\newtheorem{theorem}{Theorem}[section]
\newtheorem{proposition}[theorem]{Proposition}
\newtheorem{lemma}[theorem]{Lemma}
\newtheorem{corollary}[theorem]{Corollary}
\newtheorem{definition}[theorem]{Definition}
\newtheorem{remark}[theorem]{Remark}
\newtheorem{claim}[theorem]{Claim}
\numberwithin{equation}{section}
\newcommand \Rbb {\mathbb{R}}
\newcommand \Xcal{\mathcal{X}}
\newcommand \del \partial
\newcommand {\vep}{\varepsilon}
\def\hlinew#1{%
  \noalign{\ifnum0=`}\fi\hrule \@height #1 \futurelet
   \reserved@a\@xhline}
\title{Flat Standing Sphere Blow-up Solutions for the Nonlinear Heat Equation
}
\begin{document}
\maketitle

\centerline{Senhao Duan$^{(1)(2)}$}
\medskip
{\footnotesize
	\centerline{ $^{(1)}$ CEREMADE, Universit\'e Paris Dauphine, Paris Sciences et Lettres, France}
 \centerline{ $^{(2)}$Universit\'e Sorbonne Paris Nord,
			LAGA, CNRS (UMR 7539), F-93430, Villetaneuse, France.}
	}


\begin{abstract}
In this paper, we prove the existence of a singular standing sphere blow-up solution for the nonlinear heat equation with radial symmetry. This solution develops a finite-time singularity on a fixed-radius sphere and exhibits a flat blow-up profile. Our construction refines the method developed by Merle and Zaag \cite{MZJEMS24} which reduces the infinite-dimensional dynamics to a finite-dimensional problem in radial case. The solution satisfies explicit asymptotics near the singular ring and remains regular elsewhere. 
\end{abstract}
\tableofcontents

\section{Introduction}
In this paper, we consider the following nonlinear heat equation,
\begin{equation}\label{eq: NLH introduction}
	\left \{
	\begin{array}{lll}
		\partial_t U&=&\Delta U+|U|^{p-1}U,\\
		U(.,0)&=&U_0 \in L^{\infty}(\Rbb^d),
	\end{array}
	\right.
\end{equation}
\noindent where $U(t):x\in \Rbb^d \to U(x,t)\in \mathbb{R}$, $d \geq 2$ and $p>1$ with $(d-2)p\leq d+2$.

Equation \eqref{eq: NLH introduction}
 is considered as a model for many physical situations such as heat transfer, combustion theory, thermal explosion, etc. (see more in Kapila \cite{KSJAM80}, Kassoy and Poland \cite{KPsiam80}, Bebernes and Eberly \cite{BEbook89}).

The local Cauchy problem for equation \eqref{eq: NLH introduction} can be solved within ${L^\infty(\Rbb^d)}$. Furthermore, it can be shown that the solution $U(t)$ exists either in the interval $[0, +\infty)$ or within $[0, T)$ where $T < +\infty$.  In the latter case, $U$ undergoes a finite-time blow-up. This indicates that there exists a sequence $(x_n,t_n)\to (x_0,T)$ while $n \to \infty$ such that
$$\lim_{n \to \infty}|U(x_n,t_n)|=\infty.$$ 
We then call, in this case, $x_0$ the blow-up point and $T$ the blow-up time. 

While the nonlinear heat equation has been widely studied over the past six decades, many aspects of its blow-up dynamics, particularly those involving non-trivial geometries, remain open. Here, we focus on constructing solutions exhibiting finite-time blow-up along a spherical blow-up set, with a prescribed flat asymptotic profile. Consequently, our references will be limited to prior work within this scope.
 Interested readers may refer to Quitener and Souplet \cite{QSbook07} for a comprehensive sight of research on equation \eqref{eq: NLH introduction}.

In the Sobolev subcritical case ($(d-2)p < d+2$), pioneering works by Giga and Kohn \cite{GKcpam85} and Giga, Matsui, Sasayama \cite{GMSmmas04} yielded the first insight into the asymptotics of blow-up. Following \cite{GKcpam85}(\cite{GMSmmas04}), we introduce the following similarity variables:
 \[
W_{x_0}(Y, s)=(T-t)^{\frac{1}{p-1}} U(x, t) \text { where } Y=\frac{x-x_0}{\sqrt{T-t}} \text { and } s=-\log (T-t) .\]
   They established that, up to changing $U$ by $-U$,
   for each $K>0$, the following holds on a compact set $\{|Y|\leq 2K\sqrt{s}\}$: 
 
\begin{equation}\label{eq:GK decay of w-k}
W_{x_0}(Y,s)\rightarrow \kappa, \mbox{as $s\to\infty$ with $\kappa= (p-1)^{-\frac{1}{p-1}} $.}
\end{equation}
 For the simplicity of the exposition, we consider the two dimensional case.
In these coordiantes, Vel\'azquez \cite{VELtams93} (see also Filippas and Kohn \cite{FKcpam92} or Filippas and Liu \cite{FLaihn93}) observed that unless $W_{x_0}\equiv\kappa$, convergence \eqref{eq:GK decay of w-k} can be refined as following:
 \begin{equation}\label{eq:RDFWTK}
W_{x_0}(Y, s)-\kappa \sim Q_{x_0}(Y, s) \text { as } s \rightarrow \infty
\end{equation}
uniformly on a compact set, where $Q_{x_0}(Y,s)$ called ``blow-up'' profile,
decays in time and captures the correction of the convergence to constant $\kappa$.

Then, two typical forms of profiles $Q_{x_0}$ arise:
\begin{enumerate}
    \item ``$\frac{1}{s}$'' profile:
    \begin{equation}\label{eq: profile generic}
      Q_{x_0}(Y, s)=-\frac{\kappa}{4 p s} \sum_{i=1}^l h_2\left(Y_i\right),
    \end{equation}
    where $l=1$ or $2$, up to some rotation of coordinates.
    \item ``exponential $s$'' profile: 
    
    \begin{equation}\label{profile-generic-flat}
    Q_{x_0}(Y, s)=-e^{-\left(\frac{m}{2}-1\right) s} \sum_{j=0}^m C_{m, j} h_{m-j}\left(Y_1\right) h_j\left(Y_2\right)
    \end{equation}
    as $s \rightarrow \infty$, for some even integer $m=m(a) \geq 4$, where $Y=\left(Y_1, Y_2\right), h_j(\xi)$ is the rescaled Hermite polynomial defined by
    
    $$
    h_j(\xi)=\sum_{i=0}^{[j / 2]} \frac{j!}{i!(j-2 i)!}(-1)^i \xi^{j-2 i}
    $$
    \end{enumerate}

The existence of the solutions to \eqref{eq: NLH introduction} obeying \eqref{eq: profile generic} or \eqref{profile-generic-flat} is widely studied.
 In the one-dimensional case, Bricmont and Kupiainen \cite{BKcpam88} constructed a solution for \eqref{eq: NLH introduction} that behaves like \eqref{eq: profile generic} and \eqref{profile-generic-flat}. Recently, Duong et al. \cite{DNZArxiv2022} revisited the construction of the \textit{flat} profile using modulation theory.
 
 In the higher dimensional case, one may refer to Nguyen and Zaag \cite{NZ2017}, for constructed solutions showing a refinement of behavior \eqref{eq: profile generic} in 2 dimensions with $l=2$. In the supercritical case, we have an example of a single-point blow-up with a degenerate profile given by Merle, Raph\"ael, and Szeftel \cite{MRSIMRNI20}. More recently, Merle and Zaag \cite{MZJEMS24} have provided an example of a blow-up solution with a novel cross-shaped blow-up profile.

The methods used in \cite{BKcpam88} were enhanced after by Merle and Zaag \cite{MZdm97} using a more geometrical approach. Generally speaking, regarding the linearized equation around the profile, Merle and Zaag cope with this problem in two steps:
\begin{enumerate}
    \item Reduce the problem into a finite-dimensional one.
    \item Solve the finite-dimensional problem with a topological shooting argument. 
\end{enumerate}

This powerful method is then applied to many other different fields. Interested readers are invited to see \cite{MZjfa08}
and \cite{DNZMAMS20} for an application in the complex Ginzburg-Landau equation and a more direct way to accomplish the first step in \cite{MZdm97}. Besides, D\'avila, Del Pino, and Wei \cite{DDWIM20} applied this method to deal with the formation of the singularities for harmonic map flow.

Let us now consider the analysis of blow-up behaviors involving non-trivial geometries. We would like to mention Mahmoudi, Nouaili and Zaag \cite{MNZNon2016}, for the existence of a blow-up solution to the nonlinear heat equation on a circle;  Rapha\"el \cite{Rdm06}, for the existence and stability of solution blowing up on a sphere for the $L^2$-supercritical nonlinear Schr\"odinger equation and Rapha\"el and Szeftel \cite{RSCMP09} for a analogous result extended to higher dimensions. We also cite the work of Collot et al. \cite{CTNVJFA23} where the authors proved the existence and stability of a collapsing-ring blow-up solution to the Keller-Segel system in 3 dimensions and higher. Recently in \cite{DNZ24}, we constructed a standing sphere blow-up solution of generic type for \eqref{eq: NLH introduction}
 
In this work, our main contribution is to prove the existence of a radially symmetric solution to  \eqref{eq: NLH introduction} that develops finite-time blow-up on a fixed-radius sphere, with a flat asymptotic profile of the following type:
\[
U(x,t)\sim(T-t)^{-\frac{1}{p-1}}f\left(\frac{|x|-r_0}{(T-t)^{1/4}}\right),
\]
where
\[
			f(z)=\left (p-1+\frac{(p-1)^2}{\kappa}z^{4}\right )^{-\frac{1}{p-1}}.
\]
Our main result is stated as follows:
\begin{theorem}
\label{th: Main theorem}
(Existence of a singular standing solution for equation \eqref{eq: NLH introduction} with prescribed profile).
For any $r_0 > 0$, there exists $T > 0$ such that equation \eqref{eq: NLH introduction} admits a radially symmetric solution $U(x,t)$ defined on $\Rbb^d \times [0,T)$, satisfying:
\begin{enumerate}
\item The solution $U$ blows up in finite time $T$ on the sphere of radius $r_0$.
\item For every $C > 0$,
\begin{equation}
\label{eq:coninthm}
\sup_{\Lambda_C} \left|(T - t)^{\frac{1}{p-1}} U(x, t) - f\left (\frac{|x|-r_0}{(T-t)^{\frac{1}{4}}}\right )\right| \to 0
\quad \text{as } t \to T,
\end{equation}
where
$
		\Lambda_C := \left\{\, ||x|-r_0| \leq C(T-t)^{\frac{1}{4}} \,\right\}, \qquad
		f(z)=\left (p-1+\frac{(p-1)^2}{\kappa}z^{4}\right )^{-\frac{1}{p-1}}.
		$
\item For every $x \in \Rbb^d$ with $|x| \neq r_0$, we have $U(x,t)\to U(x,T)$ as $t\to T$, where near the singular sphere the limiting profile satisfies
\begin{equation}\label{eq: profile}
U(x,T) \sim u^*(|x|-r_0),
\qquad
u^*(\xi)=\left(\frac{(p-1)^2}{\kappa}\xi^{4}\right)^{-\frac{1}{p-1}},
\quad \xi \in \Rbb.
\end{equation}
\end{enumerate}
\end{theorem}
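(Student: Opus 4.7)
The plan is to adapt the Merle--Zaag reduction strategy --- already used by the author for the generic standing-sphere case in \cite{DNZ24} --- to the degenerate quartic self-similar scaling associated to the flat profile $f$. First, exploiting radial symmetry and writing $r=|x|$, I would introduce quartic similarity variables
\[
w(y,s) = (T-t)^{\frac{1}{p-1}} U(x,t), \qquad y = \frac{r-r_0}{(T-t)^{1/4}}, \qquad s=-\log(T-t).
\]
A direct computation using $\Delta U = \partial_r^2 U + \frac{d-1}{r}\partial_r U$ transforms \eqref{eq: NLH introduction} into
\[
\partial_s w = e^{-s/2}\partial_y^2 w - \tfrac{y}{4}\partial_y w - \tfrac{w}{p-1} + |w|^{p-1}w + R(w,y,s),
\]
where $R$ collects the curvature contribution and is exponentially small in $s$ on any bounded region $|y|\le C$. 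One checks directly that $f$ solves the reduced transport--reaction ODE $-\tfrac{y}{4}f' - \tfrac{f}{p-1} + f^p=0$, so $f$ is an exact stationary solution of the principal part.

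Next, decompose $w = f + q$. The perturbation $q$ satisfies
\[
\partial_s q = \Lcal q + N(q) + R, \qquad \Lcal q = e^{-s/2}\partial_y^2 q - \tfrac{y}{4}\partial_y q + V(y)q,
\]
with potential $V = p f^{p-1} - \tfrac{1}{p-1}$ and quadratic remainder $N(q)$. Because the diffusion coefficient $e^{-s/2}$ is exponentially subdominant, the effective spectral analysis will be driven by the first-order operator $\Lcal_0 = -\tfrac{y}{4}\partial_y + V$ on an appropriate weighted space adapted to $f$. Since $V$ interpolates between $V(0)=1$ and $V(\infty)=-\tfrac{1}{p-1}$, $\Lcal_0$ will have a finite-dimensional family of nonnegative (unstable and neutral) eigenmodes attached to low-degree monomials in $y$, plus an infinite tower of strictly dissipative modes, which I would compute via the quasi-explicit technique of \cite{DNZMAMS20,DNZArxiv2022}.

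The third step is to introduce a time-dependent shrinking set $\Scal(A,s)$ specifying, via powers of a large constant $A$ and negative powers of $s$, the size of the projections of $q$ onto each nonnegative eigenmode, a stronger decay bound on the infinite-dimensional stable part, and pointwise $L^\infty$ control in the outer region $|y|\ge K$ where $f$ is coercive. Using Duhamel's formula, smoothing estimates for the perturbed semigroup of $\Lcal$, and the exponential smallness of both $R$ and the diffusive term, the aim is to prove the usual self-improvement: if $q(\cdot,s)\in\Scal(A,s)$ on some interval $[s_0,s_1]$, then at $s_1$ every bound is strictly improved except possibly those on the nonnegative modes. This reduces the infinite-dimensional dynamics to a finite-dimensional control problem on those modes.

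Finally, I would parametrize initial data at $s_0$ by the coefficients of $q(\cdot,s_0)$ on the nonnegative eigenmodes, ranging over a suitable cube in $\Rbb^N$, and argue by contradiction: if no admissible choice produced a solution trapped in $\Scal(A,\cdot)$ for all $s\ge s_0$, the first-exit map would supply a continuous retraction from the cube onto its boundary, contradicting Brouwer's no-retraction theorem. The resulting trapped trajectory satisfies $q\to 0$ in the parabolic inner region, giving \eqref{eq:coninthm}; the outer profile $u^*$ in \eqref{eq: profile} then follows by integration along the backward characteristics $y\mapsto y e^{s/4}$ and matching, as in \cite{MZdm97,DNZ24}. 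The main obstacle will be the \emph{linear} theory in this flat case: because the diffusion coefficient decays like $e^{-s/2}$, the Gaussian-weighted $L^2$ framework underlying \cite{BKcpam88,MZdm97} does not apply, and one must design a new weight adapted to $-\tfrac{y}{4}\partial_y + V$, prove sharp time-uniform semigroup estimates, and absorb both the degenerate diffusion and the spherical curvature $\frac{d-1}{r_0}\partial_r U$ as genuine time-dependent perturbations throughout the bootstrap.
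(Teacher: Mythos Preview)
Your proposal takes a genuinely different route from the paper, and the difference is not cosmetic: it lies exactly at the point you flag as ``the main obstacle,'' and the paper's key insight is precisely how to \emph{avoid} that obstacle rather than confront it.

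You work in the quartic similarity variable $y=(r-r_0)/(T-t)^{1/4}$, which forces the diffusion to appear with coefficient $e^{-s/2}$ and leaves you with an essentially first-order linearized operator $-\tfrac{y}{4}\partial_y+V$. As you correctly note, the Gaussian-weighted $L^2$ framework then collapses: there is no regularizing effect, no clean Hermite spectral decomposition, and the delay estimates that drive the bootstrap (controlling $\|q\|_{L^r_\rho}$ by $\|q\|_{L^2_\rho}$ at an earlier time) have no obvious replacement. Designing a new weight and proving time-uniform semigroup bounds for a degenerate-parabolic operator with a nontrivial potential is a substantial open-ended project, not a technical lemma.

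The paper sidesteps this entirely by working in the \emph{standard} parabolic variable $y=(r-r_0)/\sqrt{T-t}$, so the linearized operator is the usual $\mathcal{L}=\partial_y^2-\tfrac{1}{2}y\partial_y+1$ with its full Hermite spectral theory intact. The flatness is encoded not in the scaling but in the choice of approximate profile: following the Merle--Zaag trick from \cite{MZJEMS24}, one linearizes around a modified profile
\[
\varphi(y,s)=\left(\frac{1+e^{-s}P(y)}{p-1+\tfrac{(p-1)^2}{\kappa}e^{-s}y^4}\right)^{\frac{1}{p-1}},
\]
with $P$ a quadratic polynomial chosen so that $\varphi=\kappa-e^{-s}h_4(y)+O(e^{-2s})$ on compact sets, while $\varphi(y,s)\approx f(ye^{-s/4})$ in the large region. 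The perturbation $q=\tilde w-\varphi$ is then decomposed along $h_0,\dots,h_6$ and a negative part $q_-$; the shrinking set imposes $|q_i|\le Ae^{-2s}$ for $i\le 5$, $|q_6|\le As^2e^{-2s}$, and $\|q_-\|_{L^2_\rho}\le A^2s^2e^{-3s}$. The finite-dimensional reduction and Brouwer argument then run on the six modes $q_0,\dots,q_5$. A separate, substantial part of the paper is devoted to the a priori bound $\|w\|_{L^\infty}\le 2\kappa$, proved via the Liouville theorem and a region-by-region analysis of $W_{x_0}$ for all $x_0$, again following \cite{MZJEMS24}.

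In short: your high-level architecture (shrinking set, finite-dimensional reduction, Brouwer) matches the paper, but your choice of similarity variable creates the very difficulty the paper is designed to circumvent. The correct move is to keep the standard scaling and push the degeneracy into the profile, not into the operator.
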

\begin{remark}
    Note that $f$ in \eqref{eq:coninthm} can be written more generally as \\ $\left(p-1+\frac{(p-1)^2}{\kappa}z^{m}\right)^{-\frac{1}{p-1}}$ with $m=2k$, and $k\geq 2$. In this paper, for simplicity of the exposition, we only focus on the case where $k=2$ but it can be also generalized to $k>2$. 
\end{remark}
We proceed in several sections to prove Theorem \ref{th: Main theorem}
\begin{itemize}
\item In Section \ref{formulation of the problem}, we present the problem setting.
\item In Section \ref{SOTP}, we outline the strategy of the proof of Theorem \ref{th: Main theorem}.
\item Section \ref{Dynamics of the equation} is devoted to the dynamics of the equation: we describe the general form of the initial data and sketch the proof of Theorem \ref{th: Main theorem}, omitting the technical details.
\item In Section \ref{Proof of the priori bound}, we establish a crucial a priori bound in self-similar variables.
\item Finally, Sections \ref{study of the projections} and \ref{proof of technical details} provide the technical ingredients required in the proofs of Section \ref{Dynamics of the equation} and Section \ref{Proof of the priori bound}.
\end{itemize}
\section{Formulation of the problem}\label{formulation of the problem}
 To prove Theorem \ref{th: Main theorem} , it suffices to construct a solution blowing up on a sphere of radius $r_0=1$, since the general case of $r_0> 0$ follows by the scaling invariance of the equation:
$$
\lambda\mapsto U_{\lambda}(\xi,\tau)\equiv \lambda^{\frac{2}{p-1}}u(\lambda \xi,\lambda^2 \tau).
$$ 
Our goal is to construct a solution that blows up on the sphere $\{|x|=1\}$, with asymptotic behavior described by \eqref{eq:coninthm}. In what follows, we provide a detailed description of this challenge and outline our strategy for overcoming it.
\subsection{Strategy for bounding the ``radial singularity''}
We consider radially symmetric solutions and define for all $t \geq 0$ and $r\geq 0$: \begin{equation}\label{eq: transform between 1d to dd}
	\begin{aligned}
		u(r,t)&=U\left((r,0,\cdots,0),t\right),
	\end{aligned}
\end{equation}
 so that  \begin{equation}\label{eq: transform between dd to 1d}
 	\begin{aligned}
 	\forall x\in \Rbb^d,\quad	U(x,t)&=u(|x|,t).\\
 	\end{aligned}
 \end{equation}
Substituting in  \eqref{eq: NLH introduction}, the equation becomes a 1-dimensional one:
\begin{equation}\label{eq: NLH 1d radial coordinates}
\del_t u=\del^2_{r} u+\frac{d-1}{r}\del_r u+|u|^{p-1}u,
\end{equation}
which introduces a singularity at $r=0$ due to the $\frac{1}{r}$ term. As a matter of fact, by ``radial singularity'', we mean the term $\frac{d-1}{r}\del_r u$. In order to handle this term, we divide the space into two regions:
\begin{itemize}
\item Inner region: Near the origin, where the solution remains regular 
\item Outer region: Away from the origin, where blow-up occurs.
\end{itemize}
This approach is inspired by \cite{MNZNon2016} and \cite{DNZ24}, although we must carefully avoid introducing additional singularities that may complicate the global analysis.

Indeed, we introduce two sets of similarity variables corresponding to the different aims. Let $T>0$
 be the expected blow-up time. In the outer region near $\{r=1\}$,
 we define:
\begin{equation}\label{eq: self-similar-variables}
	w_a(y,s)=(T-t)^{\frac{1}{p-1}}u(r,t)\mbox{ with }y=\frac{r-a}{\sqrt{T-t}},\; s=-\log(T-t),
\end{equation}
where $a\geq0$, $s\geq-\log T$ and $y\geq-ae^{s/2}$. In the full coordinates, self-similar variables are given by:
\begin{equation}\label{eq: second group ssv}
W_{x_0}(Y,s)=(T-t)^{\frac{1}{p-1}}U(x,t),\; Y=\frac{x-x_0}{\sqrt{T-t}}\;,s=-\log(T-t),
\end{equation}
where $x\in \Rbb^d$, $s\geq-\log T$ and $Y\in \Rbb^d$.
The first transformation is tailored to constructing the blow-up solutions near the sphere, while the second is used to analyze the behavior globally, especially to prove the $L^\infty$ priori bound
\begin{equation}\label{eqabU}
\|(T-t)^{\frac{1}{p-1}}U\|\leq M.
\end{equation}
\medskip
We will separate the origin and the singular standing sphere by  introducing the following smooth nonnegative cut-off functions:

\begin{equation}\label{eq: def Chi}
    \chi= \left\{
    \begin{array}{rcl}
        0 &&  0\leq\xi \leq \frac{1}{8} ,\\
        && \\       
        1 &&  \xi\geq \frac{1}{4},\\
        \end{array}\right.
        \end{equation}
and
\begin{equation}
    \overline{\chi}= \left\{
    \begin{array}{rcl}
         1 &&  0 \leq\xi \leq\frac{3}{8},\\
        &&\\
       0 && \xi\geq \frac{3}{4}.\\
        \end{array}\right.
        \end{equation}
 Then we get two different region 
 \begin{itemize}
 \item \textbf{The inner region}: $\left\{|x|\leq\frac{3}{8}\vep_0\right\}$ for some $\vep_0$ to be fixed small enough later. Note that we expect no blowup in this region. In order to study this inner region, we define $\overline{U}(x,t)=\overline{\chi}\left(\frac{|x|}{\vep_0}\right)U(x,t)$, for $x\in\mathbb{R}^d$, where $U(x,t)$ is assumed to satisfy the following:
$$\del_t U = \Delta U+|U|^{p-1}U.$$ 
Then for all $x\in \mathbb{R}$, $\overline{U}$ satisfies the following equation:
\begin{equation}
\del_t \overline{U} = \Delta \overline{U} + |U|^{p-1}\overline{U}-2\nabla \overline{\chi}\nabla U - \Delta \overline{\chi} U.
\end{equation}
The function $\overline{U}$ will be controlled using classical parabolic estimates.

\item \textbf{The outer region} $\left\{|x|\geq \frac{\vep_0}{4}\right\}$: In this region, we consider the equation in radial coordinates given by \eqref{eq: NLH 1d radial coordinates}.
Then, $u$ introduced in \eqref{eq: transform between 1d to dd} satisfies the following equation: $\forall t\in [0,T)$, $\forall r\geq 0$
\begin{equation}\label{eq: U }
\del_t u=\del^2_{r} u+\frac{d-1}{r}\del_r u+|u|^{p-1}u.
\end{equation}

Using the self-similar transformation \eqref{eq: self-similar-variables} (with $a=1$), we get from \eqref{eq: U } that $w_{1}$  satisfies:
\begin{equation}\label{eq: W}
\del_s w_{1} =\del^2_{y}w_{1}-\frac{1}{2}y\del_{y}w_{1}+e^{-s/2}\frac{d-1}{ye^{-s/2}+1}\del_y w_{1}-\frac{w_{1}(y,s)}{p-1}+|w_{1}|^{p-1} w_{1},
\end{equation}
where $y\in[-e^{s/2},+\infty)$ and $s\in [-\log T,+\infty)$. 

As we should restrict ourselves to the outer region $\left\{y\in \mathbb{R}\left| y\geq e^{s/2}\left(\frac{\vep_0}{4}-1\right)\right. \right\}$ in this part, we introduce for all $s\geq0$ and $y\geq-e^{\frac{s}{2}}$:
\begin{equation}\label{eqdoftw}
\tilde{w} = w_{1}\cdot \chi(\frac{ye^{-s/2} + 1}{\vep_0}),
\end{equation}
where $\chi$ is defined by \eqref{eq: def Chi}. Then from \eqref{eq: W}, $\tilde{w}$ satisfies for all $s\geq0$ and $y>e^{-s/2}$:  
\begin{equation} \label{eq: w-equation}
    \del_s \tilde{w}= \del^2_y \tilde{w}-\frac{1}{2}y\del_y \tilde{w}-\frac{1}{p-1}\tilde{w} + |\tilde{w}|^{p-1}\tilde{w} + e^{-s/2}\frac{d-1}{ye^{-s/2}+1}\del_y \tilde{w}+ F(y,s),
\end{equation}
where $F(y,s) $ is defined as follows:

\begin{equation}	
\begin{split}
F(y,s)&=
w_1\del_s \chi - 2\del_y \chi\del_y w_1- w_1\del_y^2\chi\\
&+\frac{1}{2}yw_1\del_y\chi - \frac{d-1}{y+e^{s/2}}w_1\del_y\chi+ |w_1|^{p-1}w_1(\chi-\chi^{p}).
\end{split}
\end{equation}
Extending $\tilde{w}$ and $F$ by setting $\tilde{w}=F(y,s)=0$, for all $s\geq s_0$ and $y\leq e^{-s/2}$, we see that $\tilde{w}$ is a solution of the following modification to \eqref{eq: w-equation}, for all $s\geq 0$ and $y\in\Rbb$:
\begin{equation} \label{eq:MEFWT}
\begin{aligned}
    \del_s \tilde{w}= &\del^2_y \tilde{w}-\frac{1}{2}y\del_y \tilde{w}-\frac{1}{p-1}\tilde{w} + |\tilde{w}|^{p-1}\tilde{w} \\
    &+ e^{-s/2}\frac{d-1}{ye^{-s/2}+1}\chi\left(\frac{ye^{-s/2}+1}{2\vep_0}\right)\del_y \tilde{w}+ F(y,s),
    \end{aligned}
\end{equation}
To see that, we recall by definition \eqref{eq: def Chi} that $\tilde{w}\equiv F(y,s)\equiv0$ in the transition region $y\in (e^{-s/2},(\frac{\vep_0}{8}-1)e^{-s/2})$. Note that in particular, the coefficient of $\del_y\tilde{w}$ is no longer singular, again by definition \eqref{eq: def Chi} of $\chi$.
\end{itemize}

\subsection{Expectations for the profile}
In this section, we heuristically give a guess on the assypmtotic behavior of \eqref{eq:RDFWTK} type satisfied by $\tilde{w}$ in one space dimensional case.
We separate this problem in two cases, $y$ belongs to a compact sets and $y$ in a larger zone, and discuss them separately in the following two subsection.
\subsubsection{Profile for $y$ in compact sets}
Remark that equation \eqref{eq:MEFWT} is in fact a perturbation of
\begin{equation}\label{eq:ESIBKHV}
    \del_s \tilde{w}= \del^2_y \tilde{w}-\frac{1}{2}y\del_y \tilde{w}-\frac{1}{p-1}\tilde{w} + |\tilde{w}|^{p-1}\tilde{w}. 
\end{equation}
Therefore, we expect that solutions to \eqref{eq:MEFWT} have parallel assymptotic behavior to solutions to \eqref{eq:ESIBKHV}. Hence, studies \cite{BKcpam88} and \cite{HVaihn93} on \eqref{eq:ESIBKHV} implies that there are two possible behavior on some compact sets for solutions to \eqref{eq:MEFWT}:
\begin{itemize}
    \item \textbf{Case 1 (generic):} $\tilde{w}=\kappa-\frac{\kappa}{4ps}h_2(y)+O(\frac{1}{s})$,
    \item \textbf{Case 2 (flat):} $\tilde{w}=\kappa- e^{(1-\frac{m}{2})s}h_m(y)+O(e^{-\frac{ms}{2}}),$ when $m\geq 4$ is an even number.
\end{itemize}
The existence of solutions to \eqref{eq:MEFWT} obeying behavior described in Case 1 is already studied in the previous work \cite{DNZ24}. In this paper, we will try to construct a solution in Case 2 with $m=4$. That means we will focus on the case where $\tilde{w}$ obeying:
\begin{equation}\label{eq:WTAB}
    \tilde{w}=\kappa-e^{-s}h_4(y)+O(e^{-2s}).
\end{equation}
\subsubsection{Profile in larger region}
If $|y|\leq C$, then \eqref{eq:WTAB}, we should only get a convergence to constant $\kappa$ as $s\to \infty$, other than a shape of such kind of solutions.\\
How do we get a shape? From \eqref{eq:WTAB}, we think that the reduced variable $z = e^{-s/4}y$ may play a role. 
Indeed, by \cite{HVaihn93}, we suppose that if $\tilde{w}$ satisfies \eqref{eq:WTAB}, then,
\begin{equation}\label{eq:CTTV}
\forall K>0,\quad \sup_{|y|\leq Ke^{s/4}}|\tilde{w}(y,s)-\tilde{\varphi}(y,s)|\to 0\quad \text{as } s\to0,
\end{equation}
where by\eqref{eq:coninthm},
\begin{equation}\label{eq:TVP}
    \tilde{\varphi}(y,s)=(p-1+e^{-s}y^4)^{-\frac{1}{p-1}}=f\left(\frac{|x|-r_0}{(T-t)^\frac{1}{4}}\right).
\end{equation}
Therefore, our basic idea will be to prove that equation \eqref{eq:MEFWT} admits a solution obeying \eqref{eq:WTAB} and \eqref{eq:CTTV}, since \eqref{eq:MEFWT} seems to be a small perturbation of \eqref{eq:ESIBKHV}.
\section{Strategy of the proof}
\label{SOTP}
From Section \ref{formulation of the problem}, we recall that our goal is to construct $U(x, t)$, a solution to equation \eqref{eq: NLH introduction} which is radially symmetric and defined for all $(x, t) \in \mathbb{R}^d \times[0, T)$ for some small enough $T>0$, such that in radial coordinates 

$$
w_{1}(y, s)-\kappa \sim-e^{-s} h_4\left(y\right)\text { as } s \rightarrow \infty,
$$

uniformly on compact sets, where $w_1(y, s)$ is the similarity variables' version defined in \eqref{eq: self-similar-variables}. Moreover, as explained in Section \ref{formulation of the problem}, in the inner region, the solution $U(x,t)$ will remain small, due to a parabolic estimation. Therefore, it is equivalent to say that $\tilde{w}$ defined in \eqref{eqdoftw} satisfies:
$$
\tilde{w}(y, s)-\kappa \sim-e^{-s} h_4\left(y\right)\text { as } s \rightarrow \infty.
$$
We use a classical way to construct this kind of solution. A natural way would be to linearize equation \eqref{eq:MEFWT} around the profile $\tilde{\varphi}$ given in \eqref{eq:TVP}. Unfortunately, this $\tilde{\varphi}$ does not match with \eqref{eq:WTAB}, in the sense that we don't have    
$$
\tilde{\varphi}=\kappa-e^{-s}h_4+O(e^{-2s}).
$$
In fact, as we will show below in Section \ref{COP}, it is possible to modify $\tilde{\varphi}$ into $\varphi$, so that \eqref{eq:CTTV} still holds and also \eqref{eq:WTAB}! For this, we will use an original method introduced by Merle and Zaag in \cite{MZJEMS24} to handle the degnerate profiles for \eqref{eq: NLH introduction} in higher dimensions. 

Then, by defining 
\begin{equation}\label{eq:Dq}
q(y, s)=\tilde{w}(y, s)-\varphi(y, s),
\end{equation}
 where $\varphi$ is the modified expansion of the profile. We further specify our goal by requiring that $q(y, s)$ is small in some sense that will shortly be given in Definition \ref{Def: shrinking set}

In order to achieve this goal, we need to write then understand the dynamics of the equation satisfied by $q(y, s)$ near 0 . This is done below in Section \ref{SDOq}.
Before that some preliminary estimation have to be done, as we explained. Here is the following plan of this section:
\begin{itemize}
    \item In Section \ref{POAB}, we show how we intend to prove the apriori bound \eqref{eqabU}.
    \item In Section \ref{COP}, we give the precise expression of the modified profile $\varphi$ and prove some good properties satisfied by $\varphi$.
    \item In Section \ref{SDOq} and \ref{control of potential, non linear term}, we show how we intend to analyze the dynamics of $q$ defined above in \eqref{eq:Dq}.
    \item Finally, in Section \ref{DOSS}, we give a bootstrap regime \eqref{Def: shrinking set} in which we successfully construct solutions admitting the behavior described in Theorem \ref{th: Main theorem}.
\end{itemize}
\subsection{Proof of the apriori bound \eqref{eqabU}}\label{POAB}
As we explained before, after linearizing the equation \eqref{eq: w-equation}, the classical approach consists of two steps (see for example \cite{MZdm97}):
\begin{enumerate}
    \item Reduce the problem into a finite-dimensional one.
    \item Solve the finite-dimensional problem with a topological argument. 
\end{enumerate}
In fact, these two steps are easily carried out if we have the following apriori bound
$$
\forall t\in[0,T),\;\; \|(T-t)^{\frac{1}{p-1}}U(x,t)\|_{L^{\infty}(\Rbb^d)} \leq M.
$$
 To derive this bound, there is a interesting method in Section 5 of \cite{MZJEMS24} that we prefer to use in the present work. In fact, as proved in Section 5 of \cite{MZJEMS24}, by definition \eqref{eq: second group ssv}, it is equivalent to show that
 $$
\forall x_0\in\Rbb,\; |W_{x_0}(0,s)|\leq M.
 $$
 However, for this particular method in \cite{MZJEMS24}, it is more convenient to use $W_{x_0}(Y,s)$ rather than $w_{r_0}(y,s)$. Therefore, assuming $|x_0|=r_0$,  the following is needed :
 \[
 W_{x_0}(Y,s)=w_{r_0}((|Ye^{-s/2}+x_0|-r_0)e^{s/2},s).
 \]
 (this directly follows by definition \eqref{eq: self-similar-variables} and \eqref{eq: second group ssv}, as we will show below in Claim \ref{CRBWaw}, with $r_0=1$). 
 Note that function $(|Y+x_0|e^{-s/2}-1)e^{s/2}$ is not $C^1$. It brings us more difficulties in derive a bound on $W_{x_0}(Y,s)=(T-t)^{-\frac{1}{p-1}}U(x,t)$. 
\medskip 
\subsection{Determination of a modified version of the profile}\label{COP}

In this section, we will use a tricky linearization inspired in \cite{MZJEMS24}. We therefore introduce a polynomial perturbation $P$ for the profile $\tilde{\varphi}$:
\[
\varphi=\left(\frac{1+e^{-s}P(y)}{p-1+\frac{(p-1)^2}{\kappa}y^4e^{-s}}\right)^{\frac{1}{p-1}},
\]
where $P(y)=\frac{p-1}{\kappa}(y^4-h_4(y))$(see \eqref{eq: definition P(y)} below for a justification). This ensures that the leading order term for the expansion aligns precisely with \eqref{profile-generic-flat}.
The properties of $\varphi$ and the choice of $P$ will be rigorously justified later in Lemma \ref{lemma: good properties of phi}, showing that $\varphi$ remains positive, bounded, and decays appropriately as $|y|\to\infty$.

In proximity to the ring $\{|x|=1\}$, we define the perturbation $q$ is defined by \begin{equation}\label{eq: w decompose-in-terms-q-phi}
\tilde{w}=\varphi+q,
\end{equation} where \begin{equation}\label{eq: profile form}
\varphi=\left(\frac{1+e^{-s}P(y)}{p-1+\frac{(p-1)^2}{\kappa}y^4e^{-s}}\right)^{\frac{1}{p-1}}=\left(\frac{E}{D}\right)^{\frac{1}{p-1}}.
\end{equation} A polynomial $P(y)$ is selected such that $\varphi$ remains positive, such that \begin{equation}\label{eq:flat profile}
\varphi=\kappa-e^{-s}h_4+O(e^{-2s}) .
\end{equation} The expansion of equation \eqref{eq: profile form} yields 
\begin{equation}\label{profile expansion}
\varphi=\kappa + e^{-s}\left(\frac{\kappa}{p-1}P(y)-y^4\right)+O(e^{-2s}).
\end{equation} Through identification with \eqref{eq:flat profile}, it follows that
\begin{equation}\label{eq: definition P(y)}
P(y)=\frac{p-1}{\kappa}(y^4-h_4(y)).
\end{equation} We hereby assert the following:
\begin{lemma}\label{lemma: good properties of phi}
Consider $s \geq s_{2}$ for some large enough  $s_{2} \geq 0$. Then:
\begin{itemize}
    \item[(i)] $1+e^{-s}P(y) \geq \frac{1}{2}$, hence $\varphi(y,s)$ is well defined and positive.
    \item[(ii)] $\|\varphi(\cdot,s)\|_{L^{\infty}} \leq \kappa + C_0e^{-\frac{s}{3}}$, for some $C_0 > 0$.
    \item[(iii)] $\|\partial_y\varphi(\cdot,s)\|_{L^{\infty}} \leq C_0e^{-\frac{s}{4}}$.
    \item[(iv)] $\varphi(y,s) \to 0$ as $|y| \to \infty$.
\end{itemize}
\end{lemma}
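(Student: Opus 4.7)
My plan is to exploit the explicit form of $P$. Since $h_4(\xi)=\xi^4-12\xi^2+12$, equation \eqref{eq: definition P(y)} reduces to
\[
P(y)=\frac{12(p-1)}{\kappa}(y^2-1),
\]
a quadratic bounded below by $-\frac{12(p-1)}{\kappa}$. Part (i) is then immediate: choosing $s_2\geq\log(24(p-1)/\kappa)$ forces $1+e^{-s}P(y)\geq 1-\frac{12(p-1)}{\kappa}e^{-s}\geq\tfrac{1}{2}$, so $\varphi=(E/D)^{1/(p-1)}$ is well defined and positive. Part (iv) is equally direct: for fixed $s$, as $|y|\to\infty$ the quadratic-top numerator $E$ is crushed by the quartic $D$, giving $\varphi^{p-1}\sim\frac{12}{(p-1)y^2}\to 0$.

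For part (ii), the key algebraic identity
\[
(p-1)E-D=-e^{-s}\,\frac{(p-1)^2}{\kappa}\,h_4(y),
\]
which is a direct consequence of \eqref{eq: definition P(y)}, yields
\[
\varphi^{p-1}-\kappa^{p-1}=\frac{E}{D}-\frac{1}{p-1}=-\frac{(p-1)e^{-s}h_4(y)}{\kappa D}.
\]
I split $\mathbb{R}$ at the scale $|y|=e^{s/6}$. On the inner zone $\{|y|\leq e^{s/6}\}$ one has $|h_4(y)|\leq C(1+y^4)\leq Ce^{2s/3}$ and $D\geq p-1$, so $|\varphi^{p-1}-\kappa^{p-1}|\leq Ce^{-s/3}$; a mean-value argument on $x\mapsto x^{1/(p-1)}$ near $x=1/(p-1)$ (where the derivative is finite) upgrades this to $|\varphi-\kappa|\leq Ce^{-s/3}$. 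On the outer zone $\{|y|\geq e^{s/6}\}$, taking $s_2$ large enough one has $y^2\geq e^{s/3}>6+2\sqrt{6}$, hence $h_4(y)>0$, which forces $\varphi\leq\kappa$. Combining the two zones yields (ii).

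For part (iii), I pass to the natural self-similar scale $z=e^{-s/4}y$, in which $y^4e^{-s}=z^4$ and $\partial_y=e^{-s/4}\partial_z$, so that
\[
\varphi^{p-1}=\frac{1+\frac{12(p-1)}{\kappa}(e^{-s/2}z^2-e^{-s})}{p-1+\frac{(p-1)^2}{\kappa}z^4}.
\]
It then suffices to bound $|\partial_z\varphi|=\frac{\varphi}{p-1}\left|\partial_zE/E-\partial_zD/D\right|$ uniformly in $(z,s)$ for $s\geq s_2$. I split at $|z|=e^{s/4}$. For $|z|\leq e^{s/4}$, the estimate $E\geq 1/2$ from (i) together with $e^{-s/2}z^2\leq 1$ give $E$ uniformly bounded above and below; $\varphi$ is bounded by (ii); $|\partial_z E/E|\lesssim e^{-s/2}|z|\leq e^{-s/4}$; and $|\partial_z D/D|\lesssim |z|^3/(1+|z|^4)$ is uniformly bounded. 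For $|z|\geq e^{s/4}$, the asymptotics $E\sim\frac{12(p-1)}{\kappa}e^{-s/2}z^2$ and $D\sim\frac{(p-1)^2}{\kappa}z^4$ give $\varphi\sim Ce^{-s/(2(p-1))}|z|^{-2/(p-1)}$ and, after a short computation, $|\partial_z\varphi|\lesssim e^{-s/(2(p-1))}|z|^{-(p+1)/(p-1)}$, which is uniformly bounded (indeed, vanishing). The main technical subtlety is that the chain-rule exponent $\tfrac{1}{p-1}-1=\tfrac{2-p}{p-1}$ is negative when $p>2$, so a priori $(E/D)^{(2-p)/(p-1)}$ could explode where $E/D$ becomes small; the decay of $\partial_z(E/D)$ in the large-$z$ regime exactly compensates, and tracking this cancellation is where the main work of the proof is concentrated.
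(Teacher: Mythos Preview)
Your proof is correct; parts (i) and (iv) coincide with the paper's. For (ii) and (iii) you take a genuinely different route.

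For (ii), you exploit the exact identity $(p-1)E-D=-\frac{(p-1)^2}{\kappa}e^{-s}h_4(y)$ and split at $|y|=e^{s/6}$, using the sign of $h_4$ on the outer piece to get $\varphi\le\kappa$ there. The paper instead bounds $E\le C(1+e^{-s}+e^{-s}y^2)$ crudely and controls the $e^{-s}y^2/D$ term via the rescaling $z=e^{-s/4}y$. Your identity is sharper and the sign argument is elegant; the paper's bound is coarser but avoids any case distinction.

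For (iii), you pass to $z=e^{-s/4}y$ and split at $|z|=e^{s/4}$. The paper stays in $y$ and uses the single global lower bound $E\ge C^{-1}(1+e^{-s}y^2)$, which yields $|\partial_yE|/E\le Ce^{-s/2}$ and $|\partial_yD|/D\le Ce^{-s/4}$ directly, without any zone splitting or asymptotic matching. That global bound is worth knowing: it replaces your outer-zone asymptotics by a one-line estimate. Conversely, your rescaled picture makes the role of the self-similar variable $z$ more transparent.

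One remark: the ``subtlety'' you flag for $p>2$ is not really present. Once you write $\partial_z\varphi=\frac{\varphi}{p-1}\bigl(\partial_z E/E-\partial_z D/D\bigr)$ and invoke the $L^\infty$ bound on $\varphi$ from (ii), the factor $(E/D)^{(2-p)/(p-1)}$ never appears in isolation, and no cancellation needs to be tracked. The paper's argument (and your own logarithmic-derivative formula) already sidesteps this entirely.
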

\begin{proof}
Consider $s\geq 0$ to be taken large enough.\\
(i) It is enough to show that $P(y)$ is bounded from below. Since 
\begin{equation}
h_4(y)=y^4-12y^2+12,
\end{equation}
it follows from \eqref{eq: definition P(y)} that
\begin{equation} \label{eq: estimation P(y)}
P(y)=\frac{12(p-1)}{\kappa}(y^2-1)\geq -\frac{12(p-1)}{\kappa},
\end{equation}
which implies that by taking $s$ large enough, we have $1+e^{-s}P(y)\geq \frac{1}{2}$, \\ and hence $\varphi$ is well defined in \eqref{eq: profile form}.\\
(ii) Using \eqref{eq: estimation P(y)}, we see from \eqref{eq: profile form} that
\begin{equation}\label{eq: priori estimation on phi}
\left|\varphi^{p-1}\right|\leq C\frac{N_1+N_2}{p-1+\frac{(p-1)^2}{\kappa}y^4e^{-s}},
\end{equation}
where 
\begin{equation}
N_1=1+e^{-s},\quad N_2=e^{-s}|y|^2.
\end{equation}
Since $p-1+\frac{(p-1)^2}{\kappa}y^4e^{-s}\geq p-1$, it follows that
\begin{equation}\label{eq: estimation on first part of phi}
	\frac{N_1}{p-1+\frac{(p-1)^2}{\kappa}y^4e^{-s}}\leq \frac{1+e^{-s}}{p-1}
	\end{equation}
Introducing \(z=e^{-\frac{s}{4}}y\), and denoting \(X = |z|^2\), we see that
\begin{align}\label{eq: estimation on second part of phi}
\frac{N_2}{D} \leq \frac{C e^{-\frac{s}{2}}|z|^2}{1 +|z|^4 } \leq \frac{C}{2}e^{-\frac{s}{2}}, 
\end{align}

Collecting the estimates in \eqref{eq: priori estimation on phi}, \eqref{eq: estimation on first part of phi} and \eqref{eq: estimation on second part of phi} then recalling the definition \(\kappa=(p-1)^{- \frac{1}{p-1}}\), we get the desired conclusion.
 
(iii) By definition \eqref{eq: profile form} of \(\varphi(y,s)\), we write for all \(s \geq 1\) and \(y \in \mathbb{R}\)
\begin{align}
(p - 1) \log \varphi = \log E - \log D
\end{align}
where \(E\) and \(D\) are defined in \eqref{eq: profile form}  (note that \(D > 0\) by definition, and that \(E > 0\) from item (i) of this lemma). Taking the gradient, we see that
\begin{align}
\del_y \varphi = \frac{\varphi}{p - 1} \left( \frac{\del_y E}{E} - \frac{\del_y D}{D} \right). \label{2.19}
\end{align}
 
Since
\begin{align}
|\del_y D| \leq C e^{-s} |y|^3=D_1, \label{eq: grad D estimation}
\end{align}
 we write from item (ii) of this lemma
\begin{align}
|\del_y \varphi| \leq C \left[ \frac{|\del_y E|}{E} + \frac{D_1}{D} \right]. \label{eq: grad phi estimation}
\end{align}
 
Noting that from \eqref{eq: estimation P(y)}
\begin{align}
E \geq \frac{E_0}{C} \quad \text{where} \quad E_0 = 1 + e^{-s}y^2 , \label{eq:estimation on E}\\
|\del_y E| \leq C e^{-s}|y| , \label{eq: estimation on grad E}
\end{align}
from \eqref{eq: estimation P(y)}, using the definitions \eqref{eq: profile form}  and \eqref{eq: grad D estimation} of \(D\) and \(D_1\), then proceeding a similar proof to the one of item (ii) of this lemma, we show that
\begin{align}
\frac{|\del_y E|}{E} \leq \frac{|\del_y E|}{E_0} \leq C e^{-\frac{s}{2}} \text{ and } \frac{D_1}{D} \leq C e^{-\frac{s}{4}}, \label{2.22}
\end{align}
in particular
\begin{align}\label{Gradient D estimation}
\frac{|\del_y D|}{D} \leq C e^{-\frac{s}{4}},
\end{align}
and the estimate on \(\del_y \varphi\) in item (ii) follows.
 
(iv) From \eqref{eq: profile form} and \eqref{eq: definition P(y)}, we see that the numerator of the fraction in \eqref{eq: profile form} is a polynomial of degree 2, whereas the denominator is of degree 4. Thus, the conclusion follows.
 
This concludes the proof of Lemma \ref{lemma: good properties of phi}.
\end{proof}
\subsection{Dynamics of $q$}\label{SDOq}
With the decomposition \eqref{eq: w decompose-in-terms-q-phi}, and Lemma \ref{lemma: good properties of phi}, the problem is then reduced to constructing a function $q$ satisfying 
\begin{equation}\label{eq: q target}
	\lim_{s\to \infty}\sup_{y\in \Rbb} |q(y,s)|=0.
\end{equation}
The equation for $q$ is as follows for all $y\in\Rbb$:
\begin{equation}\label{q's equation}
\del_s q = (\mathcal{L}+V)q+H(y,s)+\del_y G(y,s)+R(y,s)+B(y,s)+N(y,s)
\end{equation}
where
\begin{equation} \label{eq: linear operator}
\mathcal{L}=\del^2_y-\frac{1}{2}y\cdot\del_y +1,\quad V= p\varphi^{p-1}-\frac{p}{p-1},
\end{equation}
\begin{equation}
B(y,s)=|\varphi+q|^{p-1}|\varphi+q|-\varphi^p-p\varphi^{p-1}q,
\end{equation}
and
\begin{equation}\label{definition of B,R,F,N}
\begin{aligned}
	R(y,s)&= \del^2_y \varphi-\frac{1}{2}y\del_y \varphi-\frac{1}{p-1}\varphi+\varphi^p-\del_s \varphi,\\
	H(y,s)&=w_1(\del_y^2 \chi+\del_s \chi+\frac{1}{2}y\del_y\chi\del_y\chi)+ |w_1|^{p-1}w_1(\chi-\chi^{p}),\\
	G(y,s)&= -2\del_y\chi w_1,\\
	N(y,s)&= \frac{d-1}{y+e^{s/2}}w_1\del_y\chi.
\end{aligned}
\end{equation}
The control of $q$ near the collapsing ring $\{r=1\}$ is based on the following useful properties:
\medskip

\noindent\textbullet \textbf{ Apriori $L^{\infty}$ bound}\\
\begin{equation}
\forall s\geq s_0,\;\;\|w_1\|_{L^{\infty}(\Rbb)}\leq M,
\end{equation}
for some $M>0$.
\paragraph{\noindent\textbullet \textbf{ Spectral properties of the linear operator $\mathcal{L}$}\\}

The operator $\mathcal{L}$ is self adjoint on $\mathcal{D}(\mathcal{L})\subset L^{2}(\mathbb{R},\rho_1 dy)$ with 
\begin{equation}\label{eq: def of rho}
\rho_d(y)=\frac{e^{-\frac{|y|^2}{4}}}{(4\pi)^{d/2}},
\end{equation}
The spectrum of $\mathcal{L}$ is 
\begin{equation}\label{eq: spectrum}
	spec(\mathcal{L})=\left\{1-\frac{m}{2}|m\in \mathbb{N}\right\}.
\end{equation}
All the eigenvalues are simple and the corresponding eigenfunctions are derived from Hermite polynomials:
\begin{equation}\label{eq:definition of hm}
  h_m(y)=\sum_{n=0}^{[\frac{m}{2}]} \frac{m!}{n!(m-2n)!}(-1)^ny^{m-2n}. 
\end{equation}
the polynomials $h_m$ satisfies 
$$\int_\mathbb{R} h_mh_nd\mu=2^nn!\delta_{nm},$$
and we introduce $k_m=\frac{h_m}{\|h_m\|^2_{L_{\rho_1}^2(\Rbb)}}$.
\paragraph{\textbullet\ The potential $V$: \\}
It is bounded in $L^\infty$ and small in $L_{\rho_1}^r$ for any $r\geq 1$, in the sense that:
\begin{equation}\label{Potential control}
	\|V(\cdot,s)\|_{L^{\infty}}\leq \frac{p}{p-1}\; \text{and}\; \|V(\cdot,s)\|_{L_{\rho_1}^r}\leq C(r)e^{-s},
	\end{equation}  
\paragraph{\textbullet\ The nonlinear term $B$: \\}
Since $\varphi$ is uniformly bounded in space and time, thanks to item (iii) of Lemma \ref{lemma: good properties of phi}, assuming the following a priori estimate,
\begin{equation}\label{eq: apriori estimation on q}
	\|q\|_{L^{\infty}\left(\Rbb\times\left[s_{0}, \infty\right)\right)} \leq M 
\end{equation}
we easily see that $B(y, s, q)$ is superlinear, in the sense that
\begin{equation}\label{eq: B super linearity}
	|B(y, s, q)| \leq C(M)|q|^{\bar{p}} \text { where } \bar{p}=\min (p, 2)>1 
\end{equation}
for all $y \in \Rbb,\; q \in \mathbb{R}$ and large $s$.
\paragraph{\textbullet\ The remainder term $R$: \\}By comparing the expression \eqref{definition of B,R,F,N} of $R(y, s)$, with the expression of equation \eqref{eq: w-equation}, we see that $R(y, s)$ measures the quality of $\varphi(y, s)$ as an approximate solution of \eqref{eq: w-equation}. In fact, through a straightforward calculation (see below in Section \ref{Proof-a-priori-estimates}), one can show that
\begin{equation}\label{eq: R rough estimation}
	\forall r \geq 2, \quad \forall s \geq 0, \quad\|R(s)\|_{L_{\rho_1}^{r}} \leq C(r) e^{-2 s} 
\end{equation}
which is consistent with where we constructed $\varphi$ as an approximate solution for equation (1.12).
\paragraph{\textbullet\ The cut-off term $N$, $H$ and $G$: \\}
It is small in $L^\infty$ in the sense that:
\begin{equation}
\|N(\cdot,s)\|_{L^{\infty}}+\|\del_yG(\cdot,s)\|_{L^{\infty}}+\|H(\cdot,s)\|_{L^{\infty}}\leq C e^{-3s}
\end{equation} 
With these properties, although the control of $B(y,s)$ is delicate since the power function fail to be continuous in $L^{r}_{\rho_1}$, it still seems resonable to construct a small solution $q$ to \eqref{q's equation}. 
\subsection{Control of potential and the nonlinear terms in equation \eqref{q's equation}} \label{control of potential, non linear term}
In this section, we describe the strategy to control the potential and nonlinear terms \( V q \) and \( B \) appearing in equation \eqref{q's equation}. For clarity of exposition, we focus explicitly on the nonlinear term \( B \).  it means that we temporarily consider the following simplified equation:
\begin{equation}\label{eq: simplified q equation}
\partial_{s} q=(\mathcal{L}+V) q+B(y,s)+R(y, s).
\end{equation}
We also restrict  the discussion to the case \( p \geq 2 \), and thus setting \( \bar{p} = 2 \). However, the rigorous analysis of both terms \( V q \) and \( B \) will ultimately be carried out for all \( p > 1 \) and for equation \eqref{q's equation}. Under the assumption of the apriori estimate \eqref{eq: apriori estimation on q}, equation \eqref{eq: simplified q equation} reduces to the following linear equation with an associated source term:
\begin{equation}\label{eq: q reduce equation}
	\partial_{s} q=(\mathcal{L}+\bar{V}) q+R(y, s) 
\end{equation}
where
\begin{equation}\label{eq; estimation bar V}
	\begin{aligned}
	|\bar{V}(y, s)|&=\left|V(y, s)+\frac{B(y, s, q)}{q}\right| \leq C+C(M)|q|^{\bar{p}-1} \\
	& \leq C+C(M) M^{\bar{p}-1}\equiv \bar{C}(M). 
	\end{aligned}
\end{equation}

Owing to the regularizing properties of the operator \(\mathcal{L}\) (as detailed in Lemma \ref{lemma: Reg eff d oper} below), we can combine equation \eqref{eq: R rough estimation} with equations \eqref{eq: q reduce equation} and \eqref{eq; estimation bar V} to bound the \(L_{\rho}^{4}\)-norm of the solution in terms of its \(L_{\rho}^{2}\)-norm, possibly introducing a certain time delay:

$$
\|q(s)\|_{L_{\rho_1}^{4}} \leq e^{(1+\bar{C}) s^{*}}\left\|q\left(s-s^{*}\right)\right\|_{L_{\rho_1}^{2}}+C e^{-2 s} . 
$$

Clearly, this is equivalent to write that
$$
\left\|q(s)^{2}\right\|_{L_{\rho_1}^{2}} \leq 2 e^{2(1+\bar{C}) s^{*}}\left\|q\left(s-s^{*}\right)\right\|_{L_{\rho_1}^{2}}^{2}+2 C e^{-4 s}.
$$
This estimation enables us, via estimate \eqref{eq: B super linearity}, to obtain control of the nonlinear term \( B(y, s, q) \) in the \( L_{\rho_1}^{2} \)-norm. It is important to note that this control relies fundamentally on the a priori bound \eqref{eq: apriori estimation on q} the validity of which remains to be verified. We shall address this verification in the subsequent section.

\subsection{Definition of a shrinking set making $q(s)\to0$}\label{DOSS}

First, we introduce the following projectors

\begin{equation}\label{eq: Def of projections P_m}
        P_m (f)=f_m=\displaystyle \langle f,k_m\rangle_{L_{\rho_1}^2}\mbox{ for $m\in\{0,1,\cdots, 6\}$,}
\end{equation}

\begin{equation}\label{eq: Def of projection P_-}
    P_-(f)=f_-=\sum_{m\geq 7}P_m (f)h_m(y).
\end{equation}
For the sake of controlling $q$ in the region $|y|<e^{s/4}$, we consider the following decomposition:
\begin{equation}\label{decomposition of q}
q(y,s)=\sum_{m=0}^{6} q_m(s) \frac{h_m}{\|h_m\|_{L_{\rho_1}^2}}(y)+q_{-}(y,s).
\end{equation}
To achieve our target \eqref{eq: q target}, the core of our approach lies in constructing a solution to equation \eqref{q's equation} that exists on the semi-infinite interval $\left[s_{0}, \infty\right)$, where $s_{0} = -\log T$. This solution must satisfy the containment condition
$$
\forall s \geq s_{0},\quad q(s) \in \mathcal{V}(s),
$$
where the time-dependent set $\mathcal{V}(s)$ exhibits asymptotic collapse to zero in an appropriate sense. Furthermore, we simultaneously require that the solution $u$ to equation \eqref{eq: NLH introduction} maintains controlled smallness within the \textit{regular region}. Guided by this strategy, we introduce the following carefully designed shrinking set $\mathcal{S}(t)$.
\begin{definition}[Definition of the shrinking set $\mathcal{S}(t)$]
	\label{Def: shrinking set}
	For $A$, $ K_0$, $\varepsilon_0>0$, $0<\eta_0\leq 1$, $T>0$, we define for all $t \in [0, T)$   $$\mathcal{S}(t)=\mathcal{S}[A,K_0, \varepsilon_0,\eta_0](t),$$
	the set of all functions $J \in L^\infty(\mathbb{R}^d)$,with $j(|x|,t)=J((|x|,0,\cdots,0),t)$ and $v=(T-t)^{\frac{1}{p-1}}j(y,s)\chi(\frac{ye^{-s/2}+1}{\vep_0})-\varphi(y,s)$ satisfying:
	\begin{itemize}
		\item 	(i) Estimates in $\mathcal{R}_1$: we consider $ \mathcal{V}(s)= \mathcal{V}[K_0,A](s)$ (where $s=-\log (T-t)$ ), the set of all functions $r \in L^\infty(\mathbb{R})$ such that
		\[
		\begin{aligned}
		\left\|(T-t)^{\frac{1}{p-1}}(v+\varphi)\right\|_{L^{\infty}}&\leq 2\kappa,\\
			\lvert v_{k}(s) \rvert &\leq A e^{-2s} \quad (k\;\leq\;5), \\
			\lvert v_{6}(s) \rvert &\leq  As^2e^{-2s} , \\
			\| v_-(y, s) \|_{L_{\rho_1}^2} &\leq A^2 s^2e^{-3s}, \\
		\end{aligned}
		\]
		where
		\[
		\begin{aligned}
			r_-(s) &= P_-( r),
		\end{aligned}
		\]
		where $v_{k}(y, s) $ and $P_-$ are defined in \eqref{eq: Def of projections P_m} and \eqref{eq: Def of projection P_-}.
		
		\item 	(ii) Estimates in $\mathcal{R}_2$: For all $0 \leq \lvert x \rvert \leq \frac{\vep_0}{4}$, $\lvert j(x, t) \rvert \leq \eta_0$.
	\end{itemize}
\end{definition}
\noindent Clearly, we see that: if $U \in S(t)$, then we have
\begin{equation}\label{q L rho priori estimate}
	\|q\|_{L_{\rho_1}^2}\leq CAse^{-2s}
	\end{equation} 
 in the inner region $\mathcal{R}_1$. This implies indeed that $q(s)\to 0$ as $s\to\infty$ in $L_{\rho}^2(\Rbb)$. The next part of the paper is devoted to the rigorous proof of our goal in \eqref{eq: q target}, and to the fact that it implies our main result stated in Theorem \ref{th: Main theorem}.

\section{Dynamics of the equation and general form of initial data}\label{Dynamics of the equation}

\subsection{Dynamics of the equation \eqref{q's equation} in $\mathcal{V}$}
In order to construct a solution $q$ to the equation \eqref{q's equation} verifies 
\begin{equation}
q(y,s) \in \mathcal{V}(s),	
	\end{equation}
we firstly assume $q(s) \in \mathcal{V}(s)$ for all $s \in[s_0,s_1]$ for some $s_1 \geq s_0$. Then, derive the differential equations satisfied by $q_k$ and $q_-$ in this subsection. This idea leads us to the following proposition  
\begin{proposition}\label{Prop: Priori estimate}
For any $A\geq 1$, there exists $s_3(A)\geq 0$ such that for all $s_0\geq s_3 $ assuming:
\begin{equation}\label{eq: priori estimate q}
	q(s_0)\in L^{\infty},\;\nabla q(s_0)\in L^{\infty},\; \forall r\geq2,\;\|q(s_0)\|_{L_{\rho}^r}\leq C(r)As_0^2e^{-2s_0}
	\end{equation}
	and $q(s) \in \mathcal{V}(s)$ a solution to \eqref{q's equation} for all $[s_0,s_1]$, for some $s_1\geq s_0$. Then for all $s\in[s_0,s_1]$:
\begin{enumerate}
	\item for all $i  \leq 6$, $\left|q_i'(s)-(1-\frac{i}{2})q_i\right|\leq C_iAse^{-3s}+R_{i}$;
	\item $\frac{d}{ds}\|q_-\|_{L_{\rho_1}^2}\leq -3\|q_-\|_{L_{\rho_1}^2}+CAse^{-3s}+\|R_-\|_{L_{\rho_1}^2}$.
\end{enumerate}
\end{proposition}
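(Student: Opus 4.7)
The plan is to project equation \eqref{q's equation} onto each Hermite mode $k_i$ for $i\leq 6$ and onto the negative subspace via $P_-$, and then exploit the spectral structure \eqref{eq: spectrum} of $\mathcal{L}$. For the finite-dimensional modes I would take $\langle\cdot,k_i\rangle_{L^2_{\rho_1}}$ of \eqref{q's equation}: the linear piece $\mathcal{L}q$ contributes exactly $(1-i/2)q_i$, the source $R$ contributes the retained term $R_i$, and every other contribution (namely $Vq$, $B$, $H$, $\partial_y G$, $N$) must be shown to be bounded by $C_i A s e^{-3s}$. For the negative part I would apply $P_-$ and differentiate the norm using
\[
\frac{d}{ds}\|q_-\|_{L^2_{\rho_1}}=\frac{\langle \partial_s q_-,q_-\rangle_{L^2_{\rho_1}}}{\|q_-\|_{L^2_{\rho_1}}},
\]
and rely on the spectral gap of $\mathcal{L}$ on $\mathrm{span}\{h_m:m\geq 7\}$ to produce the leading dissipation $\leq -3\|q_-\|_{L^2_{\rho_1}}$.

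The bulk of the work is estimating the nonlinear and forcing contributions. The cut-off terms $H$, $\partial_y G$, $N$ are pointwise $O(e^{-3s})$ in $L^\infty$ by Section \ref{SDOq}, hence their $L^2_{\rho_1}$ norms and modal projections are immediately below threshold. For the potential term $Vq$, I would combine the Hermite smallness $\|V(\cdot,s)\|_{L^r_{\rho_1}}\leq C(r)e^{-s}$ from \eqref{Potential control} with the shrinking-set bound \eqref{q L rho priori estimate} $\|q(s)\|_{L^2_{\rho_1}}\leq CAse^{-2s}$ (together with the uniform $L^\infty$ bound on $q$ implied by $\mathcal{V}(s)$) via Hölder, obtaining modal projections of $Vq$ of size $CAse^{-3s}$. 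For the nonlinear term $B$, the superlinear bound \eqref{eq: B super linearity} $|B|\leq C(M)|q|^{\bar p}$ with $\bar p=\min(p,2)>1$ forces us to upgrade $L^2_{\rho_1}$ control on $q$ to $L^{2\bar p}_{\rho_1}$ control. This is where the regularizing mechanism outlined in Section \ref{control of potential, non linear term} enters: a Duhamel representation of $q$ on a short window $[s-s^*,s]$ trades a bounded multiplicative factor $e^{Cs^*}$ for the gain in integrability, and then $\|B\|_{L^2_{\rho_1}}\leq C\|q\|_{L^{2\bar p}_{\rho_1}}^{\bar p}$ is of order $CAse^{-3s}$ for $s$ large enough. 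The remainder $R$ is kept as $R_i$ or $\|R_-\|_{L^2_{\rho_1}}$ exactly as in the statement, the polynomial smoothness of $k_i$ ensuring $R_i$ is well defined.

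The main obstacle is the nonlinear piece $B$: the shrinking set $\mathcal{V}(s)$ directly provides only an $L^2_{\rho_1}$ bound on $q$, whereas projecting an $|q|^{\bar p}$ quantity against a Hermite mode naturally demands an $L^{2\bar p}_{\rho_1}$ bound. Bridging this gap without losing the exponential decay is the most delicate point, and hinges on a careful calibration of the delay $s^*$ in the regularizing estimate, together with the a priori bound \eqref{eq: priori estimate q} on the initial data ensuring that the Duhamel integral can be started from $s_0$. In parallel, one must check that the sub-leading contributions of $Vq$ and $B$ to the negative-part inequality are indeed absorbed into $CAse^{-3s}+\|R_-\|_{L^2_{\rho_1}}$ and that nothing spoils the target dissipation rate $-3$; once this bookkeeping is done, assembling the two inequalities of the proposition reduces to routine ODE manipulation.
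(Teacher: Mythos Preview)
Your proposal is correct and takes essentially the same approach as the paper. Both project \eqref{q's equation} onto the Hermite modes and onto $E_-$, exploit the spectral gap of $\mathcal{L}$, bound the cut-off terms $H$, $\partial_y G$, $N$ via their support against the Gaussian weight, handle $Vq$ by H\"older with \eqref{Potential control}, and---exactly as you anticipate---rely on the delay regularizing estimate (Proposition~\ref{delay regularizing estimate}) to upgrade the $L^2_{\rho_1}$ bound on $q$ to the $L^{r}_{\rho_1}$ control needed for $B$ (and, in item~2, also for the $\|Vq\|_{L^2_{\rho_1}}$ term via $\|V\|_{L^4_{\rho_1}}\|q\|_{L^4_{\rho_1}}$).
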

\begin{proof}
	The proof is straightforward, except for the control of the potential term and the superlinear term. They both need the following delay regularizing estimate stated informally in section \ref{control of potential, non linear term}. For the clarity of the main idea of the present paper, we  postpone the proof in Section \ref{study of the projections}.        
	\end{proof}
The size of the components of the remainder term $R(y,s)$ is derived from the following lemma(this will be proved later in Section \ref{Proof-a-priori-estimates}):
\begin{lemma}[Estimates for term $R$]
\noindent For $i \leq 6 $,
\begin{equation}
	|P_{i}(R)|\leq Ce^{-2s};
\end{equation}
\noindent and we have also:
\begin{equation}
\|P_-(R)\|_{L^2_{\rho_1}}\leq Ce^{-3s}.
\end{equation}
\end{lemma}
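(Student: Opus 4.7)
The approach is a direct computation based on the explicit rational form of $\varphi$ in \eqref{eq: profile form}. Writing $\alpha := e^{-s}$, I would expand
\[
\varphi = \kappa(1+\alpha P)^{1/(p-1)}(1+\alpha B)^{-1/(p-1)}, \qquad B := \tfrac{p-1}{\kappa}y^4,
\]
via the binomial series to obtain, on the region where $\alpha y^4$ is small,
\[
\varphi = \kappa + \alpha\,\varphi_1 + \alpha^2\,\varphi_2 + \alpha^3\,\Phi_3(y,s),
\]
where $\varphi_1 = -h_4(y)$ by the very choice of $P$ in \eqref{eq: definition P(y)}, and an explicit computation (exploiting the crucial cancellation $P-B = -\tfrac{(p-1)h_4}{\kappa}$) gives $\varphi_2 = \tfrac{p-1}{\kappa}y^4 h_4 + \tfrac{2-p}{2\kappa}h_4^2$, a polynomial of degree $8$.

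Plugging this into $R = \mathcal{L}\varphi - \tfrac{p}{p-1}\varphi + \varphi^p - \del_s\varphi$, and using the Taylor expansion $\varphi^p = \kappa^p + p\kappa^{p-1}(\varphi-\kappa) + \tfrac{p}{2\kappa}(\varphi-\kappa)^2 + O((\varphi-\kappa)^3)$, I would verify in order: (i) the $\alpha^0$ term vanishes because $\kappa$ is the constant equilibrium; (ii) the $\alpha^1$ term vanishes thanks to the eigenvalue identity $\mathcal{L}h_4 = -h_4$, which is precisely why $h_4$ was inserted in $\varphi_1$; (iii) the $\alpha^2$ coefficient equals $(\mathcal{L}+2)\varphi_2 + \tfrac{p}{2\kappa}h_4^2$, a polynomial in $y$ of a priori degree $8$. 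A short check of the $y^8$ coefficient shows it cancels ($-\tfrac{p}{2\kappa}+\tfrac{p}{2\kappa}=0$), so the $\alpha^2$ coefficient is actually a polynomial of degree at most $6$ (and of even parity). This cancellation is the heart of the lemma.

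With this in hand, the two bounds follow. For $|P_i(R)|\leq C e^{-2s}$ with $i\leq 6$, one directly computes the Hermite projections of the explicit $\alpha^2$ polynomial (which yield fixed constants), and controls the cubic remainder $\alpha^3\Phi_3$ in $L^2_{\rho_1}$ by polynomial estimates against the Gaussian weight. For $\|P_-(R)\|_{L^2_{\rho_1}}\leq C e^{-3s}$, since the $\alpha^2$ coefficient lies entirely in $\mathrm{span}(h_0,\ldots,h_6)$, it is annihilated by $P_-$; only the $\alpha^3$ remainder survives the projection, yielding the improved decay.

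The main obstacle is making the Taylor expansion in $\alpha$ rigorous when $|y|$ is large: for $|y|\gtrsim e^{s/4}$ the small parameter $\alpha y^4 = e^{-s}y^4$ is not small and the binomial series diverges. However, in the weighted norm $L^2_{\rho_1}$ the Gaussian factor $e^{-y^2/4}$ makes the contribution of the set $\{|y|\geq s\}$ super-exponentially small, so combining this cutoff with the uniform bound $\|\varphi\|_{L^\infty}\leq C$ from Lemma \ref{lemma: good properties of phi} reduces the analysis to a region where the expansion is valid with polynomial-in-$y$ remainder. For the pointwise Hermite coefficients the same splitting suffices, since $h_m(y)\rho_1(y)$ is likewise Gaussian-decaying. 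This bookkeeping is routine once set up, but is where the technical care lies.
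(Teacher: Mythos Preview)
Your approach is correct and follows the same overall strategy as the paper: Taylor expand $\varphi$ and hence $R$ in powers of $\alpha=e^{-s}$, check that the $\alpha^0$ and $\alpha^1$ contributions vanish, and identify the $\alpha^2$ coefficient explicitly. The paper carries this out by expanding each of $\del_y^2\varphi$, $-\tfrac12 y\del_y\varphi$, $-\tfrac{1}{p-1}\varphi$, $\varphi^p$, $\del_s\varphi$ separately and then summing; you instead group the linear pieces as $(\mathcal{L}+2)\varphi_2$ and add the quadratic correction $\tfrac{p}{2\kappa}h_4^2$, which is cleaner and makes the structure transparent.

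Your explicit identification of the $y^8$ cancellation is the genuine heart of the second estimate, and your argument makes this clearer than the paper's brute-force computation. In fact the paper's displayed expression for $R$ at order $e^{-2s}$ retains a nonzero $h_8$ coefficient, which would contradict the bound $\|P_-(R)\|_{L^2_{\rho_1}}\le Ce^{-3s}$ since $h_8\in\operatorname{ran}P_-$; your computation shows this coefficient must in fact vanish, so the paper's final formula has an arithmetic slip even though its conclusion is correct. Your treatment of the large-$|y|$ region via a cutoff combined with the Gaussian weight and the uniform bound on $\varphi$ from Lemma~\ref{lemma: good properties of phi} is exactly the right way to make the expansion rigorous in $L^2_{\rho_1}$; the paper states its expansions ``in $L^r_{\rho_1}$'' without spelling this out.
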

As we have just mentioned, the following time dealy estimate is crucial to the proof of Proposition \ref{Prop: Priori estimate}.
\begin{proposition}[A delay regularizing estimate for equation \eqref{q's equation}]\label{delay regularizing estimate} Under the hypothesis of Proposition \ref{Prop: Priori estimate} and for any $r \geq 2$ and $s \in\left[s_0, s_1\right]$, it holds that

$$
\|q(s)\|_{L_{\rho_1}^r} \leq C(r) A s e^{-2 s}
$$
\end{proposition}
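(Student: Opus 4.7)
The plan is to treat equation \eqref{q's equation} as a linear parabolic equation with bounded coefficients, and then exploit the smoothing property of the Ornstein--Uhlenbeck semigroup $e^{\sigma\mathcal{L}}$ (via Mehler's kernel) to upgrade an $L^2_{\rho_1}$ bound into an $L^r_{\rho_1}$ bound at the cost of a short time delay.

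First I would use the shrinking-set a priori $L^\infty$ bound on $q$ (of size $M$) together with the superlinear estimate \eqref{eq: B super linearity} to introduce the effective potential
\[
\bar V := V + \frac{B(y,s,q)}{q},
\]
extended by $0$ on $\{q=0\}$, which satisfies $\|\bar V\|_{L^\infty}\leq \bar C(M)$ as in \eqref{eq; estimation bar V}. Equation \eqref{q's equation} then reads
\[
\partial_s q = (\mathcal{L}+\bar V)q + F, \qquad F := R+H+\partial_y G + N,
\]
and \eqref{eq: R rough estimation} together with the $L^\infty$ bounds of size $e^{-3s}$ on $H,\partial_y G,N$ yield $\|F(\tau)\|_{L^r_{\rho_1}}\leq C(r)e^{-2\tau}$ for every $r\geq 2$.

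Next I would fix a delay $\sigma\in(0,1]$ and write Duhamel's formula over this delay: for $s\geq s_0+\sigma$,
\[
q(s) = \mathcal{K}(s,s-\sigma)\,q(s-\sigma) + \int_{s-\sigma}^{s} \mathcal{K}(s,\tau)\,F(\tau)\,d\tau,
\]
where $\mathcal{K}$ is the propagator of $\mathcal{L}+\bar V$. Invoking Lemma \ref{lemma: Reg eff d oper} (the regularizing effect of $\mathcal{L}$, extended to $\mathcal{L}+\bar V$ by a comparison with $e^{\sigma\mathcal{L}}$ that only adds a harmless factor $e^{\bar C\sigma}$), I expect an estimate of the form
\[
\|\mathcal{K}(s,\tau)g\|_{L^r_{\rho_1}} \leq \frac{C(r)\,e^{\bar C(s-\tau)}}{(1-e^{-(s-\tau)})^{\alpha(r)}}\,\|g\|_{L^2_{\rho_1}}
\]
for some exponent $\alpha(r)\in(0,1)$. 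Combined with the shrinking-set bound $\|q(s-\sigma)\|_{L^2_{\rho_1}}\leq C(\sigma) A s e^{-2s}$ obtained from \eqref{q L rho priori estimate} (itself a consequence of the decomposition \eqref{decomposition of q} and Definition \ref{Def: shrinking set}), this controls the main term; the integrable singularity $(1-e^{-u})^{-\alpha(r)}$ near $u=0$ bounds the source integral by $C(r,\sigma)e^{-2s}$, which is much smaller than $Ase^{-2s}$.

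Finally, the short initial layer $s\in[s_0,s_0+\sigma]$, where the delay is unavailable, is handled by a direct Duhamel estimate using the hypothesis $\|q(s_0)\|_{L^r_{\rho_1}}\leq C(r)A s_0^2 e^{-2s_0}$ and the exponential smallness of $F$, together with $s\asymp s_0$ and $e^{-2s}\asymp e^{-2s_0}$ on this window, absorbing the factor $e^{\bar C\sigma}$ into $C(r)$. The hard part will be the regularizing estimate itself in the Gaussian-weighted setting: the Mehler kernel becomes singular as $\tau\to s$, and one must verify that the corresponding weighted $L^2\!\to\! L^r$ bound produces an integrable singularity with $\alpha(r)<1$. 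Once that is granted, passage from the free semigroup to $\mathcal{L}+\bar V$ is routine thanks to the uniform $L^\infty$ bound on $\bar V$, and the remainder of the argument is a clean combination of Duhamel's formula, the shrinking-set control of $\|q\|_{L^2_{\rho_1}}$ and the explicit smallness of the source terms.
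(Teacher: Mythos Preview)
Your overall strategy matches the paper's proof almost exactly: linearize by absorbing $Vq+B$ into a bounded effective potential, use Duhamel over a short delay, apply the $L^2_{\rho_1}\to L^r_{\rho_1}$ regularizing effect of $e^{\sigma\mathcal L}$ to the initial piece (bounded via the shrinking-set estimate \eqref{q L rho priori estimate}), and treat the initial layer $s\in[s_0,s_0+\sigma]$ directly from the hypothesis on $\|q(s_0)\|_{L^r_{\rho_1}}$.

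There is one technical point where your expectation is off. The Mehler $L^2_{\rho_1}\to L^r_{\rho_1}$ bound is \emph{not} of the form $C(1-e^{-u})^{-\alpha(r)}$ with $\alpha(r)<1$: item~(i) of Lemma~\ref{lemma: Reg eff d oper} has a hard threshold $u>\log(r-1)$ below which the estimate is simply unavailable (the factor $(1-e^{-u}(r-1))^{-1/(2r)}$ blows up). Two consequences: first, your delay $\sigma$ cannot be taken in $(0,1]$ uniformly in $r$; it must be at least $s^*(r)=\log(r-1)$, as the paper does. Second, and more importantly, you cannot use the $L^2\to L^r$ bound inside the source integral $\int_{s-\sigma}^{s}\cdots$, since for $\tau$ close to $s$ the kernel estimate fails. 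The paper avoids this by bounding the source term with the $L^r\to L^r$ estimate (item~(ii) of Lemma~\ref{lemma: Reg eff d oper}), which has no singularity at all and only requires that $\tilde R$ and $\nabla\tilde R$ have polynomial growth. With this correction your argument goes through and coincides with the paper's.
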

\begin{proof}
See below in Section 7.3.
\end{proof}

\subsection{Reduction to a finite-dimensional problem}

In this part, we show that the control of the infinite problem is reduced to a finite-dimensional one.  Since the definition of  the bootstrap $\mathcal S(s)$ shows two different types of
estimates, in the regions $\mathcal R_1$ and $\mathcal{R}_2$, accordingly, we need two different approaches to
handle those estimates:
\begin{itemize}
\item In  $\mathcal{R}_1$, we work in similarity variables \eqref{eq: self-similar-variables}, in particular we crucially use the projection of equation \eqref{q's equation} with respect to the decomposition given in \eqref{decomposition of q}.
\item  In,  $\mathcal{R}_2$, we directly work in the variables $U(x, t)$, using standard parabolic estimates. For more details see subsection \ref{section_regular_region}.
\end{itemize}


In the following, we restrict ourselves to the blow-up region. Recall that our aim is to suitably choose $q\left(s_0\right)$ so that $q(s) \in \mathcal{V}(s)$ for all $s \geq s_0$. As in the previous section, we assume that $q(s) \in \mathcal{V}(s)$ for all $s \in\left[s_0, s_1\right]$ for some $s_1 \geq s_0$. This time, we will assume in addition that at $s=s_1$, one component of $q\left(s_1\right)$ (as defined in \eqref{decomposition of q}) "touches" the corresponding part of the boundary of its bound defined in Definition \eqref{Def: shrinking set}. We will then derive the position of the flow of that component with respect to the boundary, and see whether it is inward (which leads to a contradiction) or outward. This way, only the components with an outward flow may touch. Hopefully, those components will be in a finite dimensional space, leading the way to the application of a consequence of Brouwer's lemma linked to some fine-tuning in initial data, in order to guarantee that the solution will stay in $\mathcal{V}(s)$ for all $s \geq s_0$. More precisely, this is our statement:

\begin{proposition}[Position of the flow on the boundary of $\mathcal{V}$]\label{Prop:control of q1,q2,q-,qe}
	There exists $A_1 \geq 1$ and $s_4\geq 0$ such that for all $A>A_1$; $s\geq s_0\geq s_4(A)$, if $q(s)\in \mathcal{V}(s)$ is true, then  for any $s'\in [s_0,s]$ the following holds: 
\begin{enumerate}[label=(\roman*)]
		\item If 
	\[
	q_{i}(s_1) \;=\; \theta \,A\, e^{-2s_1}
	\quad\text{with}\quad i < 6\;\text{and}\;\theta = \pm 1,
	\]
	then
	\[
	\theta\, q_{i}'(s_1) 
	\;>\;
	\left.
	\frac{d}{ds} \Bigl( A\,e^{-2s} \Bigr)
	\right|_{s = s_1}.
	\]
	\item If
	\[
	q_{6}(s_1) \;=\; \theta \,A\,s_1^2\,e^{-2s_1}
	\quad \text{with } \theta = \pm 1,
	\]
	then
	\[
	\theta\, q_{6}'(s_1)
	\;<\;
	\left.
	\frac{d}{ds}\Bigl(A\,s_1^2e^{-2 s_1}\Bigr)
	\right|_{s = s_1}.
	\]
		\item If
		\[ \left\|q_{-}\left(s_1\right)\right\|_{L_{\rho_1}^2}=A^2 s_1^2 e^{-3 s_1},\]  then \[ \frac{d}{d s}\left\|q_{-}\left(s_1\right)\right\|_{L_{\rho_1}^2}<\frac{d}{d s} A^2 s^2 e^{-3 s}\mid_{ s=s_1}\]
	\end{enumerate}
\end{proposition}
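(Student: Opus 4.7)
I would deduce Proposition~\ref{Prop:control of q1,q2,q-,qe} directly from the ODEs and differential inequality provided by Proposition~\ref{Prop: Priori estimate}, combined with the remainder estimates $|P_i(R)|\leq Ce^{-2s}$ for $i\leq 6$ and $\|P_-(R)\|_{L_{\rho_1}^2}\leq Ce^{-3s}$. For each of the three cases, at the boundary time $s_1$ I would evaluate $\theta q_i'(s_1)$ (resp.\ $\tfrac{d}{ds}\|q_-\|_{L_{\rho_1}^2}|_{s_1}$) and compare it to the derivative of the corresponding boundary of $\mathcal{V}(s)$. All error contributions will be absorbed by choosing $A$ large first, then $s_0=s_0(A)$ large enough.

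\textbf{Parts (i) and (ii).} For $i<6$, the eigenvalue $\lambda_i = 1-i/2$ of $\mathcal{L}$ satisfies $\lambda_i+2 \geq \tfrac{1}{2}$, so on the boundary $q_i(s_1)=\theta A e^{-2s_1}$ Proposition~\ref{Prop: Priori estimate} yields
\[
\theta q_i'(s_1) \;\geq\; \lambda_i A e^{-2s_1} - C_i A s_1 e^{-3s_1} - C e^{-2s_1} \;>\; -2A e^{-2s_1},
\]
provided $(\lambda_i+2)A > 2C$ and $s_0$ is large enough that $C_i s_1 e^{-s_1} \ll 1$. The borderline mode $i=6$ requires a different treatment because $\lambda_6=-2$ exactly: substituting $q_6(s_1)=\theta A s_1^2 e^{-2s_1}$ I would obtain $\theta q_6'(s_1) \leq -2 A s_1^2 e^{-2s_1} + C_6 A s_1 e^{-3s_1} + C e^{-2s_1}$, while $\tfrac{d}{ds}(A s^2 e^{-2s})\big|_{s_1} = 2 A s_1 e^{-2s_1} - 2 A s_1^2 e^{-2s_1}$. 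The leading $-2 A s_1^2 e^{-2s_1}$ cancels on both sides, and the positive term $2 A s_1 e^{-2s_1}$ coming from the $s^2$ inflation of the bound supplies the gap needed to dominate the error, as soon as $A$ beats $C$ and $s_0$ is large enough.

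\textbf{Part (iii).} For the infinite-dimensional component I would plug $\|q_-(s_1)\|_{L_{\rho_1}^2} = A^2 s_1^2 e^{-3s_1}$ into Proposition~\ref{Prop: Priori estimate}(2) to get
\[
\tfrac{d}{ds}\|q_-\|_{L_{\rho_1}^2}\big|_{s_1} \;\leq\; -3 A^2 s_1^2 e^{-3s_1} + C A s_1 e^{-3s_1} + C e^{-3s_1},
\]
and compare with $\tfrac{d}{ds}(A^2 s^2 e^{-3s})\big|_{s_1} = 2 A^2 s_1 e^{-3s_1} - 3 A^2 s_1^2 e^{-3s_1}$. The quadratic-in-$s$ terms cancel, and the remaining scalar inequality $C A s_1 + C < 2 A^2 s_1$ holds as soon as $A$ is larger than a fixed constant and $s_1 \geq 1$. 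This gives the strict inward flow required.

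\textbf{Main obstacle.} The computation above is essentially bookkeeping once Proposition~\ref{Prop: Priori estimate} is in hand; the genuine difficulty lies upstream, in establishing that proposition — and in particular in producing the effective contraction rate $-3$ for $\|q_-\|_{L_{\rho_1}^2}$, which is strictly sharper than the naive spectral gap $\lambda_7 = -5/2$ of $\mathcal{L}$ alone. That improvement must absorb the potential term $V q$ and the superlinear term $B(y,s,q)$, for which continuity of $q \mapsto |q|^{p-1}q$ in $L^r_{\rho_1}$ fails at $r=2$ when $p<2$. The way around this, as sketched in Section~\ref{control of potential, non linear term}, is to upgrade the $L_{\rho_1}^2$ control of $q$ to an $L_{\rho_1}^r$ control with $r>2$ at the cost of a small time delay $s^*$, via the regularizing estimate of Proposition~\ref{delay regularizing estimate}; this is the technical heart of the argument and is the only nontrivial step needed to make the present proposition rigorous.
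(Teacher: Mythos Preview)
Your approach is correct and matches the paper's: apply Proposition~\ref{Prop: Priori estimate} together with the remainder bounds at the boundary time $s_1$, then compare $\theta q_i'(s_1)$ (resp.\ $\tfrac{d}{ds}\|q_-\|$) to the derivative of the corresponding side of $\mathcal{V}(s)$. The paper writes out items~(i) and~(ii) in exactly this way and leaves~(iii) implicit; your treatment of~(iii) is the natural one.

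Your ``main obstacle'' paragraph is also on target, and in fact sharper than the paper. The contraction rate $-3$ in Proposition~\ref{Prop: Priori estimate}(2) is indeed strictly better than the spectral gap $\lambda_7=1-\tfrac{7}{2}=-\tfrac{5}{2}$ of $\mathcal{L}$ on $E_-=\operatorname{span}\{h_i:i\geq 7\}$. The paper's derivation of that item simply asserts ``the highest eigenvalue of $\mathcal{L}$ on $E_-$ is $\lambda=-3$'', which appears to be a slip; with the correct gap $-\tfrac{5}{2}$, the comparison in item~(iii) against the bound $A^2 s^2 e^{-3s}$ would fail for large $s_1$, since the leading terms give $\tfrac{1}{2}A^2 s_1^2$ on the wrong side. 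So your caution about where the real difficulty lies --- and whether Proposition~\ref{Prop: Priori estimate}(2) is actually available with rate $-3$ --- is well placed.
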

\begin{proof}
	See Section \ref{Proof of control q1 to qe} below
	\end{proof}  

\medskip

Consequently, we have the following result
\begin{corollary} \label{control of q by finite emts}($q_6$ and $q_{-}$never quit). Following Proposition \ref{Prop:control of q1,q2,q-,qe} and assuming that $s_1>s_0$, it holds that $\left|q_{6}\left(s_1\right)\right|<A^2 s_1^2 e^{-2 s_1}$ and $\left\|q_{-}\left(s_1\right)\right\|_{L_{\rho_1}^2}<A^2 s_1^2 e^{-3 s_1}$.\end{corollary}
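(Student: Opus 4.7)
The plan is to derive the corollary as a direct consequence of the strict inward-flow inequalities in parts (ii) and (iii) of Proposition \ref{Prop:control of q1,q2,q-,qe}, by a classical barrier argument. I would argue by contradiction at the critical time $s_1$.

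For the $q_6$ component, suppose for contradiction that $|q_{6}(s_1)| = As_1^2 e^{-2s_1}$, so that $q_{6}(s_1) = \theta A s_1^2 e^{-2s_1}$ for some $\theta \in \{-1,+1\}$. Since $q(s) \in \mathcal{V}(s)$ on the whole interval $[s_0, s_1]$, the scalar function
\[
\Phi(s) := A s^2 e^{-2s} - \theta\, q_{6}(s)
\]
is nonnegative on $[s_0, s_1]$ and vanishes at $s = s_1$. Because $s_1 > s_0$, the left one-sided derivative at $s_1$ must satisfy $\Phi'(s_1^-) \leq 0$, i.e.
\[
\theta\, q_{6}'(s_1) \;\geq\; \left.\frac{d}{ds}\bigl(A s^2 e^{-2s}\bigr)\right|_{s=s_1}.
\]
This directly contradicts part (ii) of Proposition \ref{Prop:control of q1,q2,q-,qe}, which gives the strict reverse inequality. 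Hence $|q_{6}(s_1)| < A s_1^2 e^{-2s_1}$.

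For $q_-$ the argument is identical in spirit: set
\[
\Psi(s) := A^2 s^2 e^{-3 s} - \|q_-(s)\|_{L^2_{\rho_1}},
\]
which is nonnegative on $[s_0, s_1]$ by the definition of $\mathcal{V}(s)$. Assume for contradiction that $\Psi(s_1) = 0$. Then $\Psi'(s_1^-) \leq 0$ yields
\[
\left.\frac{d}{ds}\|q_-(s)\|_{L^2_{\rho_1}}\right|_{s=s_1} \;\geq\; \left.\frac{d}{ds}\bigl(A^2 s^2 e^{-3 s}\bigr)\right|_{s=s_1},
\]
directly contradicting part (iii) of Proposition \ref{Prop:control of q1,q2,q-,qe}. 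The desired strict inequality follows.

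No substantive obstacle remains at the level of the corollary, since all of the analytic work (projection of equation \eqref{q's equation}, treatment of the potential $Vq$ and of the superlinear term $B$ via the delay-regularizing estimate of Proposition \ref{delay regularizing estimate}, and extraction of the signs of the projected derivatives) has been absorbed into Proposition \ref{Prop:control of q1,q2,q-,qe}. The only minor technicality is the one-sided differentiability of $\|q_-(\cdot)\|_{L^2_{\rho_1}}$ at $s_1$, which is guaranteed by the parabolic regularity of $q$ and the differential inequality for $\|q_-\|_{L^2_{\rho_1}}$ established in the second item of Proposition \ref{Prop: Priori estimate}.
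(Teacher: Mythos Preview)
Your proof is correct and follows essentially the same contradiction argument as the paper: assume equality at $s_1$, use the fact that the barrier function is nonnegative on $[s_0,s_1]$ and vanishes at $s_1>s_0$ to deduce the derivative inequality, and contradict items (ii)--(iii) of Proposition~\ref{Prop:control of q1,q2,q-,qe}. Your formulation via the auxiliary functions $\Phi$ and $\Psi$ is slightly more explicit than the paper's, and you correctly use the bound $As_1^2e^{-2s_1}$ for $q_6$ from Definition~\ref{Def: shrinking set} and Proposition~\ref{Prop:control of q1,q2,q-,qe} (the $A^2$ in the corollary's statement for $q_6$ appears to be a typo in the paper).
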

\begin{proof}
We will only prove the estimate on $q_{-}$, since the other follows in the same way. Proceeding by contradiction, we assume that $\left\|q_{-}\left(s_1\right)\right\|_{L_{\rho_1}^2} \geq A^2 s_1^2 e^{-3 s_1}$. Since $q(s) \in$ $\mathcal{V}(s)$ for all $s \in\left[s_0, s_1\right]$ by hypothesis, it follows that

$$
\forall s \in\left(s_0, s_1\right], \quad\left\|q_{-}(s)\right\|_{L_{\rho_1}^2} \leq A^2 s^2 e^{-3 s} \text { and }\left\|q_{-}\left(s_1\right)\right\|_{L_{\rho_1}^2}=A^2 s_1^2 e^{-3 s_1}
$$

In particular, this translates into the following estimate between the derivatives of both curves:

$$
\frac{d}{d s}\left\|q_{-}\left(s_1\right)\right\|_{L_{\rho_1}^2} \geq \left.\frac{d}{d s} A^2 s^2 e^{-3 s}\right|_{s=s_1}
$$

which is a contradiction by item (iii) of Proposition \ref{Prop:control of q1,q2,q-,qe}
\end{proof}

\subsection{Choice of the initial data}
In this part, we aim to give a suitable family of initial data for our problem. For $T>0$, consider $A\geq 1$ and $0<\vep_0\leq 1$ to be chosen respectively large and small enough later. Given $s_0 \geq 0$ we consider initial data for the equation \eqref{eq: NLH introduction} depending on parameters $(d_0, d_1, d_2, d_3, d_4, d_5)\in \mathbb{R}^6$,  
defined for all $y\in \Rbb$ by:

\begin{equation}\label{eq: initial data}
	\begin{aligned}
	w_{1}(y,s_0,d_0,d_1,d_2,d_3,d_4,d_5)&=  \chi\left(\frac{1+ye^{-s_0/2}}{\vep_0}\right)\left\{ \frac{E}{D}+\frac{p-1}{\kappa D^2}Ae^{-2s_0}S(y)\right\}^{\frac{1}{p-1}},\\
	\end{aligned}
\end{equation}
where $E$ and $D$ are defined in \eqref{eq: profile form}, $\chi$ is defined in \eqref{eq: def Chi} and 
\begin{equation}\label{def S y}
	S(y)=d_0+d_1h_1(y)+d_2h_2(y)+d_3h_3(y)+d_4h_4(y)+d_5h_5(y)
\end{equation}

In the following, we exhibit a set for the parameters so that $q(s_0)$ is well-defined, $q(s_0) \in \mathcal{V}(s_0)$ with other smallness and decay properties, inherited from the profile $\varphi(y,s_0)$
given in \eqref{eq: profile form}. More precisely, this is our statement:
\begin{proposition} \label{prop: initialization} (Initialization). For any $A \geq 1$, and $\vep_0\in (0,1)$, there exists $s_{5}(A) \geq 1$ such that for all $s_0 \geq s_{5}(A)$, there exists a set $\mathcal{D}\left(A, s_0\right) \subset[-2,2]^6$ such that for all parameter $d \equiv\left(d_{0},d_{1},d_{2},d_3,d_{4},d_5\right) \in \mathcal{D}$, we have the following 2 properties:
	\begin{enumerate}
\item  $E+\frac{p-1}{\kappa D} A e^{-2 s_0} S(y)\geq \frac{1}{4}$, hence $u_0$ in \eqref{eq: initial data} is well-defined. 
\item $q\left(s_0\right) \in \mathcal{V}(s)$ introduced in Definition \ref{Def: shrinking set}, estimate \eqref{eq: priori estimate q}  holds true,\\$\left\|w_{1}\left(s_0\right)\right\|_{L^{\infty}} \leq
	\kappa+C e^{-\frac{s_0}{3}} $ and $\left\|\partial_y w_{1}\left(s_0\right)\right\|_{I_{\infty}}\leq C e^{-\frac{s_0}{6}}.$
	Moreover, the function 
\begin{equation}
\begin{aligned}\label{d one to one function}
	\mathcal{D} & \longrightarrow\left[-A e^{-2 s_0}, A e^{-2 s_0}\right]^6, \\
	d & \longmapsto\left(q_{0}\left(s_0\right), q_1(s_0),q_2(s_0), q_{3}\left(s_0\right), q_{4}\left(s_0\right),q_5(s_0)\right)
\end{aligned}
\end{equation}
is one-to-one.

\end{enumerate}
\end{proposition}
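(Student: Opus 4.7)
The plan is to compute $q(y, s_0) = \tilde w(y, s_0) - \varphi(y, s_0)$ directly from the explicit formulas \eqref{eq: profile form} and \eqref{eq: initial data}, read off its projections on $h_0, \dots, h_5$, and then set up the map $d \mapsto (q_0(s_0), \dots, q_5(s_0))$ as a small perturbation of a linear isomorphism. For item (1), we use $|S(y)| \leq C(1 + |y|^5)$ for $|d_i| \leq 2$, together with $E \geq 1/2$ and $D \geq p-1$ from Lemma \ref{lemma: good properties of phi}(i). A case split according to the size of $|y|$ (moderate, and very large relative to $e^{s_0/2}$) shows that the perturbation $\frac{p-1}{\kappa D} A e^{-2 s_0} S(y)$ added to $E$ is absorbed by $E$: in the moderate regime by direct polynomial bounds on $D^{-1}$, and in the large-$|y|$ regime by the growth $E \sim y^2 e^{-s_0}$ while $|\text{perturbation}| \lesssim A|y| e^{-s_0}$. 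In each region, $E + \frac{p-1}{\kappa D} A e^{-2 s_0} S(y) \geq 1/4$ holds for $s_0$ large enough depending on $A$ and $\vep_0$.

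For item (2), on the core region $\{\chi \equiv 1\}$ we factor
\begin{equation*}
\tilde w(y, s_0) - \varphi(y, s_0) = \varphi(y, s_0)\left[\left(1 + \frac{(p-1)\, A e^{-2 s_0}\, S(y)}{\kappa\, D\, E}\right)^{\frac{1}{p-1}} - 1\right],
\end{equation*}
and Taylor expand in the small parameter $A e^{-2 s_0}$ to obtain
\begin{equation*}
q(y, s_0) = \frac{A e^{-2 s_0}}{\kappa}\,\frac{\varphi\, S(y)}{D\, E} + O\!\left(A^2 e^{-4 s_0}\,\frac{S(y)^2}{D^2 E^2}\right).
\end{equation*}
Since $\varphi \to \kappa$, $E \to 1$ and $D \to p-1$ pointwise as $s_0 \to \infty$ on any Gaussian-integrable set, the prefactor $\frac{\varphi}{\kappa D E}$ equals $\frac{1}{p-1} + O(e^{-s_0})$ in the $L^2_{\rho_1}$ sense. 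Together with the orthogonality $\int h_i h_j\, \rho_1\, dy = 2^i\, i!\, \delta_{ij}$, this yields
\begin{equation*}
q_i(s_0) = c_i\, A\, e^{-2 s_0}\, d_i + O(A e^{-3 s_0}),\quad i = 0, \dots, 5,
\end{equation*}
with explicit nonzero constants $c_i$ each close to $\frac{1}{p-1}$. Because $S$ lies in $\operatorname{span}\{h_0, \dots, h_5\}$, the dominant linear contribution is orthogonal to $h_6$ and to the range of $P_-$, so the remaining $O(e^{-s_0})$ correction gives $|q_6(s_0)| \leq C A e^{-3 s_0} \ll A s_0^2 e^{-2 s_0}$ and $\|q_-(s_0)\|_{L^2_{\rho_1}} \leq C A e^{-3 s_0} \ll A^2 s_0^2 e^{-3 s_0}$, comfortably within $\mathcal V(s_0)$. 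The $L^\infty$ bounds on $w_1(s_0)$ and $\partial_y w_1(s_0)$ and the $L^r_{\rho_1}$ estimate in \eqref{eq: priori estimate q} follow from the explicit form of $w_1$ combined with Lemma \ref{lemma: good properties of phi}(ii)--(iii); the $\mathcal R_2$ estimate holds because $j \equiv 0$ on $|x| \leq \vep_0/8$, while on $[\vep_0/8, \vep_0/4]$ an explicit computation of $|u(r, 0)| = T^{-\frac{1}{p-1}}\chi\,(E/D)^{1/(p-1)}(1 + O(A e^{-2 s_0}))$ gives a bound depending only on $\vep_0$ and $p$ that can be made $\leq \eta_0$ by choosing $\vep_0$ sufficiently small.

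To establish the one-to-one property, we introduce the rescaled map
\begin{equation*}
\Phi_{s_0}:\, d \longmapsto A^{-1} e^{2 s_0}\bigl(q_0(s_0), \dots, q_5(s_0)\bigr),
\end{equation*}
which by the expansion above satisfies $\Phi_{s_0}(d) = (c_0 d_0, \dots, c_5 d_5) + o_{s_0 \to \infty}(1)$ uniformly for $d \in [-2, 2]^6$. Since the leading term is an invertible linear map, Brouwer's invariance of domain (or the inverse function theorem with uniform bounds on the perturbation) shows that for $s_0$ large enough, $\Phi_{s_0}$ restricts to a homeomorphism from some $\mathcal D \subset [-2, 2]^6$ onto $[-1, 1]^6$, corresponding under the rescaling to $[-A e^{-2 s_0}, A e^{-2 s_0}]^6$ in the original coordinates. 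The main technical obstacle is making all the $o(1)$ error estimates rigorously uniform in $d \in [-2, 2]^6$ and in $y$ across the whole support of $\chi$: in the transition zone $y \approx -e^{s_0/2}$, polynomial factors in $y$ are enormous but are tamed by the Gaussian weight $\rho_1 \sim e^{-y^2/4}$, which forces a careful splitting of the $L^2_{\rho_1}$ integrals and a separate treatment of the region where $\chi$ is non-constant.
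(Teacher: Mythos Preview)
Your proposal is correct and follows essentially the same approach as the paper: linearize the initial data around $\varphi$ to obtain $q(y,s_0)=cAe^{-2s_0}S(y)+O(Ae^{-3s_0})$ for a nonzero constant $c$, read off the projections, and conclude the one-to-one property from the fact that the map is a small perturbation of a diagonal linear isomorphism. You are in fact more explicit than the paper in two places (the case split for item (1), and the topological justification of the bijection via a degree/inverse-function argument), whereas the paper simply asserts these steps; your discussion of the $\mathcal R_2$ estimate is unnecessary here since the statement only requires $q(s_0)\in\mathcal V(s_0)$.
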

\begin{proof}
See the proof in Section \ref{Details of initialization}
\end{proof}
\subsection{Control of the solution in the bootstrap regime and proof of Theorem \ref{th: Main theorem}}
We prove Theorem \ref{th: Main theorem} using the previous results. We proceed in four parts:\\
\begin{itemize}
    \item In the first part, we show by a topological argument the existence of $q(y,s)$ trapped in $\mathcal{V}$ defined in Definition \ref{Def: shrinking set};
    \item In the second part, we will show that all the blow-up point is collected by the sphere of radius $r_0$;
    \item In the third part, we will prove the assymptotic behavior described in item 2 of Theorem \ref{th: Main theorem};
    \item In the last part, we will derive the profile of our blow-up solution.
\end{itemize}
\paragraph{Part 1: Existence of $q(y,s)$ trapped in $\mathcal{V}$}\;\\
Let $A$, and $T(=e^{-s_0})$ be chosen so that Proposition \ref{control of q by finite emts}  and Proposition \ref{Prop:control of q1,q2,q-,qe} are valid, We will find the parameters $(d_0,d_1,d_2,d_3,d_4,d_5)\in \mathcal{D}$ defined in Proposition \ref{prop: initialization} and advance by assuming that for all $(d_0,d_1,d_2,d_3,d_4,d_5)\in \mathcal{D}$, there exists $s_*(d_0,d_1,d_2,d_3,d_4,d_5) \geq -\log T$ such that $q_{d_0,d_1,d_2,d_3,d_4,d_5}(s) \in\mathcal V(s)$ for all $s \in [-\log T, s_*]$ and $q_{d_0,d_1,d_2,d_3,d_4,d_5}(s_*) \in \partial\mathcal V(s_*)$. From (i) of Proposition \ref{Prop:control of q1,q2,q-,qe}, we see that $$(\tilde{q}_0(s_*), \tilde{q}_1(s_*),\tilde{q}_2(s_*),\tilde{q}_3(s_*),\tilde{q}_4(s_*),\tilde{q}_5(s_*)) \in \partial[-Ae^{-2s_*},Ae^{-2s_*}]^6$$ and the following function is well-defined:

\begin{equation}\label{definition of Phi}
	\begin{aligned}
		\Phi: [-1,1]^6&\rightarrow\partial[-1, 1]^6\\
		(d_0,d_1,d_2,d_3,d_4,d_5)&\rightarrow \frac{e^{-2s_*}}{A}(\tilde{q}_0, \tilde{q}_1,\tilde{q}_2,\tilde{q}_3,\tilde{q}_4,\tilde{q}_5)_{d_0,d_1,d_2,d_3,d_4,d_5}(s_*).
	\end{aligned}
\end{equation}
Again by (i) of Proposition \ref{Prop:control of q1,q2,q-,qe}, we acquire the continuity of this function. If we manage to show that $\Phi$ is of degree 1 on the boundary, then we have a contradiction from the degree theory. We now focus on proving that.

Using the fact that $q(-\log T) = \psi_{d_0,d_1,d_2,d_3,d_4,d_5}$, we see that when\\ $(d_0,d_1,d_2,d_3,d_4,d_5)$ is on the boundary of the quadrilateral $\mathcal{D}$, we have
\[
\begin{aligned}
	(\tilde{q}_0, \tilde{q}_1,\tilde{q}_2,\tilde{q}_3,\tilde{q}_4,\tilde{q}_5)(-\log T) 
	&\in \partial\bigl[-Ae^{2\log T}, Ae^{-2\log T}\bigr]^6,\\[6pt]
	q(-\log T) &\in \mathcal V_A(-\log T)
\end{aligned}
\]
with strict inequalities for the other components.

 Applying the transverse crossing property (i.e. (i) of Proposition \ref{Prop:control of q1,q2,q-,qe}), we see that $q(s)$ leaves $\mathcal V(s)$ at $s = -\log T$, hence $s_*(d_0,d_1,d_2,d_3,d_4,d_5) = -\log T$. Using \eqref{definition of Phi}, we see that the restriction of $\Phi$ to the boundary is of degree 1. A contradiction then follows. Thus, there exists a value $(d_0,d_1,d_2,d_3,d_4,d_5) \in \mathcal{D}$ such that for all $s \geq -\log T$, $q_{d_0,d_1,d_2,d_3,d_4,d_5}(s) \in \mathcal V (s)$.\\ 
\paragraph{Part 2: Proof of 1 in Theorem \ref{th: Main theorem}}\;\\
It remains to prove that any $x$, with $|x|=r\neq r_{max}(=1)$ is not a blow-up point.  Since we know from item (ii) in Definition \ref{Def: shrinking set} that if $r \leq \frac{3\vep_0}{4}$, and $0 \leq t \leq T$, $|u(r, t)| \leq \eta_0$, it follows that $r$ is not a blow-up point, provided $r \leq \frac{3\vep_0}{4}$.

Now, if $r \geq \frac{3\vep_0}{4}$, from the similarity variables definition \eqref{eq: second group ssv}, $U$ is indeed a solution of equation \eqref{eq: NLH introduction} defined for all $(x, t) \in \mathbb{R}^d \times[0, T)$. From the classical theory by Giga and Kohn \cite{GKcpam85}, we know that when $s \rightarrow \infty, W_{x_0}(s) \rightarrow \kappa$ when $x_0$ is a blow-up point, and $W_{x_0}(s) \rightarrow 0$ if not, in $L_{\rho_d}^2$. Therefore, it's enough to prove that $W_{\mathbf{e}_1}(s) \rightarrow \kappa$ and $W_{r\mathbf{e}_1}(s) \rightarrow 0$ as $s \rightarrow \infty$ in order to conclude.
First, since $q(s) \in \mathcal{V}(s)$ for all $s \geq s_0$ from Part 1, we see from \eqref{q L rho priori estimate}, the relations \eqref{eq: w decompose-in-terms-q-phi}, \eqref{eq: self-similar-variables},\eqref{eq: second group ssv}, \eqref{eq: transform between dd to 1d}, and the definition \eqref{eq: profile form} of $\varphi$ that $W_{\mathbf{e}_1}(s) \rightarrow \kappa$ as $s \rightarrow \infty$.

Second, if $r \neq 1$, we see that for any $s_1 \geq 2 \log \frac{A}{\left|r-1\right|}$, \eqref{x0ITR3F} in below holds and the following proposition holds here:
\begin{proposition}
There exist $A_{7} \geq 1$ such that for all $A \geq A_{7}$, there exists $s_{17}(A) \geq 1$ such that for all $s_0 \geq s_{17}(A), d \in \mathcal{D}\left(A, s_0\right)$ defined in Proposition \ref{prop: initialization} and $s_1 \geq s_0$, the following holds: Assume that $W_{x_0}$ is the solution of equation \eqref{eq: W's equation} with initial data $w_1\left(y, s_0\right)$ defined in \eqref{eq: initial data}, such that for all $s \in\left[s_0, s_1\right], q(s) \in \mathcal{V}(s)$ given in Definition \eqref{Def: shrinking set}, where $q(s)$ is defined in \eqref{eq: w decompose-in-terms-q-phi}. Then:
\begin{enumerate}[label=(\roman*)]
\item For all $s \in\left[s_0, s_1\right],\left\|w_{1}(s)\right\|_{L^{\infty}}<2 \kappa$.
\item For all $x_0 \in \mathbb{R}^d$ such that

\begin{equation}
||x_0|-1| \geq A e^{-\frac{s_1}{2}}
\end{equation}
there exist $\bar{s}(x_0) \geq s_0$ and $\bar{M}(x_0) \geq 0$ such that if $\bar{s}(x_0) \geq s_1$, then for all $s \in\left[\bar{s}(x_0), s_1\right]$, $\left\|W_{x_0}(s)\right\|_{L_{\rho_d}^2} \leq \bar{M}(x_0) e^{-\frac{s}{p-1}}$.
\end{enumerate}
\end{proposition}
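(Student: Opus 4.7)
The plan is to exploit the decomposition $\tilde w=\varphi+q$ together with the change of variables
\begin{equation*}
W_{x_0}(Y,s)=w_{1}\bigl((|x_0+Ye^{-s/2}|-1)e^{s/2},s\bigr),
\end{equation*}
which follows directly from definitions~\eqref{eq: self-similar-variables} and~\eqref{eq: second group ssv} (under $r_0=1$), and to combine the shrinking-set information with the explicit decay of $\varphi$ from Lemma~\ref{lemma: good properties of phi}.

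For item (i), the membership $q(s)\in\mathcal{V}(s)$ and part (i) of Definition~\ref{Def: shrinking set} give $\|\tilde w(s)\|_{L^\infty}\leq 2\kappa$; to upgrade this to a strict inequality on $w_1$ (rather than $\tilde w$) I would split space. In the inner region $\{|x|\leq 3\vep_0/4\}$, part (ii) of $\mathcal{V}(s)$ yields $|u|\leq\eta_0$, hence $|w_1|\leq\eta_0 e^{-s/(p-1)}\ll\kappa$ for $s_0$ large. In the blow-up region, $\chi\equiv 1$, so $w_1=\varphi+q$, and Lemma~\ref{lemma: good properties of phi}(ii) combined with an $L^\infty$ upgrade of the bounds in $\mathcal{V}(s)$ (through Proposition~\ref{delay regularizing estimate} and a classical parabolic regularization for~\eqref{q's equation}, using the controls~\eqref{Potential control} and~\eqref{eq: B super linearity}) gives $\|w_1(s)\|_{L^\infty}\leq \kappa+C_0 e^{-s/3}+C(A)se^{-s}<2\kappa$ for $s_0$ large enough.

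For item (ii), I would define $\bar s(x_0)$ by $\big||x_0|-1\big|\,e^{\bar s(x_0)/2}=2A$ (truncated below at $s_0$); the hypothesis $\big||x_0|-1\big|\geq A e^{-s_1/2}$ then forces $\bar s(x_0)\leq s_1+2\log 2$, so that the interval $[\bar s(x_0),s_1]$ is non-empty (I read the ``$\bar s(x_0)\geq s_1$'' in the statement as a typo for ``$\bar s(x_0)\leq s_1$''). For $s$ in that interval and $|Y|\leq A$, the triangle inequality places the $y$-argument of $w_1$ in the zone $|y|\geq \big||x_0|-1\big|e^{s/2}/2\geq A$; there, the explicit asymptotic $\varphi(y,s)\sim (12/((p-1)y^2))^{1/(p-1)}$ (arising because $P(y)$ is quadratic, cf.~\eqref{eq: estimation P(y)}) together with a pointwise control on $q$ yields
\begin{equation*}
|w_1(y,s)|\leq C\,\big||x_0|-1\big|^{-2/(p-1)}\,e^{-s/(p-1)}.
\end{equation*}
Splitting $\|W_{x_0}(s)\|_{L^2_{\rho_d}}^2$ into the regions $\{|Y|\leq A\}$ and $\{|Y|>A\}$, the first piece is controlled by the bound above while the Gaussian tail is controlled by the $L^\infty$ bound from (i) together with $\int_{|Y|>A}\rho_d\leq Ce^{-A^2/8}$; both contributions are absorbed into $\bar M(x_0)^2\,e^{-2s/(p-1)}$ with $\bar M(x_0)=C\,\big||x_0|-1\big|^{-2/(p-1)}$, and the conclusion follows after taking a square root.

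The principal obstacle is converting the weighted $L^2_{\rho_1}$-smallness of $q$ guaranteed by $\mathcal{V}(s)$ into genuine \emph{pointwise} smallness in the far zone $|y|\geq A$, since the rapid Gaussian decay of $\rho_1$ makes that norm uninformative for $|y|\gg 1$. I would remedy this by propagating a local $L^\infty$ bound for $w_1$ via parabolic regularity applied to~\eqref{eq:MEFWT}, exploiting the fact that, once the cut-off~\eqref{eq: def Chi} is in place, the extra drift $e^{-s/2}(d-1)/(ye^{-s/2}+1)\,\chi(\cdot)$ is a bounded perturbation of the standard self-similar operator.
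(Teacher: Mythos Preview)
Your proposal has a genuine gap, and the remedy you sketch does not close it.

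The obstacle you name yourself is the real one: the shrinking set $\mathcal{V}(s)$ controls $q$ only in $L^2_{\rho_1}$, and since $\rho_1(y)\sim e^{-y^2/4}$, this gives no pointwise information whatsoever in the region $|y|\gg 1$. Your remedy is to ``propagate a local $L^\infty$ bound for $w_1$ via parabolic regularity applied to~\eqref{eq:MEFWT}''. But equation~\eqref{eq:MEFWT} contains the genuinely nonlinear term $|\tilde w|^{p-1}\tilde w$ and the potential $V$, which is only bounded by $p/(p-1)$ in $L^\infty$ (see~\eqref{Potential control}). Standard parabolic regularity would at best return an $L^\infty$ bound of the order of the one you already have from the shrinking set, namely $2\kappa$; it cannot produce the strict improvement $<2\kappa$ you need, nor the \emph{decay} in $|y|$ required for part (ii). Concretely, for large $|y|$ all you can say a priori is $|q(y,s)|\leq |\tilde w|+|\varphi|\leq 2\kappa+\kappa$, and no local smoothing argument will beat that without an additional mechanism.

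The paper's approach is entirely different and is the point of the whole of Section~\ref{Proof of the priori bound}. Rather than trying to control $q(y,s)$ pointwise, it exploits the identity $w_1(y,s)=W_{x_0}(0,s)$ with $|x_0|-1=ye^{-s/2}$, so that bounding $\|w_1(s)\|_{L^\infty}$ is equivalent to bounding $|W_{x_0}(0,s)|$ uniformly in $x_0$. A Liouville theorem (Proposition~\ref{Proposition:liouville thm}) gives $\|\nabla W_{x_0}(s)\|_{L^\infty}\leq\delta_0$, which via Claim~\ref{L infty estimation control by L rho} reduces the task to bounding $\|W_{x_0}(s)\|_{L^2_{\rho_d}}$. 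Crucially, $W_{x_0}$ solves the \emph{autonomous} equation~\eqref{eq: W's equation}, and one then classifies $x_0$ into three regions $\tilde{\mathcal R}_1,\tilde{\mathcal R}_2,\tilde{\mathcal R}_3$ according to the size of $G_1(x_0)=\frac{p-1}{\kappa}(|x_0|-1)^4$. In $\tilde{\mathcal R}_1$ and $\tilde{\mathcal R}_2$ the initial datum $W_{x_0}(\cdot,s_0)$ is close (in $L^2_{\rho_d}$) to $0$ or to the heteroclinic orbit $\psi$ of~\eqref{eq: heteroclinic orbit}, and stability of these solutions (Propositions~\ref{prop: control of w in R1} and~\ref{prop: stability R2}) gives both the strict bound for (i) and the exponential decay for (ii). In $\tilde{\mathcal R}_3$, $W_{x_0}(\cdot,s_0)$ is near the unstable equilibrium $\kappa$, and one needs a refined expansion (Lemmas~\ref{Initialvalue expansion for large x_0} and~\ref{Ex. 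W Y sigma}) followed by integration of the projected dynamics (Lemma~\ref{lemma decreaing from k to k-eta}) to show the solution eventually falls into a neighbourhood of $\psi$; this is where the subdivision $|K|\geq A$ versus $|K|\leq A$ and the auxiliary time $\sigma$ enter. The constants $\bar s(x_0),\bar M(x_0)$ in (ii) arise from the time at which trapping near $\psi$ (or $0$) is achieved and from the corresponding stability constants, not from an explicit formula in $||x_0|-1|$ as you propose.
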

\begin{proof}
See Proposition \ref{Control of W_{x_0} for all x_0} below.
\end{proof}
Applying its item (ii), we see that for some $\bar{s}(x_0) \geq s_0$ and $\bar{M}(x_0) \geq 0$, for any $s_1 \geq \max \left(2 \log \frac{A}{\left|r-1\right|}, \bar{s}(x_0)\right)$, for any $s \in\left[\bar{s}(x_0), s_1\right],
\left\|W_{x_0}(s)\right\|_{L_\rho^2} \leq \bar{M}(x_0) e^{-\frac{s}{p-1}}$. Making $s_1 \rightarrow \infty$, we see that $W_{x_0}(s) \rightarrow 0$. This concludes the proof of 1 in Theorem \ref{th: Main theorem}.

\paragraph{Part 3: Proof of 2 in Theorem \ref{th: Main theorem}}\;\\
Consider the solution constructed in Part 1, such that $q(s)\in \mathcal V(s)$.
Then by Definition \ref{Def: shrinking set}, we see that
\[
 \quad \forall s \geq -\log T, \quad \|q(y, s)\|_{L^2_{\rho_1}} \leq CAse^{-2s}.
\]
Choose $\vep_{0}$ sufficiently small, we have that all
$x\in \Lambda_C = \left\{||x|-r_0|\leq C(T-t)^{\frac{1}{4}}\right\}$, verifies $|x| \geq \frac{ \varepsilon_0}{4}$.
Then, upon \eqref{eq: w decompose-in-terms-q-phi} and \eqref{eq: self-similar-variables}, we see that
\begin{equation}\label{DBTWVP}
\forall t \in [0,T),\; x\in \Lambda_C, \quad \left| \tilde{w} - \varphi(y,s) \right|\leq CAse^{-2s}.
\end{equation}
Then apply control in regular region from Section \ref{section_regular_region} in this paper, we know that $u(|x|,t)\in S(t)$. Now, the Definition \ref{Def: shrinking set} of $S(t)$ and \eqref{eq: self-similar-variables} indicates that 

\begin{equation}\label{DBuTw}
\forall t \in [0,T),\; x\in \Lambda_C, \quad \left| (T-t)^{\frac{1}{p-1}}u - \tilde{w} \right|\leq \eta_0 e^{-\frac{s}{p-1}}.
\end{equation}
Note that by \eqref{eq: profile form}, \eqref{profile expansion} and \eqref{eq:TVP}, we have
\begin{equation}\label{DBfVP}
\forall t \in [0,T),\; x\in \Lambda_C, \quad \left|f\left(\frac{|x|-r_0}{(T-t)^{1/4}}\right)-\varphi\right|\leq Ce^{-s}.
\end{equation}
Now, the relation \eqref{eq: transform between dd to 1d}, together with \eqref{DBTWVP}, \eqref{DBuTw} and \eqref{DBfV} concludes the proof of item 2 of Theorem \ref{th: Main theorem}.

\paragraph{Part 4: Proof of 3 in Theorem \ref{th: Main theorem}}
This result is in fact a consequence of the following ODE localization property of the PDE,  and which is a direct consequence of the Liouville Theorem stated in Proposition \ref{Proposition:liouville thm}:

\begin{proposition}[ODE localization for $u(x, t)$ ].\label{ODE localization} For any $\epsilon>0$, there exists $C_\epsilon>0$ such that for all $(x, t) \in \mathbb{R}^d \times[0, T)$,

$$
(1-\epsilon) u(x, t)^p-C_\epsilon \leq \partial_t u(x, t) \leq(1+\epsilon) u(x, t)^p+C_\epsilon
$$
\end{proposition}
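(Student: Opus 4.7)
We argue by contradiction, combining the intrinsic zoom-in rescaling around a would-be violating sequence with the Liouville Theorem (Proposition \ref{Proposition:liouville thm}). We detail the upper bound on the set $\{u>0\}$; the case $\{u<0\}$ and the lower bound follow analogously, invoking the symmetry $U\mapsto -U$ of equation \eqref{eq: NLH introduction} which exchanges the two inequalities.

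Suppose the upper bound fails. Then there exist $\epsilon_0>0$ and a sequence $(x_n,t_n)\in\mathbb{R}^d\times[0,T)$ with $u(x_n,t_n)>0$ and $\partial_t u(x_n,t_n)>(1+\epsilon_0)u(x_n,t_n)^p+n$ for every $n\geq 1$. If $u(x_n,t_n)$ stayed bounded along a subsequence while $t_n\leq T-\delta$ for some $\delta>0$, interior parabolic regularity on a fixed cylinder around $(x_n,t_n)$ would force $\partial_t u(x_n,t_n)$ to be bounded, a contradiction. Hence $u(x_n,t_n)\to+\infty$, which (by local boundedness of $u$ away from $T$) forces $t_n\to T$. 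Introduce the intrinsic rescaling
\begin{equation*}
\lambda_n:=u(x_n,t_n)^{-(p-1)/2}\to 0,\qquad v_n(Y,\tau):=\lambda_n^{2/(p-1)}\,U\bigl(x_n+\lambda_n Y,\;t_n+\lambda_n^2\tau\bigr),
\end{equation*}
so that each $v_n$ solves equation \eqref{eq: NLH introduction} on $\mathbb{R}^d\times[-t_n/\lambda_n^2,\,T_n^*)$ with $v_n(0,0)=1$, where $T_n^*:=(T-t_n)\,u(x_n,t_n)^{p-1}$.

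The a priori bound \eqref{eqabU} yields both $T_n^*\leq M^{p-1}$ and $|v_n(Y,\tau)|\leq M(T_n^*-\tau)^{-1/(p-1)}$; evaluating the latter at $(0,0)$ gives $T_n^*\geq M^{-(p-1)}$, so up to extraction $T_n^*\to T_\infty^*\in[M^{-(p-1)},M^{p-1}]$. By parabolic regularity, $v_n\to v_\infty$ in $C^{2,1}_{\mathrm{loc}}(\mathbb{R}^d\times(-\infty,T_\infty^*))$, with $v_\infty$ a bounded solution of equation \eqref{eq: NLH introduction} satisfying $v_\infty(0,0)=1$. Applying the Liouville Theorem (Proposition \ref{Proposition:liouville thm}) excludes the zero limit and identifies $v_\infty(Y,\tau)=\kappa(T_\infty^*-\tau)^{-1/(p-1)}$; the normalization $v_\infty(0,0)=1$ then fixes $T_\infty^*=\kappa^{p-1}=1/(p-1)$. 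Since $v_\infty$ is space-independent, it satisfies the ODE $\partial_\tau v_\infty=v_\infty^p$ everywhere, and in particular $\partial_\tau v_\infty(0,0)=1$. On the other hand, from $\partial_\tau v_n(0,0)=u(x_n,t_n)^{-p}\,\partial_t u(x_n,t_n)$ and the failing inequality,
\begin{equation*}
\partial_\tau v_n(0,0)\;>\;(1+\epsilon_0)+n\,u(x_n,t_n)^{-p}\;\geq\;1+\epsilon_0,
\end{equation*}
and passing to the limit yields $1=\partial_\tau v_\infty(0,0)\geq 1+\epsilon_0$, the desired contradiction.

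\textbf{Main obstacle.} The central subtlety is ensuring that the Liouville limit is a nontrivial self-similar profile rather than the trivial solution, and that it lives on a non-degenerate time interval. This is secured by two ingredients working in tandem: the intrinsic rescaling $\lambda_n=u(x_n,t_n)^{-(p-1)/2}$, which normalizes $v_n(0,0)$ to $1$ and thus excludes the zero limit, and the global a priori bound \eqref{eqabU} established in Section \ref{POAB}, which simultaneously supplies the compactness needed to extract $v_\infty$ and keeps the rescaled blow-up time $T_n^*$ trapped in a compact subset of $(0,\infty)$, so that the limiting problem is posed on a non-degenerate time interval where the Liouville rigidity applies.
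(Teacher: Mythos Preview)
The paper does not give its own proof here; it simply refers to Proposition~6.6 in \cite{MZJEMS24}. Your argument is exactly the classical zoom-in/compactness/Liouville scheme that underlies that reference, and it is correct. Two minor remarks: (i) Proposition~\ref{Proposition:liouville thm} is stated in similarity variables, so strictly speaking you should pass $v_\infty$ to the self-similar frame before invoking it; once this is done, every admissible limit is space-independent and hence satisfies the ODE $\partial_\tau v_\infty=|v_\infty|^{p-1}v_\infty$, which is all you need. (ii) The normalization $v_\infty(0,0)=1$ fixes the blow-up time $T'$ of the limiting ODE profile to $1/(p-1)$, but $T'$ need not equal $T_\infty^*$ (the heteroclinic alternative in the Liouville theorem corresponds to $T'>T_\infty^*$); this is harmless since the conclusion $\partial_\tau v_\infty(0,0)=1$ only uses the ODE, not the identification $T'=T_\infty^*$.
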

\begin{proof}
	See Proposition 6.6 in \cite{MZJEMS24}
	\end{proof}
Now, The existence of the limiting profile $u(r-r_0, T)$ follows by compactness, exactly as in Proposition 2.2 page 269 in \cite{FKcpam92}. It remains to prove that
\[
U(x,T) \sim u^*(|x|-r_0) \text{ as } |x|\to r_0.
\]
	
	Consider any $K_0>0$. Given any $x\in \Rbb^d$ with $ |x| \neq r_0$, we introduce the time $t^*(|x|) \in[0, T)$ such that
	
	\begin{equation}\label{t |x| definition}
      (|x|-r_0)^4=K_0 \frac{\kappa}{p-1}\left(T-t^*(|x|)\right)
	\end{equation}
Note that $t^*(r) \rightarrow T$ as $r \rightarrow r_0$. The conclusion will follow from the study of $u(r, t)$ on the time interval $\left[t^*(r), T\right)$. Consider first some arbitrary $\varepsilon>0$.
\paragraph{Step 1: Initialization.}
From 2 of Theorem \ref{th: Main theorem}, we see that
\begin{equation}\label{u r t etoile estimation}
\left|\left(T-t^*(|x|)\right)^{\frac{1}{p-1}} U\left(|x|, t^*(|x|)\right)-\kappa\left(1+K_0\right)^{-\frac{1}{p-1}}\right| \leq \epsilon \kappa\left(1+K_0\right)^{-\frac{1}{p-1}}
\end{equation}
provided that $||x|-r_0|$ is small enough.

\paragraph{Step 2: Dynamics for $t \in [ t^*(|x|), T)$.} From Proposition \ref{ODE localization}, we see that

\begin{equation}\label{ODE localization app on urt}
\forall t \in\left[t^*(r), T\right), \quad(1-\epsilon) U(x, t)^p-C_\epsilon \leq \partial_t U(x, t) \leq(1+\epsilon) U(x, t)^p+C_\epsilon
\end{equation}
for some $C_\epsilon>0$. Using \eqref{u r t etoile estimation} and \eqref{ODE localization app on urt}, one easily shows that for $|r-r_0|$ small enough,

$$
\forall t \in\left[t^*(r), T\right), \quad C_\epsilon \leq \epsilon U(x, t)^p .
$$
Therefore, we see from \eqref{ODE localization app on urt} that

$$
\forall t \in\left[t^*(r), T\right), \quad(1-2 \epsilon) U(x, t)^p \leq \partial_t U(x, t) \leq(1+2 \epsilon) U(x, t)^p .
$$
Using again \eqref{u r t etoile estimation}, we may explicitly integrate the 2 differential inequalities we have just derived and write for all $t \in\left[t^*(|x|), T\right)$,

$$
\begin{aligned}
	\kappa & {\left[\left(T-t^*(|x|)\right)\left(1+K_0\right)(1-\epsilon)^{1-p}-(1-2 \epsilon)\left(t-t^*(|x|)\right)\right]^{-\frac{1}{p-1}} \leq U(x, t) } \\
	& \leq \kappa\left[\left(T-t^*(|x|)\right)\left(1+K_0\right)(1+\epsilon)^{1-p}-(1+2 \epsilon)\left(t-t^*(|x|)\right)\right]^{-\frac{1}{p-1}}
\end{aligned}
$$
Making $t \rightarrow T$, we see that

$$
\begin{aligned}
	\kappa(T- & \left.t^*(|x|)\right)^{-\frac{1}{p-1}}\left[\left(1+K_0\right)(1-\epsilon)^{1-p}-(1-2 \epsilon)\right]^{-\frac{1}{p-1}} \leq U(x, T) \\
	& \leq \kappa\left(T-t^*(|x|)\right)^{-\frac{1}{p-1}}\left[\left(1+K_0\right)(1+\epsilon)^{1-p}-(1+2 \epsilon)\right]^{-\frac{1}{p-1}}
\end{aligned}
$$
Taking $\epsilon$ small enough, we see that

$$
\left|U(x, T)-\kappa\left(T-t^*(|x|)\right)^{-\frac{1}{p-1}} K_0^{-\frac{1}{p-1}}\right| \leq C\left(K_0\right) \epsilon\left(T-t^*(|x|)\right)^{-\frac{1}{p-1}}
$$
Using the definitions \eqref{t |x| definition} and \eqref{eq: profile} of $t^*(|x|)$ and $u^*(r)$, we see that

$$
\left|U(x, T)-u^*(|x|-r_0)\right| \leq C^{\prime}\left(K_0\right) \epsilon u^*(|x|-r_0)
$$
Since $\epsilon>0$ was arbitrarily chosen, this concludes the proof of 3 of Theorem \ref{th: Main theorem}.
\begin{remark}
	In (i) of Proposition \ref{Prop:control of q1,q2,q-,qe}, we show that the solution $q(s)$ crosses the boundary $\partial \mathcal{V}(s)$ at $s_1$, with positive speed, in other words, that all points on $\partial \mathcal{V}(s_1)$ are strict exit points. The construction is essentially an adaptation of Wazewski's principle (see \cite{Conbook78}, chapter II and the references given there).
\end{remark}
\section{Proof of the priori $L^\infty$ bound in similarity variables}\label{Proof of the priori bound}
Beginning with this section, our objective is to demonstrate that the $L^{\infty}$ bound imposed on $w_{1}$ as defined in Definition \ref{Def: shrinking set} for the set $\mathcal{S}(t)$ is never attained with equality. Specifically, considering $w_1\left(s_0\right)$ as given by \eqref{eq: initial data} for parameters $(d_0, d_1, d_2, d_3, d_4, d_5) \in \mathcal{D}$ established in Proposition \ref{prop: initialization}, and assuming that $q(y,s)$ solves equation \eqref{q's equation} with $q(s) \in \mathcal{V}$ according to Definition \ref{Def: shrinking set} for every $s \in [s_0, s_1]$, with $s_1 \geq s_0$ and sufficiently large $s_0$, then
\begin{equation}\label{eq: priori estimate goal 2}
		\forall y  \in [-e^{-s/2},+\infty), \quad \forall s \in [s_0, s_1], \quad \|w_{1}(\cdot,s)\|_{L^{\infty}}\leq 2\kappa.
	\end{equation}
Upon equation \eqref{eq: self-similar-variables} and \eqref{eq: second group ssv}, one may see the following correspondence: Let $\mathbf{e}_1$ be the first canonical basis in $\Rbb^d$, then
\begin{equation}\label{w1 to We1}
w_1(y,s) = W_{\mathbf{e}_1}\left(y\mathbf{e}_1,s\right), \;\text{with}\; y \in[-e^{-s/2},+\infty).
\end{equation}
Then, by \eqref{eq: self-similar-variables}, \eqref{eq: second group ssv}, \eqref{w1 to We1} and the radial symmetrical property, the following statement  equivalent to \eqref{eq: priori estimate goal 2}:
\begin{equation}\label{eq: priori estimate goal}
	\forall x_0 = |x_0|\mathbf{e}_1 \in \mathbb{R}^d, \quad \forall s \in [s_0, s_1], \quad \|W_{x_0}( 0,s)\|_{L^{\infty}}\leq 2\kappa.
\end{equation}
Thus, the following equation satisfied by $W_{x_0}$ will be useful:
\begin{equation}\label{eq: W's equation}
	\partial_s W_{x_0}=\Delta W_{x_0}-\frac{1}{2} Y \cdot \nabla W_{x_0}-\frac{W_{x_0}}{p-1}+|W_{x_0}|^{p-1} W_{x_0}.
\end{equation}
We will achieve this through a structured approach comprising several stages to obtain adequate control over $W_{x_0}(Y, s)$ for any $x_0 = |x_0|\mathbf{e}_1 \in \mathbb{R}^d$. 
 Initially, we derive a uniform bound on the gradient, followed by a detailed outline of our strategy, which will differ based on the location of the point $x_0$. The subsequent three subsections provide rigorous proofs tailored to each distinct region. We conclude in the final subsection with a summarizing statement of the results achieved throughout this section.

\subsection{Control of Gradient}
We first introduce the following Liouville theorem from \cite{MZcpam98} and \cite{MZma00}.
\begin{proposition}[Liouville theorem ,Merle-Zaag]\label{Proposition:liouville thm}
Let $p>1$, consider \(W(Y,s)\) to be a solution of equation \eqref{eq: W's equation}
	that is defined and uniformly bounded for all \((Y,s)\in \mathbb{R}^d \times (-\infty, \bar{s})\)
	for some \(\bar{s}\leq +\infty\). Then, one of the following holds:
	\[
W\equiv 0, 
	\quad 
W \equiv \pm \kappa, 
	\quad 
	\text{or} 
	\quad
 W(Y, s) = \pm \kappa \Bigl( 1 \pm e^{\,s - s^*} \Bigr)^{-\frac{1}{p-1}}
	\]
	for all \((Y,s)\in \mathbb{R}^d \times (-\infty,\bar{s}]\) and for some \(s^*\in \mathbb{R}\).
	In all cases, it holds that \(\nabla W \equiv 0\).
	\end{proposition}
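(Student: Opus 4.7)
The plan is to undo the similarity transformation, convert the problem into the classification of bounded ancient solutions of the semilinear heat equation, and then exploit the Giga--Kohn Lyapunov functional to extract a stationary limit at $s=-\infty$.

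Set $T:=e^{-\bar s}$ and define
\[
u(x,t):=(T-t)^{-\frac{1}{p-1}}\,W\!\Bigl(\tfrac{x}{\sqrt{T-t}},\,-\log(T-t)\Bigr),\qquad (x,t)\in\Rbb^d\times(-\infty,T).
\]
Then $u$ solves $\del_t u=\Delta u+|u|^{p-1}u$ on $\Rbb^d\times(-\infty,T)$, with the type-I bound $|u(x,t)|\le M(T-t)^{-1/(p-1)}$ where $M:=\|W\|_{L^\infty}$. So the task becomes classifying the type-I ancient solutions of the semilinear heat equation. I then introduce the Giga--Kohn functional
\[
E[W(s)]:=\int_{\Rbb^d}\!\Bigl(\tfrac12|\nabla W|^2+\tfrac{1}{2(p-1)}W^2-\tfrac{1}{p+1}|W|^{p+1}\Bigr)\rho_d\,dY,
\]
which satisfies $\frac{d}{ds}E[W(s)]=-\int|\del_s W|^2\rho_d\,dY$ along \eqref{eq: W's equation}. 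Since $W$ is uniformly bounded, $E$ is bounded in $s$, hence $\int_{-\infty}^{\bar s}\int|\del_s W|^2\rho_d\,dY\,ds<\infty$. Choosing $s_n\to-\infty$ with $\|\del_s W(\cdot,s_n)\|_{L^2(\rho_d)}\to 0$ and using parabolic regularity together with the uniform bound, I get $W(\cdot,s_n)\to W_\infty$ along a subsequence in $L^2(\rho_d)$, and $W_\infty$ is a bounded stationary solution of the similarity equation.

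Next, invoke the classification of bounded stationary solutions in the Sobolev subcritical range $(d-2)p<d+2$ (a weighted Pohozaev identity): the only bounded solutions of $\Delta W_\infty-\tfrac12 Y\cdot\nabla W_\infty-\frac{W_\infty}{p-1}+|W_\infty|^{p-1}W_\infty=0$ are $W_\infty\in\{0,\pm\kappa\}$. A standard $\omega$-limit argument using that $E$ is a strict Lyapunov functional promotes subsequential convergence to a full limit as $s\to-\infty$, in $L^2(\rho_d)$ and then in $C^k_{\mathrm{loc}}$ by parabolic smoothing.

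Finally, analyze each case by linearization. If $W_\infty\equiv 0$, the linearized operator is $\Delta-\tfrac12 Y\cdot\nabla-\tfrac{1}{p-1}$, whose spectrum $\{-m/2:m\in\Nbb\}$ is nonpositive, so backward boundedness combined with superlinearity of the nonlinearity forces $W\equiv 0$. If $W_\infty\equiv\pm\kappa$, writing $W=\pm\kappa+v$ gives the linearized operator $\Delta-\tfrac12 Y\cdot\nabla+1$ with spectrum $\{1-m/2\}$; the unstable directions must account for the nontrivial backward evolution. The spatially homogeneous family $W(s)=\pm\kappa(1\pm e^{s-s^*})^{-1/(p-1)}$ exactly parametrizes the one-dimensional unstable manifold along the constant mode ($\lambda=1$), and one shows that the other potentially unstable/neutral modes ($\lambda=1/2,0$) are incompatible with a bounded ancient orbit: they would generate non-constant components whose backward evolution contradicts either the $L^\infty$ bound or the monotonicity of $E$. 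Backward uniqueness for the parabolic equation then forces $W$ to coincide with one member of this explicit family. In all three cases $W$ is spatially constant, giving $\nabla W\equiv 0$.

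The main obstacle is this last step: ruling out the modes with eigenvalues $1/2$ and $0$ around the equilibria $\pm\kappa$. Unlike in blow-up constructions (where one uses a topological shooting to handle positive eigenvalues), here one must \emph{exclude} their contribution for a backward-bounded solution. The cleanest route is a modulation/decomposition argument combined with the fact that $E[W(s)]\to E[\pm\kappa]=-\tfrac{\kappa^{p+1}}{p+1}(p-1)^{-d/2}\cdot\frac{p-1}{2(p+1)}$ monotonically, forcing the projections onto these modes to vanish in the limit; one then propagates this forward in $s$ via the explicit ODE on the finite-dimensional part of $v$.
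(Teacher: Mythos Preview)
The paper does not prove this proposition at all: it simply cites the original Merle--Zaag papers \cite{MZcpam98} and \cite{MZma00} (and notes that the case $\bar s<+\infty$ is a minor adaptation). Your outline is, in broad strokes, the strategy of those references: Giga--Kohn energy monotonicity to extract a stationary limit at $s\to-\infty$, classification of bounded stationary solutions in the subcritical range, and then a linearization/center-unstable manifold analysis around $\pm\kappa$.

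Two comments on the sketch. First, a minor slip: the spectrum of $\Delta-\tfrac12 Y\cdot\nabla-\tfrac{1}{p-1}$ (linearization at $0$) is $\{-\tfrac{m}{2}-\tfrac{1}{p-1}:m\in\mathbb N\}$, not $\{-m/2\}$; fortunately this is still strictly negative, so your conclusion for the $W_\infty=0$ branch is unaffected. Second, and more substantively, your treatment of the modes $\lambda=1/2$ and $\lambda=0$ near $\pm\kappa$ is where the real work lies, and the argument you propose (``monotonicity of $E$ forces the projections to vanish in the limit, then propagate via the finite-dimensional ODE'') is not enough. Those projections \emph{do} vanish as $s\to-\infty$ simply because $v\to 0$; the point is to show they vanish for \emph{all} $s$, and monotonicity of $E$ gives no leverage on individual modes. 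In Merle--Zaag, the exclusion of the $h_1$ mode uses that it is generated by space translation (so one may recentre), while the exclusion of the $h_2$ mode requires a genuine nonlinear argument comparing the growth rates of the competing positive and null modes and showing that a nontrivial $h_2$ component would drive the solution out of the $L^\infty$ bound on the forward side. Your appeal to ``backward uniqueness'' at the end is also delicate for semilinear parabolic equations and is not how the cited papers close the argument. So the architecture is right, but the last paragraph hides the actual difficulty rather than resolving it.
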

\begin{proof}

If $\bar{s}=+\infty$, see [\cite{MZcpam98}, p. 143, Theorem 1.4] for the nonnegative case and [\cite{MZma00}, p. 106, Theorem 1] for the unsigned case.\\
If $\bar{s}<+\infty$, then the statement follows from a small adaptation of the previous case. See [\cite{MZma00}, p. 144, Corollary 1.5], where a similar adaptation is carried out.
\end{proof}
\noindent With this Liouville theorem we are able to derive the following bound on the gradient. 
\begin{proposition}[Smallness of the gradient in similarity variables]\label{gradient estimation}
	For all \(A \geq 1\) and \(\delta_0 > 0\), there exists 
	 \(s_6 \geq 1\), such that for all \(s_0\geq s_6\),  if \(q(s_0)\) is given with \eqref{eq: initial data}
	for some \(\bigl(d_{0}, d_{1}, d_{2},d_3,d_4,d_5\bigr) \in \mathcal{D}\) (as defined 
	in Proposition \ref{prop: initialization}), and if \(q(s) \in \mathcal{V}(s)\) for all \(s \in [s_0, s_1]\) for some 
	\(s_1 \geq s_0\) that satisfies \eqref{q's equation}, then for all \(s \in [s_0, s_1]\), 
	\[
	\|\nabla W \|_{L^\infty} \;\leq\; \delta_0.
	\]
\end{proposition}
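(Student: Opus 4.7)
The plan is to argue by contradiction, combining parabolic regularity with the Liouville theorem (Proposition \ref{Proposition:liouville thm}). Suppose the conclusion fails: after extraction, there exist $\delta_0 > 0$, sequences $s_0^{(n)} \to \infty$, $d^{(n)} \in \mathcal{D}(A, s_0^{(n)})$, $s_1^{(n)} \geq s_0^{(n)}$, $\sigma_n \in [s_0^{(n)}, s_1^{(n)}]$ and $Y_n \in \mathbb{R}^d$ such that $|\nabla W^{(n)}(Y_n, \sigma_n)| \geq \delta_0$, where $W^{(n)} := W_{\mathbf{e}_1}$ is the solution generated by the initial data \eqref{eq: initial data} with parameters $d^{(n)}$. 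The bootstrap assumption $q^{(n)}(s) \in \mathcal{V}(s)$, together with the first inequality of item (i) in Definition \ref{Def: shrinking set} and the identifications \eqref{eq: transform between dd to 1d}, \eqref{eq: self-similar-variables}, \eqref{eq: second group ssv}, yields the uniform bound $\|W^{(n)}\|_{L^\infty(\mathbb{R}^d \times [s_0^{(n)}, s_1^{(n)}])} \leq 2\kappa$. Set $\tau_n := \sigma_n - s_0^{(n)}$; passing to a subsequence, either $\tau_n$ remains bounded, or $\tau_n \to \infty$.

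If $\tau_n$ remains bounded, I would appeal to the initial-gradient bound from Proposition \ref{prop: initialization}, namely $\|\partial_y w_1(\cdot, s_0^{(n)})\|_{L^\infty} \leq C e^{-s_0^{(n)}/6}$, which transfers to a comparable bound on $\|\nabla W^{(n)}(\cdot, s_0^{(n)})\|_{L^\infty}$ via the radial change of variable between $w_1$ and $W_{\mathbf{e}_1}$. Standard short-time parabolic estimates for \eqref{eq: W's equation}, whose nonlinearity is globally Lipschitz on the set $\{|u| \leq 2\kappa\}$, propagate this smallness forward over the bounded interval of length $\tau_n$, yielding $|\nabla W^{(n)}(Y_n, \sigma_n)| \leq C(\tau_n)\, e^{-s_0^{(n)}/6} \to 0$, a contradiction.

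If instead $\tau_n \to \infty$, introduce the space-time shifts $\widetilde{W}^{(n)}(Y, s) := W^{(n)}(Y + Y_n, s + \sigma_n)$, each of which solves \eqref{eq: W's equation}, is bounded by $2\kappa$, and is defined on $\mathbb{R}^d \times [-\tau_n, s_1^{(n)} - \sigma_n]$. Interior parabolic regularity (with the nonlinearity controlled by the $L^\infty$ bound) gives uniform $C^{2+\alpha,\,1+\alpha/2}_{\mathrm{loc}}$ bounds, so Arzel\`a--Ascoli produces a further subsequence converging in $C^2_{\mathrm{loc}}$ to a solution $W_\infty$ of \eqref{eq: W's equation} defined on all of $\mathbb{R}^d \times (-\infty, T^+]$ for some $T^+ \in [0, \infty]$, still bounded by $2\kappa$. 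Proposition \ref{Proposition:liouville thm} then forces $\nabla W_\infty \equiv 0$, contradicting the relation $|\nabla \widetilde{W}^{(n)}(0, 0)| \geq \delta_0$ preserved in the limit.

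I expect the second case to be the step that requires the most care: one must ensure that after the shifts the parabolic compactness is uniform on arbitrarily long backward time windows, so that the limit is genuinely defined on the semi-infinite interval $(-\infty, T^+]$ needed to invoke Proposition \ref{Proposition:liouville thm}, and one must verify that the $L^\infty$ bound on $W^{(n)}$ (which solves the clean equation \eqref{eq: W's equation}, as opposed to the perturbed equation \eqref{eq:MEFWT} satisfied by $\tilde{w}$) does pass to the limit. Once these two technical points are handled, the Liouville dichotomy immediately closes the argument.
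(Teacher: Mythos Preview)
Your argument is correct and is essentially the same as the paper's: the paper does not give a self-contained proof but refers to Proposition~5.2 in \cite{MZJEMS24}, whose proof is precisely the contradiction/compactness/Liouville scheme you outline, with the same split between bounded and unbounded $\sigma_n-s_0^{(n)}$.
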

\begin{proof}
The proof follows from the Liouville theorem given in Proposition \ref{Proposition:liouville thm}, exactly as in the similar step in \cite{MZJEMS24}. In order to keep this paper into reasonable length, we kindly refer to the readers to the similar step in \cite{MZJEMS24}, namely to Proposition 5.2 Page 22.
\end{proof}
	\subsection{Methodology for bounding \( W_{x_0} \)}
With the gradient estimate in Proposition \ref{gradient estimation}, we shall make a reduction of our goal in the following.
\begin{claim} \label{L infty estimation control by L rho}
	
Under the hypotheses of Proposition \ref{gradient estimation}, assuming that

\begin{equation}\label{setting of delta 0}
\delta_0 \leq \frac{\kappa}{4} \text { and } s_0 \geq s_{6}\left(A, \delta_0\right)
\end{equation}

we see that estimate \eqref{eq: priori estimate goal} follows from the following:

\begin{equation}\label{eq: Improved bound on W}
\forall x_0 \in \mathbb{R}^d, \quad \forall s \in\left[s_0, s_1\right], \quad\left\|W_{x_0}(s)\right\|_{L_{\rho_d}^2} \leq \frac{3}{2} \kappa 
\end{equation}

\end{claim}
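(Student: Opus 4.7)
The plan is to argue by contradiction, using the gradient smallness from Proposition~\ref{gradient estimation} to propagate any putative violation at the origin out to a full ball, and then convert this pointwise information into an $L^{2}_{\rho_d}$ lower bound that conflicts with \eqref{eq: Improved bound on W}. By the correspondence \eqref{w1 to We1}, establishing $|W_{x_0}(0,s)| \leq 2\kappa$ for every $x_0 \in \mathbb{R}^d$ of the form $|x_0|\mathbf{e}_1$ and every $s \in [s_0, s_1]$ is equivalent to the sought $L^\infty$ bound \eqref{eq: priori estimate goal 2} on $w_1$, so it is enough to argue pointwise in $(x_0, s)$.

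Suppose that \eqref{eq: priori estimate goal} fails, so there exist $x_{0} \in \mathbb{R}^d$ and $s_{*} \in [s_0, s_1]$ with $|W_{x_{0}}(0, s_{*})| > 2\kappa$. By Proposition~\ref{gradient estimation}, $\|\nabla W_{x_{0}}(\cdot, s_{*})\|_{L^{\infty}} \leq \delta_0$, whence the mean-value inequality yields
\[
|W_{x_{0}}(Y, s_{*})| \;\geq\; |W_{x_{0}}(0, s_{*})| - \delta_{0} |Y| \;>\; 2\kappa - \delta_{0}|Y|, \qquad Y \in \mathbb{R}^d.
\]
In particular, on the ball $\{|Y| \leq R\}$ with $R = \kappa/(4\delta_{0})$, we have $|W_{x_{0}}(Y, s_{*})| > \tfrac{7}{4}\kappa$.

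Integrating this pointwise lower bound against the Gaussian weight gives
\[
\|W_{x_{0}}(s_{*})\|_{L^{2}_{\rho_d}}^{2} \;\geq\; \Big(\tfrac{7}{4}\kappa\Big)^{2} \int_{|Y| \leq R} \rho_{d}(Y)\, dY.
\]
Since $\int_{\mathbb{R}^{d}} \rho_{d} = 1$, the truncated Gaussian mass on the right converges to $1$ as $R \to \infty$, and hence as $\delta_0 \to 0$. Comparing with the strict inequality $(7\kappa/4)^{2} = 49\kappa^{2}/16 > 36\kappa^{2}/16 = (3\kappa/2)^{2}$, we see that by imposing a further smallness restriction on $\delta_{0}$ (compatible with \eqref{setting of delta 0}, which only fixes $\delta_{0} \leq \kappa/4$ and allows any sharper threshold) we force $\|W_{x_{0}}(s_{*})\|_{L^{2}_{\rho_d}} > \tfrac{3}{2}\kappa$. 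This contradicts \eqref{eq: Improved bound on W} and concludes the proof.

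The argument has no substantial obstacle: the only quantitative point is ensuring that the Gaussian probability $\int_{|Y| \leq \kappa/(4\delta_{0})} \rho_{d}(Y)\,dY$ exceeds the explicit threshold $36/49$, which holds as soon as $\delta_{0}$ is small depending only on the dimension $d$. Conceptually the claim is a reduction principle: once a uniform gradient control is in hand, the pointwise bound at the origin is dictated by the $L^{2}_{\rho_d}$ norm.
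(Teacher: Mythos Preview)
Your argument is logically sound, and it rests on the same underlying mechanism as the paper's proof: the uniform gradient bound from Proposition~\ref{gradient estimation} forces $W_{x_0}(\cdot,s)$ to be nearly constant, so its value at the origin is governed by its $L^2_{\rho_d}$ norm. However, your contradiction-plus-ball approach is less efficient quantitatively. With the hypothesis $\delta_0\le\kappa/4$ as stated, your radius is $R=\kappa/(4\delta_0)\ge 1$, and you need $\int_{|Y|\le R}\rho_d>36/49$; already for $R=1$ this Gaussian mass is well below that threshold (for instance $1-e^{-1/4}\approx 0.22$ when $d=2$), so you are forced, as you note, to shrink $\delta_0$ further. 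Strictly speaking, then, you have not proved the claim under its stated hypothesis \eqref{setting of delta 0}, only under a stronger one.

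The paper avoids this by a direct one-line estimate: writing $W_{x_0}(0,s)=\int W_{x_0}(0,s)\rho_d(Y)\,dY$ and using $|W_{x_0}(Y,s)-W_{x_0}(0,s)|\le\delta_0|Y|$, one gets
\[
|W_{x_0}(0,s)|\le\int|W_{x_0}(Y,s)|\rho_d(Y)\,dY+\delta_0\int|Y|\rho_d(Y)\,dY\le\|W_{x_0}(s)\|_{L^2_{\rho_d}}+C_d\,\delta_0,
\]
which immediately yields $|W_{x_0}(0,s)|\le\tfrac{3}{2}\kappa+C_d\delta_0<2\kappa$ for $\delta_0$ bounded by a dimensional multiple of $\kappa$. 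This is both shorter and gives a sharper dependence on $\delta_0$, so that the stated threshold $\kappa/4$ (or something close to it) actually suffices.
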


\begin{proof} Under the hypotheses of Proposition \ref{gradient estimation}, assume that \eqref{setting of delta 0} holds. Noting that $\left\|\nabla W_{x_0}(s)\right\|_{L^{\infty}} \leq \delta_0$ from Proposition \ref{gradient estimation} together with the relations \eqref{eq: second group ssv}, we use a Taylor expansion to write

$$
\left|W_{x_0}(Y, s)-W_{x_0}(0, s)\right| \leq|Y| \cdot\left\|\nabla W_{x_0}(s)\right\|_{L^{\infty}} \leq \delta_0|Y|
$$
Therefore, since
$$
\int \rho_d(Y) d Y=1 \text { and } \int|Y| \rho_d(Y) d y=1
$$
by definition (1.18) of $\rho$, it follows that
$$
\left|W_{x_0}(0, s)\right| \leq \int\left|W_{x_0}(Y, s)\right| \rho_d(Y) d Y+\delta_0 \leq\left\|W_{x_0}(s)\right\|_{L_{\rho_d}^2}+\delta_0 \leq \frac{3}{2} \kappa+\frac{\kappa}{4}<2 \kappa
$$
and the claim follows.
\end{proof}
In the following subsections, assuming that \eqref{setting of delta 0} holds, we will prove either \eqref{eq: priori estimate goal} or \eqref{eq: Improved bound on W}, according to the context. Moreover, since the initial data \eqref{eq: initial data} is given for $w_1(y,s)$, we need to establish the relationship between $W_{x_0}(Y,s)$ and $w_1(y,s)$. For this purpose we claim the following:
\begin{claim}\label{CRBWaw}
    For all $Y,x_0\in \Rbb^d$, and $s\geq-\log T$, the following holds:
    \[
	 W_{x_0}(Y, s) =w_{1}\left((|Ye^{-s/2}+x_0|-1) e^{s/2}, s\right) 
	\]
\end{claim}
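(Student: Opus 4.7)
The claim is purely a change-of-variables computation, so the plan is to unwind the two similarity-variable transforms \eqref{eq: self-similar-variables} (with $a=1$) and \eqref{eq: second group ssv} against the radial symmetry relation \eqref{eq: transform between dd to 1d}, and observe that the prefactor $(T-t)^{1/(p-1)}$ is common to both normalizations.

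First I would fix $Y,x_0\in\Rbb^d$ and $s\ge -\log T$, and set $t=T-e^{-s}$, so $\sqrt{T-t}=e^{-s/2}$. By \eqref{eq: second group ssv} the spatial point associated with $(Y,s)$ in the $W_{x_0}$ frame is $x = x_0 + Y e^{-s/2}$, and using the radial symmetry \eqref{eq: transform between dd to 1d} we get
\[
W_{x_0}(Y,s) \;=\; (T-t)^{\frac{1}{p-1}}\,U(x,t) \;=\; (T-t)^{\frac{1}{p-1}}\,u(|x|,t).
\]
Next I would read the definition \eqref{eq: self-similar-variables} with $a=1$ at the parameter $y = (|x|-1)e^{s/2}$: the associated radial variable is $r = 1 + y\sqrt{T-t} = |x|$, so
\[
w_{1}\!\left((|x|-1)e^{s/2},\,s\right) \;=\; (T-t)^{\frac{1}{p-1}}\,u(|x|,t).
\]
Substituting $|x|=|Y e^{-s/2}+x_0|$ and comparing the two identities yields exactly
\[
W_{x_0}(Y,s) \;=\; w_{1}\!\left((|Y e^{-s/2}+x_0|-1)e^{s/2},\,s\right),
\]
which is the claim.

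There is no genuine obstacle: the only thing to be careful about is to check that the $y$-value $(|Y e^{-s/2}+x_0|-1)e^{s/2}$ lies in the admissible range $y\ge -e^{s/2}$ for $w_1$, which follows immediately from $|Y e^{-s/2}+x_0|\ge 0$. I would close the proof by remarking that, in the special case $x_0=\mathbf{e}_1$ with $Y=y\mathbf{e}_1$, this identity collapses to \eqref{w1 to We1}, confirming consistency with the earlier usage.
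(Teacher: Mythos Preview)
Your proof is correct and follows essentially the same approach as the paper: both unwind \eqref{eq: second group ssv} to recover $x=Ye^{-s/2}+x_0$, then substitute $r=|x|$ into \eqref{eq: self-similar-variables} with $a=1$. Your version simply spells out the common prefactor $(T-t)^{1/(p-1)}$ and the intermediate use of \eqref{eq: transform between dd to 1d} more explicitly, and adds the (correct) check on the admissible $y$-range.
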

\begin{proof}
This follows directly from \eqref{eq: self-similar-variables} and \eqref{eq: second group ssv}. In fact, by \eqref{eq: second group ssv}, we easily deduce that:
\[
x=(Ye^{-s/2}+x_0).
\]
Then, replace $r$ by $|x|$ in \eqref{eq: self-similar-variables} concludes the proof of this claim.
\end{proof}
Using the sharper gradient estimate at initial time $s_0$ given in item (ii) of Proposition \ref{prop: initialization}, let us remark that at $s=s_0, W_{x_0}\left(Y, s_0\right)$ is "flat" in $L_{\rho_{d}}^2$, in the sense that it is close to some constant independent from space, as we prove in the following:
\begin{lemma}[Flatness of $W_{x_0}(y, s_0)$]\label{flatness wa}
	
	For any $A \geq 1$, $\vep_0 \in (0,1)$,  there exists $s_0$ and parameter $d = (d_{0}, d_{1}, d_{2},d_3,d_4,d_5) \in \mathcal{D}$, for any $x_0\in \Rbb^d$,
	\[
	\left\| W_{x_0}(\cdot, s_0) -w_{1}\left((|x_0|-1) e^{s_0/2}, s_0\right) \right\|_{L^2_{\rho_d}} \leq C e^{-s_0/6}, \tag{5.7}
	\]
	where $\mathcal{D }$ is defined in Propostion \ref{prop: initialization}.
\end{lemma}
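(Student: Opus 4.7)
The plan is to reduce the $L^2_{\rho_d}$ distance to a pointwise Lipschitz estimate, then exploit the sharper gradient bound on $w_1$ at the initial time given by item (ii) of Proposition \ref{prop: initialization}.

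First, by Claim \ref{CRBWaw}, for every $Y \in \Rbb^d$ we have
\[
W_{x_0}(Y, s_0) = w_1\bigl((|Ye^{-s_0/2}+x_0|-1)e^{s_0/2}, s_0\bigr),
\]
and $w_1((|x_0|-1)e^{s_0/2}, s_0)$ is simply the value of the same function at $Y=0$. Thus the quantity we must estimate is
\[
\bigl\| w_1(y_1(Y), s_0) - w_1(y_0, s_0) \bigr\|_{L^2_{\rho_d}}, \qquad
y_1(Y) := (|Ye^{-s_0/2}+x_0|-1)e^{s_0/2}, \quad y_0 := (|x_0|-1)e^{s_0/2}.
\]
By the reverse triangle inequality, $|y_1(Y) - y_0| = \bigl||Ye^{-s_0/2}+x_0| - |x_0|\bigr|\,e^{s_0/2} \leq |Y|$, uniformly in $x_0$.

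Next, I would invoke the gradient bound from item (ii) of Proposition \ref{prop: initialization}, namely $\|\partial_y w_1(\cdot,s_0)\|_{L^\infty} \leq C e^{-s_0/6}$. Both $y_1(Y)$ and $y_0$ lie in the domain $[-e^{s_0/2},+\infty)$ since $|Ye^{-s_0/2}+x_0|\geq 0$, so a mean value theorem argument along a segment contained in this domain gives
\[
\bigl| w_1(y_1(Y),s_0) - w_1(y_0,s_0)\bigr| \;\leq\; \|\partial_y w_1(s_0)\|_{L^\infty}\,|y_1(Y)-y_0| \;\leq\; C e^{-s_0/6}\,|Y|.
\]
Squaring and integrating against $\rho_d$, which by definition \eqref{eq: def of rho} has finite second moment $\int_{\Rbb^d} |Y|^2\rho_d(Y)\,dY = 2d$, yields
\[
\bigl\| W_{x_0}(\cdot,s_0) - w_1(y_0,s_0) \bigr\|_{L^2_{\rho_d}}^2
\;\leq\; C^2 e^{-s_0/3}\!\int_{\Rbb^d}|Y|^2\rho_d(Y)\,dY \;\leq\; C' e^{-s_0/3},
\]
and taking square roots concludes the bound.

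The argument is essentially structural, so I do not expect a serious obstacle beyond verifying that the Lipschitz control in $y$ passes cleanly from $w_1$ to $W_{x_0}$. The only point to watch is uniformity in $x_0$: both the bound $|y_1(Y)-y_0|\leq |Y|$ and the global $L^\infty$ estimate on $\partial_y w_1$ are independent of $x_0$, so the final constant $C$ is genuinely independent of $x_0$, as required for the statement.
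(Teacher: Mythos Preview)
Your proof is correct and follows essentially the same approach as the paper: both use the relation $W_{x_0}(Y,s_0)=w_1((|Ye^{-s_0/2}+x_0|-1)e^{s_0/2},s_0)$, bound the increment pointwise via the mean value theorem and the gradient estimate $\|\partial_y w_1(s_0)\|_{L^\infty}\leq Ce^{-s_0/6}$ from Proposition~\ref{prop: initialization}, then integrate against $\rho_d$. Your version is in fact slightly more explicit, spelling out the reverse triangle inequality $|y_1(Y)-y_0|\leq|Y|$ and the second-moment computation for the $L^2_{\rho_d}$ norm.
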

\begin{proof}
Pick any $x_0\in \Rbb^{d}$.	With the following relationship established upon  \eqref{eq: transform between 1d to dd}, \eqref{eq: self-similar-variables} and \eqref{eq: second group ssv},
	\begin{equation}\label{eq: relation between W and w}
	W_{x_0}(Y,s)=w_{1}\left((|Ye^{-s/2}+x_0|-1)e^{s/2}, s\right),
	\end{equation}
 use a Taylor expansion to write
	
\begin{equation}
	\left|w_{1}\left((|Ye^{-s_0/2}+x_0|-1)e^{s_0/2}, s_0\right)-w_{1}((|x_0|-1)e^{s_0/2}, s_0)\right| \leq |Y| \cdot\left\|\del_y w_1(s_0)\right\|_{L^{\infty}}.
\end{equation}
Therefore, since
\begin{equation}\label{rho d is a density function}
	\int \rho_d(Y) d Y=1,
\end{equation}
by definition \eqref{eq: def of rho} of $\rho_d$,  the proposition follows with 2 of Propostion \ref{prop: initialization} by taking integration on $Y$.
\end{proof}
Following Lemma \ref{flatness wa}, observe that equation \eqref{eq: W's equation} satisfied by $W_{x_0}(Y, s)$ admits three bounded, nonnegative, and explicit "flat" solutions: $0, \kappa$, and
\begin{equation}\label{eq: heteroclinic orbit}\psi(s)=\kappa\left(1+e^s\right)^{-\frac{1}{p-1}},\end{equation}
where $\psi$ is a heteroclinic orbit connecting $\kappa$ to 0, and any time shift of $\psi$ remains a valid solution. Moreover, by Proposition \ref{prop: initialization}, we have $w_1\bigl((|x_0|-1)e^{\frac{s_0}{2}}, s_0\bigr)$ lying in the interval $\bigl[0, \kappa + C e^{-\frac{s_0}{3}}\bigr]$. In other words, Lemma \ref{flatness wa} places us near one of these three explicit solutions, all of which possess well-characterized stability properties. Indeed, as will be shown, both 0 and $\psi$ are stable, whereas $\kappa$ exhibits stable and unstable directions, as can be seen by linearizing equation \eqref{eq: W's equation} about $\kappa$. The linearized operator corresponds to $\mathcal{L}$ introduced in \eqref{eq: linear operator}, whose spectrum is provided in \eqref{eq: spectrum}. In particular, the presence of the heteroclinic orbit $\psi$ indicates the instability of $\kappa$. Therefore, if $w_1\bigl((|x_0|-1)e^{\frac{s_0}{2}}, s_0\bigr)$ lies close to $\kappa$, we must refine the estimate in Lemma \ref{flatness wa} to better capture the behavior of $W_{x_0}(Y, s)$ for $s \ge s_0$. 

Consequently, we partition the space into three regions, within which $w_1\bigl((|x_0|-1)e^{\frac{s_0}{2}}, s_0\bigr)$ is in the vicinity of one of these three aforementioned solutions. This decomposition naturally leads to three distinct scenarios regarding the evolution of $W_{x_0}(y, s)$ for $s \ge s_0$.

 More precisely, given $m < M$, we
introduce 3 regions $\tilde{\mathcal{R}}_i(m, M, s_0)$ for $i = 1, 2, 3$ as follows:
\begin{align}
\tilde{\mathcal{R}}_1 &= \left\{ x_0 \in \mathbb{R}^d \ \big| \ Me^{-s_0} \leq G_{1}(x_0) \right\}, \label{region R1} \\
\tilde{\mathcal{R}}_2 &= \left\{ x_0 \in \mathbb{R}^d \ \big| \ me^{-s_0} \leq G_{1}(x_0) \leq Me^{-s_0} \right\}, \label{region R2} \\
\tilde{\mathcal{R}}_3 &= \left\{ x_0 \in \mathbb{R}^d \Big|\  G_{1}(x_0) \leq me^{-s_0} \right\}, \label{region R3}
\end{align}
where
\begin{equation}\label{definition of G1}
	G_{1}(x_0) = \frac{p - 1}{\kappa}(|x_0|-1)^4. 
\end{equation}
In the following lemma, we will see that $G_1(x_0)$ is a kind of norm which measures the
size of $w_{1}((|x_0|-1)e^{s_0/2}, s_0) $:
\begin{lemma}[Size of initial data in the three regions]
\label{Lemma G_0(a) as norm}
For any $M \geq 1$, there exists a constant $C_{1}(M) > 0$ such that for any $A \geq 1$ and any $\vep_0\in (0,1)$ there exists $s_{7}(A, M)$, such that for all $s_0 \geq s_{7}(A, M)$ and $d \in \mathcal{D}(A, s_0)$ defined as in Proposition \ref{prop: initialization}, for any $m \in (0,1)$ the following statement holds:
\begin{itemize}
    \item If $x_0 \in \tilde{\mathcal{R}}_1 $:
    \[
    0 \leq w_{1}((|x_0|-1)e^{\frac{s_0}{2}}, s_0) \leq \kappa(1 + M)^{-1/(p-1)} + C_{15}(M)e^{-\frac{s_0}{3}}.
    \]
    \item If $x_0 \in \tilde{\mathcal{R}}_2$:
    \[\footnotesize
    \kappa(1 + M)^{-1/(p-1)} - C_{15}(M) \, e^{-\frac{s_0}{3}} \leq w_{1}((|x_0|-1)e^{\frac{s_0}{2}}, s_0) \leq \kappa(1 + m)^{-1/(p-1)} + C \, e^{-\frac{s_0}{3}}.
    \]
    \item If $x_0 \in \tilde{\mathcal{R}}_3$:
    \[
    \kappa(1 + m)^{-1/(p-1)} - C \, e^{-\frac{s_0}{3}} \leq w_{1}((|x_0|-1)e^{\frac{s_0}{2}}, s_0) \leq \kappa + C \, e^{-\frac{s_0}{3}}.
    \]
\end{itemize}
\end{lemma}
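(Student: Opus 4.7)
The strategy is to evaluate the explicit initial data \eqref{eq: initial data} directly at $y_0 := (|x_0|-1)e^{s_0/2}$ and compare with the target values $\kappa(1+\lambda)^{-1/(p-1)}$. The decisive algebraic observation is that at this specific $y_0$, the two ingredients of \eqref{eq: profile form} simplify transparently: direct substitution gives $D(y_0,s_0) = (p-1)\bigl(1+G_1(x_0)e^{s_0}\bigr)$ exactly, while the explicit expression \eqref{eq: estimation P(y)} for $P$ yields $E(y_0,s_0) = 1 + \tfrac{12(p-1)}{\kappa}\bigl[(|x_0|-1)^2 - e^{-s_0}\bigr]$. Writing $X := G_1(x_0)e^{s_0}$, the three regions $\tilde{\mathcal{R}}_1,\tilde{\mathcal{R}}_2,\tilde{\mathcal{R}}_3$ correspond respectively to $X\geq M$, $m\leq X\leq M$, and $X\leq m$.

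The heart of the argument is the uniform estimate
\[
\Bigl|\,\varphi^{p-1}(y_0,s_0) - \frac{1}{(p-1)(1+X)}\,\Bigr| \;\leq\; C\,e^{-s_0/2}.
\]
This would follow from two inputs: (i) the identity $(|x_0|-1)^2 = \sqrt{\kappa X/(p-1)}\,e^{-s_0/2}$, which rewrites the non-trivial term of $E$, and (ii) the elementary bound $\sup_{X\geq 0}\sqrt{X}/(1+X)\leq 1/2$, which kills the $X$-dependence of $(|x_0|-1)^2/D$ uniformly. Taking $(p-1)$-th roots via a Taylor expansion of $u\mapsto u^{1/(p-1)}$ (smooth and nonvanishing on the relevant range) converts this into the region-by-region bounds on $\varphi(y_0,s_0)$ itself, using the monotonicity of $\lambda\mapsto\kappa(1+\lambda)^{-1/(p-1)}$; the residual $O(e^{-s_0/2})$ error is comfortably absorbed into the announced $Ce^{-s_0/3}$.

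Next I would incorporate the two features present in $w_1$ but absent from $\varphi$: the polynomial perturbation $\frac{p-1}{\kappa D^2}Ae^{-2s_0}S(y_0)$ and the cutoff $\chi$. For the perturbation, I would combine $|S(y_0)|\leq C(1+|y_0|^5)$ with the key computation $|y_0|^k/D^2 = C\,X^{k/4}(1+X)^{-2}\,e^{ks_0/4}$, whose prefactor $X^{k/4}(1+X)^{-2}$ is uniformly bounded for $k\leq 8$; this yields an $O(Ae^{-3s_0/4})$ bound on the whole perturbation, which is $o(e^{-s_0/3})$. For the cutoff, the point is that in $\tilde{\mathcal{R}}_2$ and $\tilde{\mathcal{R}}_3$ the constraint $X\leq M$ forces $||x_0|-1|\leq Ce^{-s_0/4}$, so $|x_0|$ sits exponentially close to $1$ and $\chi(|x_0|/\vep_0)=1$ automatically; in $\tilde{\mathcal{R}}_1$ only the upper bound is required and $0\leq\chi\leq 1$ suffices, while the lower bound $w_1\geq 0$ is built in by item (1) of Proposition \ref{prop: initialization}. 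The global upper bound $\kappa+Ce^{-s_0/3}$ in $\tilde{\mathcal{R}}_3$ is directly item (ii) of Lemma \ref{lemma: good properties of phi}.

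The principal obstacle I anticipate is controlling the $S$-correction uniformly across all of $\tilde{\mathcal{R}}_1$, where $|y_0|$ can be as large as $e^{s_0/2}$. The saving feature is precisely that the calibration through $X = G_1(x_0)e^{s_0}$ allows the $D^{-2}$ weight to beat every monomial in $S$ (degree at most $5$, well below the threshold $8$ allowed by the prefactor bound) uniformly in the region. Once this uniformity is in place, the three claimed bounds follow by simply matching the two-term asymptotic of $\varphi^{p-1}$ with the respective constraints on $X$, and everything else is bookkeeping.
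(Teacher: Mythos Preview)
Your proposal is correct and follows essentially the same route as the paper: both evaluate at $y_0=(|x_0|-1)e^{s_0/2}$, recognize the exact identity $D(y_0,s_0)=(p-1)(1+X)$ with $X=G_1(x_0)e^{s_0}$, and reduce to showing $|w_1^{p-1}(y_0,s_0)-1/D|\leq Ce^{-s_0/3}$ via the uniform bounds $\sqrt{X}/(1+X)\leq C$ and $X^{k/4}/(1+X)^2\leq C$ for $k\leq 5$. The only organizational difference is that the paper bundles the $E$-correction, the $S$-perturbation, and the cutoff into a single inequality \eqref{w_1 first expansion}, whereas you treat $\varphi$, the $S$-term, and $\chi$ in three separate passes; your handling of the cutoff (distinguishing $\tilde{\mathcal{R}}_1$, where $0\leq\chi\leq1$ suffices for the one-sided bound, from $\tilde{\mathcal{R}}_2\cup\tilde{\mathcal{R}}_3$, where $|x_0|\to1$ forces $\chi=1$) is in fact cleaner than the paper's somewhat implicit Taylor argument.
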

\begin{proof}
See Section \ref{Proof of G as norm}
\end{proof}
\subsection{Control of $W_{x_0}(Y,s)$ for any $x_0$ in $\tilde{\mathcal{R}}_1 $}
The stability of the zero solution for equation \eqref{eq: W's equation} (under some \( L^\infty \) a priori bound) is crucial for the argument, as we wrote above in Scenario 1. Let us state it in the following: 
\begin{proposition}[Merle-Zaag \cite{MZJEMS24}]
\label{prop: control of w in R1}
There exists \( \epsilon_0 > 0 \) and \( M_0 \geq 1 \) such that if  \( W(Y,t) \) solves equation \eqref{eq: W's equation} with \( \|W(Y,s)\|_{L^{\infty}(\Rbb^d)} \leq 2\kappa\) for all \( (Y,t) \in \mathbb{R}^d \times [0, \sigma_1] \) for some \( \sigma_1 \geq 0 \), with \( \|W(0)\|_{L^2_\rho} \leq \epsilon_0 \) and \( \nabla W(0)(1 + |y|)^{-k} \in L^\infty \) for some \( k \in \mathbb{N} \), then
\[
\forall s \in [0, \sigma_1], \quad \|W(s)\|_{L^2_{\rho_d}} \leq M_0 \|W(0)\|_{L^2_{\rho_d}} e^{-\frac{s}{p-1}}.
\]
\end{proposition}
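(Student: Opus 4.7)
The plan is to establish the decay via a bootstrap/continuity argument in the weighted Hilbert space $L^2_{\rho_d}$. Linearizing equation \eqref{eq: W's equation} around $W\equiv 0$ produces the operator
\[
\Delta - \tfrac{1}{2}Y\cdot\nabla - \tfrac{1}{p-1},
\]
whose spectrum in $L^2_{\rho_d}$ is $\{-\tfrac{m}{2}-\tfrac{1}{p-1}: m\in\mathbb{N}\}$ by the same Hermite/Ornstein--Uhlenbeck analysis underlying \eqref{eq: spectrum}. The top eigenvalue $-\tfrac{1}{p-1}$ matches exactly the conclusion's rate $e^{-s/(p-1)}$, so the entire problem reduces to showing that the nonlinear source $|W|^{p-1}W$ cannot destroy this linear decay.

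The scheme I would follow starts with the weighted energy identity, obtained by multiplying \eqref{eq: W's equation} by $W\rho_d$ and integrating by parts using $\nabla\rho_d = -\tfrac{Y}{2}\rho_d$:
\[
\tfrac{1}{2}\tfrac{d}{ds}\|W(s)\|^2_{L^2_{\rho_d}} + \|\nabla W(s)\|^2_{L^2_{\rho_d}} + \tfrac{1}{p-1}\|W(s)\|^2_{L^2_{\rho_d}} = \int_{\Rbb^d}|W|^{p+1}\rho_d\,dY.
\]
I would then define $T^{**}\in[0,\sigma_1]$ as the largest time such that $\|W(s)\|_{L^2_{\rho_d}}\leq 2M_0\|W(0)\|_{L^2_{\rho_d}}e^{-s/(p-1)}$ on $[0,T^{**}]$, and aim to strengthen this into the bound with $M_0$ in place of $2M_0$, which by continuity forces $T^{**}=\sigma_1$. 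Closing the bootstrap amounts to showing that the right-hand side $\int|W|^{p+1}\rho_d$ is absorbed by $\tfrac{1}{p-1}\|W\|^2_{L^2_{\rho_d}}$ with strict margin.

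The main obstacle is precisely that the naive $L^\infty$ estimate $\int|W|^{p+1}\rho_d\leq (2\kappa)^{p-1}\|W\|^2_{L^2_{\rho_d}} = \tfrac{2^{p-1}}{p-1}\|W\|^2_{L^2_{\rho_d}}$ swamps the linear dissipation $\tfrac{1}{p-1}\|W\|^2_{L^2_{\rho_d}}$ whenever $p>1$. To overcome it, I would split the integration at some large radius $R$. On the bulk $|Y|\leq R$, I would use standard interior parabolic regularity (valid because the $L^\infty$ bound by $2\kappa$ makes the coefficient $|W|^{p-1}$ bounded, while the regularity assumption $\nabla W(0)(1+|y|)^{-k}\in L^\infty$ supplies the initial smoothness) to obtain a delayed smoothing estimate
\[
\|W(s)\|_{L^\infty(|Y|\leq R)}\leq C(R,s^*)\|W(s-s^*)\|_{L^2_{\rho_d}} \quad \text{for } s\geq s^*,
\]
with a fixed short delay $s^*>0$. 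The bootstrap then forces the bulk contribution to be of order $\epsilon_0^{p-1}\|W(s)\|^2_{L^2_{\rho_d}}$, absorbable once $\epsilon_0$ is small enough. On the tail $|Y|>R$, I would combine the uniform bound by $2\kappa$ with the exponential Gaussian smallness of $\int_{|Y|>R}\rho_d$, noting that the bootstrap propagates a comparable $L^2_{\rho_d}$-decay on that tail inherited from the linear semigroup; choosing $R=R(p,d)$ large enough makes this tail contribution strictly subdominant. The initial strip $[0,s^*]$, being of fixed length, would be handled by a crude Gronwall applied directly to the energy identity, the price being an exponential factor $e^{((2\kappa)^{p-1}-\tfrac{1}{p-1})s^*}$ absorbed into the final constant $M_0$.
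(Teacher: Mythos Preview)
The paper does not prove this proposition; it simply cites Proposition~5.6 of \cite{MZJEMS24}. So there is no in-paper argument to compare against, only the imported statement.

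Your energy/bootstrap framework is the right shape, and the bulk estimate via interior parabolic regularity is plausible. The tail, however, has a real gap. On $\{|Y|>R\}$ the only pointwise information available is $|W|\leq 2\kappa$, so the best you can do is
\[
\int_{|Y|>R}|W|^{p+1}\rho_d \;\leq\; (2\kappa)^{p-1}\int_{|Y|>R}W^2\rho_d \;=\; \frac{2^{p-1}}{p-1}\int_{|Y|>R}W^2\rho_d,
\]
and the coefficient $\tfrac{2^{p-1}}{p-1}$ still exceeds the dissipation $\tfrac{1}{p-1}$, \emph{independently of $R$}. The sentence ``the bootstrap propagates a comparable $L^2_{\rho_d}$-decay on that tail inherited from the linear semigroup'' does not supply a mechanism: the bootstrap controls only the global $L^2_{\rho_d}$ norm, not the tail mass separately, and the Gaussian smallness of $\int_{|Y|>R}\rho_d$ produces a constant floor rather than something proportional to $\|W(s)\|^2_{L^2_{\rho_d}}$. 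The splitting does not close.

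The fix, and almost certainly the argument in \cite{MZJEMS24}, is to replace the spatial splitting by the delay regularizing effect in weighted $L^r_{\rho_d}$ spaces (item~(i) of Lemma~\ref{lemma: Reg eff d oper} here, used exactly as in the proof of Proposition~\ref{delay regularizing estimate}). Writing $\partial_s|W|\leq(\mathcal{L}_0+C^*)|W|$ with $\mathcal{L}_0=\Delta-\tfrac12 Y\cdot\nabla-\tfrac{1}{p-1}$ and $C^*=(2\kappa)^{p-1}$, one obtains after a fixed delay $s^*$ the estimate $\|W(s)\|_{L^{2p}_{\rho_d}}\leq C\|W(s-s^*)\|_{L^2_{\rho_d}}$. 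Then in your energy identity
\[
\int_{\Rbb^d}|W|^{p+1}\rho_d \;\leq\; \|W(s)\|^{p}_{L^{2p}_{\rho_d}}\|W(s)\|_{L^2_{\rho_d}} \;\leq\; C\|W(s-s^*)\|^{p}_{L^2_{\rho_d}}\|W(s)\|_{L^2_{\rho_d}},
\]
which under the bootstrap carries an extra factor $\epsilon_0^{p-1}$ \emph{globally}, not just on $\{|Y|\leq R\}$. This is absorbable for $\epsilon_0$ small, and the argument closes without any tail to handle.
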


\textbf{Proof.} See Proposition 5.6 in \cite{MZJEMS24}.

Fixing \( M \geq 1 \) such that
\begin{equation}\label{Choice of M}
\kappa(1 + M)^{-\frac{1}{p-1}} \leq \max\left\{ \frac{\epsilon_0}{2}, \frac{\kappa}{2M_0} \right\}
\end{equation}
where \( M_0 \) and \( \epsilon_0 \) are given in Proposition \ref{prop: control of w in R1}, then taking \( s_0 \) large enough, we see from Proposition \ref{prop: initialization} that \( \|\nabla W_{x_0}(\cdot,s_0)\|_{L^\infty(\Rbb^d)}= \|\del_y w_{1}(\cdot, s_0) \|_{L^\infty(\Rbb)}\leq +\infty \), and from Lemmas \ref{flatness wa} and \ref{Lemma G_0(a) as norm}  that the smallness condition required in Proposition~\ref{prop: control of w in R1} holds in Region \( \tilde{\mathcal{R}}_1  \), leading to the trapping of \( W_{x_0} \) near 0, proving the bound \eqref{eq: Improved bound on W}. More precisely, this is our statement:
 
\begin{corollary}[Exponential decay of \( W_{x_0}(s) \) in Region \( \tilde{\mathcal{R}}_1  \)]
\label{exponential decay of W in R1}
For all \( A \geq 1 \), $\epsilon_{0}\in (0,1)$ there exists \( s_{8}(A) \geq 1 \) such that if \( s_0 \geq s_{8}(A) \), \( d \in \mathcal{D}(A, s_0) \), and \( x_0 \in \tilde{\mathcal{R}}_1 \) defined in \eqref{region R1}, then \( \nabla W(Y,s_0) \in L^\infty \) and
\begin{equation}\label{bound on ini con in R1}
\|W_{x_0}(s_0)\|_{L^2_{\rho_d}} \leq 2\kappa(1 + M)^{-\frac{1}{p-1}} \leq \epsilon_0 \quad \text{(introduced in Proposition~\ref{prop: control of w in R1})    }.
\end{equation}
If in addition we have \( |W_{x_0}(Y,s)| \leq 2\kappa \), for all \( (Y,s) \in \mathbb{R}^d \times [s_0, s_1] \), for some \( s_1 \geq s_0 \), then,
\[
\begin{aligned}
&\forall s \in [s_0, s_1], \\
&\|W_{x_0}(s)\|_{L^2_\rho} \leq M_0 \|W_{x_0}(s_0)\|_{L^2_\rho} e^{-\frac{s - s_0}{p-1}} \leq 2\kappa(1 + M)^{-\frac{1}{p-1}} M_0 e^{-\frac{s - s_0}{p-1}} \leq \kappa,
\end{aligned}
\]
where \( M_0 \) is also introduced in Proposition \ref{prop: control of w in R1}. In particular, \eqref{eq: Improved bound on W} holds.
\end{corollary}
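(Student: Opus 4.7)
The statement is essentially a direct combination of the three ingredients already at hand: the flatness of the initial data in $L^2_{\rho_d}$ (Lemma \ref{flatness wa}), the location-dependent upper bound on $w_1((|x_0|-1)e^{s_0/2},s_0)$ in region $\tilde{\mathcal{R}}_1$ (Lemma \ref{Lemma G_0(a) as norm}), and the stability of the zero solution for \eqref{eq: W's equation} (Proposition \ref{prop: control of w in R1}), all calibrated through the choice of $M$ in \eqref{Choice of M}. My plan is therefore to verify the hypotheses of Proposition \ref{prop: control of w in R1} and then read off the conclusion.

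\textbf{Step 1 (gradient bound).} I would use the identity $W_{x_0}(Y,s_0)=w_1((|Ye^{-s_0/2}+x_0|-1)e^{s_0/2},s_0)$ from Claim \ref{CRBWaw}. Differentiating with respect to $Y$ and using that the outer factor of $e^{-s_0/2}\cdot e^{s_0/2}=1$ is compensated by the gradient of $|\cdot|$, one gets $|\nabla_Y W_{x_0}(Y,s_0)|\leq \|\partial_y w_1(\cdot,s_0)\|_{L^\infty}$. By item 2 of Proposition \ref{prop: initialization}, this last quantity is bounded by $Ce^{-s_0/6}$, so in particular $\nabla W_{x_0}(\cdot,s_0)\in L^\infty$; the only potential issue is the single point $Y=-x_0 e^{s_0/2}$ where $|\cdot|$ is not differentiable, but there $w_1(\cdot,s_0)\equiv 0$ by the cutoff appearing in \eqref{eq: initial data}, so the bound persists.

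\textbf{Step 2 (initial $L^2_{\rho_d}$ bound).} Combining Lemma \ref{flatness wa} and the upper part of the estimate in region $\tilde{\mathcal{R}}_1$ from Lemma \ref{Lemma G_0(a) as norm}, and using that $\int \rho_d \, dY=1$, I would write
\[
\|W_{x_0}(s_0)\|_{L^2_{\rho_d}}\leq w_1((|x_0|-1)e^{s_0/2},s_0)+Ce^{-s_0/6} \leq \kappa(1+M)^{-\tfrac{1}{p-1}} + C_1(M)e^{-s_0/3} + Ce^{-s_0/6}.
\]
Taking $s_0\geq s_8(A)$ large enough, the two exponentially decaying error terms are absorbed into $\kappa(1+M)^{-\tfrac{1}{p-1}}$, giving $\|W_{x_0}(s_0)\|_{L^2_{\rho_d}}\leq 2\kappa(1+M)^{-\tfrac{1}{p-1}}$. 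The choice \eqref{Choice of M} then yields the bound $\leq \epsilon_0$, as required.

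\textbf{Step 3 (propagation).} With the $L^\infty$ hypothesis $|W_{x_0}|\leq 2\kappa$ assumed on $\mathbb{R}^d\times[s_0,s_1]$, Step 1 supplying the gradient regularity, and Step 2 supplying the smallness $\|W_{x_0}(s_0)\|_{L^2_{\rho_d}}\leq\epsilon_0$, I can apply Proposition \ref{prop: control of w in R1} (shifted to start at $s_0$) to obtain
\[
\|W_{x_0}(s)\|_{L^2_{\rho_d}}\leq M_0\|W_{x_0}(s_0)\|_{L^2_{\rho_d}}e^{-(s-s_0)/(p-1)}\leq 2\kappa(1+M)^{-\tfrac{1}{p-1}}M_0\, e^{-(s-s_0)/(p-1)}.
\]
The second inequality in \eqref{Choice of M}, namely $\kappa(1+M)^{-\tfrac{1}{p-1}}\leq \kappa/(2M_0)$, makes the right-hand side bounded by $\kappa\, e^{-(s-s_0)/(p-1)}\leq \kappa$. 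Since $\kappa<\tfrac{3}{2}\kappa$, this is precisely the improved bound \eqref{eq: Improved bound on W} for $x_0\in\tilde{\mathcal{R}}_1$.

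The argument has no real obstacle beyond bookkeeping: the delicate point, if any, is Step 1, where one must confirm that the cutoff in \eqref{eq: initial data} kills the only place where the change-of-variable $Y\mapsto(|Ye^{-s_0/2}+x_0|-1)e^{s_0/2}$ fails to be smooth, so that the $L^\infty$ gradient bound on $w_1(\cdot,s_0)$ transfers to $W_{x_0}(\cdot,s_0)$ with no loss.
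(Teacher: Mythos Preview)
Your proof is correct and follows essentially the same route as the paper: combine the flatness Lemma~\ref{flatness wa} with the $\tilde{\mathcal{R}}_1$ bound of Lemma~\ref{Lemma G_0(a) as norm} to get the smallness \eqref{bound on ini con in R1}, use Proposition~\ref{prop: initialization} for the gradient regularity, and then invoke Proposition~\ref{prop: control of w in R1} with the calibration \eqref{Choice of M}. Your write-up is in fact more detailed than the paper's (which simply asserts that the corollary is a direct consequence of these ingredients), and your careful handling in Step~1 of the single non-smooth point of the change of variables is a nice touch that the paper leaves implicit.
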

\begin{proof}
As we explained above \eqref{bound on ini con in R1} follows from Lemmas \ref{flatness wa} and \ref{Lemma G_0(a) as norm}. And this Corollary is a direct result of \eqref{bound on ini con in R1} and Lemma \ref{prop: control of w in R1}
\end{proof}
\subsection{Control of $W_{x_0}(y,s)$ for any $x_0$ in $\tilde{\mathcal{R}}_2 $}
As we explained above, in Scenario 2, the stability of the solution \( \psi \) defined in \eqref{eq: heteroclinic orbit} is the key ingreident. Let us first state that stability result:

\begin{proposition}[Merle-Zaag \cite{MZJEMS24}]\label{prop: stability R2}
There exists \( M_1 \geq 1 \) such that if \( W\) solves equation (1.12) with \( |W(Y,s)| \leq 2\kappa \) for all \( (Y,s) \in \mathbb{R}^d \times [0, \sigma_1] \) for some \( \sigma_1 \geq 0 \), and
\begin{equation}\label{stability of psi}
\nabla W(0)(1 + |y|)^{-k} \in L^\infty, \quad \|W(0) - \psi(\sigma^*)\|_{L^2_\rho} \leq \frac{|\psi'(\sigma^*)|}{M_1}
\end{equation}
for some \( k \in \mathbb{N} \) and \( \sigma^* \in \mathbb{R} \), where \( \psi \) is defined in (5.8), then
\[
\forall s \in [0, \sigma_1], \quad \|W(s) - \psi(s + \sigma^*)\|_{L^2_\rho} \leq M_1 \frac{\|W(0) - \psi(\sigma^*)\|_{L^2_\rho}}{|\psi'(s + \sigma^*)|} |\psi'(\sigma^*)|.
\]
\end{proposition}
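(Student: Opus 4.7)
The plan is to prove stability by linearizing around $\psi$ and exploiting the fact that the time translation $\sigma^*\mapsto\psi(\cdot+\sigma^*)$ is a symmetry of the ODE $\psi'=-\psi/(p-1)+\psi^p$. Writing $W(Y,s)=\psi(s+\sigma^*)+V(Y,s)$, the deviation $V$ satisfies
\begin{equation*}
\partial_s V=\Delta V-\tfrac12 Y\cdot\nabla V+\bigl[p\,\psi(s+\sigma^*)^{p-1}-\tfrac{1}{p-1}\bigr]V+N(V,s),
\end{equation*}
where $N(V,s)=|\psi+V|^{p-1}(\psi+V)-\psi^p-p\psi^{p-1}V$ is at least quadratic in $V$ for $|V|\leq 3\kappa$. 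Differentiating the ODE for $\psi$ gives $\psi''(s)=\bigl[p\psi(s)^{p-1}-1/(p-1)\bigr]\psi'(s)$, so the $Y$-independent function $\psi'(s+\sigma^*)$ lies in the kernel of the linearized operator. This is the neutral direction generated by shifting $\sigma^*$, and all directions transverse to it should contract in $L^2_\rho$.

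To remove this neutral mode I would change unknown to $\tilde V:=V/\alpha$, with
\begin{equation*}
\alpha(s):=\frac{\psi'(s+\sigma^*)}{\psi'(\sigma^*)},\qquad \alpha(0)=1,\qquad \alpha'(s)=\bigl[p\psi(s+\sigma^*)^{p-1}-\tfrac{1}{p-1}\bigr]\alpha(s).
\end{equation*}
A direct computation then yields $\partial_s\tilde V=\mathcal L_0\tilde V+N(V,s)/\alpha(s)$, where $\mathcal L_0:=\Delta-\tfrac12 Y\cdot\nabla$ is self-adjoint on $L^2_\rho$ with spectrum $\{-|m|/2:m\in\mathbb N^d\}\leq 0$, so its semigroup is a contraction in that norm. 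Neglecting the nonlinear source, Duhamel immediately gives $\|\tilde V(s)\|_{L^2_\rho}\leq\|\tilde V(0)\|_{L^2_\rho}$, which unwinds to precisely the announced estimate $\|V(s)\|_{L^2_\rho}\leq|\alpha(s)|\|V(0)\|_{L^2_\rho}$.

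The remaining task is a nonlinear bootstrap, and this is where I expect the main obstacle. Using the a priori bound $|W|,|\psi|\leq 2\kappa$ one obtains $|N(V,s)|\leq C|V|^{\min(2,p)}$, but this nonlinearity cannot be controlled by $\tilde V$ alone: since $\alpha(s)$ may become very small while $|V|$ stays of order $\kappa$, the rescaled unknown $\tilde V$ has no uniform $L^\infty$ bound. Consequently the bootstrap must be written directly on $V$, with the nonlinear contribution split into (i) the truly small regime $|V|\leq C|\alpha|\|V(0)\|_{L^2_\rho}/|\psi'(\sigma^*)|$, where $|N|\lesssim\psi^{p-2}|\alpha|^2\tilde V^2$ is negligible, and (ii) a residual regime handled via the uniform $L^\infty$ bound. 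The smallness hypothesis $\|V(0)\|_{L^2_\rho}\leq|\psi'(\sigma^*)|/M_1$ together with the decay $|\alpha(s)|\lesssim e^{-s/(p-1)}$ as $s\to\infty$ (hence $|\alpha|\in L^1([0,\infty))$) forces the total nonlinear contribution to be of size $O(M_1^{-1})\cdot|\alpha(s)|\|V(0)\|_{L^2_\rho}$, which is absorbed into $M_1|\alpha(s)|\|V(0)\|_{L^2_\rho}$ provided $M_1$ is chosen large enough. The regularity hypothesis $\nabla W(0)(1+|Y|)^{-k}\in L^\infty$ serves to propagate pointwise control through the parabolic semigroup, justifying the exchange between $L^\infty$ and $L^2_\rho$ bounds needed to close the estimate.
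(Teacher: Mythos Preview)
The paper does not prove this statement; it simply cites Proposition~5.8 of \cite{MZJEMS24}. Your outline---linearize around $\psi$, recognize $\psi'(s+\sigma^*)$ as the neutral mode, and rescale $\tilde V=V/\alpha$ so that the linear part becomes the bare Ornstein--Uhlenbeck operator $\mathcal L_0$---is correct and is the natural route; at the linear level it already yields the desired inequality with $M_1=1$.

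The gap is in your nonlinear bootstrap. You assert that the Duhamel contribution of $N/\alpha$ is of size $O(M_1^{-1})\,\|\tilde V(0)\|_{L^2_\rho}$, but the arithmetic gives the opposite. With $|N|\le C|V|^{\bar p}$, the bootstrap ansatz $\|V(\tau)\|_{L^{2\bar p}_\rho}\le C M_1|\alpha(\tau)|\,\|V(0)\|$, and the smallness $\|V(0)\|\le|\psi'(\sigma^*)|/M_1$, one finds
\[
\int_0^s\Bigl\|\tfrac{N(\tau)}{\alpha(\tau)}\Bigr\|_{L^2_\rho}d\tau
\;\le\; C\,M_1^{\bar p}\,\|V(0)\|^{\bar p}\!\int_0^\infty|\alpha|^{\bar p-1}d\tau
\;=\; C\,M_1\,\|V(0)\|\!\int_{\sigma^*}^\infty|\psi'|^{\bar p-1}d\tau,
\]
which is $O(M_1)\,\|\tilde V(0)\|$, not $O(M_1^{-1})$. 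Since $e^{s\mathcal L_0}$ is merely a contraction on $L^2_\rho$ (eigenvalue $0$ on constants), there is no spectral gap to absorb this, and enlarging $M_1$ does not close the loop.

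The remedy is to treat the nonlinearity \emph{nonlinearly} rather than through a linear bootstrap. Setting $f(s)=\|\tilde V(s)\|_{L^2_\rho}$ and combining the energy identity with a delay regularizing estimate (to pass from $L^{2\bar p}_\rho$ to $L^2_\rho$) gives a Bernoulli-type inequality $f'\le C\,|\alpha|^{\bar p-1}f^{\bar p}$, whose explicit integration yields
\[
f(s)^{1-\bar p}\ \ge\ f(0)^{1-\bar p}-C\,|\psi'(\sigma^*)|^{1-\bar p}\!\int_{\sigma^*}^\infty|\psi'|^{\bar p-1}d\tau.
\]
The hypothesis $f(0)\le|\psi'(\sigma^*)|/M_1$ makes $f(0)^{1-\bar p}\ge M_1^{\bar p-1}|\psi'(\sigma^*)|^{1-\bar p}$, while the last integral is bounded by a universal constant; hence for $M_1$ large one gets $f(s)\le 2f(0)$ uniformly in $\sigma^*$. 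Your linear bootstrap discards precisely the extra factor $M_1^{\bar p-1}$ hidden in $f(0)^{1-\bar p}$, which is what makes the argument close.
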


\begin{proof}
See Proposition 5.8 in \cite{MZJEMS24}
\end{proof}

In the following corollary, using Proposition \ref{prop: initialization}, Lemmas \ref{flatness wa} and \ref{Lemma G_0(a) as norm}, we show that \( W_{x_0}(Y, s) \) is trapped near the heteroclinic orbit \( \psi \) whenever \( x_0 \) is in Region \( \tilde{\mathcal{R}}_2\) defined in \ref{region R2}. More precisely, this is our statement:

\begin{corollary}[Trapping of \( W_{x_0}(s) \) near \( \psi \) in Region \( \tilde{\mathcal{R}}_2 \)]\label{trapping of W_{x_0} near psi}
There exists \( \bar{\sigma} \in \mathbb{R} \) such that for all \( m \in (0,1) \), there exists \( \sigma(m) \leq \bar{\sigma} \) such that for all \( A \geq 1 \) and $\vep_0\in (0,1)$, there exists \( s_{9}(A,m) \geq 1 \) such that for all \( s_0 \geq s_{9}(A,m) \), \( d \in \mathcal{D}(A,s_0) \) and \( x_0 \in \tilde{\mathcal{R}}_2 \) defined in \eqref{region R2} (with \( M \) defined in Proposition \ref{Lemma G_0(a) as norm}), \( \nabla w_1(s_0) \in L^\infty \) and
\begin{equation}\label{initial data control in R_2}
\|w_{1}((|x_0|-1)e^{s_0/2},s_0) - \psi(\sigma^*)\|_{L^2_{\rho_{1}}} \leq C e^{-s_0/6} \leq \frac{|\psi'(\sigma^*)|}{M_1}
\end{equation}
for some \( \sigma^* \in [\sigma(m), \bar{\sigma}] \), where \( M_1 \) was introduced in Proposition \ref{prop: stability R2}. If in addition we have \( |W_{x_0}(Y,s)| \leq 2\kappa \), for all \( (Y,s) \in \mathbb{R}^d \times [s_0, s_1] \), for some \( s_1 \geq s_0 \), then,
\begin{equation}\label{W exponential decay in R_2}
\forall s \in [s_0, s_2], \quad \|W_{x_0}(s)\|_{L^2_{\rho_{d}}} \leq \psi(s + \sigma^* - s_0) + \frac{C M_1}{C(m)} e^{-s_0/6} \leq \kappa + \frac{\kappa}{4} \leq \frac{3}{2}\kappa.
\end{equation}
In particular, \eqref{eq: Improved bound on W} holds.
\end{corollary}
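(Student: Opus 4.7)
The plan is to reduce the trapping statement to a direct application of Proposition~\ref{prop: stability R2}. The key observation is that Lemma~\ref{flatness wa} tells us $W_{x_0}(\cdot, s_0)$ is, in $L^2_{\rho_d}$, close to the scalar value $w_1((|x_0|-1)e^{s_0/2}, s_0)$, while Lemma~\ref{Lemma G_0(a) as norm} confines this scalar to a window strictly inside $(0,\kappa)$ that precisely matches the range of $\psi$ on a bounded interval. So I would first identify a shift $\sigma^*$ of the heteroclinic $\psi$ whose value equals this scalar, verify that the initial flatness meets the smallness hypothesis of Proposition~\ref{prop: stability R2}, and then apply that proposition with time origin shifted by $s_0$.

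For the identification, I would fix $x_0 \in \tilde{\mathcal{R}}_2$ and use the continuous, strictly decreasing bijection $\psi : \mathbb{R} \to (0,\kappa)$ to define $\sigma^*$ uniquely by $\psi(\sigma^*) = w_1((|x_0|-1)e^{s_0/2}, s_0)$; Lemma~\ref{Lemma G_0(a) as norm}, together with $s_0$ taken large enough to absorb the $e^{-s_0/3}$ corrections, places $\sigma^*$ inside a compact interval $[\sigma(m), \bar\sigma]$ depending only on $m$, $M$ and $p$, with $\bar\sigma$ independent of $m$. Combining this with Lemma~\ref{flatness wa} then yields $\|W_{x_0}(\cdot, s_0) - \psi(\sigma^*)\|_{L^2_{\rho_d}} \le C e^{-s_0/6}$, which is the content of \eqref{initial data control in R_2}. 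Since $\psi'$ is continuous and nonvanishing on $\mathbb{R}$, the quantity $|\psi'(\sigma^*)|$ admits a uniform lower bound $C(m) > 0$ on $[\sigma(m), \bar\sigma]$, so for $s_0 \ge s_{9}(A, m)$ chosen large enough that $C e^{-s_0/6} \le C(m)/M_1$, the smallness hypothesis \eqref{stability of psi} of Proposition~\ref{prop: stability R2} is met. The required boundedness of $\nabla W_{x_0}(\cdot, s_0)$ transfers from the bound $\|\del_y w_1(\cdot, s_0)\|_{L^\infty} \le C e^{-s_0/6}$ in Proposition~\ref{prop: initialization} through relation \eqref{eq: relation between W and w}.

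Next, applying Proposition~\ref{prop: stability R2} to the time-shifted solution $\tilde W(Y,s) := W_{x_0}(Y, s + s_0)$ on $[0, s_1 - s_0]$, under the standing bound $|W_{x_0}| \le 2\kappa$, delivers
\[
\|W_{x_0}(s) - \psi(s - s_0 + \sigma^*)\|_{L^2_{\rho_d}} \le M_1\,\frac{C e^{-s_0/6}}{|\psi'(s - s_0 + \sigma^*)|}\,|\psi'(\sigma^*)|
\]
for all $s \in [s_0, s_1]$. A triangle inequality together with $\psi \le \kappa$ then produces \eqref{W exponential decay in R_2} as soon as the prefactor $|\psi'(\sigma^*)|/|\psi'(s - s_0 + \sigma^*)|$ is absorbed into a constant of the form $1/C(m)$.

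The hard part is precisely to control this prefactor, because $|\psi'(s)| = \kappa(p-1)^{-1} e^{s}(1+e^{s})^{-1/(p-1)-1}$ vanishes both as $s \to -\infty$ and as $s \to +\infty$, so the denominator may become arbitrarily small as $s - s_0$ grows. To handle this I would split $[s_0, s_1]$ into a bounded initial window $s - s_0 \le T_\ast$, on which the denominator admits a uniform lower bound depending on $\sigma(m)$ and $T_\ast$, and a tail $s - s_0 > T_\ast$, on which $\psi(s - s_0 + \sigma^*)$ is already tiny; on the tail I would close the estimate by restarting the stability argument at the boundary time $s_0 + T_\ast$, where the solution still sits inside the $\psi$-trap, or alternatively invoke the fact that the proof of Proposition~\ref{prop: stability R2} in \cite{MZJEMS24} propagates the relative error along the flow of $\psi$ so that the effective prefactor stays bounded on the whole interval. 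This ultimately yields $\|W_{x_0}(s)\|_{L^2_{\rho_d}} \le \psi(s - s_0 + \sigma^*) + (CM_1/C(m))\,e^{-s_0/6} \le \tfrac{3}{2}\kappa$ for $s_0$ large, which is exactly \eqref{W exponential decay in R_2}, and hence verifies \eqref{eq: Improved bound on W} throughout region $\tilde{\mathcal{R}}_2$.
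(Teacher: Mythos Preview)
Your approach is essentially the paper's: the proof there simply cites Proposition~\ref{prop: initialization}, the definitions of $D$, $\psi$, $G_1$, and the region $\tilde{\mathcal{R}}_2$ to obtain \eqref{initial data control in R_2}, then Lemma~\ref{flatness wa} to pass to the hypothesis \eqref{stability of psi}, and concludes by Proposition~\ref{prop: stability R2}. Your identification of $\sigma^*$ via $\psi(\sigma^*)=w_1((|x_0|-1)e^{s_0/2},s_0)$ and the uniform lower bound $|\psi'(\sigma^*)|\ge C(m)$ on the compact interval $[\sigma(m),\bar\sigma]$ are exactly the right details to fill in.

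Your ``hard part'', however, is a phantom difficulty coming from a typo. The ratio you use, $|\psi'(\sigma^*)|/|\psi'(s-s_0+\sigma^*)|$, is taken literally from the displayed conclusion of Proposition~\ref{prop: stability R2}, but that display has the fraction inverted. Compare with how the same proposition is applied in the proof of Corollary~\ref{W_{x_0} control when x_0 far 1} later in the paper, where the bound reads
\[
M_1\,\|W_{x_0}(s^*)-\psi(\sigma^*)\|_{L^2_{\rho_d}}\,\frac{|\psi'(s+\sigma^*-s^*)|}{|\psi'(\sigma^*)|},
\]
and this is the form consistent with the linearization around $\psi$ (the error decays like $|\psi'|\sim e^{-s/(p-1)}$ as $s\to+\infty$). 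With the correct ratio $|\psi'(s-s_0+\sigma^*)|/|\psi'(\sigma^*)|$, the prefactor is at most $\|\psi'\|_{L^\infty}/C(m)$ uniformly in $s\in[s_0,s_1]$, and \eqref{W exponential decay in R_2} follows in one line; no time-splitting or restart is needed.
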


\begin{proof}
Notice that \eqref{initial data control in R_2} follows from \eqref{eq: profile form} definition of $D$, \eqref{eq: heteroclinic orbit} definition of $\psi$, Proposition \ref{prop: initialization}, \eqref{region R2} and  \eqref{definition of G1}; \eqref{stability of psi} then follows from Lemma \ref{flatness wa}. We can then conlude this corollary by Proposition \ref{stability of psi}
\end{proof}
\subsection{Control of $W_{x_0}(Y,s)$ in region $\tilde{\mathcal{R}}_3 $}\label{Control of W_{x_0}(Y,s) in region R_3}
Consider some initial time $s_0$ such that
\begin{equation}\label{initial data region}
	s_0\geq -\log \left[ \frac{p-1}{\kappa M}(1-\frac{\vep_0}{8})^{4}\right],
\end{equation}
 and consider $m\in(0,1)$. Assume that $x_0 \in \tilde{\mathcal{R}}_3 $ defined in \eqref{region R3}. Directly, we have:
\[
1 - \left(\frac{\kappa m}{p-1}\right)^{\frac{1}{4}}e^{-\frac{s_0}{4}}\leq|x_0|\leq 1 + \left(\frac{\kappa m}{p-1}\right)^{\frac{1}{4}}e^{-\frac{s_0}{4}}.
\]
We remark the following: 
\[
\frac{\vep_0}{8}\leq|x_0|\leq 2-\frac{\vep_0}{8}.
\]
Since we are in the radially symmetric case, it is enough to consider the control of $W_{x_0}(Y,s)$ in the case where $x_0$ is of the same direction as $\mathbf{e}_1$, i.e. 
\begin{equation}\label{eq: condition on Y}
	x_0=|x_0| \mathbf{e}_1.
\end{equation} 
Let us introduce $K \geq (\frac{\vep_0}{8}-1)e^{s_0/2   }$ such that:
\begin{equation} \label{eq: form of a in R3} |x_0|-1=Ke^{-\frac{s_0}{2}}.
\end{equation}
Taking into account \eqref{eq: transform between 1d to dd}, \eqref{eq: self-similar-variables}, \eqref{eq: second group ssv} and \eqref{eq: condition on Y}, we see that for all $Y \in \Rbb^d$ and $s\geq s_0$,
$$
	W_{x_0}(Y,s)=w_{1}\left((|Ye^{-s/2}+x_0|-1)e^{s/2}, s\right),
$$
where
\begin{equation}\label{form of x0 in R3}
	x_0=\left(1+Ke^{-s_0/2}\right)\mathbf{e}_1.
\end{equation}
 We then employ the explicit formula \eqref{eq: initial data} for $w_1(y,s_0)$ to evaluate the components of $W_{x_0}\left(s_0\right)$ in $L_{\rho_{d}}^2$. Using this as initial data, our next step is to integrate \eqref{eq: W's equation} (satisfied by $W_{x_0}$) to estimate $W_{x_0}(s)$ for times $s \geq s_0$. Notably, the outcome of this integration depends on $||x_0|-1|$, the distance between $x_0$ and the blow-up sphere, which can be gauged using the parameters $K$ and $s_0$. Consequently, we distinguish two scenarios in the subsequent discussion.
\subsubsection{Case where $|K|\geq A$} \label{Case large K}
From the decomposition \eqref{eq: form of a in R3} of $x_0$, this is the case of "large" $||x_0|-1|$, where $||x_0|-1| \geq A e^{-\frac{s_0}{2}}$. Let us first estimate $W_{x_0}\left(Y, s_0\right)$. In order to be consistent with the definition \eqref{eq: profile form} of our profile $\varphi$ and the decomposition in regions we suggest in \eqref{region R3}, we will give the expansion of $W_{x_0}\left(s_0\right)$ in $L_{\rho_d}^2$, uniformly with respect to the small variable
\begin{equation}\label{eq: iota}
	\iota=e^{-s_0}K^4,
\end{equation}
and the large parameter $A$. This is our statement:

\begin{lemma}[Initial value of \( W_{x_0}(y,s) \) for large \( |x_0|\)]\label{Initialvalue expansion for large x_0}
For any \( A \geq 1 \) and $\vep_0 \in (0,1)$ there exists \( s_{10}(A) \geq 1 \) such that for any \( s_0 \geq s_{10}(A) \) and any parameter \( (d_{0},d_{1} ,d_{2}, d_{3}, d_{4},d_{5},d_{6}) \in \mathcal{D} \) defined in Proposition~\ref{prop: initialization}, if \( w_1(y,s_0) \) is given by \eqref{eq: initial data}, then for any \( m \in (0,1) \) and \( x_0 \in R_3 \) defined in \eqref{region R3} with $|x_0|$ represented as in \eqref{eq: form of a in R3} for some $K > (\frac{\vep_0}{8}-1)e^{s_0/2}$ with \(  | K |  \geq A \) the following expansion holds in \( L^r_{\rho_d}(\mathbb{R}^d) \) for any \( r \geq 2 \):
\[
  \begin{aligned}
W_{x_0}(Y,s_0)&=w_{1}(||Y|e^{-s_0/2}-1|e^{s_0/2},s_0)\\
& = \kappa-\iota -e^{-s_0}\Bigl[h_4(Y_1)+6K^2h_2(Y_2)\\&
+4K^3h_1(Y_1)+4Kh_3(Y_1)\Bigr]+O\left(\frac{\iota}{A}\right)+O(\iota^2),
\end{aligned}
\]
where \( \iota \) is defined in \eqref{eq: iota}.
\end{lemma}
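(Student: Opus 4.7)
By Claim \ref{CRBWaw} and the normalization $x_0 = (1+Ke^{-s_0/2})\mathbf{e}_1$, we have $W_{x_0}(Y,s_0) = w_1(y_0(Y), s_0)$ where $y_0(Y) := (|Ye^{-s_0/2}+x_0|-1)e^{s_0/2}$. The plan is to Taylor-expand both the geometric map $Y\mapsto y_0(Y)$ and the explicit formula \eqref{eq: initial data} for $w_1$, keep the leading $O(1)$ and $O(e^{-s_0})$ contributions explicit, and absorb the rest into $O(\iota/A) + O(\iota^2)$ in $L^r_{\rho_d}$, using throughout $|K|\geq A$ and $\tilde{\iota} := \tfrac{p-1}{\kappa}\iota \leq m < 1$ (the latter from membership in $\tilde{\mathcal{R}}_3$). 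Setting $\beta := e^{-s_0/2}$, I first expand $|Ye^{-s_0/2}+x_0|^2 = (1+K\beta)^2 + 2(1+K\beta)\beta Y_1 + \beta^2|Y|^2$ and apply $\sqrt{1+u} = 1 + u/2 + O(u^2)$ to obtain $y_0(Y) = K + Y_1 + \tfrac{\beta}{2}|Y_\perp|^2 + O(\beta^2|Y|^4)$ pointwise on $\{|Y|\leq c e^{s_0/2}\}$, with $\rho_d$ super-exponentially small beyond. The cutoff satisfies $\chi\bigl(\tfrac{1+y_0(Y)\beta}{\vep_0}\bigr) = \chi(|Ye^{-s_0/2}+x_0|/\vep_0) \equiv 1$ on the relevant region, since $|x_0| \geq \vep_0/8$ (by the constraint on $K$) and $|Y|\beta$ is small for large $s_0$.

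With the cutoff disposed of, I decompose
\[
w_1(y_0, s_0) = \kappa\, E(y_0)^{\frac{1}{p-1}}(1+\tilde{\mu})^{-\frac{1}{p-1}}(1+\mathcal{E}_S)^{\frac{1}{p-1}},
\]
where $\tilde{\mu} := \tfrac{p-1}{\kappa}y_0^4 e^{-s_0}$, $E(y) := 1 + e^{-s_0}P(y)$ with $P(y) = \tfrac{12(p-1)}{\kappa}(y^2-1)$ from \eqref{eq: definition P(y)}, and $\mathcal{E}_S$ is the $S(y_0)$-perturbation. Using the Hermite identity
\[
(K+Y_1)^4 = K^4 + (12K^2+12) + (4K^3 + 24K)h_1(Y_1) + (6K^2+12)h_2(Y_1) + 4Kh_3(Y_1) + h_4(Y_1),
\]
one obtains
\[
\tilde{\mu} = \tilde{\iota} + \tfrac{p-1}{\kappa}e^{-s_0}\bigl[4K^3 h_1(Y_1) + 6K^2 h_2(Y_1) + 4K h_3(Y_1) + h_4(Y_1)\bigr] + \mathcal{R}_1,
\]
where $\mathcal{R}_1$ collects the omitted Hermite monomials (each carrying weight $K^j\beta^2 = \iota\, K^{j-4} \leq \iota/A^2$ for $j\leq 2$) together with the correction coming from the $\tfrac{\beta}{2}|Y_\perp|^2$ shift in $y_0$, which is bounded by $2(K\beta)^3|Y_\perp|^2 \leq (2\iota/A)|Y_\perp|^2$. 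Consequently $\|\mathcal{R}_1\|_{L^r_{\rho_d}} = O(\iota/A)$.

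Since $\tilde{\iota}<1$, the scalar identity $\kappa(1+\tilde{\iota})^{-1/(p-1)} = \kappa - \iota + O(\iota^2)$ is valid. I then Taylor-expand
\[
(1+\tilde{\mu})^{-\frac{1}{p-1}} = (1+\tilde{\iota})^{-\frac{1}{p-1}} - \tfrac{1}{p-1}(1+\tilde{\iota})^{-\frac{p}{p-1}}(\tilde{\mu} - \tilde{\iota}) + O\bigl((\tilde{\mu}-\tilde{\iota})^2\bigr),
\]
and use $\kappa\cdot\tfrac{1}{p-1}\cdot\tfrac{p-1}{\kappa}=1$ together with $(1+\tilde{\iota})^{-p/(p-1)} = 1 + O(\iota)$: the linear term reduces, modulo an $O(\iota)\cdot O(\iota)=O(\iota^2)$ contribution, to exactly the $-e^{-s_0}$ bracket written in the statement of the lemma. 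The quadratic remainder has $L^r_{\rho_d}$-norm $O(\iota^2)$ by a direct polynomial bound on $\tilde{\mu}-\tilde{\iota}$. Finally $E(y_0)^{1/(p-1)} = 1 + O(\iota/A^2)$ multiplies only higher-order corrections, and $|\mathcal{E}_S| \lesssim A e^{-2s_0}|S(y_0)| \lesssim A\iota^2\, K e^{-s_0}(1+|Y|^5) \lesssim (\iota^2/A^2)(1+|Y|^5)$, which is $O(\iota^2)$ in $L^r_{\rho_d}$.

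The main technical difficulty is the careful $L^r_{\rho_d}$ bookkeeping of the quadratic remainder $(\tilde{\mu}-\tilde{\iota})^2$, which contains products of Hermite polynomials of degree up to $8$ weighted by powers of $K$ up to $K^6 e^{-2s_0}$: one must verify that these products really sum to a term of norm exactly $O(\iota^2)$, with constants uniform in $r\geq 2$. A secondary delicate point is the justification of the Taylor expansion of $\sqrt{1+u}$ on the portion of space where $u$ fails to be uniformly small; this is handled by splitting the integration region at $|Y|\sim s_0^{1/2}$ and exploiting the super-exponential Gaussian decay on the exterior piece.
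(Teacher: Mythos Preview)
Your argument is correct and follows essentially the same route as the paper: expand the change of variables $Y\mapsto y_0(Y)$, Taylor-expand the explicit formula for $w_1$, rewrite $(K+Y_1)^4$ in the Hermite basis, and absorb all subleading pieces into $O(\iota/A)+O(\iota^2)$ using $|K|\ge A$ together with $\iota\le m\kappa/(p-1)$. The paper packages the expansion of $w_1$ into an intermediate claim (Claim~\ref{the t.e. of ini.data}) with error terms $I,J$ and then substitutes the shifted argument, whereas you expand $(1+\tilde\mu)^{-1/(p-1)}$ directly around $\tilde\iota$ rather than around $0$; this is a slightly cleaner way to isolate the $\kappa-\iota$ contribution, but the content is the same. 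Two small remarks: your intermediate bound on $\mathcal{E}_S$ should read $Ae^{-2s_0}|K|^5\le \iota^2/A^2$ (the expression $A\iota^2 Ke^{-s_0}$ is dimensionally off, though your conclusion $O(\iota^2)$ is correct), and the phrase ``$E(y_0)^{1/(p-1)}$ multiplies only higher-order corrections'' is imprecise since it multiplies $\kappa-\iota$ as well, but the resulting error $(\kappa-\iota)\cdot O(\iota/A^2)=O(\iota/A)$ is indeed admissible.
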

Since that from \eqref{region R3} and \eqref{eq: iota}: 
\begin{equation}\label{iota bound}
\iota \leq \frac{m\kappa}{p-1}\leq\frac{\kappa}{p-1}.
\end{equation}
It follows from this lemma that $\|W_{x_0} (Y,s_0)-\kappa\|_{L_{\rho}^2}\leq C\iota+CJ$ where 
\begin{equation}\label{J bound}
	J\equiv e^{-s_0}K^2\leq2\iota^{\frac{1}{3}};
\end{equation}
\begin{equation}\label{eq: definition of s etoile}
	e^{s^*-s_0}\iota=\eta^*.
	\end{equation}
\begin{lemma}\label{lemma decreaing from k to k-eta}
There exists $M_{2}>0$, $A_{2}\geq 1$ and $\eta_{2}>0$ such that for all $A\geq A_{2}$, there exists $s_{11}(A)$ such that for any $s_0\geq s_{11}(A)$ and any parameter $(d_0,d_1,d_2,d_3,d_4,d_5)\in\mathcal{D}(A,s_0)$ defined in Proposition \ref{prop: initialization}, if $w_1(y,s_0)$ is given by \eqref{eq: initial data}, then, for any $\eta^*\in(0,\eta_{2}]$ and $m\in\left(0,\min\left(1,\frac{\eta^*(p - 1)}{k}\right)\right)$, for any $x_0\in \tilde{\mathcal{R}}_3 $ defined in \eqref{region R3} where $x_0$ is given by \eqref{eq: form of a in R3} for some $ K\geq A\geq 0$, the following holds
 
\begin{enumerate}
    \item[(i)] $s^*\geq s_0$, where $s^*$ is introduced in \eqref{eq: definition of s etoile}.
    \item[(ii)] If we assume in addition that $\|W_{x_0}(s)\|_{L^\infty}\leq 2k$ for all $s\in[s_0,s_1]$ for some $s_1\geq s_0$, then, for all $s\in[s_0,\min(s^*,s_1)]$,

    \[
    \|W_{x_0}(\cdot,s)-(\kappa - e^{s - s_0}\iota)\|_{L^2_\rho}\leq M_{2}(\eta^* + A^{-1})e^{s - s_0}\iota + M_{2}e^{-\frac{s_0}{3}}.
    \]
\end{enumerate}
 \end{lemma}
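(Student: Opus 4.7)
The plan is to dispatch (i) by a direct computation and to prove (ii) by linearizing equation \eqref{eq: W's equation} about the spatially constant background $U_0(s)=\kappa-e^{s-s_0}\iota$, then running a Hermite-mode analysis on the remainder. For (i), recall from \eqref{region R3} that $x_0\in\tilde{\mathcal{R}}_3$ means $G_1(x_0)=\frac{p-1}{\kappa}(|x_0|-1)^4\leq m e^{-s_0}$, so with $|x_0|-1=Ke^{-s_0/2}$ we obtain $\iota=e^{-s_0}K^4\leq \frac{\kappa m}{p-1}$. Since by hypothesis $m\leq \eta^{*}(p-1)/\kappa$, it follows that $\iota\leq\eta^{*}$, and hence $e^{s^{*}-s_0}=\eta^{*}/\iota\geq 1$, i.e.\ $s^{*}\geq s_0$.

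For (ii), I would set $Z(Y,s)=W_{x_0}(Y,s)-U_0(s)$ for $s\in[s_0,\min(s^{*},s_1)]$. Plugging into \eqref{eq: W's equation} and using the crucial identity $p\kappa^{p-1}=p/(p-1)$, one checks that the linear-in-$(\iota e^{s-s_0})$ contributions cancel, yielding
\[
\partial_s Z=\mathcal{L}Z + V_0(s)\,Z + S_0(s) + N_0(Y,s,Z),
\]
where $\mathcal{L}$ is the operator from \eqref{eq: linear operator}, $V_0(s)=p(U_0(s)^{p-1}-\kappa^{p-1})=O(e^{s-s_0}\iota)=O(\eta^{*})$ is spatially constant, $S_0(s)=U_0^p-U_0/(p-1)-\dot U_0=\frac{p(p-1)}{2}\kappa^{p-2}(e^{s-s_0}\iota)^2+\cdots$ is of order $(e^{s-s_0}\iota)^2\leq \eta^{*}\,e^{s-s_0}\iota$, and $N_0$ is at least quadratic in $Z$. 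Since $x_0$ is away from the origin and $W_{x_0}$ solves \eqref{eq: W's equation} without any cut-off, no spurious boundary terms appear.

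The initial size of $Z$ is read off from Lemma \ref{Initialvalue expansion for large x_0}: modulo $O(\iota/A)+O(\iota^2)$ in $L^2_{\rho_d}$, $Z(\cdot,s_0)$ is an explicit linear combination of $h_1(Y_1), h_2(Y_2), h_3(Y_1), h_4(Y_1)$ with coefficients $-4\iota/K$, $-6\iota/K^2$, $-4\iota/K^3$, $-\iota/K^4$, each bounded by $\iota/A$ since $|K|\geq A$. Combining with the $L^2_{\rho_d}$ flatness of Lemma \ref{flatness wa}, one gets $\|Z(\cdot,s_0)\|_{L^2_{\rho_d}}\leq C\iota/A + C\iota^2 + Ce^{-s_0/3}$. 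I then project the equation mode-by-mode in the Hermite basis and exploit the spectrum \eqref{eq: spectrum}: the constant mode expands like $e^{s-s_0}$, mode $1$ like $e^{(s-s_0)/2}$, mode $2$ is neutral, modes of order $\geq 3$ contract. Evolving the initial Hermite data gives a contribution of order $(\iota/A+\iota^2)\,e^{s-s_0}$ in the unstable mode and $O(\iota/A)\,e^{(s-s_0)/2}$ in mode $1$, both absorbed by $A^{-1}\iota e^{s-s_0}$; the forcing $S_0$ contributes at most $\int_{s_0}^{s}e^{s-s'}|S_0(s')|\,ds'\lesssim \eta^{*}\iota e^{s-s_0}$. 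A standard bootstrap then absorbs the cross terms $V_0 Z$ (a small multiplier $O(\eta^{*})$) and the quadratic $N_0$, producing the claimed bound $M_{2}(\eta^{*}+A^{-1})\iota e^{s-s_0}+M_{2}e^{-s_0/3}$.

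The main obstacle is the length of the time window: $s^{*}-s_0=\log(\eta^{*}/\iota)$ can be as large as $O(s_0)$ when $K$ is close to $A$, so a naive Gronwall bound on the full $L^2_{\rho_d}$ norm would lose the correct growth rate. The rescue is structural: the constant background $U_0(s)=\kappa-e^{s-s_0}\iota$ has been tuned so that it exactly reproduces the linearized ODE evolution of the mode-$0$ part of $W_{x_0}$, thereby reducing the unstable-direction data of $Z$ from $-\iota$ down to $O(\iota/A+\iota^2+e^{-s_0/3})$ and leaving only a purely quadratic source $S_0$. Combined with $\int_{s_0}^{s^{*}}|V_0(s')|\,ds'\lesssim \eta^{*}$, this gives the clean $(\eta^{*}+A^{-1})$ prefactor; choosing $\eta_2$, then $A_2$, then $s_{11}(A)$ successively small/large enough closes the bootstrap.
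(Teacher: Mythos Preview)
Your proof is correct and follows essentially the same Hermite-mode strategy as the paper, with one structural difference worth noting: you linearize around the \emph{moving} background $U_0(s)=\kappa-e^{s-s_0}\iota$, whereas the paper linearizes around the constant $\kappa$, setting $V_{x_0}=W_{x_0}-\kappa$ and tracking how its mode-$0$ component, which starts at $-\iota$, evolves to $-e^{s-s_0}\iota$ under the eigenvalue $1$. Your choice has the advantage that the remainder $Z$ stays uniformly small on $[s_0,s^{*}]$, so the quadratic term $N_0$ is immediately of higher order; the price is the appearance of the time-dependent multiplier $V_0(s)=O(e^{s-s_0}\iota)$ and the source $S_0(s)=O((e^{s-s_0}\iota)^2)$, which you correctly observe integrate to $O(\eta^{*})$. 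The paper's choice gives the cleaner linearized equation $\partial_s V_{x_0}=\mathcal{L}_d V_{x_0}+\bar B(V_{x_0})$ with no potential or source, but $V_{x_0}$ itself is of size $\eta^{*}$, so the quadratic bound $\|\bar B\|\lesssim\|V_{x_0}\|^2\leq\eta^{*}\cdot e^{s-s_0}\iota$ does the same job.

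One point you leave implicit: to pass from the pointwise estimate $|N_0|\leq C|Z|^{\bar p}$ to an $L^2_{\rho_d}$ bound on $N_0$ (or on its projections) you need control of $Z$ in $L^{2\bar p}_{\rho_d}$, not just $L^2_{\rho_d}$. The paper handles this via a delay regularizing estimate (the analogue of Lemma~\ref{lemma: Reg eff d oper} and Proposition~\ref{delay regularizing estimate}), which is why their differential inequality \eqref{ode of ortho compl v} carries the term $\|V_{x_0}(s-2)\|_{L^2_\rho}^2$ with a two-unit time delay. Your ``standard bootstrap'' should invoke the same mechanism; since Lemma~\ref{Initialvalue expansion for large x_0} gives the initial expansion in $L^r_{\rho_d}$ for all $r\geq 2$, the ingredients are in place.
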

With this result, we are able to prove estimate \eqref{eq: Improved bound on W}:
 
\begin{corollary}(Proof of the $L^2_{\rho_d}$ bound when $x_0\in \tilde{\mathcal{R}}_3 $ is ``far'' under some $L^\infty$ a priori bound).\label{W_{x_0} control when x_0 far 1}
There exists $A_{3}>0$, $\eta_{3}>0$ and $s_{12}(\eta^*) \geq 1$, such that under the hypotheses of Lemma \ref{lemma decreaing from k to k-eta}, if in addition $A\geq A_{3}$, $\eta^*\leq\eta_{3}$ and $s_0\geq s_{12}(\eta^*)$, then, for all $s\in[s_0,s_1]$, $\|W_{x_0}(s)\|_{L^2_p}\leq\frac{3}{2}k$.
\end{corollary}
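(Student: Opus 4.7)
The plan is to split $[s_0, s_1]$ into up to three consecutive phases, invoking a different stability result on each: drift away from $\kappa$, trapping near the heteroclinic orbit $\psi$, and relaxation to the zero solution. First, for $s \in [s_0, \min(s^*, s_1)]$, I would apply Lemma \ref{lemma decreaing from k to k-eta}(ii) directly together with the triangle inequality to write
\[
\|W_{x_0}(s)\|_{L^2_{\rho_d}} \leq \kappa - \bigl(1 - M_{2}(\eta^{*}+A^{-1})\bigr)e^{s-s_0}\iota + M_{2}\,e^{-s_0/3},
\]
and by choosing $\eta_{3}$ small and $A_{3}$ large so that $M_{2}(\eta_{3} + A_{3}^{-1}) \leq 1/2$, together with $s_{12}(\eta^{*})$ large enough that $M_{2}\,e^{-s_0/3} \leq \kappa/4$, the right-hand side is bounded by $5\kappa/4 < \tfrac{3}{2}\kappa$. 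If $s_1 \leq s^{*}$, the proof is finished.

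Assuming $s_1 > s^{*}$, I would introduce $\sigma^{**}\in\mathbb{R}$ defined by $\psi(\sigma^{**}) = \kappa - \eta^{*}$; a Taylor expansion of \eqref{eq: heteroclinic orbit} yields $e^{\sigma^{**}} \sim (p-1)\eta^{*}/\kappa$ and $|\psi'(\sigma^{**})| \sim \eta^{*}$ for small $\eta^{*}$. Evaluating Lemma \ref{lemma decreaing from k to k-eta}(ii) at $s=s^{*}$ gives $\|W_{x_0}(s^{*}) - \psi(\sigma^{**})\|_{L^2_{\rho_d}} \leq M_{2}(\eta^{*}+A^{-1})\eta^{*} + M_{2}\,e^{-s_0/3}$; after tightening $\eta_{3}$, $A_{3}$, and $s_{12}(\eta^{*})$, this is smaller than $|\psi'(\sigma^{**})|/M_{1}$, so the hypothesis of Proposition \ref{prop: stability R2} is met (the gradient condition being supplied by Proposition \ref{gradient estimation} at $s^{*}$). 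Shifting the time origin to $s^{*}$ and invoking Proposition \ref{prop: stability R2} followed by the triangle inequality produces
\[
\|W_{x_0}(s)\|_{L^2_{\rho_d}} \leq \psi(s - s^{*} + \sigma^{**}) + \frac{|\psi'(\sigma^{**})|^{2}}{|\psi'(s - s^{*} + \sigma^{**})|},
\]
which stays below $(\kappa - \eta^{*}) + C\eta^{*} < \tfrac{3}{2}\kappa$ on the sub-interval $[s^{*}, s_{\mathrm{sym}}]$ where $|\psi'(s-s^{*}+\sigma^{**})| \geq |\psi'(\sigma^{**})|$, which by symmetry of $|\psi'|$ encompasses the descent of $\psi$ from $\kappa-\eta^{*}$ down to a fixed small multiple of $\eta^{*}$. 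At $s_{\mathrm{sym}}$ the same estimate forces $\|W_{x_0}(s_{\mathrm{sym}})\|_{L^2_{\rho_d}} \leq \epsilon_{0}$ (the threshold in Proposition \ref{prop: control of w in R1}), so for the third phase $s \in [s_{\mathrm{sym}}, s_1]$ I would restart the argument via Proposition \ref{prop: control of w in R1} (time-translation invariant), obtaining exponential decay to $0$ and hence $\|W_{x_0}(s)\|_{L^2_{\rho_d}} \leq \kappa/2$.

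The hard part will be the quantitative matching at $s^{*}$: one needs the $\mathcal{O}((\eta^{*})^{2}) + \mathcal{O}(e^{-s_0/3})$ remainder from Lemma \ref{lemma decreaing from k to k-eta}(ii) to be dominated by $|\psi'(\sigma^{**})|/M_{1} \sim \eta^{*}/M_{1}$. The quadratic contribution is absorbed by taking $\eta_{3}$ small and $A_{3}$ large (second-order beats first-order), while the $e^{-s_0/3}$ contribution forces a condition of the form $s_{12}(\eta^{*}) \geq 3\log(C/\eta^{*})$, which is precisely the $\eta^{*}$-dependence of $s_{12}$ recorded in the statement of the corollary.
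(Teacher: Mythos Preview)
Your approach is correct and the first two phases match the paper's proof almost verbatim (Case~1: $s\le s^*$ via Lemma~\ref{lemma decreaing from k to k-eta}; Case~2: $s\ge s^*$ via the matching $\psi(\sigma^{**})=\kappa-\eta^*$ and Proposition~\ref{prop: stability R2}). The only difference is that the paper does \emph{not} need your Phase~3.

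The reason is how Proposition~\ref{prop: stability R2} is actually applied. In the paper's proof the deviation from $\psi$ is bounded by
\[
\|W_{x_0}(s)-\psi(s-s^*+\sigma^*)\|_{L^2_{\rho_d}}
\le M_1\,\|W_{x_0}(s^*)-\psi(\sigma^*)\|_{L^2_{\rho_d}}\cdot
\frac{|\psi'(s-s^*+\sigma^*)|}{|\psi'(\sigma^*)|},
\]
i.e.\ with $|\psi'(s-s^*+\sigma^*)|$ in the \emph{numerator} (the displayed statement of Proposition~\ref{prop: stability R2} has the ratio written the other way, which is a typo; the form above is the one consistent with \cite{MZJEMS24} and with the paper's own use of it). Since $|\psi'|\le\|\psi'\|_{L^\infty}$ is uniformly bounded, one simply strengthens the matching condition at $s^*$ to
\[
\|W_{x_0}(s^*)-\psi(\sigma^*)\|_{L^2_{\rho_d}}\le \frac{|\psi'(\sigma^*)|}{M_1\bigl(1+\tfrac{2}{\kappa}\|\psi'\|_{L^\infty}\bigr)},
\]
which is still of order $\eta^*$ and is achieved by the same choices of $\eta_3$, $A_3$, $s_{12}(\eta^*)$ you describe. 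This gives $\|W_{x_0}(s)-\psi(s-s^*+\sigma^*)\|\le\kappa/2$ for \emph{all} $s\in[s^*,s_1]$, hence $\|W_{x_0}(s)\|_{L^2_{\rho_d}}\le\psi+\kappa/2\le\tfrac32\kappa$ directly, with no handoff to Proposition~\ref{prop: control of w in R1}.

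Your three-phase argument is a valid workaround for the typo'd version of Proposition~\ref{prop: stability R2}, but once the ratio is put the right way around, Phase~3 and the symmetry-point bookkeeping become superfluous.
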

\begin{proof}
Following Lemma \ref{lemma decreaing from k to k-eta}, we further assume that $s_0 \geq s_{6}(A, 1)$ defined in Proposition \ref{gradient estimation}, so we can apply that proposition. Consider then $s \in\left[s_0, s_1\right]$. We distinguish 2 cases:
\paragraph{Case 1:} $s \leq s^*$. We write from Lemma \ref{lemma decreaing from k to k-eta}, together with the definitions \eqref{eq: iota} and \eqref{eq: definition of s etoile} of $\iota$ and $s^*$

$$
\begin{aligned}
	\left\|W_{x_0}(s)\right\|_{L_{\rho_d}^2} & \leq \kappa+e^{s^*-s_0} \iota+M_{2}\left(\eta^*+A^{-1}\right) e^{s^*-s_0} \iota+M_{2} e^{-\frac{s_0}{3}}, \\
	& \leq \kappa+\eta^*+M_{2}\left(\eta^*+A^{-1}\right) \eta^*+M_{2} e^{-\frac{s_0}{3}}.
\end{aligned}
$$
Since $A \geq 1$, taking $\eta^*$ small enough and $s_0$ large enough, we get the result.
\paragraph{Case 2:} $s \geq s^*$. In this case, we have $s_0 \leq s^* \leq s \leq s_1$. We will show that starting at time $s^*, w$ will be trapped near the heteroclinic orbit $\psi$ \eqref{eq: heteroclinic orbit}, thanks to Proposition \ref{prop: stability R2}. In particular, its $L_{\rho_d}^2$ will remain bounded by $\frac{3}{2} \kappa$. More precisely, from item (ii) of Lemma \ref{lemma decreaing from k to k-eta} and \eqref{eq: definition of s etoile}, we see that

$$
\left\|W_{x_0}\left(s^*\right)-\left(\kappa-\eta^*\right)\right\|_{L_{\rho_d}^2} \leq M_{2}\left(\eta^*+A^{-1}\right) \eta^*+M_{2} e^{-\frac{s_0}{3}}.
$$
Assuming that $\eta^*<\kappa$, we may introduce $\sigma^* \in \mathbb{R}$ such that $\psi\left(\sigma^*\right)=\kappa-\eta^*$, where $\psi$ is defined in \eqref{eq: heteroclinic orbit}. Noting that
$$
\left|\psi^{\prime}\left(\sigma^*\right)\right| \sim \frac{\kappa e^{\sigma^*}}{p-1} \sim \kappa-\psi\left(\sigma^*\right)=\eta^* \text { as } \eta^* \rightarrow 0,
$$
we see that taking $\eta^*$ small enough, then $A$ and $s_0$ large enough, we have

$$
\left\|W_{x_0}\left(s^*\right)-\psi\left(\sigma^*\right)\right\|_{L_{\rho_d}^2} \leq \frac{\left|\psi^{\prime}\left(\sigma^*\right)\right|}{M_1\left[1+\frac{2}{\kappa}\left\|\psi^{\prime}\right\|_{L^{\infty}}\right]},
$$
where $M_1$ is introduced in Proposition \ref{prop: stability R2}. Since $\nabla W_{x_0}\left(s^*\right) \in L^{\infty}$, thanks to Proposition \ref{gradient estimation}, together with the definition \eqref{eq: initial data} of $w_1$ and the relation \eqref{eq: relation between W and w}, Proposition \ref{prop: stability R2} applies and we see that at time $s$, we have

$$
\begin{aligned}
	& \left\|W_{x_0}(s)-\psi\left(s+\sigma^*-s^*\right)\right\|_{L_{\rho_d}^2} \leq M_1\left\|W_{x_0}\left(s^*\right)-\psi\left(\sigma^*\right)\right\|_{L_{\rho_d}^2} \frac{\left|\psi^{\prime}\left(s+\sigma^*\right)\right|}{\left|\psi^{\prime}\left(\sigma^*\right)\right|} \\
	& \quad \leq \frac{\left|\psi^{\prime}\left(s+\sigma^*-s^*\right)\right|}{1+\frac{2}{\kappa}\left\|\psi^{\prime}\right\|_{L^{\infty}}} \leq \frac{\kappa}{2} .
\end{aligned}
$$
Since $\psi \leq \kappa$ by definition \eqref{eq: heteroclinic orbit}, we see that

$$
\left\|W_{x_0}(s)\right\|_{L_{\rho_d}^2} \leq \psi\left(s+\sigma^*-s^*\right)+\frac{\kappa}{2} \leq \frac{3}{2} \kappa .
$$
This concludes the proof of Corollary \ref{W_{x_0} control when x_0 far 1} .
\end{proof}
\subsubsection{Case where $|K|\leq A$}\label{Case when |K| smaller than A}
Given $m \in(0,1)$ and $x_0 \in \tilde{\mathcal{R}}_3$ defined in (5.9), we aim in this section to handle the case where $x_0$ is close to the blow-up sphere, namely when it belongs to the region $\mathcal{T}_0$ defined by

$$
\mathcal{T}_0=\left\{\left.K e^{-\frac{s_0}{2}}\right\rvert\, -A\leq K\leq A\right\}
$$
Let us recall from the beginning of Section \ref{Proof of the priori bound} that $W_{x_0}\in \mathcal{S}$ given in definition \ref{Def: shrinking set}, for all $s \in\left[s_0, s_1\right]$ for some $s_1 \geq s_0$, hence,

$$
\forall s \in\left[s_0, s_1\right], \quad\left\|W_{x_0}(s)\right\|_{L^{\infty}} \leq 2 \kappa
$$
Introducing the segment

\begin{equation}\label{ G sigma definition}
\mathcal{G}_\sigma=\left\{\left.K^{\prime} e^{-\frac{\sigma}{2}} \right\rvert\,  K'\geq \left(\frac{\vep_0}{8}-1\right)e^{s_0/2}\text { and } K^{\prime}=\pm A\right\},
\end{equation}
and the interval

$$
\mathcal{T}_1=\left\{\left.K^{\prime} e^{-\frac{s_1}{2}} \right\rvert\,  K'\geq \left(\frac{\vep_0}{8}-1\right)e^{s_0/2}\text { and } |K^{\prime}| \leq A\right\},
$$
we see that
$$
\mathcal{T}_0=\cup_{s_0 \leq \sigma<s_1} \mathcal{G}_\sigma \cup \mathcal{T}_1
$$
We then proceed in 2 steps:
\begin{itemize}[label=-]
\item  In Step 1, we handle the case of "small" $s$, namely when $x_0 \in \mathcal{G}_\sigma$ with $s_0 \leq s \leq \sigma \leq s_1$, and also the case where $x_0 \in \mathcal{T}_1$ with $s_0 \leq s \leq s_1$.
\item In Step 2, we handle the case of "large" $s$, namely when $x_0 \in \mathcal{G}_\sigma$ with $s_0 \leq \sigma \leq s \leq s_1$.
\end{itemize}
\paragraph{Step 1: Case of "small" $||x_0|-1|$ and "small" $s$}\,\\

Consider $x_0 \in \mathcal{G}_\sigma$ with $s_0 \leq s \leq \sigma<s_1$, or $x_0 \in \mathcal{T}_1$ with $s_0 \leq s \leq s_1$. The conclusion will follow from the $L_{\rho_d}^2$ estimate on $W_{x_0}$ together with the gradient estimate of Proposition \ref{gradient estimation}. Indeed, using a Taylor expansion together with \eqref{eq: second group ssv} and \eqref{rho d is a density function}, we write

$$
\begin{gathered}
		W_{x_0}(0, s)=W_{e_1}\left((|x_0|-1)e^{\frac{s}{2}}\mathbf{e}_1, s\right)=W_{e_1}(0, s)+O\left( ||x_0|-1|e^{\frac{s}{2}}\left\|\nabla W_{0}(s)\right\|_{L^{\infty}}\right) \\
		W_{\mathbf{e}_1}(0,s)=w_1(0,s)\\  
\end{gathered}
$$
on the one hand. 

On the other hand, since $w_1(0,s)=\tilde{w}(0,s)=\varphi(0,s)+q(0,s)$ by definition \eqref{eq: second group ssv}, \eqref{eq: def Chi}, \eqref{region R1}  and \eqref{eq: w decompose-in-terms-q-phi} on $s$,  recalling definition \eqref{eq: profile form} and \eqref{Def: shrinking set}, we write:
$$
\begin{aligned}
w_1(0,s) = \kappa + O(se^{-2s})
\end{aligned}
$$
Introducing  $K^{\prime}$ such that
$$
|x_0|-1=K^{\prime} e^{-\frac{\sigma}{2}}.
$$
with $\sigma=s_1$ if $a \in \mathcal{T}_1$, we see that
$$
-A \leq K^{\prime} \leq A 
$$
Recalling that $s \leq \sigma$, we write

$$
\left|(|x_0|-1) e^{\frac{s}{2}}\right|= |K'|e^{\frac{s-\sigma}{2}} \leq \sqrt{2} A
$$

Taking $A \geq 1$ and recalling that $\left\|\nabla W_0(s)\right\|_{L^{\infty}} \leq \delta_0$ from Proposition \ref{gradient estimation}, provided that $s_0 \geq s_{6}\left(A, \delta_0\right)$, we write from the previous estimates that

$$
\left|W_{x_0}(0, s)\right| \leq \kappa+C \delta_0+C A \delta_0 + O(se^{-2s})\leq \frac{3}{2} \kappa
$$
whenever $\delta_0 \leq \delta_{1}(A)$ and $s_0 \geq s_{13}\left(A, \delta_0\right)$ for some $s_{13}\left(A, \delta_0\right) \geq 1$ and $\delta_{1}(A)>0$. In particular, estimate \eqref{eq: priori estimate goal 2} holds.

\paragraph{Step 2: Case of "small" $\left||x_0|-1\right|$ and "large" $s$}\;

Now, we consider $x_0 \in \mathcal{G}_\sigma$ with $s_0 \leq \sigma \leq s \leq s_1$. As we will shortly see, the conclusion follows here from the case $|K| \geq A$ treated in Section \ref{Case large K}, if one replaces there $K$ and $s_0$ by $K^{\prime}$ and $\sigma$. Indeed, by definition \eqref{ G sigma definition} of $\mathcal{G}_\sigma$, we have
$$
K^{\prime}=\pm A
$$
where $K^{\prime}$ are defined in \eqref{eq: form of a in R3}. Following our strategy in Section \ref{Case large K}, we first start by expanding $W_{x_0}(Y  , \sigma)$ as in Lemma \ref{Initialvalue expansion for large x_0}:
\begin{lemma}[Expansion of $W_{x_0}(Y, \sigma)$]\label{Ex. W Y sigma} For any $A \geq 1$, there exists $s_{13}(A) \geq 1$ such that for any $s_0 \geq s_{13}(A)$, the following holds:
Assume that $U(t)\in \mathcal{S}(t)$, and $W_{x_0}(Y,s)$ is  defined as in \eqref{eq: second group ssv} satisfies equation \eqref{eq: W's equation} for any $s \in\left[s_0, \sigma\right]$ for some $\sigma \geq s_0$ such that $\nabla q\left(s_0\right) \in L^{\infty}\left(\mathbb{R}^2\right)$. Assume in addition that $|x_0|-1=K^{\prime}e^{-\frac{\sigma}{2}}$ such that $K= \pm A$ holds. Then,  for all $Y\in \Rbb^d$, 
 $$
\begin{aligned}
	W_{x_0}(Y , \sigma)=\kappa-\iota^{\prime} & +O\left(\frac{\iota^{\prime}}{A}\right)+O\left(\iota^{\prime 2}\right) \\
	-e^{-\sigma} & \left\{h_4(Y_1)+6K^2h_2(Y_2)
	+4K^3h_1(Y_1)+4Kh_3(Y_1)\right\} \\
	+q_{6}(\sigma) & \left\{h_6(Y_1)+5K'h_5(Y_1)+15K'^2h_4(Y_1)+20K'^3h_3(Y_1)\right. \\
	&\left.+15K'^4h_2(Y_1)+5h_1(Y_1)K'^5+K'^6h_0(Y_1)\right\}
\end{aligned}
$$
in $L_\rho^r$ for any $r \geq 2$, with $\left|q_{6}(\sigma)\right| \leq A \sigma e^{-2 \sigma}$, where

\begin{equation}\label{iota prime defintion}
\iota^{\prime}=e^{-\sigma}{K^{\prime}}^4
\end{equation}
\end{lemma}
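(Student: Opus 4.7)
The lemma is the time-$\sigma$ analogue of Lemma \ref{Initialvalue expansion for large x_0}: since the explicit initial data formula \eqref{eq: initial data} is no longer available at time $\sigma$, I would instead use the decomposition $w_1=\varphi+q$ from \eqref{eq: w decompose-in-terms-q-phi} together with the bootstrap bounds encoded in $q(\sigma)\in\mathcal V(\sigma)$ from Definition \ref{Def: shrinking set}. The starting point is Claim \ref{CRBWaw}, giving $W_{x_0}(Y,\sigma)=w_1(\tilde y,\sigma)$ with $\tilde y=(|Ye^{-\sigma/2}+x_0|-1)e^{\sigma/2}$. Writing $x_0=(1+K'e^{-\sigma/2})\mathbf{e}_1$ with $|K'|=A$, a direct Taylor expansion of the Euclidean norm yields
$$\tilde y \;=\; (Y_1+K')+\tfrac{1}{2}|Y'|^2\,e^{-\sigma/2}+O(e^{-\sigma}),\qquad Y'=(Y_2,\ldots,Y_d).$$

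\textbf{Profile contribution.} By Lemma \ref{lemma: good properties of phi} combined with the expansion \eqref{profile expansion} and the choice \eqref{eq: definition P(y)} of $P$, one has $\varphi(\tilde y,\sigma)=\kappa-e^{-\sigma}h_4(\tilde y)+O(e^{-2\sigma})$ in $L^r_{\rho_d}$. Applying the Hermite shift identity
$$h_n(a+b)=\sum_{j=0}^n\binom{n}{j}b^{n-j}h_j(a)$$
with $n=4$, $a=Y_1$, $b=K'$, one obtains $h_4(Y_1+K')=h_4(Y_1)+4K'h_3(Y_1)+6K'^2 h_2(Y_1)+4K'^3 h_1(Y_1)+K'^4$, and the contribution $-e^{-\sigma}K'^4$ is exactly $-\iota'$. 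The transverse correction $\tfrac{1}{2}|Y'|^2 e^{-\sigma/2}$ is then propagated through a Taylor expansion of $h_4$ around $Y_1+K'$, producing terms of order $e^{-3\sigma/2}$, which are dominated by $\iota'/A$ since $|K'|=A$ is fixed.

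\textbf{Perturbation contribution.} Since $q(\sigma)\in\mathcal V(\sigma)$, the decomposition \eqref{decomposition of q} together with Definition \ref{Def: shrinking set} gives $|q_m(\sigma)|\leq A e^{-2\sigma}$ for $m\leq 5$, $|q_6(\sigma)|\leq A\sigma^2 e^{-2\sigma}$, and $\|q_-(\sigma)\|_{L^2_{\rho_1}}\leq A^2\sigma^2 e^{-3\sigma}$. Applying the Hermite shift identity with $n=6$ to $h_6(\tilde y)$ produces the $q_6$-bracket stated in the lemma,
$$h_6(Y_1+K')=\sum_{j=0}^6\binom{6}{j}K'^{6-j}h_j(Y_1).$$
The lower modes $q_m$ with $m\leq 5$, after the Hermite shift, are polynomials in $Y_1$ of degree at most $5$ with coefficients bounded by $A\cdot A^5\cdot e^{-2\sigma}=A^6 e^{-2\sigma}$, which is absorbed into $O(\iota'^2)=O(A^8 e^{-2\sigma})$ provided $A$ is large enough. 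The remainder $q_-$ is $O(A^2\sigma^2 e^{-3\sigma})$ in $L^2_{\rho_1}$, hence also absorbed.

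\textbf{Main obstacle.} The delicate point is ensuring that the $L^r_{\rho_d}$-norm properly controls the transverse correction $\tfrac{1}{2}|Y'|^2 e^{-\sigma/2}$ once composed into $h_4(\tilde y)$ and (especially) the higher-degree $h_6(\tilde y)$ coming from $q_6$. A pointwise Taylor expansion of $h_n(\tilde y)$ around $Y_1+K'$ yields a remainder of size $C(1+|Y|)^{n-1}|Y'|^2 e^{-\sigma/2}$; when multiplied by $e^{-\sigma}$ for the $\varphi$ piece, or by $|q_6(\sigma)|\leq A\sigma^2 e^{-2\sigma}$ for the perturbation piece, and then integrated against the Gaussian weight $\rho_d$ using standard moment bounds on $|Y|^k$, the resulting $L^r_{\rho_d}$ contribution is of order $e^{-3\sigma/2}$ respectively $\sigma^2 e^{-5\sigma/2}$, both strictly smaller than $\iota'/A$ and $\iota'^2$ for $\sigma$ large. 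Assembling the two contributions then gives the stated expansion.
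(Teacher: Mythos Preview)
Your overall strategy---decompose $w_1=\varphi+q$ via \eqref{eq: w decompose-in-terms-q-phi}, expand the profile by a Hermite shift, then handle the finite modes $q_0,\ldots,q_6$ by the same shift---matches the paper's proof. However, there is a genuine gap in your treatment of the infinite-dimensional tail $q_-$.

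You write that ``the remainder $q_-$ is $O(A^2\sigma^2 e^{-3\sigma})$ in $L^2_{\rho_1}$, hence also absorbed.'' This does not follow. The bootstrap set $\mathcal V(\sigma)$ only bounds $\|q_-(\sigma)\|_{L^2_{\rho_1}(\Rbb)}$, whereas the lemma asserts the expansion in $L^r_{\rho_d}(\Rbb^d)$ for every $r\geq 2$. Two nontrivial steps are missing. First, one must upgrade the $L^2_{\rho_1}$ bound on $q_-$ to an $L^{r'}_{\rho_1}$ bound for arbitrary $r'\geq 2$; the paper does this via a separate parabolic regularity estimate (Lemma \ref{Parabolic regularity for q - sigma}), proved through a Duhamel argument using the smoothing of $e^{s\mathcal L}$ from Lemma \ref{lemma: Reg eff d oper}. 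Second, one must transfer a 1D weighted bound on $q_-(\cdot,\sigma)$ into a $d$-dimensional weighted bound on $Y\mapsto q_-(\tilde y(Y),\sigma)$. This requires a change of variables $Z_1=\tilde y(Y)$, $Z_i=Y_i$ ($i\geq 2$), with explicit control of the Jacobian and of how the Gaussian weight $\rho_d(Y)$ compares to $\rho_1(Z_1)$ after the shift by $K'$; the map $\tilde y$ is moreover not differentiable at $Y=-x_0 e^{\sigma/2}$, so the integral must be split into a region $|Y|\leq r_1$ where the change of variables is valid and a far region where one uses the $L^\infty$ bound on $w_1$ and Gaussian tail decay (this is Claim \ref{Claim Q underline estimation} in the paper). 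Your one-line dismissal of $q_-$ covers none of this, and for $r>2$ the argument as written simply does not close.

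A minor secondary point: you implicitly use $w_1=\varphi+q$, but in fact $\tilde w=\chi\, w_1=\varphi+q$ by \eqref{eqdoftw} and \eqref{eq: w decompose-in-terms-q-phi}, so one must also check that $(1-\chi)W_{x_0}$ is $O(\iota'/A)$ in $L^r_{\rho_d}$. This is easy (Taylor-expand $\chi$ around $|x_0|/\vep_0$ and use $\|w_1\|_{L^\infty}\leq 2\kappa$), but it should be stated. The transverse correction you flag as the ``main obstacle'' is in fact routine once one notes that all the terms involved are explicit polynomials; the real difficulty is $q_-$.
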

\begin{proof}
	See in Section \ref{proof of technical details}
\end{proof}

Now, arguing as for Lemma \ref{Initialvalue expansion for large x_0}, we see $W_{x_0}(Y, \sigma)$ as initial data then integrate equation \eqref{eq: W's equation} to get an expansion of $W_{x_0}(Y,s)$ for later times:

\begin{lemma}[Decreasing $w_a(\sigma)$ from $\kappa$ to $\kappa-\eta^*$] There exists $M_{3}>0, A_{4} \geq 1$ and $\eta_{4}>0$ such that for all $A \geq A_{4}$ and $\eta^* \in\left(0, \eta_{4}\right]$, there exists $s_{14}\left(A, \eta^*\right)$ such that for any $\sigma \geq s_{14}$ and $s_1 \geq \sigma$, if $q(s) \in V_A(s)$ given in Definition \ref{Def: shrinking set} satisfies equation \eqref{q's equation} on $\left[\sigma, s_1\right]$ and $\nabla q(\sigma) \in L^{\infty}$, if $x_0=K^{\prime} e^{-\frac{\sigma}{2}}$ with $K^{\prime}= \pm A$, then:
\begin{enumerate}
\item $s^* \geq \sigma$, where $s^*$ is such that $e^{s^*-\sigma} \iota^{\prime}=\eta^*$ where $\iota^{\prime}$ is defined in \eqref{iota prime defintion}.
\item For all $s \in\left[\sigma, \min \left(s^*, s_1\right)\right]$,
\end{enumerate}
$$
\left\|W_{x_0}(s)-\left(\kappa-e^{s-\sigma} \iota^{\prime}\right)\right\|_{L_{\rho_d}^2} \leq M_{3}\left(\eta^*+A^{-1}\right) e^{s-\sigma} \iota^{\prime}+M_{3} e^{-\frac{\sigma}{3}}
$$
\end{lemma}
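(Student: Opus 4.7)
The plan is to mimic the proof of Lemma~\ref{lemma decreaing from k to k-eta}, with the initial time $s_0$ replaced by $\sigma$ and using Lemma~\ref{Ex. W Y sigma} as the new initial expansion. Setting $V = W_{x_0} - \kappa$, the identity $p\kappa^{p-1} = \frac{p}{p-1}$ turns equation \eqref{eq: W's equation} into $\partial_s V = \mathcal{L}V + G(V)$ with $|G(V)| \leq C|V|^{\bar p}$, where $\mathcal{L}$ is the operator from \eqref{eq: linear operator} whose spectrum \eqref{eq: spectrum} is purely discrete and gapped. Item~(i) is immediate: with $K' = \pm A$ we have $\iota' = A^4 e^{-\sigma}$, so picking $s_{14}(A,\eta^*) \geq 4\log A - \log\eta^*$ ensures $\iota' \leq \eta^*$, hence $s^* \geq \sigma$.

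For item~(ii), I would project the evolution of $V$ onto the $\mathcal{L}$--eigenspaces (adapted to $\mathbb{R}^d$ using a tensor Hermite basis) and control each component on $[\sigma, \min(s^*, s_1)]$. Lemma~\ref{Ex. W Y sigma} gives $V_0(\sigma) = -\iota' + O(\iota'/A) + O(\iota'^2) + q_6(\sigma){K'}^6 + O(e^{-\sigma/3})$; the zero mode, with eigenvalue $1$, evolves via $V_0(s) = e^{s-\sigma}V_0(\sigma) + \int_\sigma^s e^{s-\tau}N_0(\tau)\,d\tau$, yielding the claimed leading term $-e^{s-\sigma}\iota'$ with error $M_3\eta^* \cdot e^{s-\sigma}\iota'$ once the nonlinear source is bounded by $|V|^{\bar p} \lesssim (e^{s-\sigma}\iota')^{\bar p} \lesssim (\eta^*)^{\bar p - 1}\, e^{s-\sigma}\iota'$ on the interval in question. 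The $h_1$ mode, with eigenvalue $1/2$ and initial magnitude $O(A^3 e^{-\sigma})$, grows like $e^{(s-\sigma)/2}$, and its ratio to the leading term is $O(A^{-1})$ uniformly on $[\sigma, s^*]$. The modes $h_m$ with $m \geq 2$ are stationary or decaying, while the tail $V_-$ is controlled by the standard energy estimate against $\rho_d$ used in Proposition~\ref{Prop: Priori estimate}, the resulting decay being absorbed into the $M_3 e^{-\sigma/3}$ remainder.

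The main obstacle is the additional contribution from $q_6(\sigma)$, which was absent in Lemma~\ref{lemma decreaing from k to k-eta}: its $h_0$--piece $q_6(\sigma){K'}^6$ evolves like $e^{s-\sigma}$, exactly as the leading zero mode. Using $|q_6(\sigma)| \leq A\sigma^2 e^{-2\sigma}$ (from the shrinking set) and $K' = \pm A$, this contributes at most $A^7\sigma^2 e^{s-3\sigma}$ at time $s$; since $e^{s-\sigma} \leq \eta^*/\iota' = \eta^* e^{\sigma}A^{-4}$ on $[\sigma, s^*]$, the piece is bounded by $A^3\sigma^2\eta^* e^{-\sigma}$, which a sufficiently large choice of $s_{14}(A,\eta^*)$ absorbs into the $M_3 e^{-\sigma/3}$ remainder. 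Once this bookkeeping is performed, the rest of the argument carries over from the proof of Lemma~\ref{lemma decreaing from k to k-eta} without modification.
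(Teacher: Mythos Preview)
Your proposal is correct and follows exactly the approach the paper intends: the paper's own proof is simply the one-line remark that ``the proof follows from a straightforward adaptation of the proof of Lemma~\ref{lemma decreaing from k to k-eta}'', and you have carried out precisely that adaptation, correctly identifying that the only new ingredient is the extra $q_6(\sigma)$ block coming from Lemma~\ref{Ex. W Y sigma} and showing how its zero-mode contribution $q_6(\sigma){K'}^6 e^{s-\sigma}$ is absorbed into the $M_3 e^{-\sigma/3}$ remainder for $\sigma \geq s_{14}(A,\eta^*)$.
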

\begin{proof}
	The proof follows from a straightforward adaptation of the proof of Lemma \ref{lemma decreaing from k to k-eta}, given in this section. For that reason, the proof is omitted.
\end{proof}
As in the case $|K| \geq A$, we derive from this result the following corollary where we prove estimate \eqref{eq: Improved bound on W}:
\begin{corollary} (Proof of the $L_{\rho_d}^2$ bound when $x_0 \in \tilde{\mathcal{R}}_3$ is ``small'' and $s$ is ``large'') \label{W_x0 control when x_0 near 1 } There exists $A_{5}>0, \eta_{5}>0$ and $s_{15}\left(\eta^*\right) \geq 1$, such that under the hypotheses of Lemma \ref{Ex. W Y sigma}, if in addition $A \geq A_{5}, \eta^* \leq \eta_{5}$ and $s_0 \geq s_{15}\left(\eta^*\right)$, then, for all $s \in\left[\sigma, s_1\right]$, $\left\|W_{x_0}(s)\right\|_{L_{\rho_d}^2} \leq \frac{3}{2} \kappa$.
\end{corollary}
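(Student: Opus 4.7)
The strategy is to mimic verbatim the two‐case analysis carried out in Corollary \ref{W_{x_0} control when x_0 far 1}, with the time origin $s_0$ replaced by $\sigma$ and the parameter $K$ replaced by $K'=\pm A$. The two inputs are the preceding lemma (decreasing estimate starting at time $\sigma$) and the stability of the heteroclinic orbit $\psi$ given in Proposition~\ref{prop: stability R2}. We first fix $\delta_0\leq \kappa/4$ and take $s_0\geq s_{6}(A,\delta_0)$ so that the gradient bound of Proposition~\ref{gradient estimation} applies on $[\sigma,s_1]$; then define $s^*\geq \sigma$ by $e^{s^*-\sigma}\iota'=\eta^*$ with $\iota'=e^{-\sigma}(K')^4=A^4 e^{-\sigma}$, and distinguish $s\in[\sigma,\min(s^*,s_1)]$ from $s\in[s^*,s_1]$ (the latter being vacuous if $s_1<s^*$).

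In the first case, the preceding lemma directly yields
\[
\|W_{x_0}(s)\|_{L^2_{\rho_d}}\leq \kappa + e^{s-\sigma}\iota' + M_{3}(\eta^*+A^{-1})e^{s-\sigma}\iota' + M_{3}e^{-\sigma/3}
\leq \kappa + \eta^* + M_{3}(\eta^*+A^{-1})\eta^* + M_{3}e^{-\sigma/3}.
\]
Choosing $\eta^*$ small (independently of $A$), then $A\geq A_{5}$ large, then $\sigma\geq s_0\geq s_{15}(\eta^*)$ large, this bound is at most $\tfrac{3}{2}\kappa$.

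In the second case, we trap $W_{x_0}$ near the heteroclinic orbit $\psi$. At time $s^*$, the preceding lemma gives
\[
\|W_{x_0}(s^*)-(\kappa-\eta^*)\|_{L^2_{\rho_d}}\leq M_{3}(\eta^*+A^{-1})\eta^* + M_{3}e^{-\sigma/3}.
\]
For $\eta^*<\kappa$ we pick $\sigma^*\in\mathbb{R}$ with $\psi(\sigma^*)=\kappa-\eta^*$; since $|\psi'(\sigma^*)|\sim \eta^*$ as $\eta^*\to 0$, shrinking $\eta^*$ and enlarging $A$ and $\sigma$ further we can arrange
\[
\|W_{x_0}(s^*)-\psi(\sigma^*)\|_{L^2_{\rho_d}}\leq \frac{|\psi'(\sigma^*)|}{M_1\bigl[1+\tfrac{2}{\kappa}\|\psi'\|_{L^\infty}\bigr]}.
\]
Proposition~\ref{gradient estimation} ensures $\nabla W_{x_0}(s^*)\in L^\infty$, so Proposition~\ref{prop: stability R2} (applied with initial time $s^*$) yields, for every $s\in[s^*,s_1]$,
\[
\|W_{x_0}(s)-\psi(s+\sigma^*-s^*)\|_{L^2_{\rho_d}}\leq \frac{|\psi'(s+\sigma^*-s^*)|}{1+\tfrac{2}{\kappa}\|\psi'\|_{L^\infty}}\leq \frac{\kappa}{2},
\]
and since $\psi\leq \kappa$ we conclude $\|W_{x_0}(s)\|_{L^2_{\rho_d}}\leq \psi(s+\sigma^*-s^*)+\kappa/2\leq \tfrac{3}{2}\kappa$, as desired.

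The only non‐routine point is the compatibility of the parameters: the constants in the "small $s$" branch fix how small $\eta^*$ must be and how large $A$ and $\sigma$ must be, and these choices must simultaneously allow the application of Proposition~\ref{prop: stability R2} in the "large $s$" branch, where the admissible error is of order $|\psi'(\sigma^*)|\sim \eta^*$. The former forces a decay of the error of order $\eta^{*2}+e^{-\sigma/3}$, which is much smaller than $\eta^*$ once $\eta^*$ is small and $\sigma$ large; this is precisely the matching performed in the proof of Corollary~\ref{W_{x_0} control when x_0 far 1}, and the same choice of thresholds $A_{5}$, $\eta_{5}$, $s_{15}(\eta^*)$ works here with $\sigma$ in place of $s_0$.
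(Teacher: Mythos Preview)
Your proposal is correct and follows exactly the approach the paper intends: the paper omits the proof, stating that it follows from the preceding decreasing lemma ``exactly in the same way Corollary~\ref{W_{x_0} control when x_0 far 1} follows from Lemma~\ref{lemma decreaing from k to k-eta}.'' You have simply written out that adaptation in detail, replacing $s_0$ by $\sigma$, $K$ by $K'=\pm A$, and $M_2$ by $M_3$, which is precisely what is required.
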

\begin{proof} The proof is omitted since the control follows from Lemma \ref{Ex. W Y sigma} exactly in the same way Corollary \ref{W_{x_0} control when x_0 far 1} follows from Lemma \ref{lemma decreaing from k to k-eta}
\end{proof}
\subsubsection{Concluding statement for the control of $W_{x_0}(s)$ when $x_0 \in R_3$ }
\begin{lemma}[Control of $W_{x_0}(s)$ when $x_0 \in R_3$]\label{Control of W_{x_0} when x_0 in R3}
Assume that $W_{x_0}\left(s_0\right)$ is given by \eqref{eq: initial data} and $q(s) \in \mathcal{V}(s)$. There exist $A_{6} \geq 1$ and $m_{6} \in(0,1)$ such that for all $A \geq A_{6}$,   there exists $\delta_{2}\left(A\right) \in (0,1)$ such that for all $\delta_0 \in (0, \delta_{2})$  there exists $s_{16}\left(A,\delta_0\right) \geq 1$ such that for all $s_0 \geq$ $s_{16}\left(A,\delta_0 \right), d \in \mathcal{D}\left(A, s_0\right)$ defined in Proposition \ref{prop: initialization} and $s_1 \geq s_0$, the following holds: Assume that $W_{x_0}$ is the solution of equation \eqref{eq: W's equation} with initial data $W_{x_0}\left(Y, s_0\right)$ defined in \eqref{eq: initial data}, such that for all $s \in\left[s_0, s_1\right], q(s) \in \mathcal{V}(s)$ given in Definition \ref{Def: shrinking set}, where $q(s)$ is defined in \eqref{eq: w decompose-in-terms-q-phi}. Then, for all $s \in\left[s_0, s_1\right]$ :
\begin{enumerate}[label=(\roman*)]
	\item $\|\nabla W(s)\|_{L^{\infty}} \leq \delta_0$.
	\item For all $m \in\left(0, m_{6}\right]$ and $x_0 \in \tilde{\mathcal{R}}_3 $ defined in \eqref{region R3}, $\left|W_{x_0}(0, s)\right|<2 \kappa$.
\end{enumerate}

Proof. The proof is omitted since it is straightforward from Claim \eqref{L infty estimation control by L rho}, Corollary \eqref{W_{x_0} control when x_0 far 1} in Step 1 and Corollary \ref{W_x0 control when x_0 near 1 }.
\end{lemma}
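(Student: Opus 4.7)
The plan is to derive item (i) as a direct invocation of the gradient smallness result in Proposition \ref{gradient estimation}, and then assemble item (ii) by distinguishing the two geometric subcases already treated in Sections \ref{Case large K} and \ref{Case when |K| smaller than A}. Throughout, the parameters $A, m, \delta_0, s_0$ will be selected (in this order of priority) so that the successive hypotheses of Proposition \ref{prop: control of w in R1}, Proposition \ref{prop: stability R2}, Corollary \ref{W_{x_0} control when x_0 far 1} and Corollary \ref{W_x0 control when x_0 near 1 } are simultaneously verified.

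\textbf{Step 1 (Gradient bound).} Given $A \geq 1$ and any small $\delta_0 > 0$, Proposition \ref{gradient estimation} applies as soon as $s_0 \geq s_6(A, \delta_0)$, and yields $\|\nabla W_{x_0}(s)\|_{L^\infty} \leq \delta_0$ for all $s \in [s_0, s_1]$ and all $x_0 \in \mathbb{R}^d$. This proves item (i) and also furnishes the input needed by Claim \ref{L infty estimation control by L rho} to transfer $L^2_{\rho_d}$-control into $L^\infty$-control at the origin.

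\textbf{Step 2 (Item (ii), case $|K| \geq A$).} Using the decomposition \eqref{eq: form of a in R3} write $|x_0| - 1 = K e^{-s_0/2}$ and assume first $|K| \geq A$. By Corollary \ref{W_{x_0} control when x_0 far 1}, taking $A \geq A_3$, $\eta^* \leq \eta_3$ and $s_0 \geq s_{12}(\eta^*)$, we obtain $\|W_{x_0}(s)\|_{L^2_{\rho_d}} \leq \tfrac{3}{2}\kappa$ for all $s \in [s_0, s_1]$. Combined with the gradient bound of Step 1 and Claim \ref{L infty estimation control by L rho} (applied with $\delta_0 \leq \kappa/4$), we conclude $|W_{x_0}(0,s)| < 2\kappa$.

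\textbf{Step 3 (Item (ii), case $|K| < A$).} Here $x_0$ lies in the segment $\mathcal{T}_0$ near the sphere. Using the splitting $\mathcal{T}_0 = \bigcup_{s_0 \leq \sigma < s_1} \mathcal{G}_\sigma \cup \mathcal{T}_1$ from Section \ref{Case when |K| smaller than A}, distinguish:

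\emph{(a) Small $s$:} If $x_0 \in \mathcal{G}_\sigma$ with $s \leq \sigma$, or $x_0 \in \mathcal{T}_1$, the Taylor expansion carried out in Step 1 of Section \ref{Case when |K| smaller than A}, combined with the decomposition $w_1(0,s) = \varphi(0,s) + q(0,s) = \kappa + O(s e^{-2s})$ (which uses $q(s) \in \mathcal{V}(s)$ together with the profile expansion) and the gradient bound $\|\nabla W_0(s)\|_{L^\infty} \leq \delta_0$, yields
\[
|W_{x_0}(0,s)| \leq \kappa + C\delta_0 + CA\delta_0 + O(s_0 e^{-2s_0}) \leq \tfrac{3}{2}\kappa,
\]
provided we further impose $\delta_0 \leq \delta_1(A)$ and $s_0 \geq s_{13}(A,\delta_0)$.

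\emph{(b) Large $s$:} If $x_0 \in \mathcal{G}_\sigma$ with $\sigma \leq s$, apply Corollary \ref{W_x0 control when x_0 near 1 } at the anchor time $\sigma$ (replacing $s_0$ by $\sigma$ and $K$ by $K' = \pm A$), which gives $\|W_{x_0}(s)\|_{L^2_{\rho_d}} \leq \tfrac{3}{2}\kappa$. Applying Claim \ref{L infty estimation control by L rho} once more yields the pointwise bound.

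\textbf{Step 4 (Parameter selection).} Set $A_6 = \max(A_3, A_5)$ and $m_6 = \min(m, \eta_3 (p-1)/\kappa, \eta_5 (p-1)/\kappa)$ so that $\iota, \iota' \leq \eta_3, \eta_5$ via \eqref{iota bound}. Then pick $\delta_2(A) = \min(\delta_1(A), \kappa/(4(1+A)))$ and finally choose $s_{16}(A,\delta_0)$ larger than all threshold times $s_6, s_{12}, s_{13}, s_{15}$ appearing in the preceding steps. The three subcases cover all $x_0 \in \tilde{\mathcal{R}}_3$, so the bound $|W_{x_0}(0,s)| < 2\kappa$ holds uniformly. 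The main (only) delicate point is verifying that the parameter hierarchy from each intermediate result can be met consistently; once the ordering $A \gg \eta^{*-1} \gg \delta_0^{-1} \gg 1$ and $s_0$ sufficiently large is respected, the argument is a bookkeeping exercise, which is presumably why the author defers it.
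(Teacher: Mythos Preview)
Your proposal is correct and follows exactly the route the paper intends: item (i) from Proposition \ref{gradient estimation}, and item (ii) by splitting into $|K|\geq A$ (Corollary \ref{W_{x_0} control when x_0 far 1} plus Claim \ref{L infty estimation control by L rho}), and $|K|<A$ with the small-$s$/large-$s$ subcases handled respectively by the direct Taylor argument of Step 1 in Section \ref{Case when |K| smaller than A} and by Corollary \ref{W_x0 control when x_0 near 1 } plus Claim \ref{L infty estimation control by L rho}. The paper's own proof is simply the one-line sentence pointing to these same ingredients, so you have faithfully reconstructed the omitted bookkeeping; the only cosmetic slip is that in your definition of $m_6$ the symbol $m$ on the right should not appear (it is the variable you are bounding), but the intended choice $m_6 \leq \min\bigl(\eta_3(p-1)/\kappa,\ \eta_5(p-1)/\kappa\bigr)$ is clear.
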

\subsection{Concluding statement for the control of  $W_{x_0}(s)$ for any $x_0 \in \mathbb{R}^d$}
Fixing $M$ as in \eqref{Choice of M} and taking $m=m_{25}$ introduced in Lemma \ref{Control of W_{x_0} when x_0 in R3}, we see that the 3 regions in \eqref{region R1}, \eqref{region R2} and \eqref{region R3} are properly defined. Then, combining the previous statements given for the different regions (namely, Corollaries \ref{exponential decay of W in R1} and \eqref{trapping of W_{x_0} near psi}, together with Lemma \ref{Control of W_{x_0} when x_0 in R3}), we derive the following:
\begin{proposition} \label{Control of W_{x_0} for all x_0}There exist $A_{7} \geq 1$ such that for all $A \geq A_{7}$, there exists $s_{17}(A) \geq 1$ such that for all $s_0 \geq s_{17}(A), d \in \mathcal{D}\left(A, s_0\right)$ defined in Proposition \ref{prop: initialization} and $s_1 \geq s_0$, the following holds: Assume that $W_{x_0}$ is the solution of equation \eqref{eq: W's equation} with initial data $w_1\left(y, s_0\right)$ defined in \eqref{eq: initial data}, such that for all $s \in\left[s_0, s_1\right], q(s) \in \mathcal{V}(s)$ given in Definition \eqref{Def: shrinking set}, where $q(s)$ is defined in \eqref{eq: w decompose-in-terms-q-phi}. Then:
\begin{enumerate}[label=(\roman*)]
\item For all $s \in\left[s_0, s_1\right],\left\|w_{1}(s)\right\|_{L^{\infty}}<2 \kappa$.
\item For all $x_0 \in \mathbb{R}^d$ such that

\begin{equation}\label{W exponential decay region x0}
||x_0|-1| \geq A e^{-\frac{s_1}{2}}
\end{equation}

there exist $\bar{s}(x_0) \geq s_0$ and $\bar{M}(x_0) \geq 0$ such that if $\bar{s}(x_0) \geq s_1$, then for all $s \in\left[\bar{s}(x_0), s_1\right]$, $\left\|W_{x_0}(s)\right\|_{L_{\rho_d}^2} \leq \bar{M}(x_0) e^{-\frac{s}{p-1}}$.
\end{enumerate}
\end{proposition}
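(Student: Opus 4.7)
The plan is to stitch together the three regional controls already established. Fix $M$ as in \eqref{Choice of M} and $m = m_{6}$ from Lemma \ref{Control of W_{x_0} when x_0 in R3}, so that the partition $\mathbb{R}^d = \tilde{\mathcal{R}}_1 \cup \tilde{\mathcal{R}}_2 \cup \tilde{\mathcal{R}}_3$ in \eqref{region R1}--\eqref{region R3} is well defined and the regional results apply. Take $A_{7} := \max(A_{6},A_{3},A_{5}, A_{2}, A_{4}, 1)$ and $\delta_0 \leq \min(\delta_{1}(A), \delta_{2}(A), \kappa/4)$, then choose $s_{17}(A)$ large enough that all smallness-of-$s_0$ conditions from Corollaries \ref{exponential decay of W in R1}, \ref{trapping of W_{x_0} near psi}, \ref{W_{x_0} control when x_0 far 1}, \ref{W_x0 control when x_0 near 1 } and Lemma \ref{Control of W_{x_0} when x_0 in R3} are simultaneously satisfied. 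Since we work in the radial setting, it suffices to treat $x_0 = |x_0|\mathbf{e}_1$.

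For item (i), the idea is to reduce the $L^\infty$ control to an $L^2_{\rho_d}$ control via Claim \ref{L infty estimation control by L rho}, whose hypothesis is secured by the gradient smallness from Proposition \ref{gradient estimation}. More precisely, by Claim \ref{CRBWaw},
\[
\|w_{1}(\cdot, s)\|_{L^\infty} = \sup_{x_0 \in \mathbb{R}^d} |W_{x_0}(0,s)|,
\]
so it is enough to bound each $|W_{x_0}(0,s)|$ by $2\kappa$. In $\tilde{\mathcal{R}}_1$ use Corollary \ref{exponential decay of W in R1} (giving $\|W_{x_0}(s)\|_{L^2_{\rho_d}} \leq \kappa$), in $\tilde{\mathcal{R}}_2$ use Corollary \ref{trapping of W_{x_0} near psi} (giving $\|W_{x_0}(s)\|_{L^2_{\rho_d}} \leq \tfrac{3}{2}\kappa$), and in $\tilde{\mathcal{R}}_3$ invoke Lemma \ref{Control of W_{x_0} when x_0 in R3}(ii) directly (which already provides $|W_{x_0}(0,s)| < 2\kappa$). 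In the first two cases, Claim \ref{L infty estimation control by L rho} then upgrades the $L^2_{\rho_d}$ bound to the desired pointwise bound at $Y=0$.

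For item (ii), the strategy is that the condition $||x_0|-1| \geq Ae^{-s_1/2}$ excludes exactly the regime ``small $|K|$ and small $s$'' of Step 1 in Section \ref{Case when |K| smaller than A}, so that in each remaining case $W_{x_0}$ eventually enters the basin of attraction of the zero solution, where Proposition \ref{prop: control of w in R1} delivers the exponential decay $e^{-s/(p-1)}$. Concretely: if $x_0 \in \tilde{\mathcal{R}}_1$, Corollary \ref{exponential decay of W in R1} already supplies the decay on $[s_0, s_1]$ and we may take $\bar{s}(x_0) = s_0$ with $\bar{M}(x_0) = M_0 \|W_{x_0}(s_0)\|_{L^2_{\rho_d}} e^{s_0/(p-1)}$. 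If $x_0 \in \tilde{\mathcal{R}}_2$, Corollary \ref{trapping of W_{x_0} near psi} bounds $\|W_{x_0}(s)\|_{L^2_{\rho_d}}$ by $\psi(s+\sigma^*-s_0)$ plus a fixed constant; since $\psi(\tau) \sim \kappa e^{-\tau/(p-1)}$, there is a time $\bar{s}(x_0)$ at which this bound falls below $\varepsilon_0$ of Proposition \ref{prop: control of w in R1}, and from that moment on we apply that proposition. If $x_0 \in \tilde{\mathcal{R}}_3$, then $||x_0|-1| \geq Ae^{-s_1/2}$ places us in the ``large $K$'' regime of Section \ref{Case large K} (possibly with $\sigma$ in place of $s_0$ as in Section \ref{Case when |K| smaller than A}, Step 2), and the argument of Corollary \ref{W_{x_0} control when x_0 far 1}, Case 2, shows that after the transition time $s^*$ defined by \eqref{eq: definition of s etoile} the solution is trapped close to the heteroclinic orbit $\psi(s+\sigma^*-s^*)$, which again decays exponentially; choose $\bar{s}(x_0)$ so that this trapping brings $\|W_{x_0}\|_{L^2_{\rho_d}}$ below $\varepsilon_0$, then conclude via Proposition \ref{prop: control of w in R1}.

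The delicate part of the argument is the bookkeeping in the $\tilde{\mathcal{R}}_3$ case, where the solution is genuinely non-monotone: from $s_0$ up to the transition time $s^*$ it moves \emph{towards} $\kappa$ (so no decay can be expected there), and only past $s^*$ does it start descending along $\psi$ back to $0$. Hence $\bar{s}(x_0)$ must be chosen strictly larger than $s^*$ (by a universal amount depending on $\varepsilon_0$), and the resulting constant $\bar{M}(x_0)$ naturally depends on $\sigma^* - s^*$, i.e.\ on the distance of $x_0$ from the singular sphere; checking that this choice is consistent with the requirement $\bar{s}(x_0) \leq s_1$ precisely under the hypothesis $||x_0|-1| \geq Ae^{-s_1/2}$ is the one quantitative verification that has to be carried out carefully.
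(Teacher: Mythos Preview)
Your proposal is correct and follows the same regional decomposition as the paper. The one notable difference is in how you extract the exponential decay in item (ii) for $x_0 \in \tilde{\mathcal{R}}_2$ and $\tilde{\mathcal{R}}_3$: you wait until the trapping near $\psi$ brings $\|W_{x_0}\|_{L^2_{\rho_d}}$ below $\varepsilon_0$ and then re-apply Proposition \ref{prop: control of w in R1} (stability of zero). The paper instead reads the decay off \emph{directly} from the trapping estimate of Proposition \ref{prop: stability R2}, since both $\psi(s+\sigma^*-s^*)$ and the error bound $|\psi'(s+\sigma^*-s^*)|$ behave like $e^{-s/(p-1)}$ for large $s$; this gives $\|W_{x_0}(s)\|_{L^2_{\rho_d}} \leq \bar{M}(x_0)e^{-s/(p-1)}$ in one stroke and sidesteps entirely the ``delicate bookkeeping'' you flagged about matching $\bar s(x_0)$ with $s_1$. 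Your two-step route works too, but the paper's shortcut is worth noting.

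For the $|K|\le A$ subcase of $\tilde{\mathcal{R}}_3$, the paper makes explicit the observation you only hint at: the hypothesis $||x_0|-1|\ge A e^{-s_1/2}$ together with $||x_0|-1|=|K|e^{-s_0/2}\le A e^{-s_0/2}$ forces $||x_0|-1|=A e^{-\sigma/2}$ for some $\sigma\in[s_0,s_1]$, i.e.\ $x_0\in\mathcal{G}_\sigma$, so Corollary \ref{W_x0 control when x_0 near 1 } applies with initial time $\sigma$ rather than $s_0$.
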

\begin{proof}
\begin{enumerate}[label=(\roman*)]
\item The proof is omitted since it is straightforward from the above-mentioned statements.
\item Take $x_0 \in \mathbb{R}^d$ such that \eqref{W exponential decay region x0} holds. The conclusion follows according to the position of $x_0$ in the 3 regions $\tilde{\mathcal{R}}_i$ defined in \eqref{region R1}, \eqref{region R2} and \eqref{region R3} with the constants $M$ and $m$ fixed in the beginning of the current subsection.
If $x_0 \in \tilde{\mathcal{R}}_1$, then the conclusion follows from Corollary \ref{exponential decay of W in R1}.
If $x_0 \in \tilde{\mathcal{R}}_2$, then we see from Corollary \ref{trapping of W_{x_0} near psi} and its proof that Proposition \eqref{prop: stability R2} applies. In particular, $W_{x_0}(s)$ is trapped near the heteroclinic orbit $\psi$ defined in \eqref{eq: heteroclinic orbit}, and the exponential bound follows from \eqref{W exponential decay in R_2} and \eqref{eq: heteroclinic orbit}.
Finally, if $x_0 \in \tilde{\mathcal{R}}_3$, then, following Section \ref{Control of W_{x_0}(Y,s) in region R_3}, we write

\begin{equation}\label{x0ITR3F}
|x_0|-1=Ke^{-\frac{s_0}{2}}
\end{equation}
If $|K| \geq A$, then we see from Corollary \eqref{W_{x_0} control when x_0 far 1} and its proof that $W_{x_0}(s)$ is trapped near the heteroclinic orbit $\psi(s)$ defined in \eqref{eq: heteroclinic orbit} and the exponential bound follows from estimate \eqref{W exponential decay in R_2}
given in Proposition \ref{prop: stability R2}.
If $|K| \leq A$, using \eqref{W exponential decay region x0} and \eqref{form of x0 in R3}, we may write $x_0=K^{\prime} e^{-\frac{\sigma}{2}}$ for some $\sigma \in\left[s_0, s_1\right]$ with $K^{\prime}=A$, as we did in Section \ref{Case when |K| smaller than A}. Using Corollary \eqref{W_x0 control when x_0 near 1 }, we see that $W_{x_0}(s)$ is trapped near the heteroclinic orbit $\psi(s)$, and the conclusion follows from estimate \eqref{W exponential decay in R_2} again. This concludes the proof of Proposition \ref{Control of W_{x_0} for all x_0}
\end{enumerate}
\end{proof}
\section{Study of the projections of equation \eqref{q's equation}}\label{study of the projections}
From this section, we are going to complete all the proof for the technical lemmas used in earlier sections. There are two types of such kind of lemmas: Lemmas aimed to control $w_1$ defined in \eqref{eq: self-similar-variables} and Lemmas served for the apriori $L^\infty$ bound in Section \ref{Proof-a-priori-estimates}. We will proof the first type in this section, and leave the the second type in Section \ref{proof of technical details}.   

To control $w_1$, since the definition of the shrinking set $\mathcal{S}$, given by Definition \ref{Def: shrinking set} shows two different types of estimates, in the blow-up region and regular region. Accordingly, we need two different approaches to handle those estimates:
\begin{itemize}
	\item In the blow-up region, we work in similarity variables \eqref{eq: self-similar-variables}, in particular, we crucially use the projection of equation \eqref{q's equation} with respect to the decomposition given in \eqref{decomposition of q}.
	
	\item In the regular region, we directly work in the variables $u(x, t)$, using standard parabolic estimates.
\end{itemize}
With this idea, we organize this section as follows:
\begin{itemize}
\item In Section \ref{Proof-a-priori-estimates}, we will compute projections of each term in \eqref{q's equation} in order to analyze their contributions to $q$.
\item In Section \ref{Sec:POPRE}, we show the delay regularizing estimate used to control $q$ of operator $\mathcal{L}$ defined in \eqref{q's equation}. 
\item In Section \ref{Proof of control q1 to qe}, we will show Proposition \ref{Prop:control of q1,q2,q-,qe} saying that $q$ is in fact control by a finite elements.
\item In Section \ref{Details of initialization}, we will give the proof of Proposition \ref{prop: initialization}, which gives the good properties of the initial data given in \eqref{eq: initial data}.
\item Lastly, in Section \ref{section_regular_region}, we give the proof of the control in the regular region. 
\end{itemize}

\subsection{Proof of Proposition \ref{Prop: Priori estimate}}\label{Proof-a-priori-estimates}

In this section, we provide the proof of Proposition \ref{Prop: Priori estimate}. Specifically, we begin by projecting the linearized equation \eqref{q's equation} onto the Hermite polynomials to obtain equations satisfied by each coordinate in the decomposition \eqref{decomposition of q}. This approach will reveal the principal contributions within the projections $P_i$ (for $0\leq i\leq 2$) and $P_-$ arising from the various terms in equation \eqref{q's equation}.
The proof is structured as follows:
\begin{itemize}
	\item In the first part, we derive all the estimations on all the projections of each term appeared in \eqref{q's equation}.   
	\item In the second part, we sum up all the estimations then integrate in sapce to establish the final result stated in assertion $1$ and $2$ of Proposition \ref{Prop: Priori estimate}.
\end{itemize}
\textbf{Part 1: Study of the projections}
\medskip
The objective of this section, as previously stated, is to prove assertion (1) of Proposition \ref{Prop: Priori estimate}. To achieve this, we conduct a thorough analysis of the projections associated with the terms $V(y,s)$, $B(y,s)$, $H(y,s)$, $G(y,s)$, and $N(y,s)$. In the subsequent analysis, each projection is examined individually to clarify its specific contribution. Additionally, the projection of $R(y,s)$ is computed here as well for use in subsequent sections.
\medskip

\noindent\textbf{First term $\partial_s q$}\\
Let $P_i$ and $P_-$ defined as in \eqref{eq: Def of projections P_m} and \eqref{eq: Def of projection P_-}, then the following holds:
\begin{equation}
	\begin{aligned}
		P_i (\del_s q)&= \del_s q_i \mbox{ with } i \leq 6, \\
		P_-(\del_s q)&= \del_s q_-.
	\end{aligned}
\end{equation}
\noindent\textbf{Second term $\mathcal{L} q$}\\
By the definition of $h_i$ given by \eqref{eq:definition of hm}, we easily obtain the projection of $\mathcal{L} q$ as follows
\begin{lemma}\label{second term}
Let $P_i$ and $P_-$ defined as in \eqref{eq: Def of projections P_m} and \eqref{eq: Def of projection P_-}, then the following holds:
\begin{equation}
	\begin{aligned}
		P_i (\mathcal{L} q)&= \left (1-\frac{i}{2}\right ) q_i,\mbox{ for }i \leq 6. \\
		P_-(\mathcal{L} q)&= \mathcal{L} q_-.
	\end{aligned}
\end{equation}
\end{lemma}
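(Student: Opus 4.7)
The plan is to exploit the two standard facts: the rescaled Hermite polynomials $h_m$ from \eqref{eq:definition of hm} are eigenfunctions of $\mathcal{L}$ with eigenvalues $1-m/2$, and the operator $\mathcal{L}$ is self-adjoint on $L^2_{\rho_1}$, which was already recorded in the paragraph on the spectral properties. Since this lemma is purely algebraic, there is no real obstacle; the only small point is to be careful with the orthogonal complement part $P_-$.

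First, I would verify the eigenvalue relation $\mathcal{L} h_m = (1-\tfrac{m}{2}) h_m$ directly from \eqref{eq:definition of hm}. Writing $h_m(y)=\sum_{n=0}^{[m/2]} c_{m,n}\, y^{m-2n}$ with $c_{m,n}=\frac{m!(-1)^n}{n!(m-2n)!}$, a term-by-term computation using $\partial_y^2 y^{k}=k(k-1)y^{k-2}$ and $\tfrac12 y\partial_y y^{k}=\tfrac{k}{2}y^k$ collapses, after reindexing the second-derivative sum, to $(1-\tfrac{m}{2})h_m(y)$. This yields $\mathcal{L}k_m=(1-\tfrac{m}{2})k_m$ as well, since $k_m$ is a scalar multiple of $h_m$.

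Second, for the projection onto $k_i$ with $i\le 6$, I would use self-adjointness of $\mathcal{L}$ with respect to $\rho_1\,dy$ (which follows from integration by parts once one writes $\mathcal{L}f=\frac{1}{\rho_1}\partial_y(\rho_1 \partial_y f)+f$). Combined with the eigenvalue relation, this gives
\begin{equation*}
P_i(\mathcal{L} q)=\langle \mathcal{L} q,k_i\rangle_{L^2_{\rho_1}}=\langle q,\mathcal{L} k_i\rangle_{L^2_{\rho_1}}=\Bigl(1-\tfrac{i}{2}\Bigr)\langle q,k_i\rangle_{L^2_{\rho_1}}=\Bigl(1-\tfrac{i}{2}\Bigr)q_i,
\end{equation*}
which is the first assertion.

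Finally, for the action on $q_-$, I would insert the decomposition \eqref{decomposition of q} into $\mathcal{L} q$ and use the eigenvalue relation once more to obtain
\begin{equation*}
\mathcal{L} q=\sum_{m=0}^{6}\Bigl(1-\tfrac{m}{2}\Bigr)q_m(s)\,k_m(y)+\mathcal{L} q_-.
\end{equation*}
Since $q_-=\sum_{m\ge 7}P_m(q)\,h_m$ by \eqref{eq: Def of projection P_-}, the eigenvalue relation yields $\mathcal{L} q_-=\sum_{m\ge 7}(1-\tfrac{m}{2})P_m(q)\,h_m$, which still lies in $\mathrm{span}\{h_m\}_{m\ge 7}$, i.e.\ it is orthogonal in $L^2_{\rho_1}$ to each $k_0,\dots,k_6$. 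Applying $P_-$ to the displayed identity therefore kills the finite sum and leaves $P_-(\mathcal{L} q)=\mathcal{L} q_-$, proving the second assertion and completing the lemma.
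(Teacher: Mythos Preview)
Your proof is correct and is exactly the standard argument the paper has in mind: the paper's own proof consists of the single sentence ``By the definition of $h_i$ given by \eqref{eq:definition of hm}, we easily obtain the projection of $\mathcal{L} q$ as follows,'' so you have simply written out the details (eigenfunction relation, self-adjointness, and closure of the negative subspace under $\mathcal{L}$) that the paper leaves implicit.
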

\noindent\textbf{Third term $Vq$}\\
In this part we aim to derive esimation in \eqref{Potential control} on the potential $V$.  It is stated as following: 

\begin{lemma}\label{Third term}
Let potential $V$ be defined in \eqref{eq: linear operator} the following estimations holds:
 for any $r>1$,
	\begin{equation}
		\|V\|_{L_{\rho_1}^r(\Rbb)}\leq C(r)e^{-s}.
	\end{equation} 
\end{lemma}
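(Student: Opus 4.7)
The plan is to obtain the estimate by an explicit pointwise computation of $V$, after which Gaussian integration finishes the job. The key observation is that the polynomial $P(y)$ in the definition of $\varphi$ was chosen precisely so that a miraculous cancellation occurs in $V$.

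First I would use the identity $\kappa^{p-1}=\frac{1}{p-1}$ (from $\kappa=(p-1)^{-1/(p-1)}$) to rewrite
\[
V = p\varphi^{p-1}-\frac{p}{p-1} = p\left(\varphi^{p-1}-\kappa^{p-1}\right).
\]
Then, from the explicit formula \eqref{eq: profile form} for $\varphi$ (valid for $s\ge s_2$ by Lemma \ref{lemma: good properties of phi}(i), which guarantees positivity of the numerator),
\[
\varphi^{p-1} \;=\; \frac{1+e^{-s}P(y)}{\,p-1+\frac{(p-1)^2}{\kappa}y^4 e^{-s}\,},
\]
so that after a short manipulation
\[
V(y,s) \;=\; \frac{p}{p-1}\cdot\frac{e^{-s}\left[P(y)-\frac{p-1}{\kappa}y^4\right]}{1+\frac{p-1}{\kappa}y^4 e^{-s}}.
\]

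Next I would invoke the definition \eqref{eq: definition P(y)}, namely $P(y)=\frac{p-1}{\kappa}(y^4-h_4(y))$, which yields the clean cancellation $P(y)-\frac{p-1}{\kappa}y^4=-\frac{p-1}{\kappa}h_4(y)$ and therefore
\[
V(y,s) \;=\; -\frac{p}{\kappa}\cdot\frac{e^{-s}\,h_4(y)}{1+\frac{p-1}{\kappa}y^4 e^{-s}}.
\]
Since the denominator is bounded below by $1$ and $|h_4(y)|\le C(1+|y|^4)$ from \eqref{eq:definition of hm}, we obtain the pointwise bound
\[
|V(y,s)| \;\le\; C\,e^{-s}(1+|y|^4), \qquad y\in\Rbb,\; s\ge s_2.
\]

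Finally, raising to the $r$-th power and integrating against the Gaussian weight $\rho_1$ from \eqref{eq: def of rho} gives
\[
\|V(\cdot,s)\|_{L^r_{\rho_1}}^r \;\le\; C^{\,r}e^{-rs}\int_{\Rbb}(1+|y|^4)^r\rho_1(y)\,dy \;=\; C(r)^r\,e^{-rs},
\]
which is the claimed estimate. There is no real obstacle here: the entire proof is algebraic up to the final Gaussian moment bound, and the cancellation arranged by the choice \eqref{eq: definition P(y)} of $P$ does the essential work — without it, $V$ would contain an order-one piece and would not be $L^r_{\rho_1}$-small. The $r$-dependence of the constant enters only through $\int(1+|y|^4)^r\rho_1\,dy$, which is finite for every $r\ge 1$.
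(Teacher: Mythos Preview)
Your proof is correct and in fact cleaner than the paper's. The paper proceeds by Taylor-expanding $p\varphi^{p-1}$ around $\kappa$, writing
\[
p\varphi^{p-1}=\frac{p}{p-1}+p(p-1)\kappa^{p-2}(\varphi-\kappa)+\frac{p(p-1)(p-2)}{2}\kappa^{p-3}(\varphi-\kappa)^2+O((\varphi-\kappa)^3),
\]
then substituting the expansion \eqref{eq:flat profile} $\varphi=\kappa-e^{-s}h_4+O(e^{-2s})$ to read off $V$ as a combination of Hermite polynomials with $e^{-s}$ and $e^{-2s}$ coefficients. That argument is somewhat formal: the remainder terms $O((\varphi-\kappa)^3)$ and $O(e^{-2s})$ must be controlled in $L^r_{\rho_1}$, which the paper does not spell out.

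You instead exploit that $\varphi^{p-1}=E/D$ is an explicit rational function, compute $V$ \emph{exactly}, and discover the closed form $V=-\frac{p}{\kappa}\,e^{-s}h_4(y)\big/\bigl(1+\tfrac{p-1}{\kappa}y^4e^{-s}\bigr)$. This bypasses the Taylor expansion entirely and makes the pointwise bound $|V|\le Ce^{-s}(1+|y|^4)$ immediate. The cancellation you highlight is exactly the design principle behind the choice \eqref{eq: definition P(y)} of $P$. What the paper's expansion buys is slightly finer structural information (the next-order Hermite coefficients of $V$), but for the stated $L^r_{\rho_1}$ estimate your route is both shorter and more rigorous.
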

\begin{proof}
To begin with, we analyze 
\begin{equation}
	V= p\varphi^{p-1}-\frac{p}{p-1}
\end{equation}
in its expanded form:
\begin{equation}\label{eq: V's expansion}
	V=-p(p-1)\kappa^{p-2} e^{-s}h_4+\frac{1}{2}p^2(p-1)\kappa^{p-3} e^{-2s}h_8+\frac{p(p-1)(p-2)}{2}\kappa^{p-3}e^{-2s}h_4^2.
\end{equation}
Here, the term $p\varphi^{p-1}$ is expanded around $\kappa$ as following:
\begin{equation}
	p\varphi^{p-1}=\frac{p}{p-1}+p(p-1)\kappa^{p-2}(\varphi-\kappa)+\frac{p(p-1)(p-2)}{2}\kappa^{p-3}(\varphi-\kappa)^2+O((\varphi-\kappa)^3),
\end{equation}
and \eqref{eq: V's expansion} is conclude with the help of \eqref{eq:flat profile}.

Given the relation $h_4^2=h_8(y)+32h_6(y)+408h_4(y)+2208h_2(y)+1824$, we derive the estimates for $\|V\|_{L_{\rho_1}^r(\Rbb)}$.
\end{proof}
\paragraph{Fourth term $B(y,s)$} \; \\
For the bilinear term $B(y,s)$ we have the following priori estimate established by Lemma 3.6 in \cite{MZdm97}
\begin{lemma}
For all $A > 0$, there exists $s_{0}\geq 0$ such that for all $\tau \geq s_{0}$, if $q(\tau) \in \mathcal{V}(\tau)$, then
\begin{equation}\label{eq:B inner estimation}
	|\chi_c(y, \tau) B(q(y, \tau))| \leq C |q|^2, 
\end{equation}
and
\begin{equation}\label{eq:B outer estimation}
	|B(q)| \leq C |q|^{\bar{p}} ,
\end{equation} 
where $\bar{p} = \min(p, 2)$.
\end{lemma}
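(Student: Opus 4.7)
The plan is to treat $B(q)=|\varphi+q|^{p-1}(\varphi+q)-\varphi^p-p\varphi^{p-1}q$ as the second-order Taylor remainder of the smooth-away-from-zero function $f(x)=|x|^{p-1}x$ expanded around $x=\varphi$. Writing $B(q)=\int_0^1(1-\theta)\,f''(\varphi+\theta q)\,d\theta\cdot q^2$ (formally when $p\geq 2$, after appropriate truncation otherwise), the two desired bounds will follow from controlling $f''(\varphi+\theta q)$ either pointwise (in the inner cutoff region) or in a piecewise fashion (globally, when $f''$ is singular at $0$).

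First I would establish the inner bound $|\chi_c B|\leq C|q|^2$. The cutoff $\chi_c$ is supported in the region where the profile $\varphi$ stays uniformly away from zero — indeed, by Lemma~\ref{lemma: good properties of phi}(ii) and the form \eqref{eq: profile form}, $\varphi\geq \kappa/2$ provided $|y|\leq K e^{s/4}$ for some fixed $K$, which is precisely where $\chi_c$ is nontrivial. Combined with the a priori bound \eqref{eq: apriori estimation on q}, which guarantees $\|q\|_{L^\infty}\leq M$ and hence (for $s_0$ sufficiently large) $\varphi+\theta q\geq \kappa/4$ uniformly for $\theta\in[0,1]$, one has $|f''(\varphi+\theta q)|=p(p-1)|\varphi+\theta q|^{p-2}\leq C$. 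Integrating against $(1-\theta)q^2$ yields the claimed quadratic bound.

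For the global bound $|B|\leq C|q|^{\bar p}$, I would split into two cases. When $p\geq 2$, $f$ is genuinely $C^2$ with $|f''(x)|\leq C|x|^{p-2}\leq C$ on bounded sets, and since both $\varphi$ and $q$ are uniformly bounded on $\mathbb{R}$, the same Taylor argument gives $|B|\leq C|q|^2=C|q|^{\bar p}$. When $1<p<2$, $f''$ blows up at $0$, so I would distinguish $|q|\leq \varphi/2$ from $|q|>\varphi/2$. In the first subcase, $\varphi+\theta q\geq \varphi/2$ so $|f''(\varphi+\theta q)|\leq C\varphi^{p-2}$, giving $|B|\leq C\varphi^{p-2}q^2\leq C\varphi^{p-2}(\varphi/2)^{2-p}|q|^p\leq C|q|^p$. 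In the second subcase, $\varphi\leq 2|q|$ and $|\varphi+q|\leq 3|q|$, so by brute force $|B|\leq |\varphi+q|^p+\varphi^p+p\varphi^{p-1}|q|\leq C|q|^p$. Combining yields $|B|\leq C|q|^{\bar p}$.

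The only subtle point will be the case $1<p<2$, where the naive Taylor expansion fails at $\varphi=0$ and the case-splitting argument above (borrowed from Merle--Zaag \cite{MZdm97}, Lemma~3.6) is necessary; everything else follows from the boundedness of $\varphi$, the a priori smallness of $q$, and standard calculus manipulations. Once the two estimates are established, they feed directly into the nonlinear control discussed in Section~\ref{control of potential, non linear term} and close the bootstrap loop for the dynamics of $q$.
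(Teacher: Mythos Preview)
The paper itself offers no proof for this lemma beyond a citation to Lemma~3.6 of \cite{MZdm97}, and your argument for the global bound \eqref{eq:B outer estimation} is precisely that argument, so that half is fine.

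For the inner bound \eqref{eq:B inner estimation}, however, your reasoning contains a gap. You assert that $\|q\|_{L^\infty}\leq M$ together with $\varphi\geq\kappa/2$ on the support of $\chi_c$ forces $\varphi+\theta q\geq\kappa/4$ for all $\theta\in[0,1]$. This does not follow: the constant $M$ in \eqref{eq: apriori estimation on q} is fixed and of order $\kappa$ (from Definition~\ref{Def: shrinking set} one only gets $\|\tilde w\|_{L^\infty}\leq 2\kappa$, hence $|q|\lesssim 3\kappa$), so $\varphi+\theta q$ can well be negative and $f''$ then blows up when $1<p<2$. The conclusion can still be salvaged by reusing your own case-split from the global bound, restricted to the support of $\chi_c$: when $|q|\leq\varphi/2$ the Taylor remainder gives $|B|\leq C\varphi^{p-2}|q|^2\leq C(\kappa/2)^{p-2}|q|^2$; when $|q|>\varphi/2\geq\kappa/4$ the crude estimate $|B|\leq C|q|^p$ combined with $|q|^{p-2}\leq(\kappa/4)^{p-2}$ (valid since $p-2<0$ and $|q|\geq\kappa/4$) again yields $|B|\leq C|q|^2$. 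In other words, the lower bound you actually need is on $\varphi$ alone, which the support condition on $\chi_c$ provides, not on $\varphi+\theta q$. Note also that \eqref{eq:B inner estimation} is never invoked later in this paper---only \eqref{eq:B outer estimation} enters Lemma~\ref{Fourth term} and the energy estimate for $q_-$---and the cutoff $\chi_c$ is not even defined here, so this gap has no downstream effect on the argument.
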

Our estimation on $B(y,s)$ is then stated as:
\begin{lemma}\label{Fourth term}
For all $A>0$, there exists an \(s_{17}\geq 0\) such that for all \(\tau \geq s_{0}\geq s_{17}\), if \(q(\tau)\in \mathcal V(\tau)\), the following estimations holds: 
\begin{enumerate}
	\item For $i \leq 6$, 
	\begin{equation}\label{estimation P02(B) central}
		P_i(B)\leq CAse^{-3s},
	\end{equation}
\end{enumerate}
\end{lemma}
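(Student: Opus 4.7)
The plan is to bound each projection $P_i(B)(s) = \langle B(\cdot,s), k_i\rangle_{L^2_{\rho_1}}$ for $i \leq 6$ by splitting the integral into an inner and an outer piece, matched to the two pointwise estimates \eqref{eq:B inner estimation} and \eqref{eq:B outer estimation} on $B$. To this end, I would introduce a spatial cutoff $\chi_c(y,s) = \chi\bigl(|y|/(K_0\sqrt{s})\bigr)$ (with $\chi$ as in \eqref{eq: def Chi} and $K_0$ large, to be fixed) and write
\[
P_i(B) = \int_{\mathbb{R}} \chi_c\, B\, k_i \rho_1\, dy \;+\; \int_{\mathbb{R}} (1-\chi_c)\, B\, k_i \rho_1\, dy \;=:\; I_{\mathrm{in}} + I_{\mathrm{out}}.
\]

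For the inner piece, the quadratic bound \eqref{eq:B inner estimation} gives $|\chi_c B| \leq C|q|^2$. Since $k_i$ is a polynomial of degree at most $6$, Hölder's inequality applied with exponents $(2+\varepsilon)/2$ and $(2+\varepsilon)/\varepsilon$ for some small $\varepsilon > 0$ yields
\[
|I_{\mathrm{in}}| \leq C \int |q|^2 (1+|y|^6) \rho_1\, dy \leq C(\varepsilon)\,\|q(s)\|_{L^{2+\varepsilon}_{\rho_1}}^2,
\]
the polynomial factor being absorbed into the constant thanks to the Gaussian decay of $\rho_1$. The decisive input is then the delay regularizing estimate of Proposition \ref{delay regularizing estimate} applied with $r = 2+\varepsilon$, which gives $\|q(s)\|_{L^{2+\varepsilon}_{\rho_1}} \leq C(\varepsilon) A s e^{-2s}$, hence
\[
|I_{\mathrm{in}}| \leq C A^2 s^2 e^{-4s} \leq C A s e^{-3s}
\]
for $s \geq s_{17}(A)$ large enough, since $A s e^{-s} \to 0$.

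For the outer piece, the support of $1-\chi_c$ is contained in $\{|y| \geq K_0\sqrt{s}/8\}$. The $L^\infty$ constraint built into $\mathcal{V}(s)$, combined with item (ii) of Lemma \ref{lemma: good properties of phi}, provides the uniform bound $\|q(s)\|_{L^\infty} \leq C$, and therefore $|B| \leq C|q|^{\bar p} \leq C$ by \eqref{eq:B outer estimation}. The Gaussian tail estimate then gives
\[
|I_{\mathrm{out}}| \leq C \int_{|y|\geq K_0\sqrt{s}/8} (1+|y|^6) \rho_1\, dy \leq C_{K_0}\, e^{-K_0^2 s/300},
\]
which, for $K_0$ large enough (independent of $s$), is $o(A s e^{-3s})$. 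Combining the two estimates yields $|P_i(B)| \leq C A s e^{-3s}$.

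The anticipated obstacle lies not in the splitting argument itself but in invoking Proposition \ref{delay regularizing estimate}: the raw $L^2_{\rho_1}$ bound $\|q\|_{L^2_{\rho_1}} \leq C A s e^{-2s}$ directly supplied by $\mathcal{V}(s)$ would square to $A^2 s^2 e^{-4s}$, which beats the target $A s e^{-3s}$ only thanks to the factor $A s e^{-s} \to 0$, so the genuine work is hidden in the regularizing step that upgrades $L^2_{\rho_1}$ to $L^{2+\varepsilon}_{\rho_1}$ without deteriorating the time decay. The case $1 < p < 2$ (so $\bar p = p < 2$, where the outer estimate is weaker) is handled uniformly by the splitting, since the weaker pointwise bound is only applied on an $L^\infty$-bounded set of exponentially small Gaussian measure.
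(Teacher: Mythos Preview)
Your argument is correct. Compared with the paper, you take a slightly different and in some respects more careful route.

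The paper's own proof does not split into inner and outer regions: it simply uses the global bound \eqref{eq:B outer estimation}, $|B|\le C|q|^{\bar p}$, writes $|P_i(B)|\le C\int |q|^{\bar p}|k_i|\rho_1\,dy$, and then appeals to Cauchy--Schwarz together with the $L^2_{\rho_1}$ bound \eqref{q L rho priori estimate}. Taken literally this is incomplete, since Cauchy--Schwarz produces $\|q\|_{L^{2\bar p}_{\rho_1}}^{\bar p}$ rather than an $L^2_{\rho_1}$ norm; the missing ingredient is precisely the delay regularizing estimate (Proposition~\ref{delay regularizing estimate}), which you invoke explicitly. Once that estimate is used, the paper's approach yields $|P_i(B)|\le C(Ase^{-2s})^{\bar p}$, which indeed gives $CAse^{-3s}$ when $\bar p=2$ but only $o(e^{-2s})$ when $1<p\le 3/2$.

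Your inner/outer splitting buys exactly this: by using the genuinely quadratic bound \eqref{eq:B inner estimation} on $\{|y|\lesssim K_0\sqrt s\}$ you get $(Ase^{-2s})^2=A^2s^2e^{-4s}\le CAse^{-3s}$ uniformly in $p>1$, while the outer piece is killed by the Gaussian tail. So your proof reaches the stated bound $CAse^{-3s}$ for all $p>1$, whereas the paper's shortcut does so only for $p\ge 3/2$ (though the weaker $o(e^{-2s})$ already suffices for the application in Proposition~\ref{Prop: Priori estimate}, since the remainder $R_i$ there is of size $e^{-2s}$). Your last paragraph correctly identifies Proposition~\ref{delay regularizing estimate} as the real engine of the argument in both approaches.
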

\begin{proof}
When $i\leq 6$, multiplying $B(y,s)$ by $k_i=\frac{h_i(y)}{\|h\|_{L_{\rho_1}^2}}$ then integrate in $L_{\rho_1}(\Rbb)$ gives :
\[
|P_i(B)|\leq \int_{\Rbb}|B(y,s)|k_i\rho_1(y)dy.
\]
Now by \eqref{eq:B outer estimation}, we deduce that:
\[
|P_i(B)|\leq \int_{\Rbb}C(M)|q|^{\bar p}k_i\rho_1(y)dy.
\]
Estimation \eqref{estimation P02(B) central} then follows from \eqref{q L rho priori estimate} together with Cauchy-Schwartz inequality.
\end{proof}

\noindent\textbf{Sixth term $H$:}

\begin{lemma}\label{sixth term}
The following estimations holds: 
\begin{enumerate}
	\item for $i\leq6$	
    \begin{equation}
		\begin{aligned}
			P_i(H)&\leq  Ce^{-3s},\\
		\end{aligned}
	\end{equation}
	\item \begin{equation}
		\|H\|_{L_{\rho_1}^2(\Rbb)} \leq Ce^{-3s}.
	\end{equation}
\end{enumerate}
\end{lemma}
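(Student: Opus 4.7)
The key observation is that $H(y,s)$ is supported in the very narrow transition strip of the cutoff $\chi\!\left(\frac{ye^{-s/2}+1}{\vep_0}\right)$. More precisely, each of the factors $\del_y\chi$, $\del_y^2\chi$, $\del_s\chi$, $(\del_y\chi)^2$, and $\chi-\chi^p$ vanishes off the set
$$I_s := \left\{y\in\Rbb : \tfrac{\vep_0}{8}\leq ye^{-s/2}+1 \leq \tfrac{\vep_0}{4}\right\} = \left[\bigl(\tfrac{\vep_0}{8}-1\bigr)e^{s/2},\ \bigl(\tfrac{\vep_0}{4}-1\bigr)e^{s/2}\right],$$
and hence so does $H$. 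Every $y\in I_s$ satisfies $|y|\geq(1-\tfrac{\vep_0}{4})e^{s/2}\geq\tfrac{3}{4}e^{s/2}$, which yields the crucial super-exponential bound
$$\rho_1(y)\leq C\exp\!\bigl(-c\,e^s\bigr)\quad\text{for all }y\in I_s,$$
with $c,C>0$ depending only on $\vep_0$.

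Next I bound $H$ pointwise on $I_s$. A direct chain-rule computation yields $|\del_y\chi|\leq Ce^{-s/2}$, $|\del_y^2\chi|\leq Ce^{-s}$, and $|\del_s\chi|\leq C$. On $I_s$ one has $|y|\leq e^{s/2}$, so the factor $y\del_y\chi$ (and hence $(\del_y\chi)^2$) is also bounded, while $\chi-\chi^p$ is bounded by a constant. Combined with the a priori bound $\|w_1(\cdot,s)\|_{L^\infty}\leq 2\kappa$ enforced by Definition \ref{Def: shrinking set} (which is in force under the hypothesis of Proposition \ref{Prop: Priori estimate}), these give a uniform pointwise bound $|H(y,s)|\leq C$ on $I_s$.

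Both estimates then follow at once from the Gaussian damping. For item 1, $k_i$ is a polynomial of degree at most $6$, so $|k_i(y)|\leq C(1+|y|)^6\leq Ce^{3s}$ on $I_s$; together with $|I_s|\leq e^{s/2}$ and the bound above on $\rho_1$,
$$|P_i(H)|\leq \int_{I_s}|H|\,|k_i|\,\rho_1\,dy \leq C\,e^{3s}\,e^{s/2}\exp(-c\,e^s),$$
which is vastly smaller than $e^{-3s}$ for $s$ large. Item 2 is analogous:
$$\|H(\cdot,s)\|_{L^2_{\rho_1}}^2 \leq C\int_{I_s}\rho_1(y)\,dy \leq C\,e^{s/2}\exp(-c\,e^s)\ll e^{-6s}.$$
The rate $Ce^{-3s}$ in the statement is therefore far from sharp; any polynomial or exponential decay rate is produced by the same argument. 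I foresee no serious obstacle here, as the super-exponential damping of $\rho_1(y)$ on $|y|\gtrsim e^{s/2}$ dominates every other factor and no cancellation or fine projection geometry is required; the same philosophy will handle $G$ and $N$ in \eqref{definition of B,R,F,N}.
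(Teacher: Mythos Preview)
Your proof is correct and follows essentially the same approach as the paper, which simply defers to Lemma 3.9 in \cite{MNZNon2016}; the underlying mechanism is exactly the one you describe---localizing to the support of the cutoff derivatives and exploiting the Gaussian weight $\rho_1$ there---and the paper in fact spells out this very computation explicitly in its proof of the analogous Lemma \ref{Eighth term} for the term $N$.
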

\begin{proof} We argue it as in the proof of Lemma 3.9 from  \cite{MNZNon2016}.
\end{proof}
\noindent\textbf{Seventh term $\del_y G$}:
\begin{lemma}\label{seventh term}
For $\del_y G$ we have the following estimations: 
\begin{enumerate}
	\item for $i\leq 6$ 
    \begin{equation}
		\begin{aligned}
			P_i(\del_y G)&\leq  Ce^{-3s},  \\
		\end{aligned}
	\end{equation}
	\item \begin{equation}
		\|\del_y G\|_{L_{\rho_1}^2(\Rbb)}\leq    Ce^{-3s},
	\end{equation}
\end{enumerate}
\end{lemma}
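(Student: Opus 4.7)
The plan is to decompose $\del_y G$ via the Leibniz rule using the definition in \eqref{definition of B,R,F,N}:
$$
\del_y G = -2\del_y^2\chi \cdot w_1 - 2\del_y\chi \cdot \del_y w_1,
$$
and then exploit the fact that the derivatives of the cut-off $\chi$ are supported at very large values of $|y|$, where the Gaussian weight $\rho_1$ beats everything. More precisely, by \eqref{eq: def Chi}, $\chi'$ and $\chi''$ are supported in $\xi \in [1/8, 1/4]$, so when evaluated at $\xi = (ye^{-s/2}+1)/\vep_0$, the functions $\del_y\chi$ and $\del_y^2\chi$ are supported in the set $\{y \in \Rbb : |y| \geq c_0 e^{s/2}\}$ for some $c_0 = c_0(\vep_0) > 0$. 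The chain rule then yields the pointwise bounds
$$
|\del_y\chi| \leq \frac{C}{\vep_0}e^{-s/2}, \qquad |\del_y^2\chi| \leq \frac{C}{\vep_0^2}e^{-s}.
$$

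Combining these with the a priori $L^\infty$ bound $\|w_1\|_{L^\infty} \leq 2\kappa$ inherited from item (i) of Definition \ref{Def: shrinking set} and the gradient smallness $\|\del_y w_1\|_{L^\infty} \leq \delta_0$ coming from Proposition \ref{gradient estimation}, I obtain the pointwise estimate
$$
|\del_y G(y,s)| \leq C(\vep_0)\, e^{-s/2}\, \mathbf{1}_{\{|y|\geq c_0 e^{s/2}\}}.
$$
Since the Gaussian weight satisfies $\rho_1(y) \leq C e^{-c_0^2 e^s/4}$ on this support, I conclude
$$
\|\del_y G\|_{L_{\rho_1}^2(\Rbb)}^2 \leq C(\vep_0)\, e^{-s}\!\int_{|y|\geq c_0 e^{s/2}}\!\!\rho_1(y)\,dy \leq C(\vep_0)\, e^{-s}\, e^{-c_0^2 e^s/8},
$$
which is trivially bounded by $Ce^{-3s}$ for $s$ sufficiently large, proving item 2.

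For the projection estimates in item 1, Cauchy--Schwarz together with the above $L^2_{\rho_1}$ bound yields
$$
|P_i(\del_y G)| \leq \|\del_y G\|_{L_{\rho_1}^2}\,\|k_i\|_{L_{\rho_1}^2} \leq C(i)\, e^{-3s}
$$
for any $i \leq 6$, giving item 1. The argument is essentially parallel to the one used for $H$ in Lemma \ref{sixth term}, based on Lemma 3.9 in \cite{MNZNon2016}. There is no real obstacle: the super-exponential smallness coming from the Gaussian weight on the far support $\{|y|\geq c_0 e^{s/2}\}$ dwarfs the required polynomial rate $e^{-3s}$, so no delicate cancellation is needed. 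The only minor point to ensure is that the implicit constants (which depend on $\vep_0$, the $L^\infty$ norms of $\chi', \chi''$, and the Hermite normalizations $\|k_i\|_{L_{\rho_1}^2}$) are absorbed by taking $s \geq s_0$ large enough.
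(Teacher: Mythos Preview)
Your proof is correct. Both items follow from the super-exponential Gaussian decay on the support of $\del_y\chi$, exactly as you argue.

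The route differs slightly from the paper's. The paper invokes integration by parts (referring to Lemma 5.19 in \cite{MNZNon2016}): for the projections, one writes
\[
P_i(\del_y G)=\int_{\Rbb}\del_y G\,k_i\rho_1\,dy=-\int_{\Rbb}G\Bigl(\del_y k_i-\tfrac{y}{2}k_i\Bigr)\rho_1\,dy,
\]
which only needs a pointwise bound on $G=-2\del_y\chi\,w_1$ itself, hence only $\|w_1\|_{L^\infty}\le 2\kappa$. Your approach instead differentiates $G$ directly, producing a term $\del_y\chi\,\del_y w_1$ that you control via the gradient smallness $\|\del_y w_1\|_{L^\infty}\le\delta_0$ from Proposition \ref{gradient estimation}. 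This is legitimate in the setting of the paper, but note that Proposition \ref{gradient estimation} carries the extra hypothesis that $q(s_0)$ comes from the specific initial data \eqref{eq: initial data}, whereas Proposition \ref{Prop: Priori estimate} (which Lemma \ref{seventh term} feeds into) is stated under the weaker assumption \eqref{eq: priori estimate q}. The paper's integration-by-parts route avoids this mild hypothesis mismatch and keeps the lemma self-contained. In the end, both approaches exploit the same mechanism: the support of $\del_y\chi$ sits at $|y|\gtrsim e^{s/2}$, where $\rho_1$ decays like $e^{-ce^s}$, annihilating any polynomial or sub-exponential factor.
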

\begin{proof}
This can be done with integration by part; interested readers are invited to see the proof of Lemma 5.19 in \cite{MNZNon2016}. 
\end{proof}
\noindent\textbf{Eighth term $N$: }
\begin{lemma}[projection of the last term: $N$ ]\label{Eighth term}For all $A>0$, there exists an \(s_{18}\geq 0\) such that for all \(\tau \geq s_{0}\geq s_{18}\), if \(w_1(y,s)\in \mathcal {V}(s)\), the following estimations holds: 
\begin{enumerate}
		\item \begin{equation}\label{control of $N$ on H+}
		|P_i(N)|\leq Ce^{-3s}\mbox{ where }i\;\leq\;6.
	\end{equation}
	\item \begin{equation}
		\|N\|_{L_{\rho_1}^2(\Rbb)}\leq Ce^{-3s}.
	\end{equation}
\end{enumerate}
\end{lemma}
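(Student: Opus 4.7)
The plan is to exploit the fact that $\del_y\chi\!\left(\frac{ye^{-s/2}+1}{\vep_0}\right)$ is compactly supported in a narrow band located far out in $y$, specifically where the Gaussian weight $\rho_1$ is super-exponentially small. So everything reduces to a pointwise bound on the integrand times a very small measure.

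First, I would locate the support of $\del_y\chi$. By the definition \eqref{eq: def Chi}, $\chi'(\xi)$ vanishes outside $[\tfrac18,\tfrac14]$, so
\[
\mathrm{supp}\,\del_y\chi \subset \Bigl\{\, y :\; (\vep_0/8-1)e^{s/2} \le y \le (\vep_0/4-1)e^{s/2}\,\Bigr\}.
\]
Since $\vep_0\in(0,1)$, every $y$ in this support satisfies $|y|\ge c_0 e^{s/2}$ for some $c_0>0$ depending only on $\vep_0$. On this band, the chain rule gives $|\del_y\chi|\le C\vep_0^{-1}e^{-s/2}$; the denominator satisfies $|y+e^{s/2}| = e^{s/2}|ye^{-s/2}+1|\ge \vep_0 e^{s/2}/8$, hence $\tfrac{1}{|y+e^{s/2}|}\le C\vep_0^{-1}e^{-s/2}$; and $|w_1|\le 2\kappa$ by the assumption $w_1\in\mathcal V(s)$. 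Putting these together,
\[
|N(y,s)| \le C(\vep_0)\,(d-1)\,e^{-s}\,\mathbf{1}_{\mathrm{supp}\,\del_y\chi}(y).
\]

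Next, I would use the Gaussian weight to absorb the remaining factors. On the support, $\rho_1(y)\le Ce^{-y^2/4}\le Ce^{-c_0^2 e^s/4}$. For the $L^2_{\rho_1}$ norm,
\[
\|N\|_{L^2_{\rho_1}}^2 \le C(\vep_0)e^{-2s}\,|\mathrm{supp}\,\del_y\chi|\,\sup_{\mathrm{supp}}\rho_1 \le C(\vep_0)\,e^{-3s/2}\,e^{-c_0^2 e^s/4},
\]
and for any polynomial Hermite weight $k_i$ with $i\le 6$, we have $|k_i(y)|\le C(1+|y|)^6\le Ce^{3s}$ on the support, so
\[
|P_i(N)| \le \int |N|\,|k_i|\,\rho_1\,dy \le C(\vep_0)\,e^{5s/2}\,e^{-c_0^2 e^s/4}.
\]
Both right-hand sides decay faster than any polynomial or any $Ce^{-ks}$ as $s\to\infty$, so in particular they are dominated by $Ce^{-3s}$ once $s_0\ge s_{18}$ for some sufficiently large $s_{18}$ (depending only on $\vep_0$, $d$ and the choice of $\chi$). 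This gives both (1) and (2).

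No substantial obstacle is anticipated; the argument is the same flavor as the cutoff-type estimates already carried out for $H$ and $G$ in Lemmas \ref{sixth term} and \ref{seventh term}. The only care needed is verifying that the numerical constant $c_0$ in the lower bound on $|y|$ on the support is uniform in $s$ and strictly positive (which holds as soon as $\vep_0$ is fixed in $(0,1)$), so that $e^{-c_0^2e^s/4}$ is indeed the dominant factor and kills all polynomial and exponential-in-$s$ losses coming from the derivatives of $\chi$, the $1/(y+e^{s/2})$ factor, and the Hermite weights $k_i$.
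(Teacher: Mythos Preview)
Your proposal is correct and follows essentially the same approach as the paper: locate the support of $\del_y\chi$ at distance $\sim e^{s/2}$ from the origin, bound $|N|$ pointwise there by $Ce^{-s}$ using $|w_1|\le 2\kappa$, $|\del_y\chi|\le C\vep_0^{-1}e^{-s/2}$, and $|y+e^{s/2}|\ge c\vep_0 e^{s/2}$, and then let the Gaussian factor $e^{-c_0^2 e^{s}/4}$ absorb all polynomial-in-$y$ and exponential-in-$s$ losses. The paper carries this out explicitly for $P_0(N)$ and the $L^2_{\rho_1}$ norm (splitting $e^{-|y|^2/4}=e^{-|y|^2/8}e^{-|y|^2/8}$ to extract the decay) and then remarks that the same reasoning covers $P_i(N)$ for $i\le 6$; your write-up is slightly more explicit about the polynomial growth of $k_i$, but otherwise the two arguments are the same.
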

\begin{proof}
Let us first recall that  
\[
N(y,s)= \frac{d-1}{y+e^{s/2}}w_1\del_y\Xcal,
\]  
where $w_1$ is defined by \eqref{eq: W}, and $\chi$ denotes the cut-off function introduced in \eqref{eq: def Chi}.  

We now proceed to estimate the terms $P_i(N)$ for $i \in \{0,1,2\}$. According to Definition \ref{Def: shrinking set}, we know that for $|y| \geq e^{\frac{s}{2}}(\frac{3}{8}\vep_0 - 1)$, the $L^{\infty}$ norm of $w_1(s)$ satisfies $\|w_1(s)\|_{L^\infty} \leq 2\kappa $. Moreover, using the definition \eqref{eq: def Chi} of $\chi$, we readily obtain the estimate
\begin{equation}\label{eq:estim dely Chi}
	|\del_y \mathcal{\chi}|\leq e^{-s/2}\frac{C}{\vep_0}\mathbb{I}_{\{(\frac{3}{8}\vep_0-1)e^{s/2}\leq y \leq (\frac{3}{4}\vep_0-1)e^{s/2}\}}.
\end{equation}

\textbf{1.} Using the estimates above, we derive
\begin{equation}\label{eq:estim:abs:eta0}
	\begin{aligned}
		|P_0(N)| 
		&\leq e^{-s/2}\int_{\Rbb}\left|\frac{d-1}{ye^{-s/2}+1}w_1\del_y\Xcal\right|\frac{e^{-\frac{|y|^2}{4}}}{(4\pi)^{1/2}}dy,\\
		&\leq C e^{-s} \int_{\{y\geq (\frac{3}{8}\vep_0-1)e^{s/2}\}}\left|\frac{d-1}{ye^{-s/2}+1}\right|\frac{e^{-\frac{|y|^2}{4}}}{(4\pi)^{1/2}}dy,\\
		&\leq C e^{-s} \int_{\{y\geq (\frac{3}{8}\vep_0-1)e^{s/2}\}}\frac{e^{-\frac{|y|^2}{4}}}{(4\pi)^{1/2}}dy\\
		&\leq C e^{-s}e^{-(\frac{3}{8}\vep_0-1)^2e^{s}/4} \int_{\{y\geq (\frac{3}{8}\vep_0-1)e^{s/2}\}}\frac{e^{-\frac{|y|^2}{8}}}{(4\pi)^{1/2}}dy\leq C e^{-3s}.
	\end{aligned}
\end{equation}

By applying the same reasoning, we can derive the corresponding estimates for $P_i(N)$ with $i \leq 6$.  

\textbf{2.} Regarding the estimate of $\|N\|_{L_{\rho_1}^2(\Rbb)}$, we proceed as follows:  
\begin{equation}
	\begin{aligned}
		\|N\|^2_{L_{\rho}^2(\Rbb)} 
		&= e^{-s}\int_{\Rbb}\left|\frac{d-1}{ye^{-s/2}+1}w_1\del_y\Xcal\right|^{2}\frac{e^{-\frac{|y|^2}{4}}}{(4\pi)^{1/2}}dy,\\
		&\leq C e^{-2s} \int_{\{y\geq (\frac{3}{8}\vep_0-1)e^{s/2}\}}\left|\frac{d-1}{ye^{-s/2}+1}\right|^2\frac{e^{-\frac{|y|^2}{4}}}{(4\pi)^{1/2}}dy,\\
		&\leq C e^{-2s} \int_{\{y\geq (\frac{3}{8}\vep_0-1)e^{s/2}\}}\frac{e^{-\frac{|y|^2}{4}}}{(4\pi)^{1/2}}dy\\
		&\leq C e^{-2s}e^{-(\frac{3}{8}\vep_0-1)^2e^{s}/4} \int_{\{y\geq (\frac{3}{8}\vep_0-1)e^{s/2}\}}\frac{e^{-\frac{|y|^2}{8}}}{(4\pi)^{1/2}}dy\leq C e^{-6s}.
	\end{aligned}
\end{equation}

This concludes the desired estimate.
\end{proof}

\noindent\textbf{Fifth term $R(y,s)$}

\begin{lemma}[Estimates for term $R$]\label{Lemma:esti:R}\;

\noindent For $i \leq 6 $,
\begin{equation}\label{eq: estimation P024R}
	|P_{i}(R)|\leq Ce^{-2s};
\end{equation}
\noindent and we have also:
\begin{equation}
\|P_-(R)\|_{L^2_{\rho_1}}\leq Ce^{-3s}.
\end{equation}
\end{lemma}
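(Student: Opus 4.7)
The plan is to perform a careful Taylor expansion of the explicit profile $\varphi = (E/D)^{1/(p-1)}$ from \eqref{eq: profile form} in powers of $\alpha := e^{-s}$. Writing $\varphi = \kappa - \alpha\, h_4(y) + \alpha^2 \varphi_2(y) + O(\alpha^3)$, where a direct binomial expansion gives $\varphi_2(y) = \frac{p-1}{\kappa}y^4 h_4(y) + \frac{2-p}{2\kappa}h_4(y)^2$, and expanding each ingredient of $R$ (namely $\partial_y^2\varphi$, $(y/2)\partial_y\varphi$, $\varphi/(p-1)$, $\varphi^p$, $\partial_s\varphi$) in the same fashion, I will collect powers of $\alpha$ as follows. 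The $\alpha^0$ term vanishes since $\kappa^p = \kappa/(p-1)$; the $\alpha^1$ term vanishes because $\mathcal{L}h_4 + h_4 = 0$ (using $\mathcal{L}h_4 = -h_4$), which is precisely the motivation for defining $P(y) = \frac{p-1}{\kappa}(y^4 - h_4)$; and the $\alpha^2$ term equals the fixed polynomial
\[
R_1(y) \;:=\; \mathcal{L}\varphi_2 + 2\varphi_2 + \tfrac{p(p-1)}{2}\,\kappa^{p-2} h_4(y)^2.
\]

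The key algebraic observation is that $R_1$ has degree at most $6$. Indeed $R_1$ is even by inspection, and a direct computation of the $y^8$ coefficient, combining the three contributions, yields $-\frac{p}{2\kappa} + \frac{p(p-1)}{2}\kappa^{p-2}$, which vanishes by the identity $(p-1)\kappa^{p-2} = \kappa^{-1}$ (itself immediate from $\kappa^{p-1}=1/(p-1)$). Hence $R_1$ is a polynomial of degree at most $6$, and in particular $P_-(R_1) = 0$ by orthogonality of the Hermite polynomials.

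Next I will control the remainder $\tilde R := R - e^{-2s} R_1$. On the region $\{|y| \leq c\, e^{s/4}\}$ with $c^4 < \kappa/(p-1)$, the quantity $\frac{p-1}{\kappa}y^4 e^{-s}$ stays bounded away from $1$, so the binomial expansion of $(E/D)^{1/(p-1)}$ in $\alpha$ converges geometrically and Taylor's theorem gives the pointwise bound $|\tilde R(y,s)| \leq C e^{-3s}(1+|y|)^N$ for some fixed $N$. On the complementary region $\{|y| \geq c\, e^{s/4}\}$, the density $\rho_1(y)$ is super-exponentially small (bounded by $e^{-c^2 e^{s/2}/8}$), whereas the pointwise bounds from Lemma \ref{lemma: good properties of phi} on $\varphi$, $\partial_y\varphi$, $\partial_y^2\varphi$, together with the explicit formula for $\partial_s\varphi$, yield at most polynomial growth in $y$ for $R$; thus the $L^2_{\rho_1}$ contribution from this outer region is negligible compared to any power of $e^{-s}$. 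Combining these estimates, $\|\tilde R(\cdot,s)\|_{L^2_{\rho_1}} \leq C e^{-3s}$.

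With these ingredients the two bounds follow at once. For $i \leq 6$, splitting $P_i(R) = e^{-2s} P_i(R_1) + P_i(\tilde R)$ and applying Cauchy--Schwarz to the second term gives $|P_i(R)| \leq C e^{-2s}$. For the stable projection, $P_-(R_1) = 0$ yields $P_-(R) = P_-(\tilde R)$, hence $\|P_-(R)\|_{L^2_{\rho_1}} \leq \|\tilde R\|_{L^2_{\rho_1}} \leq C e^{-3s}$. The main obstacle I anticipate is the degree-at-most-$6$ verification for $R_1$: it forces one to propagate the $y^8$ cancellation through all three contributions and hinges crucially on the algebraic identity among $p$ and $\kappa$; without it, the stronger $e^{-3s}$ bound on $P_-(R)$ would fail and only the weaker $e^{-2s}$ estimate from the central region would survive.
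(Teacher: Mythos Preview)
Your approach is correct and follows the same overall strategy as the paper: Taylor-expand $R$ in powers of $e^{-s}$, identify the $e^{-2s}$ contribution as a fixed polynomial, and control the $O(e^{-3s})$ remainder in $L^2_{\rho_1}$. The difference lies in how the leading polynomial is handled. The paper expands each of the five constituents of $R$ separately, sums the results, and writes out the Hermite coefficients of the $e^{-2s}$ term explicitly. You instead isolate the structural fact that $R_1 = \mathcal{L}\varphi_2 + 2\varphi_2 + \tfrac{p}{2\kappa}h_4^2$ has degree at most $6$, via the one-line cancellation of its $y^8$ coefficient ($-\tfrac{3p}{2\kappa}+\tfrac{p}{\kappa}+\tfrac{p}{2\kappa}=0$, using $(p-1)\kappa^{p-2}=\kappa^{-1}$); this yields $P_-(R_1)=0$ without computing any lower-order coefficient and makes the $e^{-3s}$ bound on $P_-(R)$ immediate. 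Your inner/outer splitting for the remainder $\tilde R$ is also more explicit than the paper's bare ``$+O(e^{-3s})$''. It is worth noting that the paper's explicit expansion records a nonzero $h_8$ term at order $e^{-2s}$, which would actually be inconsistent with the stated bound $\|P_-(R)\|_{L^2_{\rho_1}}\le Ce^{-3s}$; your degree argument shows this coefficient must in fact vanish, so your route is both conceptually cleaner and sidesteps what appears to be an arithmetic slip in the paper's computation. One small point: Lemma~\ref{lemma: good properties of phi} does not state a bound on $\partial_y^2\varphi$, so in the outer region you should either derive it by differentiating \eqref{2.19} once more (the same logarithmic-derivative trick works) or simply observe that all five pieces of $R$ are uniformly bounded in $L^\infty$, which suffices since $e^{-2s}R_1$ grows only polynomially.
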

\begin{proof}
To give the estimates on $R$, we first compute each term in $R$ its Taylor expansion in $L_{\rho_1}^r$ with $r \geq 2$. To have a better vision of this, we remind the readers that
\[R(y,s)= \del^2_y \varphi-\frac{1}{2}y\del_y \varphi-\frac{1}{p-1}\varphi+\varphi^p-\del_s \varphi,\]
with \eqref{eq:flat profile}, we build the estimations for $i \leq 6$ by the following expansions:\\

\noindent \bigskip
\textbf{expansion of $\del^2_y \varphi$}
\begin{equation}
	\begin{split}
		\del^2_y\varphi &= -e^{-s}(12h_2)+\frac{e^{-2s}}{\kappa}\left(28p y^6-360y^4+(1872-864p)y^2+(2-p)288\right)+O(e^{-3s});
	\end{split}
\end{equation}
\textbf{expansion of $-\frac{1}{2}y\del_y \varphi$}
\begin{equation}
	\begin{split}
		-\frac{1}{2}y\del_y \varphi&=e^{-s}(2h_4+12h_2)\\
		&-\frac{e^{-2s}}{\kappa}\left(2py^8-36y^6+(312-144p)y^4-144(2-p)y^2\right)+O(e^{-3s});
	\end{split}
\end{equation}
\textbf{expansion of $-\frac{1}{p-1}\varphi$}
\begin{equation}
	\begin{split}
		-\frac{1}{p-1}\varphi&=-\frac{\kappa}{p-1}+\frac{e^{-s}}{p-1}h_4\\
		&-\frac{e^{-2s}}{\kappa(p-1)}\left(\frac{p}{2}y^8-12y^6+(156-72p)y^{4}-(2-p)144y^2+72(2-p)\right)\\
		&+O(e^{-3s});
	\end{split}
\end{equation}
\\
\textbf{expansion of $\varphi^p$}
\begin{equation}
	\begin{split}
		\varphi^p=&\frac{\kappa}{p-1}-\frac{pe^{-s}}{p-1}h_4\\
		&+\frac{pe^{-2s}}{p-1}\left(\frac{p}{2\kappa}y^8-\frac{12}{\kappa}y^6+(156-72p)y^{4}-(2-p)144y^2+72(2-p)\right)\\
		&+O(e^{-3s});\\
	\end{split}
\end{equation}
\textbf{expansion of $\del_s \varphi$ }
\begin{equation}
	\begin{split}
		\del_s\varphi&=e^{-s}h_4-\frac{2}{\kappa}e^{-2s}\left(\frac{p}{2}y^8-12y^6+(156-72p)y^{4}-(2-p)144y^2+72(2-p)\right)\\
		&+\frac{p}{2\kappa}e^{-2s}h_4^2+O(e^{-3s}).
	\end{split}
\end{equation}
We utilize the relation $h_4^2=h_8(y)+32h_6(y)+408h_4(y)+2208h_2(y)+1824$.
and sum up all the expansions, we obtain that
\begin{equation}
	\begin{aligned}
		R(y,s)=&\frac{pe^{-2s}}{\kappa}(3ph_8+(48+184p)h_6+(2268+3764p)h_4)\\
		&+\frac{pe^{-2s}}{\kappa}((20880+20880p)h_2(y)+19584+14016p)+O(e^{-3s}).
	\end{aligned}
\end{equation}
It directly gives the estimation for $P_i(R)$ with $i\leq 6$. Furthur, that
\medskip
$$
\|P_{-}(R)\|_{L^2_{\rho_1}}\leq \int_{\Rbb}\left|R-\sum_{i=0}^{6}P_{i}(R)\frac{h_{i}}{\|h_i\|_{L_{\rho_1}^2}}(y)\right|\rho_{1}(y)dy\leq Ce^{-3s}
$$
is then build upon \eqref{eq: estimation P024R}, and \eqref{eq: Def of projection P_-}.
\end{proof}

\paragraph{Proof of item 1 in Proposition \ref{Prop: Priori estimate}}\;\\
Consider a solution $q(s) \in V_A(s)$ of equation \eqref{q's equation} on a time interval $[s_0, s_1]$, with some $A>0$. Observe that $\|q(s)\|_{L^{\infty}}$ remains uniformly bounded; thus, Lemma \ref{Fourth term} applies. Suppose the initial condition \eqref{eq: priori estimate q} is satisfied. For any $s \in [s_0, s_1]$ and any $i \leq 6$, multiplying equation \eqref{q's equation} by $\frac{h_m}{\|h\|^2_{L_{\rho_1}^2(\Rbb)}}$ yields the desired result. This follows from the definition of $q_i$ in \eqref{eq: Def of projections P_m}, along with Lemmas \ref{second term}, \ref{Third term}, \ref{Fourth term}, \ref{sixth term}, \ref{seventh term}, \ref{Eighth term}, and an application of Hölder's inequality.
\paragraph{Part 2: Proof of the item 2 in Proposition \ref{Prop: Priori estimate}}\;\\
If $P_{-}$ is the $L_{\rho_1}^2$  projector on 
\begin{equation}
E_{-} \equiv \operatorname{span}\left\{h_i \mid i > 6 \right\},
\end{equation}
then $q_{-}(s)=P_{-}(q(s))$ and $R_{-}(s)=P_{-}(R(s))$. Applying this projector to equation \eqref{q's equation}, we get
$$
\partial_s q_{-}=\mathcal{L} q_{-}+P_{-}(V q+B+H+\del_y G+N)+R_{-}
$$
Multiplying by $q_{-} \rho$ then integrating in space, we write
$$
\begin{aligned}
\frac{1}{2} \frac{d}{d s}\left\|q_{-}\right\|_{L_{\rho_1}^2}^2=&\int_{\mathbb{R}} q_{-} \mathcal{L}_{-} q_{-} d y\\
&+\int_{\mathbb{R}} q_{-}\left[P_{-}(V q+B+H+\del_y G+N)+R_{-}\right] \rho d y
\end{aligned}
$$
Since the highest eigenvalue of $\mathcal{L}$ on $E_{-}$is $\lambda=-3$ (see \eqref{eq: spectrum}), it follows that

$$
\int_{\mathbb{R}^2} q_{-} \mathcal{L} q_{-} \rho d y \leq-3\left\|q_{-}\right\|_{L_\rho^2}^2
$$
Recalling that $|B| \leq C|q|^{\bar{p}}$ where $\bar{p}=\min (p, 2)>1$ thanks to \eqref{eq:B outer estimation}, then using the Cauchy-Schwarz inequality, we write

$$
\begin{aligned}
&\left|\int_{\mathbb{R}} q_{-}\left[P_{-}(V q+B)+R_{-}\right] \rho d y\right| \\ &\leq\left\|q_{-}\right\|_{L_{\rho_1}^2}\left[\left\|P_{-}(V q+B+H+\del_y G+N)\right\|_{L_{\rho_1}^2}+\left\|R_{-}\right\|_{L_{\rho_1}^2}\right] \\
& \leq\left\|q_{-}\right\|_{L_{\rho_1}^2}\left[\|V q\|_{L_{\rho_1}^2}+C\|B\|_{L_{\rho_1}^2}+C\|H\|_{L_{\rho_1}^2}+C\|\del_y G\|_{L_{\rho_1}^2}+C\|N\|_{L_{\rho_1}^2}+\left\|R_{-}\right\|_{L_{\rho_1}^2}\right] \\
& \leq\left\|q_{-}\right\|_{L_{\rho_1}^2}\left[\|V\|_{L_{\rho_1}^4}\|q\|_{L_{\rho_1}^4}+C\|q\|_{L_{\rho_1}^{2 \bar{p}}}^{\bar{p}}+\left\|R_{-}\right\|_{L_{\rho_1}^2}\right]+Ce^{-3s} .
\end{aligned}
$$
Using the delay regularizing estimate given in Proposition \ref{delay regularizing estimate}, together with the Lemma \ref{Third term} satisfied by $V$, we conclude the proof of Proposition \ref{Prop: Priori estimate}.

\subsection{Proof of the delay regularizing estimate}\label{Sec:POPRE}
Now, the only thing left for the proof of Proposition \ref{Prop: Priori estimate} is to prove Proposition \ref{delay regularizing estimate}, the delay regularizing estimate use just before to control $q_-$. Before that, we first introduce the following statement about the regularizing effect: 
\begin{lemma}\label{lemma: Reg eff d oper}(Regularizing effect of the operator $\mathcal{L}$)
	\begin{enumerate}
		\item \cite{HVaihn93} For any $r>1$, $\bar{r}>1$,$v_0\in L_{\rho}^r(\mathbb{R}^d)$ and $s>\max\left\{0,-\ln\frac{r-1}{\bar{r}-1}\right\}$, it holds that
		\begin{equation}
			\|e^{s\mathcal{L}}v_0\|_{L_{\rho}^{\bar r}}(\Rbb^d)\leq\frac{C(r,\bar r)e^{s}}{(1-e^{-s})^{\frac{N}{2r}}(r-1-e^{-s}(\bar r -1 ))^{\frac{N}{2\bar r}}}\|v_0\|_{L_{\rho}^r(\Rbb^d)}
		\end{equation}
		\item Consider \( r \geq 2 \) and \( v_0 \in L^r_\rho(\mathbb{R}^d) \) such that 
		\[ 
		|v_0(y)| + |\nabla v_0(y)| \leq C(1 + |y|^k) 
		\] 
		for some \( k \in \mathbb{N} \). Then, for all \( s \geq 0 \), we have
		\[ 
		\| e^{s\mathcal{L}}v_0 \|_{L^r_\rho(\mathbb{R}^d)} \leq Ce^{s} \| v_0 \|_{L^r_\rho(\mathbb{R}^d)}.
		\]
		\item For any \( v_0 \in L^\infty(\mathbb{R}^d) \) and \( s \geq 0 \), it holds that 
		\( e^{s\mathcal{L}}v_0 \in L^\infty \) with 
		\[
		\| e^{s\mathcal{L}}v_0 \|_{L^\infty} \leq Ce^{s} \| v_0 \|_{L^\infty}.
		\]
		\item For any \( v_0 \in W^{1,\infty}(\mathbb{R}^d) \) and \( s > 0 \), it holds that 
		\( e^{s\mathcal{L}} \nabla v_0 \in L^\infty \) with 
		\[
		\| e^{s\mathcal{L}} \nabla v_0 \|_{L^\infty} \leq \frac{Ce^{s}}{\sqrt{1 - e^{-s}}} \| v_0 \|_{L^\infty}.
		\]
	\end{enumerate}
\end{lemma}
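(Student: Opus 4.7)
The plan is to derive all four bounds from the explicit Mehler-type kernel representation of the semigroup. Since $\mathcal{L}=\mathcal{L}_0+1$ with $\mathcal{L}_0 = \Delta - \tfrac{1}{2}y\cdot\nabla$ the Ornstein--Uhlenbeck generator whose invariant measure is precisely $\rho_d = \mathcal{N}(0,2I_d)$ (see \eqref{eq: def of rho}), one checks by direct computation (or by Fourier analysis) that
\[
(e^{s\mathcal{L}}v_0)(y) = \frac{e^s}{[4\pi(1-e^{-s})]^{d/2}} \int_{\mathbb{R}^d} \exp\!\left(-\frac{|x-ye^{-s/2}|^2}{4(1-e^{-s})}\right) v_0(x)\,dx,
\]
or equivalently $(e^{s\mathcal{L}}v_0)(y) = e^s\,\mathbb{E}\!\left[v_0\bigl(ye^{-s/2}+\sqrt{2(1-e^{-s})}\,Z\bigr)\right]$ with $Z\sim\mathcal{N}(0,I_d)$. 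Every assertion in the lemma will be read off from this single formula.

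For item (1) I would simply follow \cite{HVaihn93}: inserting the kernel, applying Hölder's inequality with the Gaussian weights, and performing the change of variables $x \mapsto (x-ye^{-s/2})/\sqrt{1-e^{-s}}$ yields a Gaussian integral whose integrability threshold is exactly $s > -\log\frac{r-1}{\bar r -1}$, producing the constants displayed in the statement. For item (2), since the underlying expectation is with respect to a probability measure, Jensen's inequality applied to the convex map $t\mapsto t^r$ gives
\[
|e^{s\mathcal{L}}v_0(y)|^r \leq e^{rs}\,\mathbb{E}\!\left[\bigl|v_0\bigl(ye^{-s/2}+\sqrt{2(1-e^{-s})}Z\bigr)\bigr|^r\right].
\]
Integrating against $\rho_d(y)\,dy$ and using the invariance of $\rho_d$ under the O--U flow (which collapses the double integral back to $\|v_0\|_{L^r_{\rho_d}}^r$) gives the desired $e^s$-bound; the polynomial-growth hypothesis on $v_0$ and $\nabla v_0$ only ensures that all quantities are finite and the semigroup acts classically. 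Item (3) is immediate: the probabilistic representation identifies $e^{-s}(e^{s\mathcal{L}}v_0)(y)$ with a probability average of $v_0$, whence $|e^{s\mathcal{L}}v_0(y)|\leq e^s\|v_0\|_{L^\infty}$.

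For item (4), the gradient is transferred from $v_0$ to the Gaussian kernel via one integration by parts:
\[
(e^{s\mathcal{L}}\nabla v_0)(y) = \frac{e^s}{[4\pi(1-e^{-s})]^{d/2}} \int \frac{x-ye^{-s/2}}{2(1-e^{-s})} \exp\!\left(-\frac{|x-ye^{-s/2}|^2}{4(1-e^{-s})}\right) v_0(x)\,dx.
\]
Factoring out $\|v_0\|_{L^\infty}$ and evaluating the remaining $L^1_x$-integral of $|x-ye^{-s/2}|$ against the Gaussian (first absolute moment of $\mathcal{N}(0,2(1-e^{-s})I_d)$) produces exactly the factor $\sqrt{1-e^{-s}}$ in the numerator, which combined with the $1/(1-e^{-s})$ prefactor yields the stated $1/\sqrt{1-e^{-s}}$ singularity.

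The genuinely delicate step is item (1): tracking the constants through the two nested Gaussian integrals and obtaining the sharp threshold $s>-\log\frac{r-1}{\bar r-1}$ requires a careful bookkeeping of the quadratic forms in the exponents after the change of variables, and is the reason one typically cites \cite{HVaihn93} rather than redoing the computation. Items (2)--(4) are, by contrast, routine consequences of the kernel/probabilistic formula together with Jensen's inequality and an integration by parts.
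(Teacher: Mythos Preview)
Your proposal is correct. The paper itself does not prove this lemma at all: its ``proof'' is a one-line citation to Lemma~7.1 of \cite{MZJEMS24}, so there is no argument to compare against. Your sketch via the explicit Mehler kernel and the probabilistic representation $e^{s\mathcal{L}}v_0(y)=e^s\,\mathbb{E}\bigl[v_0(ye^{-s/2}+\sqrt{2(1-e^{-s})}Z)\bigr]$ is exactly the standard route (and almost certainly what \cite{MZJEMS24} does as well): item~(1) is the Herrero--Vel\'azquez computation, item~(2) is Jensen plus invariance of $\rho_d$ under the Ornstein--Uhlenbeck flow (your observation that the polynomial-growth hypothesis only guarantees the kernel integral converges pointwise, while the bound itself is pure Jensen, is accurate), item~(3) is immediate from the probability-average interpretation, and item~(4) is the integration by parts moving $\nabla$ onto the Gaussian kernel and reading off the first absolute moment. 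Nothing is missing.
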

\begin{proof}
    This is exactly the same lemma as Lemma 7.1 on Page 44 of \cite{MZJEMS24}. We hence omit here. Interested readers are kindly invited to read Lemma 7.1 on Page 44 of \cite{MZJEMS24}.
\end{proof}
With the help of Lemma \ref{lemma: Reg eff d oper}, we are ready to give the proof of Proposition \ref{delay regularizing estimate}.
\paragraph{Proof of Proposition \ref{delay regularizing estimate}}
 Let us consider $A>0$ and $s_1 \geq s_0 \geq 0$, and assume that $q(y, s)$ satisfies equation \eqref{q's equation} for all $(y, s) \in [(\frac{\vep_0}{8}-1)e^{-s},+\infty) \times\left[s_0, s_1\right]$, with $q(s) \in V_A(s)$ defined in \eqref{Def: shrinking set}. Assume also that the initial condition \eqref{eq: initial data} holds. Note in particular that $\|q(s)\|_{L^{\infty}}$ is uniformly bounded. Using \eqref{q's equation} together with \eqref{eq; estimation bar V}, we see that for some universal constant $C^*>0$ and for almost every $(y, s) \in [(\frac{\vep_0}{8}-1)e^{-s},+\infty) \times\left[s_0, s_1\right]$, we have
\begin{equation}\label{reduce q's equation}
\partial_s|q| \leq\left(\mathcal{L}+C^*\right)|q|+|\tilde{R}|,
\end{equation}
where
$$
\tilde{R}=|R|+|H|+|N|+|\del_yG|.
$$

Consider now some $r \geq 2$ and introduce $s^*=-\log \left(\frac{2-1}{r-1}\right) \geq 0$, which is involved in the statement of Lemma \ref{lemma: Reg eff d oper}. Consider then some $s \in\left[s_0, s_1\right]$, and introduce
\begin{equation}\label{s' definition}
s^{\prime}=\max \left[s_0, s-s^*\right].
\end{equation}

We then introduce the following Duhamel formulation of \eqref{reduce q's equation} on the interval $\left[s^{\prime}, s\right]$ :

$$
|q(s)| \leq e^{\left(\mathcal{L}+C^*\right)\left(s-s^{\prime}\right)}\left|q\left(s^{\prime}\right)\right|+\int_{s^{\prime}}^s e^{\left(\mathcal{L}+C^*\right)(s-\tau)}|\tilde{R}(\tau)| d \tau,
$$
which implies that

\begin{equation}\label{qs L rho decomposition}
\|q(s)\|_{L_{\rho_{1}}^r\left(\mathbb{R}\right)} \leq I+J,
\end{equation}
where

$$
I=\left\|e^{\left(\mathcal{L}+C^*\right)\left(s-s^{\prime}\right)}\left|q\left(s^{\prime}\right)\right|\right\|_{L_{\rho_1}^r\left(\mathbb{R}\right)} \text { and } J=\int_{s^{\prime}}^s\left\|e^{\left(\mathcal{L}+C^*\right)(s-\tau)}|\tilde{R}(\tau)|\right\|_{L_{\rho_1}^r\left(\mathbb{R}\right)} d \tau.
$$

Let us first bound $J$ then $I$.
Since $\tilde{R}(y, s)$ and $\nabla \tilde{R}(y, s)$ are clearly bounded by a polynomial in $y$ by definition \eqref{definition of B,R,F,N}, we write from item (ii) of Lemma \ref{lemma: Reg eff d oper} and the bound form Lemmas \ref{sixth term}, \ref{seventh term},\ref{Eighth term}, and \ref{Lemma:esti:R} on each components from $\tilde{R}$ :

$$
\begin{aligned}
	J & \leq C \int_{s^{\prime}}^s e^{\left(1+C^*\right)(s-\tau)}\|\tilde{R}(\tau)\|_{L_{\rho_1}^r\left(\mathbb{R}\right)} d \tau \\
	& \leq C \int_{s^{\prime}}^s e^{\left(1+C^*\right)(s-\tau)} e^{-2 \tau} d \tau \leq e^{\left(1+C^*\right)\left(s-s^{\prime}\right)}\left(s-s^{\prime}\right) e^{-2 s^{\prime}} d \tau
\end{aligned}
$$

Since $s^{\prime} \geq s-s^*$ from \eqref{s' definition}, it follows that $s-s^{\prime} \leq s^*$ and $e^{-2 s^{\prime}} \leq e^{2 s^*-2 s}$, hence
\begin{equation}\label{J estimation}
J \leq C s^* e^{\left(3+C^*\right) s^*} e^{-2 s}
\end{equation}
In order to bound $I$, we consider 2 cases:
\begin{itemize}
\item If $s-s^* \geq s_0$, then $s^{\prime}=s-s^*$ and $s-s^{\prime}=s^*$. Using item (i) in Lemma \ref{lemma: Reg eff d oper}, we write $I \leq C(r)\left\|q\left(s-s^*\right)\right\|_{L_{\rho_1}^2}$.
Since $q\left(s-s^*\right) \in V_A\left(s-s^*\right)$ by hypothesis, it follows from \eqref{q L rho priori estimate} that

$$
\left\|q\left(s-s^*\right)\right\|_{L_{\rho_1}^2} \leq C A\left(s-s^*\right) e^{-2\left(s-s^*\right)} \leq C e^{2 s^*} A s e^{-2 s}.
$$

Therefore, we conclude that

\begin{equation}\label{I estimation s star small}
I \leq C(r) A s e^{-2 s}.
\end{equation}

\item If $s-s^*<s_0$, then $s^{\prime}=s_0$. This time, from hypothesis \eqref{eq: priori estimate q}, we see that we can apply item (ii) of Lemma \ref{lemma: Reg eff d oper} to write

$$
I \leq C e^{\left(1+C^*\right)\left(s-s_0\right)}\left\|q\left(s_0\right)\right\|_{L_{\rho_1}^r} \leq C(r) e^{\left(1+C^*\right) s^*} A s_0 e^{-2 s_0}.
$$

Since $s_0 \leq s<s_0+s^*$, it follows that

\begin{equation}\label{I estimation s star large}
I \leq C(r) A s e^{-2 s}.
\end{equation}
\end{itemize}
Combining \eqref{qs L rho decomposition}, \eqref{J estimation}, \eqref{I estimation s star large} and \eqref{I estimation s star small} concludes the proof of Proposition \ref{delay regularizing estimate}.\qed
\subsection{Proof of Proposition \ref{Prop:control of q1,q2,q-,qe}}\label{Proof of control q1 to qe}
Arguing as the proof of Proposition 4.4 in  \cite{MZIMRN22}. Consider $i \leq 5$, and assume that
\[
q_{i}(s_1) = \theta A e^{-2 s_1},
\]
for some $\theta = \pm 1$. Since Lemma \ref{Lemma:esti:R} implies that $|R_{i}(s)| \leq C e^{-2 s}$ by Definition \ref{Def: shrinking set}, we write from Lemma \ref{second term}, Lemma \ref{Third term}, Lemma \ref{Fourth term}, Lemma \ref{Lemma:esti:R}, Lemma \ref{sixth term}, Lemma \ref{seventh term} and Lemma \ref{Eighth term},
\begin{equation}
\begin{aligned}
	\theta q_{i}'(s_1) &\geq \left(1 - \frac{i}{2}\right) A e^{-2 s_1} - C A s_1 e^{-3 s_1} - C e^{-2 s_1} -Ce^{-3s_1}\\
	&\geq \left(1 - \frac{i}{2}\right) A e^{-2 s_1},
\end{aligned}
\end{equation}
on the one hand, taking $A$ large enough, then $s_0$ large enough. On the other hand, we have
\begin{equation}
\left. \frac{d}{ds} \left( A e^{-2 s} \right) \right|_{s = s_1} = -2 A e^{-2 s_1}. 
\end{equation}
Since 
\[
1 - \frac{i}{2} \geq 1 - \frac{5}{2} = -\frac{3}{2} > -2,
\]
the conclusion follows.
This concludes item (i)  of Proposition \ref{Prop:control of q1,q2,q-,qe}. 
The case of item (ii) is similar. Assume that
\[
q_{6}(s_1) = \theta As e^{-2 s_1},
\]
upon Definition \ref{Def: shrinking set}, Lemma \ref{second term}, Lemma \ref{Third term}, Lemma \ref{Fourth term}, Lemma \ref{Lemma:esti:R}, Lemma \ref{sixth term}, Lemma \ref{seventh term} and Lemma \ref{Eighth term}, we obtain that

\begin{equation}
\begin{aligned}
	\theta q_6'(s_1) &\leq -2CAs_1e^{-2s_1} + C A s_1 e^{-3 s_1} +C e^{-2 s_1} + Ce^{-3s_1},\\
	&\leq CAe^{-2s_1}(-2s_1+s_1e^{-s_1}+\frac{1}{A}+\frac{Ce^{-s_1}}{A}).
\end{aligned}
\end{equation}
again on the one hand, taking large enough $A_0$ and $s_0$, we have that
\begin{equation}
\theta q_6'(s_1) < (1-2s_1)CAe^{-2s_1}.
\end{equation}

On the other hand, 
\begin{equation}
\left. \frac{d}{ds} \left( A s e^{-2 s} \right) \right|_{s = s_1} = (1-2s_1) A e^{-2 s_1}. 
\end{equation}
This concludes the proof of item (iii) in  Proposition \ref{Prop:control of q1,q2,q-,qe}. 

\subsection{Details of initialization}\label{Details of initialization}
In this section, we will show the two good properties of the initial data given by \eqref{eq: initial data}. They are mentioned in Proposition \ref{prop: initialization}.

\noindent \textbf{Proof of Proposition}\ref{prop: initialization}. We will be using the notation $d=\left(d_{0},d_{1}, d_{2}, d_{3},d_4,d_5\right)$ for simplicity. Let us consider $A \geq 1$ and $s_0 \geq 1$. The first item (i) will be proved for any $d \in[-2,2]^6$. The set $\mathcal{D}$ will be introduced while proving item (ii). Since the set $\mathcal{D}$ we intend to construct will be in $[-2,2]^6$, there will be no need to revisit the proof of item (i) afterwards. Note that all the expansions given below are valid in $L_{\rho_1}^r$ for any $r \geq 2$.

(i) By definition \eqref{def S y} of $S(y)$ and definition \eqref{eq: profile form} of $D$,  we write for $s_0$ large enough,

\begin{equation}
\begin{aligned}\label{S over D estimation}
	\frac{A e^{-2 s_0}|S(y)|}{D} =O(Ae^{-2s_0}(1+|y|)).
	\end{aligned}
\end{equation}

Using \eqref{eq: definition P(y)}, the conclusion follows for $s_0$ large enough.

(ii) Since $D \geq p-1>0$ by definition \eqref{eq: profile form} of $D$, using the expressions \eqref{eq: initial data} and \eqref{eq: profile form} of $w_0\left(y, s_0\right)$ and $\varphi$, together with item (ii) of Lemma \ref{lemma: good properties of phi} and \eqref{S over D estimation}, we write

\begin{equation}\label{initial w to power p-1 bound}
	\begin{aligned}
\left|w_0\left(y, s_0\right)\right|^{p-1} &\leq \chi^{p-1}(\frac{ye^{-s_0/2}+1}{\vep_0})\left(\left|\varphi\left(y, s_0\right)\right|^{p-1}+\frac{p-1}{\kappa D^2}A e^{-2 s_0} S(y)\right),\\
 &\leq \frac{1}{p-1}+C e^{-\frac{s_0}{3}},
    \end{aligned}
\end{equation}
for $s_0$ large enough, and the bound on $\left\|w_0\left(s_0\right)\right\|_{L^{\infty}}$ follows.

Taking the gradient of $w_0\left(y, s_0\right)$ defined in \eqref{eq: initial data}, we write
\begin{equation}
\begin{aligned}
\left|\del_y w_0\left(y, s_0\right)\right| &\leq \del_y \chi\left(\frac{ye^{-s_0/2}+1}{\vep_0}\right)\left(\frac{E}{D}+\frac{p-1}{\kappa D^2}A e^{-2 s_0} S(y)\right)^{\frac{1}{p-1}}\\
&+\chi\left(\frac{ye^{-s_0/2}+1}{\vep_0}\right)\del_y\left(\frac{E}{D}+\frac{p-1}{\kappa D^2}A e^{-2 s_0} S(y)\right)^{\frac{1}{p-1}}\\
&=I+II
\end{aligned}
\end{equation}
We we start from control $I$. Note that $\del_y \chi\left(\frac{ye^{-s_0/2}+1}{\vep_0}\right)= O(e^{-s_0/2})$, from (ii) of Lemma \ref{lemma: good properties of phi}, we have that:
\begin{equation}
	I \leq C e^{-s_0/2}
\end{equation}
Now, let $\hat{w}_0(y,s_0) = \left(\frac{E}{D}+\frac{p-1}{\kappa D^2}A e^{-2 s_0} S(y)\right)^{\frac{1}{p-1}}$. Taking the logarithm then gradient of $\hat{w}_0\left(y, s_0\right)$ gives:
\begin{equation}\label{gradient hat w }
\left|\del_y \hat{w}_0\left(y, s_0\right)\right| \leq \frac{\left|\hat{w_0}\left(y, s_0\right)\right|}{p-1}\left[\frac{|\del_y \bar{E}|}{\bar{E}}+\frac{|\del_y D|}{D}\right]
\end{equation}
where
$$
\bar{E}=E+\frac{p-1}{\kappa D}A e^{-2 s_0} S(y)
$$
Using \eqref{S over D estimation}, \eqref{eq: profile form} and \eqref{eq: estimation P(y)}, we write

\begin{equation}\label{estimation bar E}
\bar{E} \geq \frac{E_0}{C} \text { where } E_0=1+e^{-s_0}y^2+e^{-2 s_0}y^5
\end{equation}
Then, we write

$$
\del_y\left[\frac{S(y)}{D}\right]=\frac{\del_y S(y)}{D}-\frac{S(y) \del_y D}{D^2}
$$
hence, by definition \eqref{def S y} of $S(y)$, we have

$$
A e^{-2 s_0}\left|\del_y\left[\frac{S(y)}{D}\right]\right| \leq C A e^{-2 s_0} \frac{\left(1+|y|^4\right)}{D}+A e^{-2 s_0} \frac{|S(y)|}{D} \frac{|\del_y D|}{D} \leq C A e^{-\frac{2}{3} s_0},
$$
thanks to \eqref{S over D estimation} and \eqref{Gradient D estimation}.

Therefore, using \eqref{gradient hat w }, \eqref{estimation bar E} together with  \eqref{2.22}, we write

\begin{equation}\label{gradient bar E estimation}
\frac{|\del_y \bar{E}|}{\bar{E}} \leq C \frac{|\del_y E|}{E_0}+C A e^{-\frac{2}{3} s_0}\leq C e^{-\frac{s_0}{3}}
\end{equation}
Using \eqref{Gradient D estimation}, \eqref{gradient bar E estimation}, together with a similar strategy with \eqref{initial w to power p-1 bound}, we obtain the following bound

$$
\left|\del_y w_0\left(y, s_0\right)\right| \leq C e^{-\frac{s_0}{6}}
$$
Arguing as for \eqref{profile expansion}, we show the following:

$$
\begin{aligned}
	& w_0\left(y, s_0\right)=\kappa+e^{-s_0}\left(\frac{\kappa}{p-1} P(y)-y^4\right) \\
	& +e^{-2 s_0}\left(A S(y)+\frac{\kappa(2-p)}{2(p-1)^2} P(y)^2-\frac{P(y)}{p-1} y^4+\frac{p}{2 \kappa} y^8\right) \\
	& +O\left(A e^{-3 s_0}\right)
\end{aligned}
$$
uniformly for $d \in[-2,2]^6$. Using again the expansion \eqref{profile expansion} of $\varphi$ together with definition \eqref{eq: w decompose-in-terms-q-phi} of $q\left(y, s_0\right)$, we derive that

$$
q\left(y, s_0\right)=A e^{-2 s_0} S(y)+O\left(A e^{-3 s_0}\right) \text { as } s_0 \rightarrow \infty.
$$
By definition \eqref{def S y} of $S(y)$, together with the definition \eqref{eq: Def of projections P_m} and \eqref{eq: Def of projection P_-} of the projections, we clearly see that for all $d \in[-2,2]^6$ and $i\in I_0 \equiv\{0,1,2,3,4,5\}$,

$$
q_{i}\left(s_0\right)=A e^{-2 s_0} d_{i}+O\left(A e^{-3 s_0}\right)
$$
(please note that this identity holds after differentiation in $d$ ). We also have $\left\|q_{-}\left(s_0\right)\right\|_{L_\rho^2}=$ $O\left(A e^{-3 s_0}\right)$.
 
Recalling estimate \eqref{initial w to power p-1 bound} and the definition \eqref{eq: w decompose-in-terms-q-phi} of $q\left(y, s_0\right)$, we see that this clearly gives the existence of $\mathcal{D} \subset[-2,2]^6$ such that $q\left(s_0\right) \in \mathcal{V}\left(s_0\right)$, \eqref{eq: priori estimate q} and  $\left\|q_{-}\left(s_0\right)\right\|_{L_\rho^2} \leq C A e^{-3 s_0}$ holds, whenever $d \in \mathcal{D}$, with the function \eqref{d one to one function} in  one-to-one, provided that $s_0$ is large enough. This concludes the proof of Proposition \ref{prop: initialization}.\qed

\subsection{Estimates in the regular region}\label{section_regular_region}
Our goal here is to show that:
\begin{equation}\label{eq:estimation on u in regularregion}
|x|\leq \frac{\vep_0}{4}, \text{ then we have } U(x,t^*) \leq \frac{\eta}{2}.
\end{equation}
This is shown in 3 steps:
\begin{itemize}
    \item In the first step, we improve the bounds on the solution $u(x,t)$ in the intermediate region.
    \item In the second step, we use parabolic regularity to obtain an estimation of the solution in the region $\mathcal{R}_2$.
    \item Finally, we use the two steps above to get \eqref{eq:estimation on u in regularregion}
\end{itemize}

\noindent\textbf{Step 1: Improved estimates in the intermediate region}

Here, we refine the estimates on the solution in the following intermediate region:

\begin{equation}\label{def: intermediate region}
\frac{\vep_0}{8}\leq|x|\leq 1- 2K(T-t)^{\frac{1}{4}}.
\end{equation}
By the priori estimate $w_1\leq 2\kappa$, we have
\begin{equation}
    \forall t \in [0, t^*], \text{ and } \forall x\in \Rbb^d,  \left| U(t) \right| \leq C(T - t)^{-\frac{1}{p-1}}, \label{eq:bound intermediate region}
\end{equation}
valid in particular in the intermediate region given by  \eqref{def: intermediate region}. This bound is unsatisfactory since it goes to infinity as $t \to T$. In order to refine it, given a $x$ small enough in norm $\|\cdot\|_{\Rbb^{d}}$ , we use this bound when $t = t_0(x)$ defined by
\begin{equation}
    \left| x\right| = 1 - 2K_0 (T - t_0(x))^{\frac{1}{4}}, \label{eq:t0}
\end{equation}
to see that the solution is, in fact, flat at that time. Then, advancing the PDE \eqref{eq: NLH introduction}, we see that the solution remains flat for later times. More precisely, we claim the following:
\begin{lemma}[Flatness of the Solution in the Intermediate Region in \eqref{def: intermediate region}]\label{Lemma: flateness}
 For all $K_{19} > 0$, $\varepsilon_{19} > 0$, $A_{19} \geq 1$, there exists $s_{19}(K_{19}, \varepsilon_{19}, A)$ such that if $s_0 \geq s_{19}$ and $0 < \eta_0 \leq 1$, then, $\forall t_0(x) \leq t \leq t^*$,
    \begin{equation}
        \left| \frac{U(x, t)}{u^*(1-|x|)} - \frac{U_{K}(x)}{U_{K}(1)} \right| \leq C|1-|x||^4 ,
    \end{equation}
    where $u^*$ is defined in \eqref{eq: profile} and
    \begin{equation}
        U_{K}(\tau) = \kappa \left( (1 - \tau) + \frac{(p-1)K^2}{4p} \right)^{-\frac{1}{p-1}}. \label{eq:UK0}
    \end{equation}
    In particular, $\left| U(x, t) \right| \leq 2 \left| u^*(|x|) \right|$.
\end{lemma}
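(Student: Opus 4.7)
I adapt the argument from Part 4 of the proof of Theorem \ref{th: Main theorem}, which establishes the pointwise profile of $U(x, T)$ near the singular sphere, but run it on the intermediate time interval $[t_0(x), t^*]$ rather than pushing to $t = T$. The three-step plan is: (i) pin down $U(x, t_0(x))$ precisely at the initial time $t_0(x)$ using the outer profile $\varphi$; (ii) propagate this datum forward via the ODE localization of Proposition \ref{ODE localization}; (iii) reinterpret the resulting explicit expression in terms of $u^*$ and $U_K$.

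\textbf{Initialization at $t = t_0(x)$.} From \eqref{eq:t0}, setting $s_0 = -\log(T - t_0(x))$ and $y_0 = (|x|-1)/\sqrt{T-t_0(x)}$ gives $y_0\,e^{-s_0/4} = -2K$. Expanding the explicit formula \eqref{eq: profile form} for $\varphi$ at this value of $y$ yields
\[
  \varphi(y_0, s_0) \;=\; f(-2K) + O(e^{-s_0/2}),
\]
where $f$ is the profile from \eqref{eq:coninthm}. Combined with the pointwise closeness of $\tilde w$ to $\varphi$ in the large region, which follows from the bootstrap $q(s)\in\mathcal{V}(s)$ together with the a priori bound $\|w_1\|_{L^\infty} \leq 2\kappa$ from Section \ref{Proof of the priori bound} and standard parabolic regularization, this gives
\[
  (T-t_0(x))^{\frac{1}{p-1}}\,U(x, t_0(x)) \;=\; f(-2K) + o(1)
\]
as $s_0 \to \infty$, uniformly in $K$ on compact sets.

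\textbf{ODE propagation on $[t_0(x), t]$.} For any $\varepsilon > 0$, Proposition \ref{ODE localization} gives
\[
  (1-\varepsilon)U(x,t)^p - C_\varepsilon \;\leq\; \partial_t U(x,t) \;\leq\; (1+\varepsilon)U(x,t)^p + C_\varepsilon.
\]
Since $U(x, t_0(x))$ is of order $(1-|x|)^{-4/(p-1)}$, which is large as $|x| \to 1$, the additive term $C_\varepsilon$ is negligible against $U^p$ for $s_0$ large, reducing the bound to $(1-2\varepsilon)U^p \leq \partial_t U \leq (1+2\varepsilon)U^p$. Integrating both differential inequalities explicitly and inserting the value of $U(x, t_0(x))$ from the previous paragraph yields two-sided bounds
\[
  U(x,t)^{-(p-1)} \;=\; U(x, t_0(x))^{-(p-1)} - (p-1)(1 \pm 2\varepsilon)(t - t_0(x)) + o(\,\cdot\,).
\]
Using \eqref{eq:t0} to substitute $T - t_0(x) = \bigl((1-|x|)/(2K)\bigr)^4$, together with the formulas \eqref{eq: profile} for $u^*$ and \eqref{eq:UK0} for $U_K$, this expression is identified with $u^*(1-|x|)\, U_K(|x|)/U_K(1)$ up to an error of the target order $|1-|x||^4$. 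The "in particular" statement $|U(x,t)| \leq 2\,u^*(1-|x|)$ follows since the ratio $U_K(|x|)/U_K(1)$ is uniformly bounded by a universal constant on $|x| \in [\vep_0/8, 1]$.

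\textbf{Main obstacle.} The technically delicate step is the pointwise smallness $|q(y_0, s_0)| = o(1)$ needed for the initialization, at a point $y_0 = -2K\,e^{s_0/4}$ far out in the tail where the Gaussian weight $\rho$ is exponentially small; the $L^2_{\rho_1}$-bootstrap alone gives no pointwise control there. One must resort to local parabolic regularity in the rescaled variable $z = ye^{-s/4}$, in which the profile equation becomes a bona fide PDE on the bounded set $\{z \in [-2K, 0]\}$ with the steady profile $f(z)$ as an exact solution. Once this pointwise initialization is in hand, the ODE integration and the algebraic identification with $u^*(1-|x|)\,U_K(|x|)/U_K(1)$ are essentially mechanical; the uniform-in-intermediate-region error control is then a matter of balancing $\varepsilon$ against $s_0$ and $(1-|x|)^4$.
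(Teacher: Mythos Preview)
Your strategy mirrors what the paper does in Part~4 of the proof of Theorem~\ref{th: Main theorem} (for item~3), and the paper's own proof of this lemma is just a citation to \cite{MNZNon2016}, Lemma~3.12. So in spirit you are on the intended route: initialize at $t_0(x)$ via the self-similar profile, then propagate by an ODE-type argument.

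There is, however, a logical ordering issue. Proposition~\ref{ODE localization} as stated in the paper is invoked in Part~4 \emph{after} the solution has been trapped in $\mathcal{V}(s)$ for all $s\ge s_0$; its proof (Liouville plus compactness along $t_n\to T$) uses global existence on $[0,T)$. But Lemma~\ref{Lemma: flateness} is used in Section~\ref{section_regular_region} to \emph{close} the bootstrap on a finite interval $[s_0,s_1]$. So you cannot quote Proposition~\ref{ODE localization} as a black box. The fix is to observe that the gradient smallness of Proposition~\ref{gradient estimation} \emph{is} available under the bootstrap hypotheses, and that is what actually drives the comparison: write $\partial_t U=\Delta U+|U|^{p-1}U$, control $|\Delta U|$ via parabolic regularity and the gradient bound, and integrate the resulting approximate ODE directly. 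This is what the cited argument in \cite{MNZNon2016} does.

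On your ``main obstacle'': you are right that the $L^2_{\rho_1}$ control on $q_-$ in Definition~\ref{Def: shrinking set} gives no pointwise information at $y_0\sim e^{\sigma_0/4}$. The polynomial modes are harmless there (the worst is $|q_6 h_6(y_0)|\lesssim A\sigma_0^2 e^{-2\sigma_0}\cdot K^6 e^{3\sigma_0/2}=O(\sigma_0^2 e^{-\sigma_0/2})$), so the difficulty is entirely $q_-$. In the generic-profile constructions this is handled by including an $L^\infty$ bound on $q_e/(1+|y|^3)$ in the shrinking set, which is absent here. Your parabolic-regularity-in-$z$ suggestion is the right instinct; carrying it out and adapting the \cite{MNZNon2016} argument to the flat profile and the present $L^2_\rho$-only shrinking set is the genuine technical content that the paper delegates to the citation.
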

\begin{proof}
    Proof see in \cite{MNZNon2016} P.316 Lemma 3.12.
\end{proof}

\noindent\textbf{Step 2: A parabolic estimate in regular region:}

Recall from the definition on $\mathcal{V}$, that:
$$\forall x\in \Rbb^d \mbox{ such that } 0\leq|x|\leq \frac{\vep_0}{4}, U(x,t)\leq \eta_0 .$$
Using parabolic estimation on the solution, for $U(x,t)$ in region $\mathcal{R}_2$, we claim the following: 
 \begin{proposition}\label{Prop: parabolic estimate}
	For all \( \varepsilon > 0 \), \( \varepsilon_0 > 0 \), \( \sigma_1 \geq 0 \), there exists \( T \geq 0 \) such that for all \( \overline{t} \leq T \), if \( U \) is a  solution of
 		\[
 		\partial_t U = \Delta U + |U|^{p-1}U \quad \text{for all } x \in [0,\vep_0/4], t \in [0, \overline{t}],
 		\]
 		which satisfies:
 		\begin{enumerate}
 			\item[(i)] For \( |x| \in [\frac{\varepsilon_0}{8}, \frac{\varepsilon_0}{4}] \), \( |U(x, t)| \leq \sigma_1 \).
 			\item[(ii)] For \( 0 \leq |x| \leq \frac{\vep_0}{8} \), \( U(x, 0) = 0 \).
 		\end{enumerate}
 		Then, for all \( t \) in \( [0, \overline{t}] \), for all \(  |x| \leq \frac{\vep_0}{4} \), $\left|U(x, t)\right| \leq \varepsilon$.
 	\end{proposition}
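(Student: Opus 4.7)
The plan is to combine a short-time $L^\infty$ bound with a cutoff-and-Duhamel argument, which reduces the NLH on a sub-cylinder to an effectively linear problem with zero initial data, making smallness quantitative in $\bar t$.

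\textbf{Step 1 (a priori $L^\infty$ bound).} Since $|U|$ is a subsolution of $\partial_t V \leq \Delta V + V^p$, I would apply the maximum principle on the inner disk $\{|x|\leq \varepsilon_0/8\}$, using zero initial data from (ii) and boundary data bounded by $\sigma_1$ from (i), comparing with the spatially constant supersolution $W(t)$ solving $W' = W^p$, $W(0)=\sigma_1$, which stays below $C\sigma_1$ for $t \leq c\,\sigma_1^{-(p-1)}$. Combined with (i) on the outer annulus, this yields $\|U(\cdot,t)\|_{L^\infty(|x|\leq \varepsilon_0/4)} \leq M(\sigma_1)$ for $\bar t$ sufficiently small, fixing the first smallness constraint on $T$.

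\textbf{Step 2 (cutoff and Duhamel).} I pick $\tilde\chi \in C_c^\infty(\mathbb{R}^d)$ with $\tilde\chi\equiv 1$ on $\{|x|\leq \varepsilon_0/16\}$ and $\mathrm{supp}\,\tilde\chi \subset \{|x|\leq \varepsilon_0/8\}$, so the derivatives of $\tilde\chi$ are supported inside the inner disk where $U(\cdot,0)\equiv 0$. Setting $V := \tilde\chi U$ and extending by zero, $V$ solves
\[
\partial_t V - \Delta V \;=\; \tilde\chi\,|U|^{p-1}U \;-\; 2\,\nabla\tilde\chi\cdot\nabla U \;-\; (\Delta\tilde\chi)\,U \;=:\; G
\]
on $\mathbb{R}^d\times[0,\bar t]$ with $V(\cdot,0)\equiv 0$. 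Duhamel's formula and the $L^\infty$-contractivity of the heat semigroup then give $\|V(\cdot,t)\|_{L^\infty}\leq \int_0^t\|G(\cdot,s)\|_{L^\infty}\,ds$.

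\textbf{Step 3 (source control and conclusion).} The contribution $\tilde\chi|U|^{p-1}U$ is pointwise bounded by $M(\sigma_1)^p$ from Step~1. The terms involving $\nabla\tilde\chi$ and $\Delta\tilde\chi$ live on an annulus where $|U|\leq M(\sigma_1)$ and where $|\nabla U|$ is controlled by some $C(\sigma_1)$ via standard interior parabolic Schauder (or $L^p$) estimates applied on a slightly enlarged sub-cylinder; the decisive input for that regularity is the zero initial data (ii), which rules out boundary-layer blow-up of the Schauder constants at $t=0$. Hence $\|G\|_{L^\infty}\leq C_*(\sigma_1)$, so $|U(x,t)| = |V(x,t)| \leq C_*(\sigma_1)\,t$ on $\{|x|\leq \varepsilon_0/16\}$. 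Iterating this argument with a nested family of cutoffs whose supports grow up to $\{|x|\leq \varepsilon_0/4\}$, and combining with (i) on the outer annulus, taking $T$ so small that $C_*(\sigma_1)\,T\leq \varepsilon$ produces the stated bound. The main obstacle is the interior parabolic regularity step bounding $\nabla U$ uniformly in terms of $\sigma_1$ only: it is classical, but one has to be careful to place the Schauder cylinder strictly inside a region where the initial data already vanishes so that the constants do not degenerate as $t\downarrow 0$.
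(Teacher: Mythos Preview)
Your strategy is sound and would work, but it takes a harder road than the paper. The paper uses the same cutoff-and-Duhamel skeleton, with one decisive simplification: instead of keeping the commutator term as $-2\nabla\tilde\chi\cdot\nabla U$ and then invoking interior parabolic Schauder to bound $\nabla U$, the paper rewrites it in divergence form,
\[
-2\nabla\tilde\chi\cdot\nabla U-(\Delta\tilde\chi)U \;=\; -2\,\nabla\!\cdot\!\bigl((\nabla\tilde\chi)\,U\bigr)+(\Delta\tilde\chi)\,U,
\]
and then uses the heat-kernel smoothing estimate $\|e^{t\Delta}\nabla\!\cdot f\|_{L^\infty}\le C\,t^{-1/2}\|f\|_{L^\infty}$ inside Duhamel. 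This produces an integrable factor $\int_0^t(t-t')^{-1/2}\,dt'=2\sqrt{t}$ and requires only the $L^\infty$ bound $|U|\le\sigma_1$ on the annulus from hypothesis~(i), never $\nabla U$. The nonlinear term is then handled by a straightforward Gronwall (with coefficient $\sigma_1^{p-1}$), and a single cutoff suffices---no nested iteration is needed. What your approach buys is robustness (Schauder would also give higher regularity if you wanted it), but the paper's divergence-form trick is shorter, fully elementary, and sidesteps exactly the ``main obstacle'' you flagged in your last sentence. Your Step~1 (the a~priori $L^\infty$ bound via comparison with an ODE supersolution) is a sensible precaution that the paper essentially absorbs into its Gronwall step.
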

 \begin{proof}
 	Consider $\overline{U}$, recalled here, after a trivial chain rule to transform the $\partial_x U$ term:
 	\[
 	\forall t \in [0, \overline{t}], \quad \forall \theta \in \mathbb{R}, \quad \partial_{t}\overline{U} = \Delta \overline{U} + |\overline{U}|^{p-1} \overline{U} - 2\nabla (\nabla\overline{\chi} U) + \Delta\overline{\chi} U.
 	\]
 	Therefore, since $\overline{U}(x, 0) \equiv 0$, we write
 	\[
 	\|\overline{U}(t)\|_{L^\infty} \leq \int_0^t S(t - t')\left(|U|^{p-1} I_{|x| \leq \frac{\varepsilon_0}{4}} \overline{U} - 2\nabla \left(\nabla\overline{\chi} U I_{|x| \leq \frac{\varepsilon_0}{4}}\right)\right) + \Delta\overline{\chi} U(t') I_{|x| \leq \frac{\varepsilon_0}{4}} \, dt'.
 	\]
 	where $S(t)$ is the heat kernel. Since $\nabla\overline{\chi}$ and $\Delta\overline{\chi}$ are supported by $\left\{\frac{\varepsilon_0}{8} \leq |x| \leq \frac{\varepsilon_0}{4}\right\}$ and satisfy $|\nabla\overline{\chi}| \leq \frac{C}{\varepsilon_0}$, $|\Delta\overline{\chi}| \leq \frac{C}{\varepsilon_0^2}$ and using parabolic regularity, we write
 	\[
 	\|\overline{U}(t)\|_{L^\infty} \leq \sigma_1^{p-1} \int_0^t \|\overline{U}(t')\| \, dt' + C\sigma_1 \frac{1}{\varepsilon_0} \int_0^t\frac{1}{ \sqrt{t - t'}} dt' + C\sigma_1 \frac{1}{\varepsilon_0^2} \int_0^t dt'.
 	\]
 	If $\overline{t} < 1$, by Gronwall's estimate, this implies that
 	\[
 	\|\overline{U}(t)\|_{L^\infty} \leq Ce^{\sigma_1^{p-1}} \left( \frac{\sigma_1}{\varepsilon_0} \sqrt{\overline{t}} +  \frac{\sigma_1}{\varepsilon_0^2} \overline{t}\right).
 	\]
 	Taking $\overline{t}$ small enough, we can obtain $\forall t \in [0,\overline{t}]$, $\|\overline{u}(t)\|_{L^\infty} \leq \varepsilon$.
 	
 	\end{proof}

\textbf{Step 3}:\textbf{Proof of the improvement in Definition \ref{Def: shrinking set}}

Here, we use Step 1 and Step 2 to prove \eqref{eq:estimation on u in regularregion}, for a suitable choice of parameters.

Let us consider $K > 0$, and $\delta_0(K) > 0$ defined in Lemma \ref{Lemma: flateness}. Then, we consider $\varepsilon_0 \leq 2\delta_0$, $0 < \eta_0 \leq 1$ defined in Lemma \ref{Lemma: flateness} and Proposition \ref{Prop: parabolic estimate}; $A \geq 1$, $s_0$ sufficiently large such that Corollary \ref{control of q by finite emts} and Lemma \ref{Lemma: flateness} and Proposition \ref{Prop: parabolic estimate} holds.

Applying Lemma \ref{Lemma: flateness}, we see that for all  $|x| \leq \delta_0$, $A \geq 1$, for all $t \in [0, t^*]$, $|U(x, t)| \leq 2|u^*(|x|)|$.

In particular, for all $\delta_0 \leq \frac{ \varepsilon_0}{8} \leq |x| \leq \frac{\varepsilon_0}{4} $, for all $t \in [0, t^*]$, $|U(x, t)| \leq 2|u^*(\frac{\varepsilon_0}{8})|$.

Using definition of initial data \eqref{eq: initial data}, we see that for all $\leq |x| \leq  \frac{\varepsilon_0}{8} $, $U(x, 0) = 0$.

Therefore Proposition \ref{Prop: parabolic estimate} applies with $\varepsilon = \frac{\eta_0}{2}$ and $\sigma_1 = 2u^*(\frac{\varepsilon_0}{4})$, and we see that for all $|x| \leq \frac{\vep_0}{4}$, for all $t \in [0, t^*]$, $|U(x, t)| \leq \frac{\eta_0}{2}$ and estimate $\eqref{eq:estimation on u in regularregion}$ holds.

\section{Proof of technical results}\label{proof of technical details}
In this section, we will give all the details used in Section \ref{Proof of the priori bound} concerning the proof of the priori $L^{\infty}$ bound. This section is organized as follows:
\begin{itemize}
    \item In Section \ref{Proof of G as norm}, we first prove that $G_1$ defined in \eqref{definition of G1} is a kind of norm  measuring the size of $w_1((|x_0|-1)e^{s_0/2})$.
    \item Then, in Section \ref{Sec: DOCOWITR3}, we give all the proofs of lemmas we used in Section \ref{Control of W_{x_0}(Y,s) in region R_3} to control $W_{x_0}$ in the region $\tilde{\mathcal{R}}_3$ defined in \eqref{region R3}.
\end{itemize}

\subsection{Proof of $G_1(x_0)$ measuring the size of initial data}\label{Proof of G as norm}
As we mentioned before, in this section, we will prove that $G_1(x_0)$ measures the size of initial data in the three regions: $\tilde{\mathcal{R}}_1$, $\tilde{\mathcal{R}}_2$, $\tilde{\mathcal{R}}_3$.

\noindent\textbf{Proof of Lemma \ref{Lemma G_0(a) as norm}.}
Consider $A \geq 1, s_0 \geq s_{5}(A)$ and $d=\left(d_{0},d_1,d_2,d_3,d_4,d_5,d_6\right) \in$ $\mathcal{D}$, where $s_{5}(A)$ and $\mathcal{D}$ are defined in Proposition \ref{prop: initialization}. Consider then $x_0 \in \mathbb{R}^d$ and introduce $y=(|x_0|-1) e^{\frac{s_0}{2}}$. By definition  \eqref{eq: profile form} and  \eqref{eq: initial data} of initial data $w_1\left(y, s_0\right)$ and the profile $\varphi$, arguing as for \eqref{eq: priori estimation on phi}, we may improve that estimate and write
{\small
	\begin{equation} \label{w_1 first expansion}
		\begin{aligned}
			\left|w_1\left(y, s_0\right)^{p-1}-\frac{1}{D}\right| &\leq
				\left|w_1\left(y, s_0\right)^{p-1}-\chi^{p-1}\left(\frac{ye^{-s_0/2}+1}{\vep_0}\right)\frac{1}{D}\right|\\&+\left|\left(1-\chi^{p-1}\left(\frac{ye^{-s_0/2}+1}{\vep_0}\right)\right)\frac{1}{D}\right|\\
			&\leq C\left\{\frac{e^{-s_0}}{D}+\frac{e^{-s_0}|y|^2}{D}+\frac{A e^{-2 s_0} S(y)}{D^2}\right\}
			+O(\frac{e^{-s_0}|y|^2}{D})\\
			& \leq C\left\{e^{-s_0}+\frac{e^{-s_0}|y|^2}{D}+\frac{A e^{-2 s_0} (1+|y|^5)}{D^2}\right\} \\
			&\leq C e^{-\frac{s_0}{3}}\\
		\end{aligned}
	\end{equation}
}
where $D$ and $S(y)$ are defined in \eqref{eq: profile form} and \eqref{eq: initial data}.
and we use the following expansion for $\chi$ defined in \eqref{eq: def Chi}:
\[
\chi^{p-1}\left(\frac{ye^{-s_0/2}+1}{\vep_0}\right)=\chi^{p-1}\left(\frac{1}{\vep_0}\right)+O(y^2e^{-s_0})=1+O(y^2e^{-s_0}).
\]
By definitions \eqref{eq: profile form} and \eqref{definition of G1} of $D$ and $G_1(x_0)$, we see that

$$
D=p-1+\frac{(p-1)^2 e^{-s_0}}{\kappa} y^4 =(p-1)\left[1+e^{s_0} G_0(x_0)\right]
$$

Consider now 2 nonnegative numbers $m$ and $M$ such that $0<m \leq 1 \leq M$. If $x_0 \in R_1$ (resp. $R_2$, resp. $R_3$ ) defined in \eqref{region R1}, \eqref{region R2} and \eqref{region R3}, we see that $(p-1)(1+M) \leq D$ (resp. $(p-1)(1+m) \leq D \leq(p-1)(1+M)$, resp. $D \leq(p-1)(1+m))$. Since $0 \leq w_1\left(y, s_0\right) \leq \kappa+C e^{-\frac{s_0}{3}}$ by Proposition \ref{prop: initialization}, combining this with \eqref{w_1 first expansion} concludes the proof of Lemma \ref{Lemma G_0(a) as norm}.
\qed

\subsection{Details for the control of $W_{x_0}$ in $\tilde{\mathcal{R}}_3$}
\label{Sec: DOCOWITR3}
In this section, we are going to give the proofs for Lemmas \ref{Initialvalue expansion for large x_0}, \ref{lemma decreaing from k to k-eta} and \ref{Ex. W Y sigma}.

\noindent\textbf{Proof of Lemma \ref{Initialvalue expansion for large x_0}}\;\\
(i) Consider \(A \geq 1\), $\vep_0 \in (0,1)$  and \(s_0 \geq s_{10}(A)\), together with the parameter
\[
d = (d_{0}, d_1,d_{2}, d_ {3},d_4,d_5) \in \mathcal{D}(A, s_0),
\]
where \(s_{13}\) and \(\mathcal{D}\) are defined in Proposition \ref{prop: initialization}. Let $x_0 \in R_3$ defined in \eqref{region R3} decomposed as in \eqref{form of x0 in R3} to satisfy $|x_0|> \frac{\vep_0}{16}$ (i.e we have $K \in ((\frac{\vep_0}{8}-1)e^{s_0/2},-A]\cup[A,+\infty)$). Consider also \(w_1(y, s_0)\) defined in \eqref{eq: initial data}. Recalling that \(\mathcal{D} \subset [-2, 2]^6\), we claim the following Taylor expansion:
\begin{claim}\label{the t.e. of ini.data}
\begin{equation}\label{w1 taylor expansion}
\begin{aligned}
&\left|w_1(y, s) - \kappa \left[1 - \frac{X}{p-1}-\frac{e^{-s_0}P}{p-1}\right] \right|\\
&\quad \leq C \left( I + J ^2+ \mathds{1}_{\{1<p<\frac{3}{2}\}} J^\frac{1}{p - 1} \right) 
\end{aligned}
\end{equation}
where
\begin{equation}
X(y, s_0) = \frac{p - 1}{\kappa} e^{-s_0} y^4,
\end{equation}

\begin{equation}
I = X e^{-s_0} |P|+ X^2 \left( 1 + e^{-s_0} |P| \right) + A e^{-2s_0} \left( 1 + |y|^5 \right),
\end{equation}
and
\begin{equation}
J = X + e^{-s_0} |P| + I.
\end{equation}
The polynomials \(P\) is given in \eqref{eq: definition P(y)} of 2 degree.
\end{claim}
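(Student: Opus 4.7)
My plan is a direct multiplicative Taylor expansion of the explicit formula for $w_{1}(y,s_{0})$ in \eqref{eq: initial data}. Setting $\alpha := 1/(p-1)$ and using $D = (p-1)(1+X)$ together with $\kappa = (p-1)^{-\alpha}$, I would first factor
\[
w_{1}(y,s_{0}) \;=\; \chi\!\left(\tfrac{1+y e^{-s_{0}/2}}{\vep_{0}}\right)\kappa\,(1+X)^{-\alpha}\,(1 + e^{-s_{0}} P)^{\alpha}\,(1 + c)^{\alpha},
\]
where $c := A e^{-2 s_{0}} S(y)/\bigl[\kappa(1+X) E\bigr]$ is the small correction produced by the $S$-perturbation. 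Each of the three power factors will be linearized separately at $0$ by means of the elementary remainder inequality
\[
\bigl|(1+z)^{\beta} - 1 - \beta z\bigr| \;\leq\; C_{\beta}\bigl(z^{2} + \mathds{1}_{\{\beta > 2\}}\,|z|^{\beta}\bigr), \qquad z > -\tfrac{1}{2},
\]
valid for every real $\beta$. The extra $|z|^{\beta}$ correction appears precisely when $\beta = \alpha > 2$, i.e.\ in the regime $1 < p < 3/2$, and this is exactly the source of the indicator term $\mathds{1}_{\{1<p<3/2\}}$ in the statement.

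Multiplying out the three linearizations produces the first-order part displayed in the claim, together with three categories of error: pure quadratic remainders in each of the variables $X$, $e^{-s_{0}} P$, $c$; the three pairwise cross-products of the linear terms; and, when $\alpha > 2$, a $|X|^{\alpha}$ contribution coming from the $(1+X)^{-\alpha}$ factor (the variables $e^{-s_{0}} P$ and $c$ are already so small that their own $|z|^{\alpha}$ terms are swallowed by $J^{2}$). A direct inspection using $|P(y)| \lesssim 1+|y|^{2}$ from \eqref{eq: estimation P(y)} and $|S(y)| \lesssim 1+|y|^{5}$ from \eqref{def S y} shows that all of these quantities are majorised by $I$, $J^{2}$, or $\mathds{1}_{\{1<p<3/2\}} J^{1/(p-1)}$; in particular, the $\alpha c$ term produced by the linearization of $(1+c)^{\alpha}$, of order $A e^{-2 s_{0}}(1+|y|^{5})$, is absorbed directly into $I$.

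The cutoff $\chi$ does not affect the estimate in an essential way. For $y$ in the bulk, $\chi \equiv 1$ and the expansion is exactly as above. On the outer set $\{\chi = 0\}$, where $y \leq (\vep_{0}/8 - 1) e^{s_{0}/2}$, we have $w_{1} \equiv 0$, but there $X$ is so large that the full displayed polynomial $\kappa[1 - X/(p-1) - e^{-s_{0}} P/(p-1)]$ is itself dominated by a multiple of $J^{2}$; the transition strip is thin and handled analogously.

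The main obstacle will be the uniform bookkeeping in the regime $1 < p < 3/2$ with $x_{0}$ close to the inner edge of $\tilde{\mathcal R}_{3}$, where $X$ can reach values of order $m\kappa/(p-1)$ and is therefore \emph{not} uniformly small. There the remainder for $(1+X)^{-\alpha}$ is genuinely of size $|X|^{\alpha}$ rather than $X^{2}$, and the $|X|^{\alpha}$ contribution must be matched exactly against the displayed $J^{1/(p-1)}$ term. Once the dichotomy in the remainder inequality above is propagated consistently through the product expansion, the remainder of the verification is a routine algebraic check.
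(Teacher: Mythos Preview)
Your approach—multiplicative factorization $w_{1}=\chi\,\kappa\,(1+X)^{-\alpha}(1+e^{-s_{0}}P)^{\alpha}(1+c)^{\alpha}$ followed by a first-order Taylor expansion of each factor—is essentially the paper's own approach. The appendix simply groups your first two factors back into $\varphi$ and writes $w_{1}=\chi\,\varphi\,(1+c)^{1/(p-1)}$, then expands $\varphi$ and $(1+c)^{1/(p-1)}$ separately; this is a cosmetic difference only. Your handling of the cutoff (bulk / transition / outer set) is also the same in spirit.

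One point in your last paragraph should be corrected. When you apply your remainder inequality to the factor $(1+X)^{-\alpha}$, the relevant exponent is $\beta=-\alpha<0$, so the indicator $\mathds{1}_{\{\beta>2\}}$ is \emph{off} and the remainder is simply $O(X^{2})$, uniformly for all $X\geq 0$ (for large $X$ one has $|(1+X)^{-\alpha}-1+\alpha X|\sim \alpha X\leq \alpha X^{2}$). There is therefore no $|X|^{\alpha}$ contribution from this factor, contrary to what you write. In fact, carrying your own bookkeeping through consistently, every error term already lands in $C(I+J^{2})$: the only factors with positive exponent $\alpha$ are $(1+e^{-s_{0}}P)^{\alpha}$ and $(1+c)^{\alpha}$, and since $|e^{-s_{0}}P|,\,|c|\leq 1/2$ their $|\cdot|^{\alpha}$ corrections (when $\alpha>2$) are dominated by their squares, hence by $J^{2}$, exactly as you note earlier. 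The displayed term $\mathds{1}_{\{1<p<3/2\}}J^{1/(p-1)}$ thus appears to be a harmless overestimate; your expansion proves the stated bound a fortiori.
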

\begin{proof}
	See Appendix \ref{proof of the t.e of ini.data}.
\end{proof}
Consider now some \(m \in (0, 1)\) and \(x_0 \in \mathbb{R}^d\) defined in \eqref{region R3}, with \(x_0\) decomposed as in \eqref{eq: form of a in R3}, for some $K \in ((\frac{\vep_0}{16}-1)e^{s_0/2},-A]\cup[A,+\infty)$. Given some \(r \geq 2\), we may use the relation \eqref{eq: relation between W and w} together with  \eqref{w1 taylor expansion} to derive an expansion for \(W_{x_0}(Y, s_0)\), showing error terms bounded by small terms in scales of \(1/ K\) and \(e^{-s_0}\). In particular, the following expansions are useful:
{\footnotesize
\[
\begin{aligned}
|P((|Ye^{-s_0/2}+x_0|-1)e^{s_0/2})| =& \frac{12(p-1)}{\kappa}\Bigl(K^2+Y_1^2+2KY_1-1\\&+\frac{(K+Y_1)(|Y|^2-Y_1^2)}{r_0}e^{-s_0/2}\Bigr)+O(e^{-s_0})\\
\end{aligned}
\]
\[
\begin{aligned}
|X(|Ye^{-s_0/2}+x_0|-1)e^{s_0/2}, s_0)| = \frac{p-1}{\kappa}e^{-s_0}\bigl[K^4&+Y_1^4+6K^2Y_1^2\\&
+4K^3Y_1+4KY_1^3\bigr]+O(e^{-3s_0/2})
\end{aligned}
\]
}
in \(L^r_{\rho_{d}}(\mathbb{R}^d)\). This latter estimate can be easily written in the Hermite polynomials basis $h_1(Y_1)$ defined in \eqref{eq:definition of hm}. Since by definition \eqref{eq: iota} of \(\iota\) and \eqref{iota bound}, it follows that
\[
\iota \leq \frac{m\kappa}{p - 1} \leq \frac{\kappa}{p- 1}, \quad K \leq \iota^{1/6} e^{s_0/3} \quad \text{and} \quad K^2 \leq \sqrt{\iota} e^{s_0/2},
\]
one can easily bound all the error terms by \(O\left(\frac{\iota}{A}\right)\) and \(O(\iota^2)\), as required by the statement of the lemma. 
This concludes the proof of Lemma \ref{Initialvalue expansion for large x_0}. \qed

Now we start to prove  Lemma \ref{lemma decreaing from k to k-eta}.
\paragraph{Proof of  Lemma \ref{lemma decreaing from k to k-eta}}
Take $A \geq 1$ and $s_0 \geq \max \left[s_{5}(A), s_{10}(A)\right]$ where $s_{5}$ and $s_{10}$ are defined in Proposition \ref{prop: initialization} and Lemma \ref{Initialvalue expansion for large x_0}. Consider then $\left(d_0,d_1,d_2,d_3,d_4,d_5,d_6\right) \in \mathcal{D}$ defined in Proposition \ref{prop: initialization} and initial data $w_1\left(y, s_0\right)$ defined in \eqref{eq: initial data}. Consider also some $\eta^* \leq \frac{\kappa}{p-1}, 0<m<\frac{\eta^*(p-1)}{\kappa} \leq 1$ and $x_0 \in \tilde{\mathcal{R}}_3$ from \eqref{region R3} given by \eqref{eq: form of a in R3} for some $K \geq A \geq 0$. In this case, Proposition \ref{prop: initialization} applies, and so does Lemma \ref{Initialvalue expansion for large x_0}. In particular, the expansion given there holds for $W_{x_0}\left(Y, s_0\right)$.
\begin{itemize}
\item[(i)] Note that condition \eqref{eq: definition of s etoile} holds from the choice of $\eta^*$ and $m$. Therefore, using \eqref{eq: iota}, we see that $\iota \leq \frac{m \kappa}{p-1} \leq \eta^*$. By definition \eqref{eq: definition of s etoile} of $s^*$, it follows that $s^*=$ $s_0+\log \frac{\eta^*}{\iota} \geq s_0$.
\item[(ii)] Assume now that

\begin{equation}
\forall s \in\left[s_0, s_1\right],\left\|W_{x_0}(s)\right\|_{L^{\infty}} \leq 2 \kappa
\end{equation}

for some $s_1 \geq s_0$. Introducing

\begin{equation}\label{V definition}
V_{x_0}=W_{x_0}-\kappa \text { and } \bar{s}=\min \left(s^*, s_1\right)
\end{equation}
\end{itemize}

we work in the following in the interval $\left[s_0, \bar{s}\right]$, and proceed in 3 steps in order to give the proof:
\begin{itemize}
\item  In Step 1, we write an equation satisfied by $V_{x_0}$ and project it on the various eigen-function $h_{j_1}\cdots h_{j_d}$ where $j=(j_1,\cdots,j_d) \in \mathbb{N}^d$.
\item In Step 2, we integrate those equations.
\item In Step 3, we collect the previous information to conclude the proof.
\end{itemize}
\paragraph{Step 1: Dynamics for $V_{x_0}$}
Given that $W_{x_0}$ satisfies the equation presented in \eqref{eq: W's equation}, by its defining property \eqref{V definition}, it immediately implies that $V_{x_0}$ fulfills the equation:

\begin{equation}\label{eq: v's equation}
	\forall s \in \left[s_0, \bar{s}\right], \quad \partial_s V_{x_0}=\mathcal{L}_d V_{x_0}+\bar{B}\left(V_{x_0}\right),
\end{equation}
in which the linear operator $\mathcal{L}_d$ is defined as
\begin{equation}
\mathcal{L}_d= \Delta_d + \frac{1}{2}Y\cdot\nabla_d+1.
\end{equation}
Its spectrum properties is as following: let $j=(j_1,\cdots,j_d)$ be a multi-indice, then
\[
\mathcal{L}_d(h_{j_1}h_{j_2}\cdots h_{j_d})=\left(1-\frac{|j|}{2}\right)h_{j_1}h_{j_2}\cdots h_{j_d}
\]
and the nonlinear term is expressed as:
\[
\bar{B}\left(V_{x_0}\right)=\left|\kappa+V_{x_0}\right|^{p-1}\left(\kappa+V_{x_0}\right)-\kappa^p - p\kappa^{p-1}V_{x_0}.
\]

At this stage, by projecting equation \eqref{eq: v's equation}, we derive differential inequalities that the components $V_{x_0,j}$, the projection on $h_{j_1}\cdots h_{j_d}$, and the projection $\bar{P}(V_{x_0})$ must satisfy. Here, $\bar{P}$ denotes the orthogonal projector in the $L_{\rho_d}^2$ sense onto the subspace

\begin{equation}\label{bar E}
\bar{E} = \operatorname{span}\left\{h_{j_1}\cdots h_{j_d} \mid j_1 \notin \{0, \dots, 4\}, j_2, \dots, j_d \neq 0\right\},
\end{equation}
which represents the orthogonal complement of the directions explicitly identified in Lemma \ref{w1 taylor expansion}'s expansion. This is our statement:
\begin{lemma}[Projections of equation \eqref{eq: v's equation}.]\label{Proj. of v's eq} Under the hypotheses of Lemma \ref{lemma decreaing from k to k-eta}, for all $s \in\left[s_0, \bar{s}\right]$, for all $j =(j_1,\cdots,j_d)\in \mathbb{N}^d$, with $j_1 \in \{0,\cdots,4\}$ and $j_2=j_3=\cdots=j_d=0$, it holds that

\begin{equation}\label{ode of vj}
\left|V_{x_0, j}^{\prime}(s)-\left(1-\frac{|j|}{2}\right) V_{x_0, j}(s)\right| \leq C(i)\left\|v_a(s)\right\|_{L_\rho^2}^2
\end{equation}
In addition,
\begin{equation}\label{ode of ortho compl v}
\begin{aligned}
	\frac{d}{d s}\left\|\bar{P}\left(V_{x_0}(s)\right)\right\|_{L_\rho^2} \leq & -\frac{1}{2}\left\|\bar{P}\left(V_{x_0}(s)\right)\right\|_{L_\rho^2} \\
	& +\mathbbm{1}_{\left\{s_0 \leq s \leq s_0+2\right\}} C_{2}\left\|V_{x_0}\left(s_0\right)\right\|_{L_\rho^4}^2\\&+\mathbbm{1}_{\left\{s_0+2 \leq s \leq \bar{s}\right\}} C_{2}\left\|V_{x_0}(s-2)\right\|_{L_\rho^2}^2
\end{aligned}
\end{equation}
for some universal constant $C_{2}$, where $\bar{s}$ is defined in \eqref{V definition}and the projector $\bar{P}$ is the projector on $\bar E$ defined in \eqref{bar E} as we have mentioned before .
\end{lemma}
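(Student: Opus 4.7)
\textbf{Proof proposal for Lemma \ref{Proj. of v's eq}.}
My plan is to derive both inequalities directly from equation \eqref{eq: v's equation} by taking $L^2_{\rho_d}$ inner products against the appropriate eigenfunctions. For part \eqref{ode of vj}, I would project equation \eqref{eq: v's equation} onto $\frac{h_{j_1}(Y_1)\cdots h_{j_d}(Y_d)}{\|h_{j_1}\cdots h_{j_d}\|^2_{L^2_{\rho_d}}}$ with $j_1\in\{0,\dots,4\}$ and $j_2=\dots=j_d=0$. The linear part $\mathcal{L}_d V_{x_0}$ produces exactly $(1-|j|/2)V_{x_0,j}(s)$ by the spectral property of $\mathcal{L}_d$ recalled just before the lemma, so the only thing to control is the projection of the nonlinear remainder $\bar{B}(V_{x_0})$. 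Because $W_{x_0}$ is bounded on $[s_0,\bar s]$ by the standing hypothesis $\|W_{x_0}\|_{L^\infty}\le 2\kappa$, a Taylor expansion of $|\kappa+v|^{p-1}(\kappa+v)$ around $v=0$ in a neighborhood of size $3\kappa$ gives the pointwise estimate $|\bar{B}(V_{x_0})|\le C|V_{x_0}|^2$. Cauchy--Schwarz together with the fact that $h_{j_1}\cdots h_{j_d}/\|h_{j_1}\cdots h_{j_d}\|_{L^2_{\rho_d}}$ is bounded in $L^2_{\rho_d}$ then yields the quadratic error $C(i)\|V_{x_0}(s)\|^2_{L^2_{\rho_d}}$.

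For part \eqref{ode of ortho compl v}, I would apply the projector $\bar{P}$ to \eqref{eq: v's equation}, multiply by $\bar{P}(V_{x_0})\,\rho_d$ and integrate in $Y$, giving
\begin{equation*}
\frac{1}{2}\frac{d}{ds}\|\bar{P}(V_{x_0})\|^2_{L^2_{\rho_d}}
=\int_{\Rbb^d}\bar{P}(V_{x_0})\,\mathcal{L}_d\bar{P}(V_{x_0})\,\rho_d\,dY
+\int_{\Rbb^d}\bar{P}(V_{x_0})\,\bar{P}(\bar{B}(V_{x_0}))\,\rho_d\,dY.
\end{equation*}
By construction, the eigenvalues of $\mathcal{L}_d$ on $\bar E$ are at most $-\tfrac12$ (since the directions explicitly displayed in Lemma \ref{Initialvalue expansion for large x_0} exhaust all eigenfunctions with eigenvalue $>-\tfrac12$), so the first integral is bounded above by $-\tfrac12\|\bar{P}(V_{x_0})\|^2_{L^2_{\rho_d}}$. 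For the second, Cauchy--Schwarz together with $\|\bar{P}\|\le 1$ and $|\bar{B}(V_{x_0})|\le C|V_{x_0}|^2$ reduces the matter to estimating $\|V_{x_0}(s)\|^2_{L^4_{\rho_d}}\le \|V_{x_0}\|_{L^\infty}\|V_{x_0}\|_{L^2_{\rho_d}}\cdot\text{(polynomial terms)}$; after dividing by $\|\bar{P}(V_{x_0})\|_{L^2_{\rho_d}}$, this gives the desired inequality provided we can bound $\|V_{x_0}(s)\|^2_{L^4_{\rho_d}}$ by $\|V_{x_0}(s-2)\|^2_{L^2_{\rho_d}}$.

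The main obstacle, as anticipated in Section \ref{control of potential, non linear term}, is this transfer from $L^4_{\rho_d}$ to $L^2_{\rho_d}$ with a time delay. I would handle it exactly as in Proposition \ref{delay regularizing estimate}: use the Duhamel formula for $V_{x_0}$ on the interval $[\max(s_0,s-2),s]$, apply items (i)--(ii) of Lemma \ref{lemma: Reg eff d oper} to the semigroup $e^{\tau\mathcal{L}_d}$ (which converts $L^2_{\rho_d}$ data into $L^4_{\rho_d}$ output after a delay $\tau=\log\!\tfrac{3}{1}>\log 3$, hence certainly at $\tau=2$), and absorb the contribution of $\bar{B}(V_{x_0})$ along the Duhamel integral using $|\bar{B}|\le C|V|\cdot|V|\le 2C\kappa|V|$ in the $L^\infty$ a priori bound. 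For $s\in[s_0,s_0+2]$, the delay cannot be applied, so we fall back on the initial data, controlling $\|V_{x_0}(s)\|_{L^4_{\rho_d}}$ by $\|V_{x_0}(s_0)\|_{L^4_{\rho_d}}$ via item (iii) of Lemma \ref{lemma: Reg eff d oper} (using that $\nabla V_{x_0}(s_0)\in L^\infty$ from Proposition \ref{prop: initialization}). This produces the indicator split on the right-hand side of \eqref{ode of ortho compl v} and completes the proof.
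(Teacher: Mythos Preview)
Your strategy---project onto the displayed eigendirections, use the spectral gap on $\bar E$, and control $\bar B(V_{x_0})$ via the delay regularizing effect of $e^{\tau\mathcal{L}_d}$---is exactly what the paper invokes by deferring to Lemma~7.3 of \cite{MZJEMS24}. Two slips in the execution: in your derivation of \eqref{ode of vj}, Cauchy--Schwarz on $\int \bar B(V_{x_0})\,k_j\,\rho_d\,dY$ yields $C(j)\|\bar B(V_{x_0})\|_{L^2_{\rho_d}}\le C(j)\|V_{x_0}(s)\|^2_{L^4_{\rho_d}}$, not $\|V_{x_0}(s)\|^2_{L^2_{\rho_d}}$, because $k_j$ is an unbounded polynomial; the right-hand side of \eqref{ode of vj} must therefore be read in $L^4_{\rho_d}$ (or in $L^2_{\rho_d}$ at a delayed time), which is indeed how the estimate is consumed in Lemma~\ref{integration of v equation}. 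And for $s\in[s_0,s_0+2]$ you want item (ii) of Lemma~\ref{lemma: Reg eff d oper} (polynomial-growth initial data, available via Proposition~\ref{prop: initialization}), not item (iii), which is only an $L^\infty\!\to\! L^\infty$ bound and does not deliver the $L^4_{\rho_d}$ control you need.

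One further gap: your justification that the top eigenvalue on $\bar E$ is at most $-\tfrac12$ is incomplete for $d\ge 2$. If $\bar E$ is the full $L^2_{\rho_d}$-orthogonal complement of $\{h_{j_1}(Y_1):0\le j_1\le 4\}$, then for instance $h_1(Y_2)$ and $h_2(Y_2)$ lie in $\bar E$ with eigenvalues $\tfrac12$ and $0$. The rescue is that $V_{x_0}(Y,s)=w_1\bigl((|Ye^{-s/2}+x_0|-1)e^{s/2},s\bigr)-\kappa$ is even in each $Y_i$, $i\ge 2$, and invariant under their permutations, so the odd transverse modes vanish identically; moreover the surviving transverse quadratic mode $\sum_{i\ge 2}h_2(Y_i)$ enters only at order $O(e^{-s/2})$ through the curvature of the blow-up sphere (cf.\ the Taylor expansion of $|Ye^{-s/2}+x_0|$). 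One must therefore either enlarge the finite-dimensional block to include that direction or carry its smallness through the bootstrap separately, as is done in \cite{MZJEMS24}.
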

\begin{proof}
	Remark that the eigen value on $\bar{E}$ is $-\frac{1}{2}$. The proof of this Lemma is actually analogue to the one of Lemma 7.3 in \cite{MZJEMS24}. Hence, we omit it here and intereted reader's are invited to see it in \cite{MZJEMS24} 
\end{proof}

\paragraph{Step 2: Integration of the equations in Lemma \ref{Proj. of v's eq}.}\;\\
Now, with a similar argument as in $\cite{MZJEMS24}$, we obtain the following result.
\begin{lemma}[Integration of equations in Lemma \ref{Proj. of v's eq}.] \label{integration of v equation}There exist $M_{29}, A_{29} \geq 1$ and $\eta_{29}>0$ such that under the hypotheses of Lemma \ref{lemma decreaing from k to k-eta}, if $A \geq A_{29}$ and $\eta^* \leq \eta_{29}$, then for all $s \in\left[s_0, \bar{s}\right]$, it holds that 
	\begin{itemize}
	\item for all $j =(j_1,\cdots,j_d)\in \mathbb{N}^d$, with $j_1 \in \{0,\cdots,4\}$ and $j_2=\cdots=j_d=0$
\begin{equation}
	 \left|V_{x_0, j}(s)-e^{\left(1-\frac{i}{2}\right) \tau} V_{x_0, j}\left(s_0\right)\right| \leq M_{2}\left(\eta^*+A^{-2}\right) e^\tau \iota,
\end{equation}
\item 
\begin{equation}
	\left\|\bar{P}\left(V_{x_0}(s)\right)\right\|_{L_\rho^2} \leq e^{-\frac{\tau}{2}}\left\|\bar{P}\left(V_{x_0}\left(s_0\right)\right)\right\|_{L_\rho^2}+M_{2}\left(\eta^*+A^{-2}\right) e^\tau \iota,
\end{equation}

\end{itemize}
provided that $A$ and $s_0$ are large enough and $\eta^*$ is small enough, where $\bar{s}, \iota$ and $\bar{P}$ are defined in \eqref{V definition}, \eqref{eq: iota} and right before \eqref{bar E}, with $\tau=s-s_0$.
\end{lemma}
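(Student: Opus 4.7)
The plan is to prove the two estimates of Lemma \ref{integration of v equation} simultaneously via a bootstrap/continuity argument, using the ODE/ODI's of Lemma \ref{Proj. of v's eq} together with Duhamel's formula. First, I would read off the initial data by combining the definition $V_{x_0}=W_{x_0}-\kappa$ with the expansion of $W_{x_0}(Y,s_0)$ from Lemma \ref{Initialvalue expansion for large x_0}. This gives $V_{x_0,(0,\ldots,0)}(s_0)=-\iota + O(\iota/A)+O(\iota^2)$, while the components $V_{x_0,j}(s_0)$ corresponding to $h_4(Y_1)$, $6K^2 h_2(Y_2)$, $4K^3 h_1(Y_1)$, $4K h_3(Y_1)$ are of size at most $\iota$ (after using $K^{\alpha}e^{-s_0}=\iota^{\alpha/4}e^{-(1-\alpha/4)s_0}$ to rewrite them), and all remaining projections (including $\bar{P}(V_{x_0}(s_0))$) are bounded by the remainder $O(\iota/A)+O(\iota^2)$.

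Next, I would introduce the bootstrap assumption: for some $\tilde s\in[s_0,\bar s]$, the target conclusions hold on $[s_0,\tilde s]$ with the constant $2M_2$ in place of $M_2$. Combined with the explicit form of $V_{x_0,j}(s_0)$ and the elementary bound $e^{(1-|j|/2)\tau}\leq e^{\tau}$ for $|j|\leq 2$, the Parseval identity in $L^2_{\rho_d}$ yields an overall control of the form
\begin{equation*}
\|V_{x_0}(s)\|_{L^2_{\rho_d}}\leq C\,e^{\tau}\iota\qquad\text{for all }s\in[s_0,\tilde s],
\end{equation*}
where the crucial fact $e^{\tau}\iota\leq \eta^{*}$ on $[s_0,\bar s]$ (by the very definition of $s^{*}$) will be used to convert quadratic terms into $\eta^{*}$-small linear ones.

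I would then integrate the inequalities from Lemma \ref{Proj. of v's eq} using Duhamel's formula. For each $j$ with $j_1\in\{0,\ldots,4\}$ and other indices zero,
\begin{equation*}
V_{x_0,j}(s)=e^{(1-|j|/2)\tau}V_{x_0,j}(s_0)+\int_{s_0}^{s}e^{(1-|j|/2)(s-\sigma)}\,E_j(\sigma)\,d\sigma,
\end{equation*}
with $|E_j(\sigma)|\leq C(i)\|V_{x_0}(\sigma)\|^2_{L^2_{\rho_d}}\leq C(e^{\sigma-s_0}\iota)^2=C\,\eta^{*}\!\cdot\! e^{\sigma-s_0}\iota$ thanks to the bootstrap and to $e^{\sigma-s_0}\iota\leq\eta^{*}$. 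Pulling out one factor of $\iota$ and integrating $e^{(1-|j|/2)(s-\sigma)}e^{\sigma-s_0}$ in $\sigma$, one obtains an error of size $O(\eta^{*} e^{\tau}\iota)$; together with the $O(\iota/A)$-type contribution arising from the non-explicit components of $V_{x_0}(s_0)$ propagated by $e^{(1-|j|/2)\tau}\leq e^{\tau}$, this yields exactly the claimed $M_2(\eta^{*}+A^{-2})e^{\tau}\iota$ bound. The $\bar{P}$-estimate is handled in the same spirit: the linear part contracts at rate $e^{-\tau/2}$ (giving the first term), and the quadratic source $\|V_{x_0}(s-2)\|^2_{L^2_{\rho_d}}$ (or the initial $\|V_{x_0}(s_0)\|^2_{L^2_{\rho_d}}$ on the short initial window $[s_0,s_0+2]$) is again turned into $\eta^{*} e^{\tau}\iota$ by the bootstrap and by $e^{\tau}\iota\leq\eta^{*}$.

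Choosing the universal constant $M_2$ twice the constants produced by the above estimates then closes the bootstrap on the full interval $[s_0,\bar s]$, for $A\geq A_2$ and $\eta^{*}\leq\eta_2$ sufficiently small. The main technical obstacle I anticipate is the careful bookkeeping that separates three distinct error sources —- the $O(\iota/A)$ part of the initial projections, the $O(\iota^2)$ initial error, and the quadratic contribution of $\bar B(V_{x_0})$ propagated by the Duhamel integral —- and checking that each of them fits under a common envelope $M_2(\eta^{*}+A^{-2})e^{\tau}\iota$ uniformly in $|j|\leq 4$; in particular one must ensure that the $e^{\tau}$ growth of the linear propagator does not destroy the $A^{-2}$ gain when it multiplies the error terms from the initial expansion.
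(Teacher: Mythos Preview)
Your bootstrap-plus-Duhamel argument is correct and is precisely the proof of Lemma~7.4 in \cite{MZJEMS24}, to which the paper simply defers. Your closing worry about preserving an $A^{-2}$ gain is unfounded: the Duhamel error is purely $O(\eta^{*}e^{\tau}\iota)$ (the $O(\iota/A)$ part of the initial data sits inside the propagated term $e^{(1-|j|/2)\tau}V_{x_0,j}(s_0)$, not in the error bound), and the $A^{-2}$ in the stated estimate is harmless slack---indeed the downstream Lemma~\ref{lemma decreaing from k to k-eta} only uses $A^{-1}$, consistent with the $O(\iota/A)$ remainder in Lemma~\ref{Initialvalue expansion for large x_0}.
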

\begin{proof}
	See Lemma 7.4 in \cite{MZJEMS24}
\end{proof} 
\paragraph{Step 3: Conclusion of the proof of Lemma \ref{lemma decreaing from k to k-eta}.}\;\\
Recalling the definition \eqref{V definition}, then using Lemma \ref{integration of v equation} together with Lemma  \ref{Initialvalue expansion for large x_0} and the Cauchy-Schwarz inequality, we see that for all $s \in\left[s_0, \bar{s}\right]$,

$$
\left\|W_{x_0}(\cdot, s)-\left(\kappa-e^\tau \iota\right)\right\|_{L_\rho^2} \leq M_{2}^{\prime \prime}\left\{M_{2}\left(\eta^*+A^{-2}\right) e^\tau \iota+e^\tau\left(\frac{\iota}{A}+\iota^2\right)+2K^2 e^{-s_0}\right\},
$$
for some universal constant $M_{2}^{\prime \prime}>0$. Using the bounds \eqref{iota bound} and \eqref{J bound}, then we see that

$$
e^{-s_0}K^2 \leq 2 e^{-\frac{s_0}{3}}\left(\frac{\kappa}{p-1}\right)^{\frac{1}{3}},
$$
This concludes the proof of Lemma \ref{lemma decreaing from k to k-eta}.\qed

To finish this section, we will give the proof of Lemma  \ref{Ex. W Y sigma}.
\paragraph{Proof of Lemma  \ref{Ex. W Y sigma}}
By definition \eqref{eq: second group ssv}, relation \eqref{eq: w decompose-in-terms-q-phi} and \eqref{eq: relation between W and w}, we shall write $W_{x_0}$ in three parts:
\[W_{x_0}(Y,\sigma)=\Phi_{x_0}(Y, \sigma)+Q_{x_0}(Y, \sigma)+(1-\chi)W_{x_0}(Y,\sigma),\]
where
\[
\Phi_{x_0}(Y, \sigma)= \varphi\left((|x_0+Ye^{-\frac{\sigma}{2}}|-1)e^{\frac{\sigma}{2}}\sigma\right)
\]
and
\[
Q_{x_0}=q\left((|x_0+Ye^{-\frac{\sigma}{2}}|-1)e^{\frac{\sigma}{2}}\sigma\right).
\]
 
Now, the proof of Lemma \ref{Ex. W Y sigma} follows by adding the expansions of the following three functions performed in three steps:
\begin{enumerate}
\item
 $\left(1-\chi(\frac{|Ye^{-\sigma/2}+x_0|}{\vep_0})\right)W_{x_0}(Y,\sigma)$
\item
$\Phi_{x_0}(Y,\sigma)$
 \item $ Q_{x_0}(Y,\sigma)$
\end{enumerate}
\paragraph{Step 1, the control of $(1-\chi)W(Y,\sigma)$}\;

Note by \eqref{eq: relation between W and w}, we have 
\[
W_{x_0}(Y,\sigma) = w_{1}((|x_0+Ye^{-\frac{\sigma}{2}}|-1)e^{\frac{\sigma}{2}},\sigma).
\]
 Since under the assumption of Lemma \ref{Ex. W Y sigma}, we have $\|w_1(\cdot,\sigma)\|_{L^{\infty}(\Rbb)}\leq 2\kappa$, proceeding as in Lemma \ref{Initialvalue expansion for large x_0}, we hence have
\[
\begin{aligned}
	&\left\|W_{x_0}(Y,\sigma)(1-\chi(\frac{|Ye^{-\sigma/2}+x_0|}{\vep_0}))\right\|_{L^{r'}_{\rho_d}}\leq 2\kappa \left\|1-\chi(\frac{|Ye^{-\sigma/2}+x_0|}{\vep_0})\right\|_{L^{r'}_{\rho_d}} \\
\end{aligned}
\]
By definition \eqref{eq: def Chi}, it can be seen that:
\begin{equation*}
	\chi\left(\frac{|Ye^{-\sigma/2}+x_0|}{\vep_0}\right)=\chi\left(\frac{|x_0|}{\vep_0}\right)+O(\vep_0^{-4}|Y|^4e^{-2\sigma}).
	\end{equation*}
Since $\chi\left(\frac{|x_0|}{\vep_0}\right)=1$ by definition \eqref{eq: def Chi}, whence
\begin{equation*}
	1-\chi\left(\frac{|Ye^{-\sigma/2}+x_0|}{\vep_0}\right)=O(\vep^{-4}_0|Y|^4e^{-2\sigma}),
\end{equation*}
and
\begin{equation}\label{control of 1- chi W}
	\left\|1-\chi\left(\frac{|Ye^{-\sigma/2}+x_0|}{\vep_0}\right)W_{x_0}(Y,\sigma)\right\|_{L^{r'}_{\rho_d}}=O(\vep_0^{-4}e^{-2\sigma})=O\left(\frac{\iota'}{A}\right).
\end{equation}

\paragraph{Step 2, the expansion of $\Phi_{x_0}$:} \;

We claim that the expansion of $\varphi$ follows from Lemma \ref{Initialvalue expansion for large x_0}. Indeed, the input in that lemma is initial data $w_1\left(y, s_0\right)$ given in \ref{eq: initial data}, and if one takes the parameter $\left(d_0,d_2,d_4\right)=(0,0,0)$ and formally replaces $s_0$ by $\sigma$ in the definition \eqref{eq: initial data} of initial data $w_1\left(y, s_0\right)$, then, we recover $\varphi(y, \sigma)$ defined in \eqref{eq: profile form}. In addition, the point $x_0$ we consider is given by \eqref{ G sigma definition} with $K^{\prime}=\pm A$, which falls in the framework considered in Section \ref{Case large K}. Therefore, Lemma \ref{Initialvalue expansion for large x_0} applies and we see that
\begin{equation}\label{Exp. Phi x0}
\begin{aligned}
	\Phi_{x_0}(Y,\sigma)&=\varphi((|Ye^{-\sigma/2}+x_0|-1)e^{\sigma/2},s_0) = \kappa-\iota -e^{-\sigma}\Bigl[h_4(Y_1)+6K^2h_2(Y_2)\\&
	+4K^3h_1(Y_1)+4Kh_3(Y_1)\Bigr]+O\left(\frac{\iota'}{A}\right)+O(\iota'^2),
\end{aligned}
\end{equation}
in $L_{\rho_d}^r$ for any $r \geq 2$, where $\iota^{\prime}$ is given in \eqref{iota prime defintion}.

\paragraph{Step 3: The expansion of $q$}\;

Take $r \geq 2$. Using the decomposition \eqref{decomposition of q} for $q(Y, \sigma)$, we write  :
$$
Q(y, \sigma)=\bar{Q}(Y, \sigma)+\underline{Q}(Y, \sigma)
$$
where

\begin{equation}\label{q oveline}
\bar{Q}(Y, \sigma)=\sum_{i=0}^7 q_{i}(\sigma) h_{i}\left((|Ye^{-\sigma/2}+x_0|-1)e^{\sigma/2}\right),
\end{equation}
\begin{equation}\label{q underline}
 \underline{Q}(Y, \sigma)=q_{-}\left((|Ye^{-\sigma/2}+x_0|-1)e^{\sigma/2}, \sigma\right)
\end{equation}
Concerning $\bar{q}(y, \sigma)$, recalling that $q(\sigma) \in V_A(\sigma)$ defined in Definition \ref{Def: shrinking set}, then proceeding as for the proof of Lemma \ref{Initialvalue expansion for large x_0} given at the beginning of this subsection, we derive that

\begin{equation}\label{Exp. Q overline}
\begin{aligned}
	\bar{Q}(y, \sigma)=&q_{6}(\sigma)  \bigl\{h_6(Y_1)+5K'h_5(Y_1)+15K'^2h_4(Y_1)+20K'^3h_3(Y_1) \\
	&+15K'^4h_2(Y_1)+5h_1(Y_1)K'^5+K'^6h_0(Y_1) \bigr\}+O\left(\frac{\iota^{\prime}}{A}\right)
\end{aligned}
\end{equation}
in $L_{\rho_d}^{r'}$. As for $\underline{q}(y, \sigma)$, we can bound it thanks to the following parabolic regularity estimate on $q_{-}(\sigma)$ :

\begin{lemma}[Parabolic regularity for $q_{-}(\sigma)$] \label{Parabolic regularity for q - sigma}Under the hypotheses of Lemma \ref{Ex. W Y sigma}, it holds that for all $r^{\prime} \geq 2$,

$$
\forall s \in\left[s_0, \sigma\right], \quad\left\|q_{-}(s)\right\|_{L_{\rho_1}^{r^{\prime}}} \leq C\left(r^{\prime}\right) A^2 s^2 e^{-3 s}
$$
\end{lemma}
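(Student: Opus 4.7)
The proof is a direct adaptation of the delay regularizing argument of Proposition \ref{delay regularizing estimate}, now applied to the stable component $q_-$ rather than to $q$ itself. Applying the projector $P_-$ to equation \eqref{q's equation} gives
$$\partial_s q_- = \mathcal{L}q_- + \mathcal{R}(s),\qquad \mathcal{R}(s) := P_-\bigl(Vq + B + H + \partial_y G + N\bigr) + R_-,$$
so that Duhamel's formula reads
$$q_-(s) = e^{\mathcal{L}(s-s')}q_-(s') + \int_{s'}^{s} e^{\mathcal{L}(s-\tau)}\mathcal{R}(\tau)\,d\tau$$
for any $s' \in [s_0,s]$. I will choose $s' = \max(s_0,s-s^*)$ with $s^* = s^*(r')$ slightly larger than $\log(r'-1)$ so that the $L^2 \to L^{r'}$ smoothing estimate in Lemma \ref{lemma: Reg eff d oper}(i) applies.

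The key point is that both $q_-(s')$ and $\mathcal{R}(\tau)$ lie in $E_-$, where $\mathcal{L}$ has spectral gap $-3$ in $L^2_{\rho_1}$. I will therefore combine the smoothing estimate of Lemma \ref{lemma: Reg eff d oper}(i), applied on a time window of length $s^*$, with the $L^2_{\rho_1}$ decay $\|e^{\mathcal{L}t}v\|_{L^2_{\rho_1}} \leq e^{-3t}\|v\|_{L^2_{\rho_1}}$ valid for $v\in E_-$, to produce an inequality of the form
$$\|e^{\mathcal{L}t}v\|_{L^{r'}_{\rho_1}} \leq C(r')\,e^{-3t}\|v\|_{L^2_{\rho_1}}\qquad (v\in E_-,\ t \geq s^*).$$
For the contribution of the initial term I then plug in the bootstrap bound $\|q_-(s')\|_{L^2_{\rho_1}} \leq A^2(s')^2 e^{-3s'}$ from Definition \ref{Def: shrinking set}. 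When $s - s_0 < s^*$, instead of this splitting I will use directly the initial-time $L^{r'}$ bound $\|q_-(s_0)\|_{L^{r'}_{\rho_1}} \leq C(r')Ae^{-3s_0}$ provided by Proposition \ref{prop: initialization} (whose explicit form of $w_1(s_0)$ makes every $L^p_{\rho_1}$ norm of $q_-(s_0)$ comparable), together with the semigroup decay on $E_-$.

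For the integral term, I estimate $\|\mathcal{R}(\tau)\|_{L^2_{\rho_1}}$ contribution by contribution: $\|Vq\|_{L^2_{\rho_1}} \leq \|V\|_{L^4_{\rho_1}}\|q\|_{L^4_{\rho_1}} \leq CAse^{-3s}$ by Lemma \ref{Third term} and Proposition \ref{delay regularizing estimate}; $\|B\|_{L^2_{\rho_1}} \leq C\|q\|_{L^{2\bar p}_{\rho_1}}^{\bar p} \leq C(Ase^{-2s})^{\bar p}$, which is $O(A^2s^2e^{-3s})$ for $s$ large since $\bar p>1$; and $\|H\|_{L^2_{\rho_1}}$, $\|\partial_y G\|_{L^2_{\rho_1}}$, $\|N\|_{L^2_{\rho_1}}$, $\|R_-\|_{L^2_{\rho_1}}$ are all $O(e^{-3s})$ by Lemmas \ref{sixth term}, \ref{seventh term}, \ref{Eighth term}, and \ref{Lemma:esti:R}. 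Summing and pairing the resulting $e^{-3\tau}$ factors with the $e^{-3(s-\tau)}$ semigroup decay on $E_-$ produces an integrand that contributes $O(A^2s^2e^{-3s})$ after integration over the window $[s',s]$ of bounded length.

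The main technical obstacle is precisely the simultaneous upgrade of integrability and preservation of the $e^{-3t}$ exponent: the bare smoothing estimate of Lemma \ref{lemma: Reg eff d oper}(i) introduces an $e^{t}$ growth factor, which would spoil the target exponent $-3$. The remedy is the splitting $e^{\mathcal{L}t} = e^{\mathcal{L}s^*}\circ e^{\mathcal{L}(t-s^*)}$, using the first semigroup to pay for the $L^2 \to L^{r'}$ smoothing (at the cost of a constant depending only on $r'$) and exploiting the spectral gap of $\mathcal{L}|_{E_-}$ in $L^2_{\rho_1}$ for the remaining duration $t-s^*$. Once this splitting is in hand, the rest of the argument mirrors the corresponding estimates in the proof of Proposition \ref{delay regularizing estimate}.
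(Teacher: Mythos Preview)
Your Duhamel setup and the two-case split for the initial term are exactly what the paper does, and your treatment of the initial term $I$ is correct in both cases. The difference lies in how you handle the integral term and in what you flag as the ``main technical obstacle''.

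The paper does \emph{not} use the spectral gap on $E_-$ at all. Instead, it observes that $\hat Q=P_-(Vq+B+R+H+\partial_yG+N)$ and its gradient have polynomial growth in $y$ (because $q$ and $\nabla q$ are in $L^\infty$ by the bootstrap and Proposition~\ref{gradient estimation}, and the remaining terms are explicit). This allows item~(ii) of Lemma~\ref{lemma: Reg eff d oper}, which gives $\|e^{\mathcal L(s-\tau)}\hat Q(\tau)\|_{L^{r'}_{\rho_1}}\le Ce^{s-\tau}\|\hat Q(\tau)\|_{L^{r'}_{\rho_1}}$. Since the window $[s',s]$ has length at most $s^*(r')$, the factor $e^{s-\tau}$ is bounded by a constant $C(r')$, and one simply bounds $\|\hat Q(\tau)\|_{L^{r'}_{\rho_1}}$ directly using Proposition~\ref{delay regularizing estimate} and the estimates on $V$, $R$, $H$, $\partial_yG$, $N$. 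Thus the $e^t$ growth from smoothing is harmless here, and your ``main obstacle'' is not one.

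Your alternative route --- bounding $\|\mathcal R(\tau)\|_{L^2_{\rho_1}}$ and invoking the combined estimate $\|e^{\mathcal L t}v\|_{L^{r'}_{\rho_1}}\le C(r')e^{-3t}\|v\|_{L^2_{\rho_1}}$ for $v\in E_-$ --- has a gap: that inequality needs $t\ge s^*$ (the $L^2\to L^{r'}$ smoothing requires a minimal waiting time), but on your window $[s-s^*,s]$ one has $s-\tau\le s^*$ for every $\tau$, and $s-\tau\to 0$ near the endpoint. So you cannot pass from $L^2$ to $L^{r'}$ on most of the integral. The fix is precisely the paper's: bound the source in $L^{r'}_{\rho_1}$ from the start and use item~(ii) rather than item~(i) of Lemma~\ref{lemma: Reg eff d oper} for the integral term.
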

\noindent Indeed, using \eqref{q underline}, we write
$$
\begin{aligned}
	\|\underline{Q}\|_{L_{\rho_d}^r}^r&=\int\left|q_{-}((|Ye^{-\sigma/2}+x_0|-1)e^{\sigma/2}, \sigma)\right|^r \rho_d(Y) d Y.\\
\end{aligned}
$$

Like in \cite{MZJEMS24}, we wish to introduce the following change of variable:
\begin{equation}\label{change of variable control Q underline}
\begin{aligned}
	&Z_1=(|Ye^{-\sigma/2}+x_0|-1)e^{\sigma/2},\\
	&Z_i=Y_i,\; \text{for}\; i=1,\cdots,d.
\end{aligned}
\end{equation}
However, this is not differentiable when $Y=-x_0e^{\sigma/2}$. In order to deal with this problem, we introduce $r_1\leq\frac{1}{2}|x_0|e^{\sigma/2}<|x_0|e^{\sigma/2}$, such that when $|Y|\leq r_1$,  we have $|Ye^{-\sigma/2}+x_0|\geq \frac{1}{2}|x_0|> 0$ and \eqref{change of variable control Q underline} is diffrentiable hence well defined. For $|Y|\geq r_1$, we will take advantage of the fact that $|q_- |$ is bounded in $L_{\rho_1}^{r}$. 

Under this idea, let us write the following: 
\begin{equation}
	\begin{aligned}
		\|\underline{Q}\|_{L_{\rho_d}^{r'}}^{r'}&=\int_{|Y|\leq  r_1}\left|q_{-}((|Ye^{-\sigma/2}+x_0|-1)e^{\sigma/2}, \sigma)\right|^r \rho_d(Y) d Y\\
		&+\int_{|Y|\geq r_1}\left|q_{-}((|Ye^{-\sigma/2}+x_0|-1)e^{\sigma/2}, \sigma)\right|^r \rho_d(Y) d Y\\
		&= I +II.
		\end{aligned}
	\end{equation}
Now we estimate $I$ and $II$ to obtain control on $\|\underline{Q}\|_{L_{\rho_d}^{r'}}^{r'} $. We claim the following:
\begin{claim}\label{Claim Q underline estimation}Under the assumption of Lemma \eqref{Ex. W Y sigma}, for all $r_1\leq \frac{1}{2}|x_0|e^{\sigma/2}$, we have 
\begin{equation}
		\|\underline{Q}\|_{L_{\rho_d}^{r'}}^{r'} \leq 3C(d,K')r_1^{d-1}A^{2}\sigma^2e^{-3\sigma}+2C\bar{C}e^{-\frac{r_1^2}{8}}.
\end{equation}
\end{claim}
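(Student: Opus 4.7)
The plan is to handle the two integrals $I$ and $II$ separately: $II$ by a crude Gaussian tail estimate combined with the uniform $L^\infty$ bound on $q_-$, and $I$ by the change of variables \eqref{change of variable control Q underline} followed by Lemma~\ref{Parabolic regularity for q - sigma}.

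For $II$, the bootstrap condition $q(\sigma)\in\mathcal{V}(\sigma)$ together with the a priori estimates of Section~\ref{Proof of the priori bound} yields $\|q_-(\cdot,\sigma)\|_{L^\infty(\mathbb{R})}\leq\bar{C}$. Splitting $e^{-|Y|^2/4}=e^{-|Y|^2/8}\cdot e^{-|Y|^2/8}$ and using $|Y|\geq r_1$ on the domain of integration produces a factor $e^{-r_1^2/8}$ outside a globally convergent integral of Gaussian type that is uniformly bounded by an absolute constant, giving the second term of the claim.

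For $I$, I would perform the change of variables \eqref{change of variable control Q underline} on $\{|Y|\leq r_1\}$. The restriction $r_1\leq\tfrac{1}{2}|x_0|e^{\sigma/2}$ together with $x_0=|x_0|\mathbf{e}_1$ ensures $Y_1 e^{-\sigma/2}+|x_0|\geq \tfrac{1}{2}|x_0|>0$ and $|Y e^{-\sigma/2}+x_0|\leq\tfrac{3}{2}|x_0|$, so the map is $C^1$ with triangular Jacobian equal to $(Y_1 e^{-\sigma/2}+|x_0|)/|Y e^{-\sigma/2}+x_0|\geq 1/3$. Using the identity $|Y e^{-\sigma/2}+x_0|=1+Z_1 e^{-\sigma/2}$ together with $Y_i=Z_i$ for $i\geq 2$, one solves $Y_1=Z_1-K'+O(r_1^2 e^{-\sigma})$, from which the weight comparison $\rho_d(Y)\leq C(K')\rho_1(Z_1)$ on $|Y|\leq r_1$ follows (the extra Gaussian factors in $Z_2,\ldots,Z_d$ only help). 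Integrating the transverse variables $Z_2,\ldots,Z_d$ over the bounded interval $[-r_1,r_1]$ yields the volume factor $r_1^{d-1}$, and then Lemma~\ref{Parabolic regularity for q - sigma} provides the bound on $\|q_-(\cdot,\sigma)\|_{L^{r'}_{\rho_1}}$, producing the first term on the right-hand side of the claim.

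The main technical obstacle is the weight comparison and the uniform lower bound on the Jacobian, both of which reduce to keeping $Y$ uniformly away from the singular point $-x_0 e^{\sigma/2}$ of the change of variables. The smallness of $|K'|e^{-\sigma/2}$ for large $\sigma$ (since $|K'|=A$ is fixed while $\sigma\to\infty$) guarantees that $|x_0|$ stays close to $1$, which stabilises all the geometric constants $C(d,K')$ appearing in the final estimate; the constraint $r_1\leq\tfrac{1}{2}|x_0|e^{\sigma/2}$ is exactly what is needed to translate this into a non-degenerate change of variables on the inner region.
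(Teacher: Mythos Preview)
Your overall strategy matches the paper's, but there are two genuine gaps.

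\textbf{For $II$.} You claim $\|q_-(\cdot,\sigma)\|_{L^\infty(\mathbb{R})}\leq \bar C$, but this is false: by definition $q_-=q-\sum_{i=0}^6 q_i h_i$, and while $q\in L^\infty$ from the bootstrap, the subtracted Hermite polynomials grow like $|y|^6$. What you actually have is the polynomial bound $|q_-(y,\sigma)|\leq C(1+|y|^6)$, which the paper uses; the Gaussian tail still absorbs this factor, so the fix is immediate.

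\textbf{For $I$.} The weight comparison $\rho_d(Y)\leq C(K')\rho_1(Z_1)$ does not hold. From $Y_1\approx Z_1-K'$ (your own expansion) one gets
\[
e^{-Y_1^2/4}\approx e^{-(Z_1-K')^2/4}=e^{-Z_1^2/4}\,e^{K'Z_1/2}\,e^{-K'^2/4},
\]
and the factor $e^{K'Z_1/2}$ is unbounded in $Z_1$; it cannot be absorbed into a constant $C(K')$. Bounding it crudely on the image of $\{|Y|\leq r_1\}$ gives $e^{|K'|r_1/2}$, which with the eventual choice $r_1=\sigma$ and $|K'|=A$ destroys the $e^{-3\sigma}$ decay. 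The paper's remedy is a Cauchy--Schwarz step: after the change of variables one writes
\[
\int |q_-(Z_1)|^{r'}\rho_1(Z_1)e^{K'Z_1/2}\,dZ_1
\leq \Bigl(\int |q_-|^{2r'}\rho_1\,dZ_1\Bigr)^{1/2}\Bigl(\int e^{K'Z_1}\rho_1\,dZ_1\Bigr)^{1/2},
\]
so the exponential factor integrates against $\rho_1$ alone (a Gaussian moment, giving a genuine constant $C(K')$), while the $q_-$ factor is controlled in $L^{2r'}_{\rho_1}$ by Lemma~\ref{Parabolic regularity for q - sigma}, which is stated for arbitrary $r'\geq 2$. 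This passage to a higher Lebesgue exponent is the missing ingredient in your argument.
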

\noindent\textit{proof}.
\paragraph{Estimation for $I$.}\;\\
 We introduce the following change of variable: 
\begin{equation}
	\begin{aligned}
		&Z_1=(|Ye^{-\sigma/2}+x_0|-1)e^{\sigma/2},\\
		&Z_i=Y_i,\; \text{for}\; i=1,\cdots,d.
	\end{aligned}
\end{equation}
Since 
$$(Z_1+e^{\sigma/2})^2 = |Ye^{-\sigma/2}+x_0|^2e^{\sigma}\geq\frac{1}{4}|x_0|^2e^{\sigma}\geq |Y|^2\geq \sum_{i=2}^dY_i^2=\sum_{i=2}^dZ_i^2,$$
we can express $Y_1$ as follows:
\[
	\begin{aligned}
		&Y_1=\pm\sqrt{(Z_1+e^{\sigma/2})^2-\sum_{i=2}^dZ_i^2}-|x_0|e^{\sigma/2}.\\
	\end{aligned}
\]
Remark that 
\[
\begin{aligned}
	&\left|-\sqrt{(Z_1+e^{\sigma/2})^2-\sum_{i=2}^dZ_i^2}-|x_0|e^{\sigma/2}\right|\geq |x_0|e^{\sigma/2}>r_1,\\
\end{aligned}
\]
we are hence making the choice with 
\begin{equation}
	\begin{aligned}
		&Y_1=\sqrt{(Z_1+e^{\sigma/2})^2-\sum_{i=2}^dZ_i^2}-|x_0|e^{\sigma/2}.\\
	\end{aligned}
\end{equation}
This is differentiable when $|Y|\leq r_1<\frac{1}{2}|x_0|e^{\sigma/2}$. We hence can calculate its Jacobian as follows:
\[
	\begin{aligned}
	\left|\det \frac{\partial (Y_1,\cdots,Y_d)}{\partial(Z_1,\cdots,Z_d)}\right|&= \left|\det\frac{\partial (Z_1,\cdots,Z_d)}{\partial(Y_1,\cdots,Y_d)}\right|^{-1}\\
	&=\left|\frac{\del Z_1}{\del Y_1}\right|^{-1}\\
	&=\frac{|Ye^{-\sigma/2}+x_0|}{Y_1e^{-\sigma/2}+|x_0|}\\
	\end{aligned}
\]
Since $|Y|\leq r_1\leq\frac{1}{2}|x_0|e^{\sigma/2}$, it follows that
\begin{equation}\label{jacobian estimate}
		\left|\det \frac{\partial (Y_1,\cdots,Y_d)}{\partial(Z_1,\cdots,Z_d)}\right|\leq\frac{r_1e^{-\sigma/2}+|x_0|}{-r_1e^{-\sigma/2}+|x_0|}\leq 3.
\end{equation}
Then, by definition \eqref{eq: def of rho}, we have
\small
{
\begin{equation}\label{rhod Y to rho1 Z}
	\begin{aligned}
	&(4\pi)^{d/2}\rho_d(Y)\\
	&=\exp\left\{-\frac{1}{4}(Z_1+e^{\sigma/2})^2+\frac{1}{4}\sum_{i=2}^dZ_i^2+\frac{1}{2}|x_0|e^{\sigma/2}\sqrt{(Z_1+e^{\sigma/2})^2-\sum_{i=2}^dZ_i^2}-\frac{1}{4}|x_0|^2e^{\sigma}\right\}\\&
	\exp\left\{-\frac{1}{4}\sum_{i=2}^dZ_i^2\right\}\\
	&=\exp\left\{-\frac{1}{4}(Z_1+e^{\sigma/2})^2+\frac{1}{2}|x_0|e^{\sigma/2}\sqrt{(Z_1+e^{\sigma/2})^2-\sum_{i=2}^dZ_i^2}-\frac{1}{4}|x_0|^2e^{\sigma}\right\}\\
	&\leq\exp\left\{-\frac{1}{4}(Z_1+e^{\sigma/2})^2+\frac{1}{2}|x_0|e^{\sigma/2}(Z_1+e^{\sigma/2})-\frac{1}{4}|x_0|^2e^{\sigma}\right\}\\
	&=\exp\left\{-\frac{1}{4}(Z_1+e^{\sigma/2}-|x_0|e^{\sigma/2})^2\right\}\\
	&=\exp\left\{-\frac{1}{4}(Z_1+(1-|x_0|)e^{\sigma/2})^2\right\}\\
	&=\exp\left\{-\frac{1}{4}(Z_1-K')^2\right\}\\
	&=(4\pi)^{\frac{1}{2}}\rho_1(Z_1)e^{\frac{K'Z_1}{2}}e^{\frac{K'^{2}}{4}}.
	\end{aligned}
	\end{equation} 
}
Take \eqref{change of variable control Q underline} in mind, using Fubini's theorem, equations \eqref{jacobian estimate}, \eqref{rhod Y to rho1 Z}, and \eqref{eq: def of rho}, we remark that

$$
\begin{aligned}
	I&=\int_{|Y|\leq  r_1}\left|q_{-}((|Ye^{-\sigma/2}+x_0|-1)e^{\sigma/2}, \sigma)\right|^r \rho_d(Y) d Y,\\
    &\leq (4\pi)^{\frac{d-1}{2}}\int_{Z_1\in \Rbb}\left|q_{-}(Z_1,\sigma)\right|^r  \rho_1(Z_1)e^{\frac{K'Z_1}{2}}e^{\frac{K'^{2}}{4}}d Z_1\\&\;\;\;\;\int_{\sum_{i=2}^dZ_i^2\leq r_1^2}\left|\det\frac{\partial (Y_1,\cdots,Y_d)}{\partial(Z_1,\cdots,Z_d)}\right|dZ_2 \cdots dZ_d,\\
    &\leq 3C(d)(4\pi)^{\frac{d-1}{2}}r_{1}^{d-1}e^{\frac{K'^{2}}{4}}\int_{Z_1\in \Rbb}\left|q_{-}(Z_1,\sigma)\right|^r  \rho_1(Z_1)e^{\frac{K'Z_1}{2}}d Z_1,\\
    &\leq 3C(d)(4\pi)^{\frac{d-1}{2}}r_{1}^{d-1}e^{\frac{K'^{2}}{4}}\int_{Z_1\in \Rbb}\left|q_{-}(Z_1,\sigma)\right|^{2r}  \rho_1(Z_1)d Z_1 \int_{Z_1\in\Rbb}e^{K'Z_1}\rho(Z_1)dZ_1,\\
    &\leq 3C(d)(4\pi)^{\frac{d-1}{2}}r_{1}^{d-1}e^{\frac{3K'^{2}}{4}}(A^{2}\sigma^2e^{-3\sigma}).
\end{aligned}
$$
It then follows that 
\begin{equation}\label{Q underline I estimation}
I \leq 3C(d,K')r_1^{d-1}A^{2}\sigma^2e^{-3\sigma}
\end{equation}

\paragraph{Estimation for $II$.}
By definition \eqref{decomposition of q}, we write:
\[
\left\|\frac{q_-(\cdot,\sigma)}{1+|y|^{6}}\right\|_{L^{\infty}}\leq \|q\|_{L^{\infty}}+C\sum_{i=0}^{6}|q_i(s)|\leq 2\kappa+C\sigma^2e^{-2\sigma}.
\]
Therefore, $II$ satisfies:
\begin{equation}\label{Q underline II estimation}
\begin{aligned}
II &\leq C\int_{|Y|\geq r_1} (2\kappa+C\sigma^{2}e^{-2\sigma})^r\left[1+(|Y|+K')^6\right]e^{-\frac{|Y|^2}{4}}dY\\
&\leq Ce^{-\frac{r_1^2}{8}}\int_{|Y|\geq r_1} (2\kappa+C\sigma^{2}e^{-2\sigma})^r\left[1+(|Y|+K')^6\right]e^{-\frac{|Y|^2}{8}}dY\\
&\leq C\tilde{C}e^{-\frac{r_1}{8}}
\end{aligned}
\end{equation}
Now, by summing \eqref{Q underline I estimation} and \eqref{Q underline II estimation}, we conclude that:
\begin{equation}
\|\underline{Q}\|_{L_{\rho_d}^r}^r \leq 3C(d,K')r_1^{d-1}A^{2}\sigma^2e^{-3\sigma}+2C\bar{C}e^{-\frac{r_1^2}{8}}.
\end{equation}
\paragraph{Conclusion of the proof of Lemma \ref{Ex. W Y sigma}}\;\\
 When $s_0$ sufficiently large, we have $|x_0|e^{\sigma/2}\geq \sigma \geq s_0 $ choosing $r_1= \sigma$ so that Claim \ref{Claim Q underline estimation} is avalable. We then deduce from Claim \ref{Claim Q underline estimation} that 
 \begin{equation}\label{Q underline O iota prime}
 	\|\underline{Q}\|_{L_{\rho_d}^r}^r \leq 3C(d,K')\sigma^{d+1}A^{2}e^{-3\sigma}+2C\bar{C}e^{-\frac{\sigma^2}{8}} = O\left(\frac{\iota'}{A}\right)
 	\end{equation}
Now we can conclude the proof of Lemma \ref{Ex. W Y sigma} by summing up equations \eqref{control of 1- chi W}, \eqref{Exp. Phi x0}, \eqref{Exp. Q overline} and \eqref{Q underline O iota prime}. \qed\\
The only thing left for this paper is to prove Lemma \ref{Parabolic regularity for q - sigma}.
\paragraph{Proof of Lemma \ref{Parabolic regularity for q - sigma}}. 
Assume that $s_0$ sufficiently large so that Poposition \ref{delay regularizing estimate} applies. Consider some $r'\geq 2$ we project equation \eqref{q's equation} for all $s\in [s_0,\sigma]$ as follows:
\begin{equation}
\del_s q_-=\mathcal{L} q_-+\hat{Q},
\end{equation}
where
\[
\hat{Q}=P_-(Vq+H(y,s)+\del_y G(y,s)+R(y,s)+B(y,s)+N(y,s)),
\]
and $P_-$ is defined in \eqref{eq: Def of projection P_-}. Given some $s'\in[s_0,\sigma]$
\begin{small}
	\begin{equation}\label{q- solution}
		\begin{aligned}
			q_-(\sigma)&=e^{(\sigma-s')\mathcal{L}}q_{-}(\tau)
		+\int_{s'}^\sigma e^{(\sigma'-s')\mathcal{L}} \hat{Q} (y,\sigma')d\sigma'.
		\end{aligned}
	\end{equation}
\end{small}
We then argue as in  \cite{MZJEMS24}; Taking $L_{\rho_1}^{r'}$ norm on both side of the equation \eqref{q- solution} gives:  
\[
\begin{aligned}
	\left\|q_{-}(s)\right\|_{L_\rho^{r'}(\Rbb)} 
	\;\le\; I + II 
	\;\equiv\;& 
	\left\| e^{\mathcal{L}(\sigma-s')} q_-(s') \right\|_{L_{\rho_1}^\infty(\Rbb)}
\\&\;+\;
	\int_{s'}^{\sigma} 
	\left\| e^{(s-s')\mathcal{L}} \hat{Q}(y,\sigma') \right\|_{L^{r'}_{\rho_1}(\Rbb)} 
	\, d\sigma'.
\end{aligned}
\]
We start by bounding $I I$. We claim that for any $\sigma^{\prime} \in\left[s', \sigma\right], Q\left(\sigma^{\prime}\right)$ and its gradient have polynomial growth in $y$, allowing the application of item (ii) of Lemma \ref{lemma: Reg eff d oper}. Indeed, by definition \eqref{definition of B,R,F,N} , together with the $L^{\infty}$ bound on $q(s)$ from Definition \ref{Def: shrinking set} and the gradient estimate of Proposition \ref{gradient estimation},  together with \eqref{w1 to We1} we see that $V q+B+R+H+\del_yG+N$ and its gradient have polynomial growth in $y$. By definition of the $P_{-}$ operator (see \eqref{eq: Def of projection P_-}), so has $Q$. Applying item (ii) of Lemma \ref{lemma: Reg eff d oper}, we see that
\begin{equation}
	|II|\leq \int_{s'}^\sigma\bigl\| Q(\sigma') \bigr\|_{L^{r'}_{\rho}(\Rbb)}d\sigma'
\end{equation}
Upon using Proposition \ref{delay regularizing estimate}, Lemma \ref{Lemma:esti:R}, Lemma \ref{sixth term}, Lemma \ref{seventh term} and Lemma \ref{Eighth term} we obtain that
\begin{equation}\label{eq: II}
	\begin{aligned}
		\bigl\| Q(\sigma') \bigr\|_{L^{r'}_{\rho}(\Rbb)}\leq C(r')A\sigma'e^{-3\sigma'}.
	\end{aligned}
\end{equation}
Then as for the bound on  $I$, introducing $\sigma^*\left(r^{\prime}\right)>0$ and $C^*\left(r^{\prime}\right)>0$ such that the following delay regularizing effect holds for any $v \in L_{\rho_1}^2$ (see item (i) of Lemma \ref{lemma: Reg eff d oper}):

\begin{equation}\label{regularizing effect for q-}
\left\|e^{\sigma^*\left(r^{\prime}\right) \mathcal{L}}(v)\right\|_{L_{\rho_1}^{r^{\prime}}} \leq C^*\left(r^{\prime}\right)\|v\|_{L_{\rho_1}^2}
\end{equation}
we distinguish two cases in the following:
\paragraph{Case 1:} $\sigma \geq s_0+\sigma^*$. Fixing $s'=\sigma-\sigma^*$, we see that $s' \geq s_0$. Using \eqref{regularizing effect for q-}, we see by Definition \eqref{Def: shrinking set} of $V_A(s)$ that

$$
|I| \leq C^*\left(r^{\prime}\right)\left\|q_{-}\left(\sigma-\sigma^*\right)\right\|_{L_\rho^2} \leq C^*\left(r^{\prime}\right) A^2\left(\sigma-\sigma^*\right)^2 e^{-3\left(\sigma-\sigma^*\right)}
$$
\paragraph{Case 2:} $s_0 \leq \sigma \leq s_0+\sigma^*$. Fixing $s'=s_0$, and noting that $q\left(s_0\right)$ and $\nabla q\left(s_0\right)$ are bounded (see the hypotheses of Lemma \ref{Ex. W Y sigma} and Definition \ref{Def: shrinking set} of $V_A\left(s_0\right)$ ), we can apply item (ii) of Lemma \ref{lemma: Reg eff d oper} and write by Definition \ref{Def: shrinking set}:

$$
|I| \leq C e^{\sigma-s_0}\left\|q_{-}\left(s_0\right)\right\|_{L_\rho^2} \leq C e^{\sigma-s_0} A^2 s_0^2 e^{-3 s_0} \leq C e^{\sigma^*} A^2 \sigma^2 e^{-3\left(\sigma-\sigma^*\right)}
$$
By summing $|I|$ and $|II|$ we can conclude the proof of Lemma \ref{Parabolic regularity for q - sigma}\qed

\begin{appendix}
\section{Proof of the taylor expansion of the initial data}\label{proof of the t.e of ini.data}
In this appendix we give the proof of Claim \ref{the t.e. of ini.data}.
\paragraph{Proof of Claim \ref{the t.e. of ini.data}}\;

As defined in \eqref{eq: initial data}, for some small parameter $\vep_0 \in (0,1)$ and a sufficiently large constant $A$, the initial data $w_1(y,s_0)$ is given by
\[
w_1(y,s_0)=\chi\left(\frac{1+ye^{-s_0/2}}{\vep_0}\right)\left\{ \frac{E}{D}+\frac{p-1}{\kappa D^2}Ae^{-2s_0}S(y)\right\}^{\frac{1}{p-1}},
\]
where $E$, $D$, and $S(y)$ are defined in \eqref{def S y} and \eqref{eq: profile form}. Extracting the factor $\frac{E}{D}$ from the expression yields:
\[
w_1(y,s_0)=\chi\left(\frac{1+ye^{-s_0/2}}{\vep_0}\right)\varphi\left\{ 1+\frac{p-1}{\kappa DE}Ae^{-2s_0}S(y)\right\}^{\frac{1}{p-1}},
\]
where $\varphi = \left(\frac{E}{D}\right)^{\frac{1}{p-1}}$.

To conclude the proof of Claim \ref{the t.e. of ini.data}, we use the following asymptotic expansions:
\[
\begin{aligned}
	\varphi(y,s_0) &= \kappa - e^{-s_0} h_4 + e^{-2s_0} \left( \frac{p}{2\kappa}y^8 - \frac{y^4 P(y)}{p-1} + \frac{(2-p)\kappa}{2(p-1)^2}P^2(y) \right) + O(e^{-3s_0}) \\
	&= \kappa \left(1 - \frac{e^{-s_0}}{p-1}P(y) - \frac{X}{p-1} \right) + O\left(e^{-s_0}XP(y) + X^2(1 + e^{-s_0}P(y))\right),
\end{aligned}
\]
\[
(ED)^{-1} = \frac{1}{p-1} - \frac{X}{p-1} - e^{-s_0}\frac{P(y)}{p-1} + O\left(e^{-s_0}XP(y) + X^2(1 + e^{-s_0}P(y))\right),
\]
and
\[
\chi\left(\frac{1 + y e^{-s_0/2}}{\vep_0}\right) = \chi\left(\frac{1}{\vep_0}\right) + O\left(\left(\frac{y e^{-s_0/2}}{\vep_0}\right)^n\right).
\]
\end{appendix}

\bibliographystyle{alpha}
\bibliography{heat-20-02-2023}

\end{document}